\newcommand{\cals}{\mathcal{S}}
\newcommand{\calf}{\mathcal{F}}
\newcommand{\cala}{\mathcal{A}}
\newcommand{\caly}{\mathcal{Y}}
\newcommand{\calx}{\mathcal{X}}
\newcommand{\calg}{\mathcal{G}}
\newcommand{\calh}{\mathcal{H}}
\newcommand{\call}{\mathcal{L}}
\newcommand{\calr}{\mathcal{R}}
\newcommand{\callr}{\mathcal{LR}}
\newcommand{\calxs}{\mathcal{XS}}
\newcommand{\calxr}{\mathcal{XR}}	
\newcommand{\calls}{\mathcal{LS}}
\newcommand{\ocalg}{\overline{\mathcal{G}}}
\newcommand{\brac}{\textup{Brac}}
\newcommand{\band}{\calg^\circ}
\newcommand{\tcalg}{\widetilde {\mathcal{G}}}
\newcommand{\calgSW}{{}_{SW}\calg}
\newcommand{\calgNE}{\calg^{N\!E}}
\newcommand{\GSW}{{}_{SW}G}
\newcommand{\GNE}{G^{N\!E}}
\def\Ddots{\mathinner{\mkern1mu\raise\p@
\vbox{\kern7\p@\hbox{.}}\mkern2mu
\raise4\p@\hbox{.}\mkern2mu\raise7\p@\hbox{.}\mkern1mu}}
\newtheorem{thm}{Theorem}[section]
\newtheorem{prop}[thm]{Proposition}
\newtheorem{lem}[thm]{Lemma}
\newtheorem{cor}[thm]{Corollary}
\theoremstyle{defn}
\newtheorem{defn}[thm]{Definition}
\newtheorem{example}[thm]{Example}
\theoremstyle{remark}
\newtheorem{remark}[thm]{Remark}
\numberwithin{equation}{section}
\newcommand{\gi}[3]{G_{#1}, G_{#2}, \ldots, G_{#3}}
\newcommand{\gii}[3]{G'_{#1}, G'_{#2}, \ldots, G'_{#3}}
\newcommand{\gr}[3] {\calg_1 = (G_{#1}, G_{#2}, \ldots, G_{#3})}
\newcommand{\grr}[3] {\calg_2 = (G'_{#1}, G'_{#2}, \dots, G'_{#3})}
\newcommand{\re}[2] {\res_{\calg} ( \calg_{#1}, \calg_{#2})}
\newcommand{\ree}[1] {\res_{\calg} ( \calg_{#1})}
\newcommand{\gt}[4] {\graft_{{#1}, {#2}} ( \calg_{#3}, \calg_{#4})}
\newcommand{\la}[2] {\call ( \calg_{#1} \sqcup \calg_{#2}) }
\newcommand\restr[2]{{
  \left.\kern-\nulldelimiterspace 
  #1 
  \vphantom{\big|} 
  \right|_{#2} 
  }}
\newcommand{\e}{\delta}
\def\B{\mathcal{B}}
\newcommand{\za}{\alpha}
\newcommand{\zb}{\beta}
\newcommand{\zd}{\delta}
\newcommand{\zD}{\Delta}
\newcommand{\ze}{\epsilon}
\newcommand{\zg}{\gamma}
\newcommand{\zG}{\Gamma}
\newcommand{\zl}{\lambda}
\newcommand{\zs}{\sigma}
\newcommand{\zS}{\Sigma}
\DeclareMathOperator{\Bang}{Bang}
\DeclareMathOperator{\Brac}{Brac}
\newcommand{\pred}{\textup{pred}}
\newcommand{\suc}{\textup{succ}}
\newcommand{\Int}{\textup{Int}}
\newcommand{\C}{\mathcal{C}}
\newcommand{\A}{\mathcal{A}}
\def\Acal{\mathcal{A}}
\DeclareMathOperator{\res}{Res \,} 
\DeclareMathOperator{\graft}{Graft \,  } 
\DeclareMathOperator{\match}{Match \, }
\begin{document}\date{\today}
\title[ Snake graph calculus III: Band graphs and snake rings]{Snake graph calculus and cluster algebras from surfaces III: Band graphs and snake rings}
\author{Ilke Canakci}
\address{Department of Mathematics 
University of Leicester 
University Road 
Leicester LE1 7RH 
United Kingdom}
\email{ic74@le.ac.uk}
\author{Ralf Schiffler}\thanks{The authors were supported by NSF grant DMS-10001637; the first author was also supported by EPSRC grant number EP/K026364/1, UK, and by the University of Leicester,  and the second author was also supported by the NSF grants  DMS-1254567, DMS-1101377 and by the University of Connecticut.}
\address{Department of Mathematics, University of Connecticut, 
Storrs, CT 06269-3009, USA}
\email{schiffler@math.uconn.edu}


\begin{abstract}
We introduce several commutative rings, the \emph{snake rings}, that have strong connections to cluster algebras.
The elements of these rings are residue classes of unions of certain labeled graphs that were used to construct canonical bases in the theory of cluster algebras. 
We obtain \emph{several} rings by varying the conditions on the structure as well as the labelling of the graphs. The most restrictive form of this ring is isomorphic to the ring $\mathbb{Z}[x,y]$ of polynomials in two variables over the integers. A  more general form contains \emph{all} cluster algebras of unpunctured surface type. 

The definition of the rings requires the snake graph calculus which we also complete in this paper building on two earlier articles on the subject. Identities in the snake ring correspond to bijections between the posets of perfect matchings of the  graphs. One of the main results of the current paper is the completion of the explicit construction of these bijections.

\end{abstract}

 \maketitle




\section{Introduction} 
This article is the third and final of a sequence of papers introducing the snake graph calculus, which, on the one hand, is an efficient computational tool for cluster algebras of surface type, and on the other hand,  provides a framework for a more systematic study of the combinatorial structure of abstract snake and band graphs. 

The main result of this paper is two-fold: the completion of the snake graph calculus and the introduction of several commutative rings, the \emph{snake rings}.
The definition of these rings requires the snake graph calculus, and one of our motivating goals for developing the calculus was the introduction of the snake rings.

The elements of the snake rings are residue classes of unions of so-called abstract snake and band graphs, which are a {generalisation} of certain labeled graphs that were used to construct canonical bases in the theory of cluster algebras. Our \emph{abstract} snake and band graphs are not limited to a particular choice of a surface but rather inspired from the combinatorial structure of  \emph{all} surface type cluster algebras.
We obtain several different snake  rings by varying the conditions on the structure as well as on the labelling of the graphs. This ring, in its most restrictive form, is isomorphic to the ring $\mathbb{Z}[x,y]$ of polynomials in two variables over the integers. Thus every polynomial has a realisation as a union of snake and band graphs; actually many such realisations, and it is an interesting question which polynomials correspond to a  connected graph.

In a more general form, the snake ring contains all cluster algebras of unpunctured surface type. This is a very large and   well studied class of cluster algebras. At this level, the snake ring provides a very efficient tool for explicit computation in these cluster algebras. Because of the ubiquity of cluster algebras, this tool is likely to be useful for computations in many areas of mathematics and physics. It has already been used by the authors of the current paper and their co-authors in the areas of representation theory of associative algebras, in the theory of cluster algebras themselves, and in knot theory.
The snake ring also has a natural relation to the skein algebras in hyperbolic geometry via the surface model for cluster algebras.
Moreover, the snake rings   have an interesting connection to the theory of distributive lattices, because  the identities in the snake ring correspond to bijections between the posets of perfect matchings of the  graphs.  
The poset of  perfect matchings of a snake graph or a band graph is a finite distributive lattice. By the fundamental theorem of finite distributive lattices, it is  therefore isomorphic to the lattice $J(\mathcal{P})$ of order ideals
in some poset $\mathcal{P}$ determined by the snake or band graph, see \cite{Stanley}.  

The main motivation for snake graph calculus comes from cluster algebras which were introduced in \cite{FZ1}, and further developed in \cite{FZ2,BFZ,FZ4}, motivated by combinatorial aspects of canonical bases in Lie theory \cite{L1,L2}. A cluster algebra is a subalgebra of a field of rational functions in several variables, and it is given by constructing a distinguished set of generators, the \emph{cluster variables}. These cluster variables are constructed recursively and their computation is rather complicated in general. By construction, the cluster variables are rational functions, but Fomin and Zelevinsky showed in \cite{FZ1} that they are Laurent polynomials with integer coefficients. Moreover, these coefficients are known to be non-negative \cite{LS4}.

An important class of cluster algebras is given by cluster algebras of surface type \cite{GSV,FG1,FG2,FST,FT}. From a classification point of view, this class is very important, since it has been shown in \cite{FeShTu} that almost all (skew-symmetric) mutation finite cluster algebras are of surface type. For generalizations to the skew-symmetrizable case see \cite{FeShTu2,FeShTu3}. The closely related surface skein algebras were studied in \cite{M,T}.

Snake graphs were introduced in the context of cluster algebras of surface type in \cite{MS, MSW} to provide a combinatorial formula for the Laurent expansions of the cluster variables building on previous work in \cite{S2,ST,S3}. A special type of snake graphs had also been studied earlier in \cite{Propp}. Furthermore the band graphs were introduced in \cite{MSW2} in order to construct canonical bases for the cluster algebras in the case where the surface has no punctures and has at least 2 marked points. As an application of the computational tools developed in \cite{CS,CS2} and in the present paper, it is proved in \cite{CLS} that the basis construction of \cite{MSW2} also applies to surfaces with non-empty boundary and with exactly one marked point.

In \cite{MSW2}  the snake graphs and band graphs were constructed using the geometric interpretation of the elements of the cluster algebra as collections of curves in the surface. Fixing an initial cluster in the cluster algebra corresponds to fixing a triangulation $T$ of the surface. Then each cluster variable $x_\zg$ of the cluster algebra corresponds to a unique arc $\zg$ in the surface and the Laurent expansion of $x_\zg$ in the initial cluster is determined by the crossing pattern of the arc $\zg$ with the triangulation $T$. The combinatorial formula of \cite {MS,MSW} for this Laurent expansion was given as a sum over all perfect matchings of the snake graph $\calg_\zg$ determined by $\zg$ and $T$. 

An arbitrary element of the cluster algebra is a polynomial in the cluster variables, thus it corresponds to a formal sum of multisets of arcs in the surface.
In \cite{MW} it was shown that the relations in the cluster algebra are skein relations, which means that they have a geometric interpretation as local smoothing of crossings of curves in the surface. Using the skein relations, one can find the expansion of an arbitrary cluster algebra element $z$  in the canonical bases of \cite{MSW2} by successively smoothing all the local crossings in the multisets of curves corresponding to $z$ in the surface. For example if $\zg$ is some multiset of curves which has a crossing then the corresponding cluster algebra element can be written as 
  \begin{equation}\label{equation 1}x_{\zg}= y_{\za}x_{\za} +y_{\zb}x_{\zb},\end{equation}
  where  $\za, \zb$ are the (isotopy classes of) curves obtained by smoothing the crossing in $\zg$, and $y_{\za},y_{\zb}$ are certain coefficients.
  But in order to \emph{explicitly} compute the equation (\ref{equation 1}) in the cluster algebra, one needs to construct the snake and band graphs for all the curves involved in $\zg,\za$ and $ \zb$ and use them to compute the Laurent expansions of each of the cluster algebra elements involved.

  Our original motivation for the snake graph calculus was to compute the expression on the right hand side of equation (\ref{equation 1}) directly in terms of snake and band graphs. So instead of taking the curves $\zg$, smoothing their crossing to obtain the curves $\za, \zb$ and then constructing their snake or band graphs, we take the snake or band graphs $\calg_\zg$ of $\zg$, determine what a crossing means in terms of these graphs, and then directly construct snake or band graphs $\calg_{\za}$ and $\calg_{\zb}$ as a resolution of this crossing and rewrite equation (\ref{equation 1}) as  an equation of graphs as follows 
  \begin{equation}\label{equation 2}\calg_{\zg}= y_{\za}\calg_{\za} +y_{\zb}\calg_{\zb}.\end{equation}
  
  The list of all possible cases for these resolutions of crossing snake and band graphs is surprisingly long and complicated. The complete list is given in the current paper extending those of  \cite{CS,CS2}. The case where $\calg_{\zg}$ is a pair of snake graphs is treated in \cite{CS}, the case where $\calg_{\zg}$ is a single snake graph in \cite{CS2}  and all the cases where $\calg_{\zg}$ involves a band graph
  are treated in the current paper. 
  
  With this list of rules of snake graph calculus at hand, we obtain a very efficient and manageable tool for computations in the cluster algebra. 
   The advantage of using snake graphs instead of curves is due to the following
   fundamental difference between the smoothing of a crossing of curves and the resolution of a crossing of snake graphs. The definition of smoothing is very simple. It is defined as a local transformation replacing a crossing {\Large $\times$} with the pair of segments 
  $\genfrac{}{}{0pt}{5pt}{\displaystyle\smile}{\displaystyle\frown}$ (resp. {$\supset \subset$}).    
But once this local transformation is done, one needs to find representatives inside the isotopy classes of the resulting curves which realise the minimal number of crossings with the fixed triangulation. This means that one needs to deform the obtained curves isotopically, and to `unwind' them if possible, in order to see their actual crossing pattern, which is crucial for the applications to cluster algebras. This can be quite intricate, especially in a higher genus surface.

The situation for the snake and band graphs is exactly opposite. The definition of the resolution is very complicated because one has to consider many different cases. But once all these cases are worked out, {there is} a complete list of rules in hand, which one can apply very efficiently in actual computations.
The reason for this is that the definitions of the resolutions already take into account the isotopy mentioned above.

As in our earlier papers \cite{CS} and \cite{CS2}, we continue to  develop the theory on an abstract level. We define snake graphs, band graphs and their resolutions in a purely combinatorial way without any reference to a triangulated surface. Resolutions are operations on these graphs which induce bijections on the sets of perfect matchings of the graphs involved. In the case where the graphs actually come from a triangulated surface, we show that our bijections give rise to the skein relations in the cluster algebra. In  particular, we obtain a new proof of the skein relations. We also determine which abstract snake and band graphs can actually be realised in an unpunctured surface, and also which ones can be realised in a surface with punctures, see Theorem \ref{lem unpunctured}.

This abstract approach allows us to introduce the snake rings mentioned above. The elements of the snake rings are residue classes of unions of snake graphs and band graphs.  The difference between the various snake rings stems from imposing different properties on  snake and band graphs and their labels.
The snake ring $\calls$ in Theorem \ref{thm Gamma} contains each cluster algebra of unpunctured surface type as a subring, whereas the snake ring $\calx\caly\cals_{geo}$ of Corollary \ref{corgeo} is isomorphic to $\mathbb{Z}[x,y]$. 

The snake rings also provide an algebraic framework for the systematic study of abstract snake and band graphs and their lattices of perfect matchings as combinatorial objects, and we will present results  in this direction in a forthcoming paper. 

\smallskip
The snake graph calculus has been applied in \cite{CLS} to study cluster algebras from unpunctured surfaces with exactly one marked point and to show that the cluster algebra coincides with the upper cluster algebra in this case.
Snake graphs have also been used as a conceptual tool in the study of extension spaces in cluster categories and module categories of Jacobian algebras associated to surfaces without punctures in \cite{CaSc}. Perfect matchings of certain graphs have also been used in \cite{MSc} to give expansion formulas in the cluster algebra structure of the homogenous coordinate ring of a Grassmannian.

The article is organized as follows.  In section~\ref{sect 2}, we recall basic definitions pertaining to abstract snake and band graphs, and we introduce the notion of crossing and self-crossing band graphs. We construct the resolutions of these  crossings and self-crossings in section~\ref{section3}. In section~\ref{section 4}, we prove the existence of  a bijection between the set of perfect matchings of a crossing snake or band graphs and the set of perfect matchings of its resolution, see Theorem \ref{thmbijection}. In section \ref{sect surfaces}, we review the definition of labeled snake graphs arising from cluster algebras of surfaces, as well as the construction of the canonical bases of \cite{MSW2}.
We then show in section~\ref{sect 5} that, in the case where the snake or band graphs are actually coming from 
arcs and loops in a surface, the resolutions of the graphs correspond exactly to the smoothings of the curves.
 We also show that the corresponding skein relation holds in the cluster algebra.
In section \ref{sect 7}, we introduce various rings, that we call snake rings, and study their properties as well as their relation to cluster algebras. We also determine which snake and band graphs arise from unpunctured and from punctured surfaces, see Theorem \ref{lem unpunctured}.

 \section{Abstract snake graphs and abstract band graphs}
\label{sect 2}

 Abstract snake graphs and abstract band graphs have been introduced in \cite{CS,CS2} motivated by the snake graphs and band graphs appearing in the combinatorial formulas for elements in cluster algebras of surface type  in \cite{Propp,MS,MSW,MSW2}.

The construction of abstract snake graphs and band graphs is completely detached from triangulated surfaces. Our goal is to study these objects in a combinatorial way. 
We shall simply say snake graphs and band graphs, since we shall always mean \emph{abstract} snake graphs and \emph{abstract} band graphs. 
 
In this section, we recall the constructions of \cite{CS, CS2} pertaining to the snake graphs and band graphs and introduce the notions of overlaps and crossing overlaps for band graphs. Throughout
we fix the standard orthonormal basis of the plane.

\subsection{Snake graphs} A {\bf tile} $G$ is a square in the plane whose sides are parallel or orthogonal  to the elements in    the  fixed basis. All tiles considered will have the same side length.
\begin{center}
  {\small
\begingroup%
  \makeatletter%
  \providecommand\color[2][]{%
    \errmessage{(Inkscape) Color is used for the text in Inkscape, but the package 'color.sty' is not loaded}%
    \renewcommand\color[2][]{}%
  }%
  \providecommand\transparent[1]{%
    \errmessage{(Inkscape) Transparency is used (non-zero) for the text in Inkscape, but the package 'transparent.sty' is not loaded}%
    \renewcommand\transparent[1]{}%
  }%
  \providecommand\rotatebox[2]{#2}%
  \ifx\svgwidth\undefined%
    \setlength{\unitlength}{68.59006958bp}%
    \ifx\svgscale\undefined%
      \relax%
    \else%
      \setlength{\unitlength}{\unitlength * \real{\svgscale}}%
    \fi%
  \else%
    \setlength{\unitlength}{\svgwidth}%
  \fi%
  \global\let\svgwidth\undefined%
  \global\let\svgscale\undefined%
  \makeatother%
  \begin{picture}(1,0.70549664)%
    \put(0,0){\includegraphics[width=\unitlength]{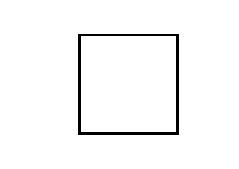}}%
    \put(0.49750979,0.31061177){\color[rgb]{0,0,0}\makebox(0,0)[lb]{\smash{$G$}}}%
    \put(0.00429408,0.31029854){\color[rgb]{0,0,0}\makebox(0,0)[lb]{\smash{West}}}%
    \put(0.79324631,0.31021311){\color[rgb]{0,0,0}\makebox(0,0)[lb]{\smash{East}}}%
    \put(0.36461283,0.61868105){\color[rgb]{0,0,0}\makebox(0,0)[lb]{\smash{North}}}%
    \put(0.36019916,0.00759722){\color[rgb]{0,0,0}\makebox(0,0)[lb]{\smash{South}}}%
  \end{picture}%
\endgroup%
}
\end{center}
We consider a tile $G$ as  a graph with four vertices and four edges in the obvious way. A {\em snake graph} $\calg$ is a connected planar graph consisting of a finite sequence of tiles $ \gi 12d$ with $d \geq 1,$ such that
$G_i$ and $G_{i+1}$ share exactly one edge $e_i$ and this edge is either the north edge of $G_i$ and the south edge of $G_{i+1}$ or the east edge of $G_i$ and the west edge of $G_{i+1}$,  for each $i=1,\dots,d-1$.
An example is given in Figure \ref{signfigure}.
\begin{figure}
\begin{center}
  {\tiny \scalebox{0.9}{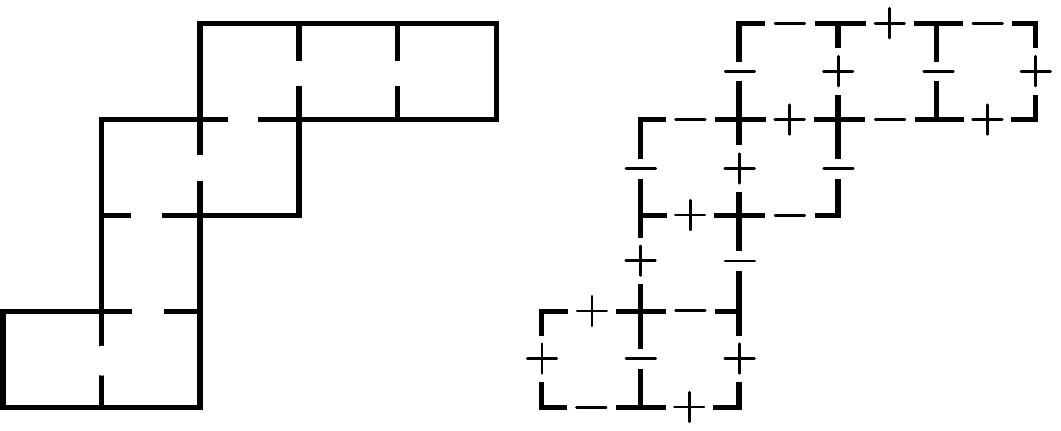}}
 \caption{A snake graph with 8 tiles and 7 interior edges (left);
 a sign function on the same snake graph (right)} 
 \label{signfigure}
\end{center}
\end{figure}
The graph consisting of two vertices and one edge joining them is also considered a snake graph.

 \begin{remark} It follows from the definition that if $\calg$ is a snake graph with tiles $G_1,\ldots,G_d$ then
\begin{itemize}
 \item[(i)] $G_i $ and $G_j$ have no edge in common whenever $|i-j| \geq 2.$
 \item[(ii)] $G_i $ and $G_j$ are disjoint whenever $|i-j| \geq 3.$
\end{itemize}  
\end{remark}

 We sometimes use the notation $\calg =(\gi 12d)$ for the snake graph and $\calg [i, i+t] = (\gi i{i+1}{i+t})$ for the subgraph of $\calg$ consisting of the tiles $\gi i{i+1}{i+t}.$
 {One may think of this subgraph as a closed interval inside $\calg$.}

The $d-1$ edges $e_1,e_2, \dots, e_{d-1}$ which are contained in two tiles are called {\em interior edges} of $\calg$ and the other edges are called {\em boundary edges.}  
Denote by $\Int(\calg)=\{e_1,e_2,\ldots,e_{d-1}\}$  the set of interior edges of $\calg$.
We will always use the natural ordering of the set of interior edges, so that $e_i$ is the edge shared by the tiles $G_i$ and $G_{i+1}$.

We denote by  $\calgSW$ the 2 element set containing the south and the west edge of the first tile of $\calg$ and by $\calgNE$ the 2 element set containing the north and the east edge of the last tile of $\calg$.  If $\calg$ is a single edge, we let $\calgSW=\emptyset$ and $\calgNE=\emptyset$.
 It will occasionally be convenient to extend the ordering on interior edges to a partial ordering on the slightly larger set $\calgSW\cup\Int(\calg)\cup\calgNE$ by specifying that the edges in $\calgSW$ are less than all interior edges and the edges in $\calgNE$ are greater than all interior edges.

{If $i\ge 2$ and $i+t\le d-1$, the notation $\calg(i,i+t)$ means $\calg[i,i+t]\setminus\{e_{i-1},e_{i+t}\}$.  One may think of this subgraph as a open interval inside $\calg$}.

If  $\calg=(\gi 12d)$ is a snake graph and $e_i$ is the interior edge shared by the tiles $G_i$ and $G_{i+1}$, we define the snake graph $\calg\setminus \textup{pred} (e_i)$ to be the graph obtained from $\calg$ by removing the vertices and edges that are predecessors of  $e_i$, more precisely,
\[ \calg\setminus \pred (e_i) =\calg[i+1 , d].\]
It will be convenient to extend this construction to the edges $e\in \calgNE $, thus 
\[ \calg\setminus \pred (e) =\{e\} \quad\textup{if } e\in \calgNE.\]  
Similarly, we define the snake graph $\calg\setminus \textup{succ} (e_i)$ to be the graph obtained from $\calg$ by removing the vertices and edges that are successors of  $e_i$, more precisely,
\[ \calg\setminus \suc (e_i) =\calg[1 , i].\]
It will be convenient to extend this construction to the edges $e\in \calgSW $, thus 
\[ \calg\setminus \suc (e) =\{e\} \quad\textup{if } e\in \calgSW.\] 

A snake graph $\calg$ is called {\em straight} if all its tiles lie in one column or one row, and a snake graph is called {\em zigzag} if no three consecutive tiles are straight.
 We say that two snake graphs are \emph{isomorphic} if they are isomorphic as graphs.
\subsection{Sign function} 

A {\em sign function} $f$ on a snake graph $\calg$ is a map $f$ from the set of edges of $\calg$ to $\{ +,- \}$ such that on every tile in $\calg$ the north and the west edge have the same sign, the south and the east edge have the same sign and the sign on the north edge is opposite to the sign on the south edge. See Figure \ref{signfigure} for an example.
%
%

Note that on every snake graph,  there are exactly two sign functions.

We remark that if we associate a fixed sign function $f$ on a subgraph $\calg'$ of a snake graph $\calg,$ we obtain a sign function on the snake graph $\calg$ by successively applying the rules of the sign function. We call this sign function the {\em induced sign function of $\calg'$\! on $\calg$ by $f.$}

A snake graph is  determined up to symmetry by its sequence of tiles together with a sign function.

 Given a snake graph $\calg=(G_1,G_2,\ldots,G_d)$ with sign function $f$, we define the \emph{opposite} snake graph $\ocalg$ as follows; it has the opposite sequence of tiles $\ocalg=(G_d,G_{d-1},\ldots,G_1)$ and its sign function $\bar f$ assigns to the interior edge shared by the tiles $G_\ell$ and $G_{\ell-1}$ the  sign that the function $f$ assigns to the edge shared by the tiles $G_{\ell-1}$ and $G_\ell$.
 
Although the definition of sign function may not seem natural at first sight, it has a geometric meaning which is explained in Remark \ref{rem sign}.

\subsection{Band graphs}\label{sect band} {Band graphs are obtained from snake graphs by identifying a boundary edge of the first tile with a boundary edge of the last tile, {where both edges have the same sign}. We use the notation $\calg^\circ$ for general band graphs, indicating their circular shape, and we also use the notation $\calg^b$ {to indicate} that the band graph is constructed by glueing a snake graph $\calg$ along an edge $b$.} 

More precisely, to define a band graph $\calg^\circ$, we start with an abstract snake graph $\calg=(\gi 12d)$ with $d\ge 1$, and fix a sign function on $\calg$. Denote by $x$ the southwest vertex of $G_1$, let $b\in\calgSW$ the south edge (respectively the west edge) of $G_1$, and let $y$ denote the other endpoint of $b$, see Figure \ref{figband}. Let $b'$ be the unique edge in $\calgNE$ that has the same sign as  $b$, and let $y'$ be the northeast vertex of $G_d$ and $x'$ the other endpoint of $b'$.
\begin{wrapfigure}{l}{0.15\textwidth}
  \begin{center}
    {\scalebox{0.15}{\includegraphics{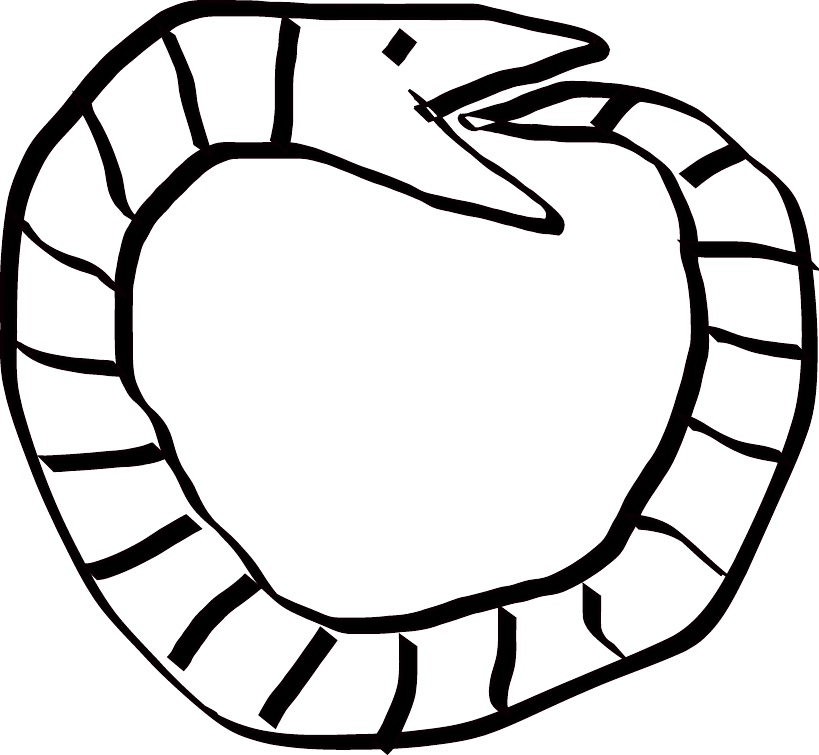}}}
  \end{center}
\end{wrapfigure}   
Let $\calg^b$ denote the graph obtained from $\calg $ by identifying the edge $b$ with the edge $b'$ and the vertex $x$ with $x'$ and $y$ with $y'$. The graph $\calg^b$ is called a \emph{band graph} or \emph{ouroboros}\
\footnote{Ouroboros: a snake devouring its tail.}. Note that non-isomorphic snake graphs can give rise to isomorphic band graphs. See Figure \ref{figband} for an example.

\begin{figure}
\begin{center}
  {\Large \scalebox{.7}{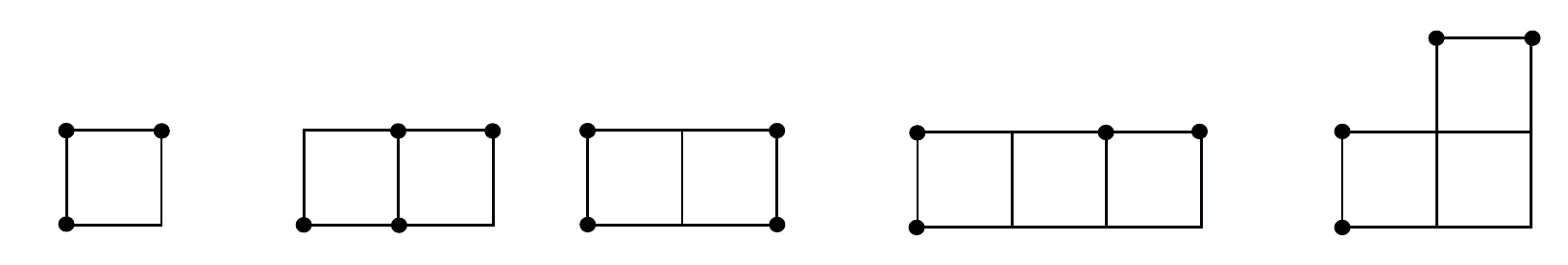}}
 \caption{Examples of small band graphs; the two band graphs with 3 tiles are isomorphic.}
 \label{figband}
\end{center}
\end{figure}

The interior edges of the band graph $\calg^b$ are by definition the interior edges of $\calg$ plus the glueing edge $b=b'$. {As usual}, we denote by $e_i$ the interior edge shared by $G_i$ and $G_{i+1}$ and we denote the glueing edge by $e_0$ or $e_d$. 
Given a  band graph $\calg^\circ$ with an  interior edge $e$, we denote by $\calg^\circ_e $ the snake graph obtained by cutting $\calg^\circ$ along the edge $e$. 
Note that  $(\calg^\circ_e)^e = \calg^\circ$, for all band graphs $\calg^\circ$ and that $({\calg^b})_b=\calg$, for all snake graphs $\calg$.
Moreover, if $\calg^\circ$ has $d$ interior  edges $e_1,e_2,\ldots,e_d$ then the isomorphism classes of the $d$ snake graphs  $\calg^\circ_{e_i}$, $i=1,\dots,d $,
are not necessarily distinct.

\begin{defn}\label{def group}
 Let  $\mathcal{R}$  denote the free abelian group generated by all isomorphism classes of {finite disjoint} unions of snake graphs and band graphs.  {If $\calg$ is a snake graph, we also denote its class in $\mathcal{R}$ by $\calg$, and we say that $\calg\in \mathcal{R}$ is a {\em positive} snake graph  and that its inverse  $-\calg\in \mathcal{R}$ is a {\em negative} snake graph.}
 \end{defn}

\subsection{Labeled snake and band graphs}
 A \emph{labeled} snake graph is a snake graph in which each edge and each tile carries a label or weight. 
For example, for snake graphs from cluster algebras of surface type, these labels are  cluster variables.

Formally, a labeled snake graph is a snake graph $\calg$ together with two functions \[\{\textup {tiles in }\calg\}\to \calf \qquad\textup{ and }\qquad \{\textup {edges in }\calg\}\to \calf,\] 
where $\calf$  is a  set.
Labeled band graphs are defined in the same way. 

 Let  $\mathcal{LR_\calf}$  denote the free abelian group generated by all isomorphism classes of unions of labeled snake graphs and labeled band graphs with labels in $\calf$.

 \subsection{Overlaps and self-overlaps involving band graphs}\label{section2point5}  Overlaps of snake graphs have been introduced in \cite{CS} and self-overlaps of snake graphs in \cite{CS2}. We now extend these notions to band graphs.
Let $\calg_2^\circ$ be a band graph. Recall that there exists  a snake graph  $\calg_2=(\gi 12d)$ with sign function $f$ such that  $\calg_2^\circ=\calg_2^b$ is obtained from  $\calg_2$ by identifying an edge $b\in {}_{SW}\calg_2$ with the unique edge $b'\in\calg_2^{N\!E}$ such that $f(b)=f(b')$.

It will be convenient to introduce the following infinite snake graph $\calg_2^\infty$, which can be thought of as the universal cover of $\calg_2^\circ$. An example of this construction is given in Figure \ref{calginfinity}.
For $i\in \mathbb{Z}$, let $\calg_{2i}$ denote a copy of $\calg_2$ with glueing edges $b_i,b_i'$, where $b_i$ corresponds to $b$ and $b_i'$ corresponds to $b'$. Define the infinite snake graph $\calg_2^\infty$ by identifying the edges $b_{i}$ and $b_{i+1}'$ for all $i$ in such a way, that $\calg_2^\infty$ is a snake graph locally isomorphic to $\calg_2^\circ$.
Note that if the number of tiles in $\calg_2^\circ$ is even then all $\calg_{2i}$ can be glued without changing their orientation, and if the number of tiles is odd, then every other $\calg_{2i}$ must be flipped over in order to glue. 

\begin{figure}
\begin{center}
\scalebox{0.7}{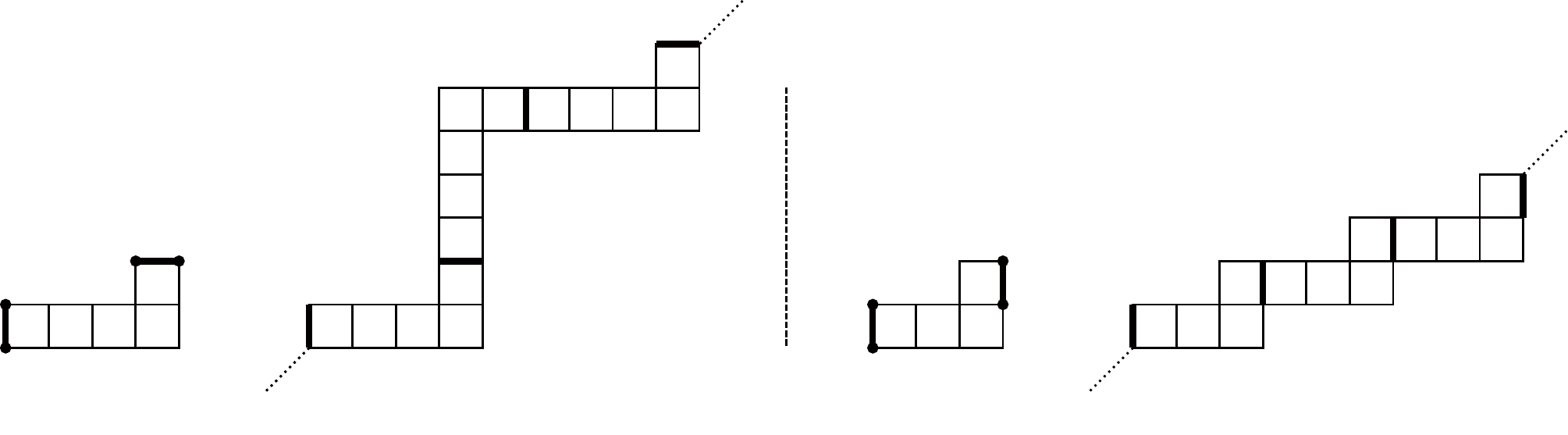}
\caption{Two examples of the infinite snake graph $\calg^\infty$. On the left the number of tiles in $\band_1$ is odd,  so that in $\calg_1^\infty $ every other copy of $\band_1$ is flipped over. On the right, the number of tiles in $\band_2$ is even, so in $\calg_2^\infty$ all copies of $\band_2$ have the same orientation.}
\label{calginfinity}
\end{center}
\end{figure}

Let   $\calg_1$ be a snake graph, and let $\calg_2^\circ$ be a band graph. We say that   $\calg_1$  and $\calg_2^\circ$ have an {\em overlap} $\calg$ if $\calg$ is a snake graph consisting of at least one tile and there exist an interior edge $b\in \Int(\calg_2^\circ)$ and two embeddings of graphs $i_1 : \calg \rightarrow \calg_1$ and $i_2: \calg \rightarrow  (\calg_2^\circ)_{b}$ which are maximal in the following sense.

\begin{itemize}
\item[(i)] If $\calg$ has at least two tiles and if there exists a snake graph $\calg'$ with an interior edge $b'\in \Int(\calg_2^\circ)$ and two embeddings  of graphs $i'_1: \calg' \to \calg_1 ,$ $i'_2: \calg' \to (\band_2)_{b'}$ such that $i_1(\calg) \subseteq i'_1(\calg')$ and $i_2(\calg) \subseteq i'_2(\calg')$ then $i_1(\calg) = i'_1(\calg')$ and $i_2(\calg) = i'_2(\calg').$
\item[(ii)] 
If $\calg$ consists of a single tile then
\begin{itemize}
 \item[(a)] $i_1(\calg)$ is the first or the last tile of $\calg_1$, or
 \item[(b)] 
the two subgraphs of $\calg_1$ and $\calg_2^\infty$ consisting of the overlap and the two adjacent tiles are either both straight or both zigzag subgraphs.
\end{itemize}
\end{itemize}

\begin{remark}
It is possible that $\calg\cong(\calg_2^\circ)_{b}$. So in order for $i_2$ to be an embedding, it is necessary to consider it as a  map 
 into the snake graph $(\calg_2^\circ)_{b}$ and not into the band graph $\calg_2^\circ$.
\end{remark}

\begin{remark}\label{remsis1}
  Without loss of generality, we may choose $b$ to be
 the interior edge of $\calg_2^\circ$ which marks the beginning of the overlap $i_2(\calg)$. In other words, we choose $b$ such that the overlap $i_2(\calg)$ in  $\calg_2$ starts with the first tile. 
\end{remark}
\smallskip

 In order to define the notion a self-overlap in one single band graph, we need to distinguish two cases depending on the direction of the embedded subgraphs as follows. Let $ \gr12d$ be a snake graph, let $\calg$ be another snake graph together with two embeddings of graphs $i_1:\calg\to\calg_1$ and $i_2:\calg\to\calg_1$ such that $i_1(\calg)=\calg_1[s,t]\ne i_2(\calg)=\calg_1[s',t']$. 
We may assume without loss of generality that the embedding $i_1$ maps the southwest vertex of  the first tile of $\calg$ to the southwest vertex of  $G_s$ in $\calg_1[s,t].$ We then say that the embeddings $i_1,i_2$ are  in the \emph{same direction} if  $i_2 $  maps the southwest vertex of  the first tile of $\calg$ to the southwest vertex of  $G_{s'}$ in $\calg_1[s',t']$, and we say that $i_1,i_2$ are  in the \emph{opposite direction} if $i_2$ maps this vertex to the northeast vertex of $G_{t'}$ in $\calg_1[s',t']$.

\begin{remark}
The notion of direction depends on the embeddings $i_1$ and $i_2$ and not only on the subgraphs $\calg_1[s,t]$ and $ \calg_1[s',t']$.
\end{remark}

We say that a band graph $\calg_1^\circ$  has a \emph{self-overlap $\calg$ in the same direction} if $\calg$ is a snake graph and  there exist  interior edges $b_1,b_2\in \Int(\calg_1^\circ)$, $b_1\ne b_2$, and two embeddings of graphs $i_1 : \calg \rightarrow (\calg_1^\circ)_{b_1},$ $i_2: \calg \rightarrow (\calg_1^\circ)_{b_2}$  in the same direction, such that $i_1(\calg)\ne i_2(\calg)$, the two sign functions on $\calg_1$  induced by $f$ are equal to each other,   and the following condition hold.
\begin{itemize}
\item[(i')]  If $\calg$ has at least two tiles and if there exists a snake graph $\calg'$ with an interior edge $b'\in \Int(\calg_1^\circ)$ and two embeddings  of graphs $i'_1: \calg' \to (\band_1)_{b_1'} ,$ $i'_2: \calg' \to (\band_1)_{b_2'}$ such that $i_1(\calg) \subseteq i'_1(\calg')$ and $i_2(\calg) \subseteq i'_2(\calg')$ then $i_1(\calg) = i'_1(\calg')$ and $i_2(\calg) = i'_2(\calg').$
\item[(ii')] 
If $\calg$ consists of a single tile then
the two subgraphs of $\calg^\infty_1$ consisting of the overlaps and the two adjacent tiles are either both straight or both zigzag subgraphs.
\item[(iii)] If the overlap is not the whole band graph then there exist $b_1,b_2,i_1,i_2$ such that the intersection $i_1(\calg)\cap i_2(\calg)$ is connected. 
\end{itemize}

\begin{remark}
 It is   possible that  the overlap is  the whole band graph, that is, $\calg\cong (\calg_1^\circ)_{b_1}\cong (\calg_1^\circ)_{b_2}$. In this case the intersection $i_1(\calg)\cap i_2(\calg)$ is never connected. 
\end{remark}

\begin{example}\label{exband2tiles}
 The band graph on the left of Figure \ref{band2tiles} has an overlap $\calg$ consisting of a single tile, with embeddings $i_1(\calg)=G_1$ and $i_2(\calg)=G_2$. This is indeed an overlap, because $\calg_1^\infty$ is straight. The band graph on the right of Figure \ref{band2tiles} also has an overlap $\calg$ consisting of a single tile, with embeddings $i_1(\calg)=G_1$ and $i_2(\calg)=G_2$. This is  an overlap, because $\calg_1^\infty$ is zigzag. 
\begin{figure}
\begin{center}
\scalebox{0.8}{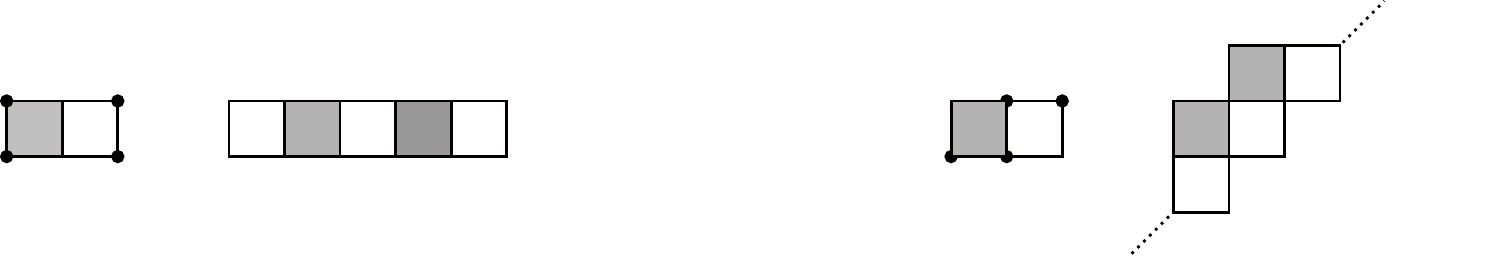}
\caption{Example \ref{exband2tiles}. }
\label{band2tiles}
\end{center}
\end{figure}
\end{example}

 We say that a band graph $\calg_1^\circ$ has a \emph{self-overlap $\calg$ in the opposite direction} if $\calg$ is a snake graph and there exist  interior edges $b_1,b_2\in \Int(\calg_1^\circ)$, $b_1\ne b_2$, and two embeddings of graphs $i_1 : \calg \rightarrow (\calg_1^\circ)_{b_1},$ $i_2: \calg \rightarrow (\calg_1^\circ)_{b_2}$ in the opposite direction such that {$i_1(\calg)$ and $ i_2(\calg)$ are disjoint and there is at least one tile between them}, the two  sign functions on $\calg_1$  induced by $f$ are opposite to each other, and  the conditions (i'), (ii') and (iii) above are satisfied.

 Now we define self-overlaps with intersections.
Let $\calg_1=(\calg_1^\circ)_{b_1}$ and label the tiles such that $\calg_1=(\gi 12{d})$. Let $s=1,t,s',t'$,    be such that $i_1(\calg)=\calg_1[1,t]$ and 
\[i_2(\calg)=\left\{\begin{array}{ll}\calg_1[s',t'] &\textup{if $i_2(\calg) $ is a connected subgraph of $\calg_1$}\\
\calg_1[s',d]\cup\calg_1[1,t'] &\textup{otherwise.}\end{array}\right.\]
The self-overlap is said to have an \emph{intersection} if $i_1(\calg)\cap i_2(\calg)$ contains at least one edge; in other words, $s'\le t+1$ or  $i_2(\calg)$ is not connected. In this case, we say $\calg_1$ has an \emph{intersecting self-overlap}. 

\begin{remark} We can always arrange the setup such that $s'$ is at most half of the total number of tiles in $\calg_1$ simply by interchanging the roles of $i_1$ and $i_2 $ if necessary.  Then because of condition (iii) above, the intersection of the overlaps always is a connected subgraph of  $\calg_1$ unless the overlap is the whole band graph. 
\end{remark}

\begin{remark}
 If  the overlap is the whole band graph then the intersection is also the whole band graph.\end{remark}
{For labeled snake graphs, we define overlaps by adding the  requirement that the embeddings $i_1$ and $i_2$ are label preserving.}

\subsection{Crossing overlaps}  The notion of crossing overlaps for snake graphs was introduced in \cite{CS}. Here we extend this notion to overlaps that involve band graphs. There are two cases to consider, namely a crossing between a snake graph and a band graph, and a crossing between two band graphs.
\subsubsection{Snake graph and band graph} Let $\calg_1$ be a snake graph with at least one tile and $\band_2$ a band  graph with overlap $\calg$ and embeddings $i_1(\calg)\subset \calg_1$ and $i_2(\calg)\subset\calg_2$, where $\calg_2=(\calg_2^\circ)_b$  is  the snake graph obtained from $\calg_2^\circ$ by cutting at $b$, for some interior edge $b\in\Int(\calg_2^\circ)$. 
 By Remark \ref{remsis1}, we may assume that $i_2(\calg)$ in $\calg_2$ starts with the first tile. Thus {$s'=1$}. Let $t'$ be such that $i_2(\calg)=\calg_2[1,t']$.
Label $b\in {}_{SW}\calg_2$ and $b'\in \calg_2^{N\!E}$ the edges that  correspond to the interior edge $b$ of $\calg_2^\circ$.
Let $e_i$ (respectively $e'_i$) denote the interior edges of $\calg_1$ (respectively $\band_2$). 

If the overlap is not the whole band graph, that is if $i_2(\calg)\ne \calg_2$, we let $s\le t$ be such that $i_1(\calg)=\calg_1[s,t]$. On the other hand, if the overlap is equal to the whole band graph, that is if $i_2(\calg)=\calg_2$, we let $\overline{i_1(\calg)}$ be the largest subgraph of $\calg_1$ which contains $i_1(\calg)$ and which is isomorphic to a subgraph of $\calg_2^\infty$, and we 
let $s\le t$ be such that $\overline{i_1(\calg)}=\calg_1[s,t]$.
Let $f$ be a sign function on $\calg.$ Then $f$ induces a sign function $f_1$ on $\calg_1$ and $f_2$
 on $\band_2.$

\begin{defn} \label{crossing} We say that $\calg_1$ and $\band_2$ {\em cross in} $\calg$ if one or both of the following conditions hold. Recall that since $s'=1$, we have $e'_{s'-1}=e'_0=b$.

\begin{itemize}
 \item[(i)] \label{i}
\[ \begin{array}{lrclll}
 &f_1(e_{s-1})& =& -f_1(e_t) & \mbox{ and } &s>1, t<d ;  \\
 \mbox{ or}\\
&  f_2 (e'_{s'-1})&=&-f_2(e'_{t'}) & \mbox{ and } & t'<d'  ;
\end{array}\]
 \item[(ii)]  \[\begin{array}{lrcllcc}
 &f_1(e_{t}) &=& f_2(e'_{s'-1})& \mbox{ and } &s=1, t<d,  t'=d'  \\
  \mbox{ or}\\
&f_1 (e_{s-1})&=&f_2(e'_{t'}) & \mbox{ and } &s>1 , t=d, t'<d' &
\end{array}\] 
\end{itemize}
\end{defn} 

Examples of crossing overlaps are given in {Figure \ref{straight}}.
\begin{remark}
  In the definition above, we distinguish cases (i) and (ii) because they are conceptually different. In case (i), we compare signs of edges in the same graph, and in case (ii) we compare signs of edges from different graphs.
  \end{remark}

\begin{remark}\label{rem2.8}
 If $s=1$ and $t=d$ then the only way we can have a crossing is when $t'<d'$. In particular, if the overlap is equal to both $\calg_1$ and $\calg_2$ then there is no crossing.
\end{remark}

\subsubsection{Two band graphs} Let $\band_1,\band_2$ be two band  graphs, with overlap $\calg$ and embeddings $i_1(\calg)\subset \calg_1$ and $i_2(\calg)\subset\calg_2$, where $\calg_i=(\calg_i^\circ)_{b_i}$  is  the snake obtained from $\calg_i^\circ$ by cutting at some interior edge $b_i\in\Int(\calg_i^\circ)$. Again, we may choose $b_2$ to be such that the overlap in  $\calg_2$ starts with the first tile. 
Let $t'$ be such that  $i_2(\calg)=\calg_2[1,t']$.

If the two band graphs are isomorphic and the overlap is the whole band graph, thus $\calg\cong\calg_1$ and $\band_1\cong\band_2$ then we define it to be non-crossing.

Suppose that at least one of the band graphs, say $\band_1$, is strictly bigger than the overlap. Thus $i_1(\calg)\subsetneq\calg_1$.
Let $e_i$ (respectively $e'_i$)  denote  the interior edges of $\band_1$ (respectively $\band_2$). 

If  $i_2(\calg)\ne \calg_2$, we let $b_1, t$ be such that $i_1(\calg)=\calg_1[1,t]$,  where $\calg_1=(\band_1)_{b_1}$. On the other hand, if the overlap is equal to the whole band graph, that is if $i_2(\calg)=\calg_2$, we let $\overline{i_1(\calg)}$ be the largest subgraph of $\band_1$ which contains $i_1(\calg)$ and which is isomorphic to a subgraph of $\calg_2^\infty$, and we 
let $ b_1, t$ be such that $\overline{i_1(\calg)}=\calg_1[1,t]$, where $\calg_1=(\band_1)_{b_1}$.
Let $f$ be a sign function on $\calg.$ Then $f$ induces a sign function $f_1$ on $\band_1$ and $f_2$
 on $\band_2.$ 
 
\begin{defn} \label{defcrossbands} 
We say that $\calg_1^\circ$ and $\calg_2^\circ$ {\em cross in} $\calg$ if one or both of the following conditions hold. Recall that since $s=s'=1$, we have $e_{s-1}=e_0=b_1$ and $e'_{s'-1}=e'_0=b_2$.
\begin{itemize}
 \item[(i)] 
\[ \begin{array}{lrclll}
&  f_2 (e'_{s'-1})&=&-f_2(e'_{t'}) & \mbox{ and } & t'<d' ; 
\end{array}\]

 \item[(ii)]  \[\begin{array}{lrcllcc}
 &f_1(e_{t}) &=& f_2(e'_{s'-1})& \mbox{ and } & t<d,\, t'=d' . \\
 \end{array}\] 
\end{itemize}
\end{defn} 
If $t=d$ and $t'=d'$ then the overlap is equal to both band graphs and there is no crossing.

\subsection{Crossing self-overlaps}
 For snake graphs, the notion of crossing self-overlaps has been introduced in \cite{CS2}. Here we extend this notion to band graphs.

 \subsubsection{Selfcrossing band graphs}

Let $\calg_1^\circ=\calg_1^b$ be a band graph with self-overlap $i_1(\calg)=\calg_1[1,t]$ and $i_2(\calg)=\calg_1[s',t']$. Let $f$ be a sign function on $\calg_1^\circ$.

\begin{defn}\label{def self-crossing band}
\begin{itemize}
\item[(a)] Suppose that $\calg$ is not the whole band graph $\calg_1^\circ$.
 We say that $\calg_1^\circ$ has a \emph{self-crossing} (or \emph{self-crosses}) in $\calg$ if  the following two conditions hold
\begin{itemize}
 \item[(i)] \label{2.4i}
\[ \begin{array}{lrcllclrclll}

  f (e_{s'-1})=-f(e_{t'}) & \mbox{ if }  t'<d;  \ and
\end{array}\]
 \item[(ii)] 
  \[\begin{array}{lrcllcc}
 &f(e_{t}) &=& f(e_{s'-1}).&
\end{array}\] 
\end{itemize}
\item[(b)] Suppose that $\calg$ is the whole band graph $\calg_1^\circ$.  We say that $\calg_1^\circ$ has a \emph{self-crossing} (or \emph{self-crosses}) in $\calg$ if 
\[f(e_t)=f(e_{s'-1}).\]

\end{itemize}

\end{defn}

 In the examples considered earlier in Figure \ref{band2tiles}, the self-overlap is self-crossing in the example on the left, but it is not crossing in the example on the right.
  
To illustrate part (b) of the above definition, we give an example in Figure \ref{straight} of the same band graph with two different overlaps, both equal to the whole band graph. The edge $e_t$ is the glueing edge in both examples. In the example on the left, the edge $e_{s'-1}$ is shared by the third and the fourth tile and its sign is opposite to the one of $e_t$. Thus this overlap is non-crossing. 
 In the example on the right, the edge $e_{s'-1}$ is shared by the second and the third tile and its sign is equal to the one of $e_t$. Thus this overlap is crossing.

\begin{figure}
\begin{center}
\scalebox{1}{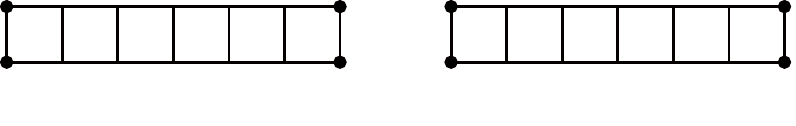}
\caption{Two examples of overlaps  $\band[s,t]\cong\band[s',t']$ isomorphic to the whole band graph. The overlap on the left is not crossing and the overlap on the right is crossing. }
\label{straight}
\end{center}
\end{figure}

\section{Resolutions}\label{section3}
 In this section, we define the resolutions of crossings and self-crosings involving band graphs. For the resolutions involving only snake graphs, we refer to \cite{CS2}. The resolution of the crossings consists of a sum of two elements in the group $\mathcal{R}$. In section \ref{section 4}, we show that there is a bijection between the sets of perfect matchings of the original graphs and those obtained by the resolutions, and in section \ref{sect 7}, we introduce a ring structure on $\mathcal{R}$ and consider the ideal generated by all resolutions. Thus in the resulting quotient ring a crossing pair of snake and/or band graphs as well as a self-crossing snake or band graph will be equal to its resolution. We shall see that this quotient ring is strongly related to cluster algebras from surfaces.

Throughout this section, we will often use the notation $\calg\cup_e\calg'$ to indicate that the two graphs $\calg$ and $\calg'$ are glued along an edge $e$.
\subsection{Resolutions for crossings between a snake graph and a band graph} \label{sect 3.1}
Let $\calg_1=(G_1,G_2,\ldots,G_d)$ be a snake graph and $\band_2$ a band graph, such that $\calg_1$ and $\band_2$ have a crossing overlap $ i_1(\calg)=\calg_1[s,t]$ and $i_2(\calg)\subset\band_2$.  According to Remark \ref{remsis1}, we may choose $b$ such that the overlap $i_2(\calg)$ in  $\calg_2=(\band_2)_b$ starts with the first tile. 
Let $t'$ be such that $i_2(\calg)=\calg_2[1,t']$.
Label $b\in {}_{SW}\calg_2$ and $b'\in \calg_2^{N\!E}$ the edges that  correspond to the interior edge $b$ of $\calg_2^\circ$. 
We define two snake graphs $\calg_{34}$ and $\calg_{56}$ as follows.
Let \[\calg_{34}=\calg_1[1,s-1]\cup_{e_{s-1}}\calg_2 \cup_{b'}\calg_1[s,d],\]
where the first two subgraphs are glued along the edges $e_{s-1}$ of $\calg_1 $ and the unique edge in $\calgSW_2\setminus\{b\}$, and the last two subgraphs    are glued along the edges $b'\in \calgNE_2$ and the edge in ${}_{SW}G_s$ corresponding to $b$ under $i_1$.

The snake graph $\calg_{56}$ will be defined as a subgraph of the following snake graph.
\[\calg_{56}'=\calg_1[1,s-1]\cup \overline\calg_2[d',t'+1]\cup\calg_1[t+1,d],\]
where the first two subgraphs are glued along the unique boundary edge in $G_{s-1}^{N\!E}$ and the unique edge in $\calgNE_2$ that is different from $b'$,
and the last two subgraphs are glued along the unique boundary edge in ${}_{SW}G'_{t'+1}$ and the unique boundary edge in ${}_{SW}G_{t+1}$.

In other words, the sequence of tiles of the snake graph $\calg'_{56}$ is 
\[(G_1,G_2,\ldots,G_{s-1}, G_d',G_{d-1}',\ldots,G'_{t+1}, G_{t+1}, G_{t+2},\ldots,G_{d}).\]
Let $f_{56}$ be the sign function on $\calg_{56}'$ induced by $f$ on $\calg_2^\circ$.
Then we define $\calg_{56}$ as follows
\begin{enumerate}
\item if $s\ne 1 $ and $t\ne d$, then
\[ \calg_{56} = \calg_{56}' , \] 

\item  if $s= 1 $ and $t\ne d$  (see Figure \ref{selfcrossingA6} for an example), then
\[ \calg_{56} = \calg_{56}'\setminus\pred(e),\] \textup{where $e$ is the first edge in $\Int(\overline{\calg}_2[d',t'+1])\cup ( \overline{\calg}_2[d',t'+1])^{N\!E} $ 
 such that $f_{56}(e)=f_{56}(b')$, } 

\item  if $s\ne 1 $ and $t= d$, then  
\[ \calg_{56} =  \calg_{56}'\setminus \suc(e') ,\] 
where $e'$ is the last edge in 
$\Int(\overline{\calg}_2[d',t'+1])\cup {}_{SW}( \overline{\calg}_2[d',t'+1]) $ such that $f_{56}(e')=f_{56}(e'_{t'})$,

\item  if $s=1 $ and $t= d$ (see Figure \ref{selfcrossingA5} for an example), then 
\begin{enumerate}
\item let 
\[\calg_{56}= 
(\calg_{56}'\setminus \pred(e))\setminus \suc(e')
,\]
where $e$ is the first edge in $\Int(\calg_{56}')\cup\calg_{56}'^{N\!E} $ such that $f_{56}(e)=f_{56}(b')$
and $e'$ is the last edge in $\Int(\calg_{56}'\setminus\pred(e))\cup{}_{SW}(\calg_{56}'\setminus\pred(e)) $ such that $f_{56}(e')=f_{56}(e'_{t'})$, if such an $e'$ exists,

\item
if $e'$ does not exist, let 
\[\calg_{56}= 
(\calg_{56}'\setminus \suc(e'))\setminus \pred(e)
,\]
where $e'$  is the last edge in $\Int(\calg_{56}')\cup{}_{SW}\calg_{56}' $ such that $f_{56}(e')=f_{56}(e'_{t'})$
and $e$ is the first edge in $\Int(\calg_{56}'\setminus\suc(e'))\cup(\calg_{56}'\setminus\suc(e'))^{N\!E} $ such that $f_{56}(e)=f_{56}(b')$, if such an $e$ exists,

\item  if $\calg_{56}'$ is a single tile, let
\[\calg_{56}= \textup{unique boundary edge of } {}_{SW}\calg_2.\] 
\end{enumerate}
\end{enumerate}

\begin{figure}
\begin{center}
\scalebox{0.8}{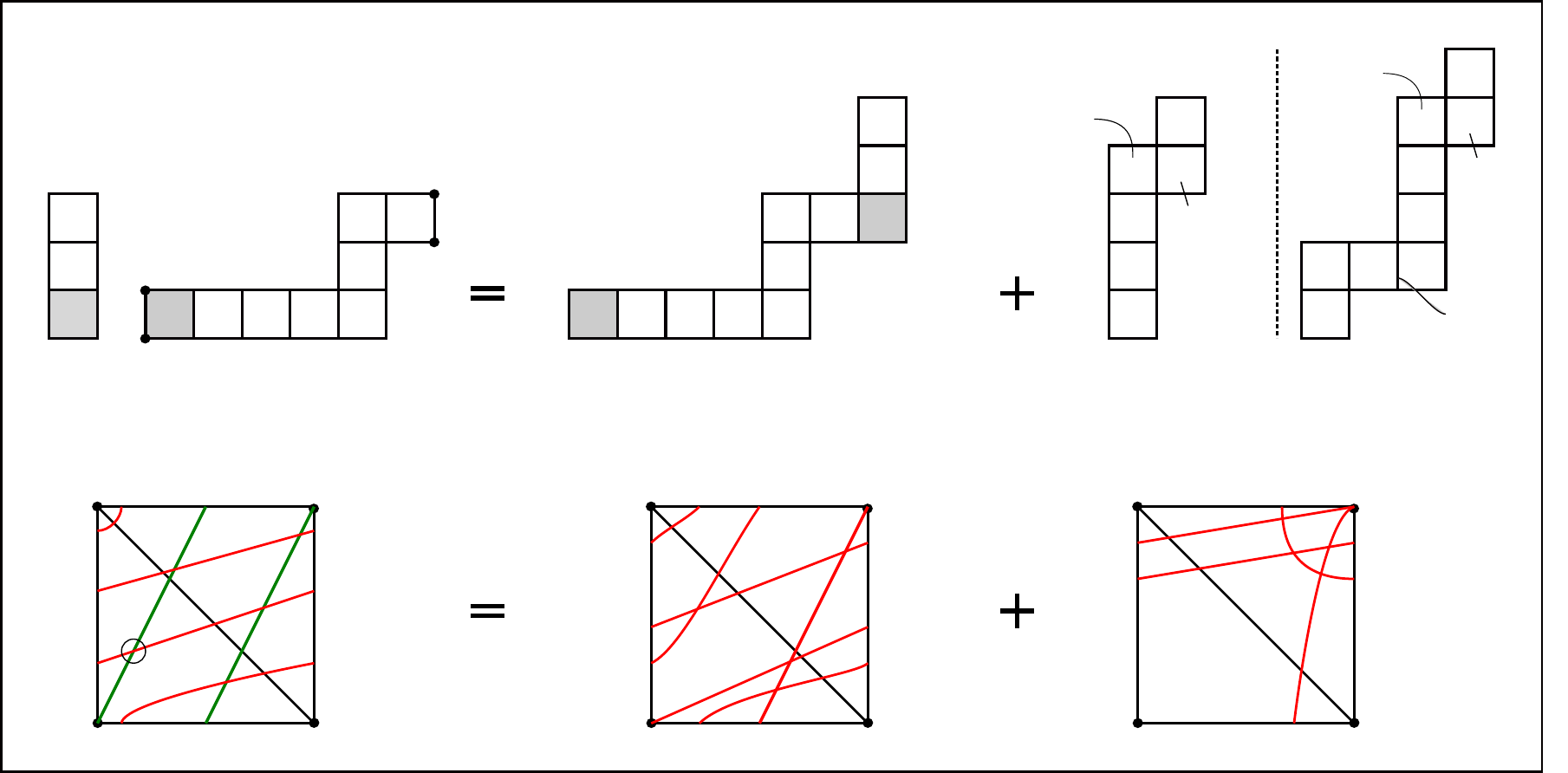}
\caption{ An example of a resolution of a crossing between a snake graph and a band graph with $s=t=1$ and $t<d$. The labels in the tiles correspond to the labels of the arcs of the triangulation in the geometric realisation. The overlap (shaded) consists of the first tile in each of the two graphs $\calg_1$ and $\band_2$. The crossing point in the geometric realisation is circled.}
\label{selfcrossingA6}
\end{center}
\end{figure}

\begin{figure}
\begin{center}
\scalebox{0.87}{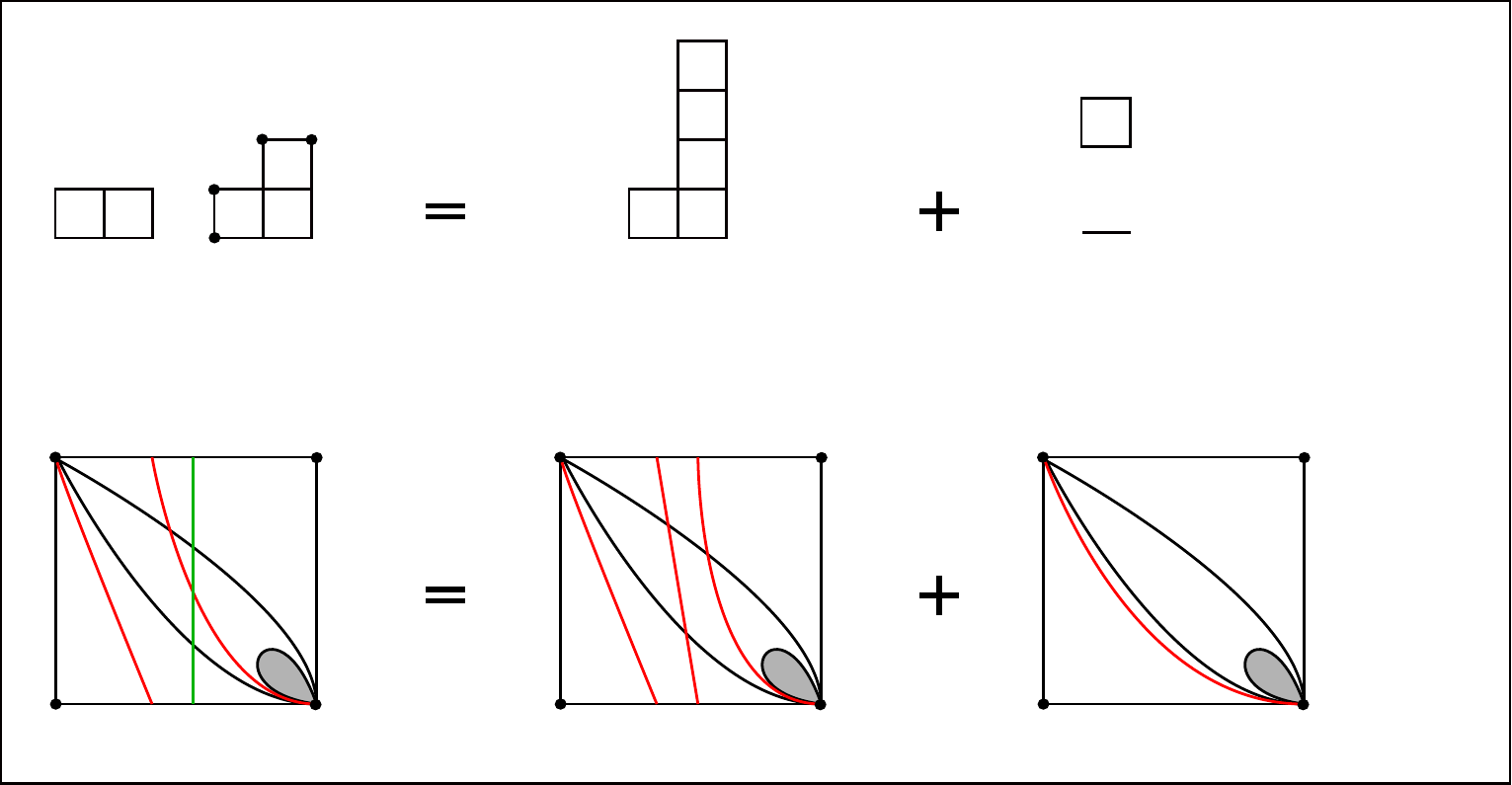}
\caption{ An example of a resolution of a crossing between a snake graph and a band graph with $s=1$ and $t=d$.}
\label{selfcrossingA5}
\end{center}
\end{figure}

\begin{remark}  The cases 4(a),(b),(c) cover all possibilities. Indeed, on the one hand, $\calg_{56}'$ is at least one tile, since otherwise $i_1(\calg)=\calg_1$ and $i_2(\calg)=\calg_2$ which would not be a crossing overlap  by Remark \ref{rem2.8}, and on the other hand,  $\calg_{56}$ cannot have more than one tile, because then the interior edge between the two tiles would have the same sign as the edge $e_{s-1}$ or the edge $e_t$ and therefore one of $e$ or $e'$ existed in the earlier cases. 
\end{remark}
\begin{remark}
  In the case 4(c),
we could also define $\calg_{56}$ to be the unique boundary edge of $(G'_{t'})^{N\!E}$.  This makes sense because if the band graph is a labeled band graph coming from a surface without punctures, then both edges would have the same label, which would also be the label of the tile $G'_{d'}$.
\end{remark}
We have the following special case when the overlap is the whole band graph.
\begin{prop}
 If $i_2(\calg)=\calg_2$ then $\calg_{56}=\calg_1\setminus i_1(\calg)$.
\end{prop}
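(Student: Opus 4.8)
The plan is to trace through the definition of $\calg_{56}$ in the special case $i_2(\calg)=\calg_2$, which forces $t'=d'$, and to identify the resulting graph explicitly. First I would record the structural consequences of the hypothesis: since $i_2(\calg)=\calg_2[1,t']$ and $i_2(\calg)=\calg_2$, we have $t'=d'$; moreover, by Remark \ref{rem2.8}, a crossing with $t'=d'$ is only possible when $s>1$ or $t<d$, so the degenerate case $s=1,\ t=d$ (case (4) in the construction) does not occur, and we are in one of cases (1), (2), or (3). Next I would analyze $\overline{\calg}_2[d',t'+1]$: when $t'=d'$ this is the degenerate ``interval'' $\overline{\calg}_2[d',d'+1]$, which should be interpreted as a single edge (the boundary edge of $G'_{d'}$ other than $b'$, i.e.\ the image under the gluing of the relevant edge of $\calgSW_2$), carrying no tiles. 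Consequently $\calg_{56}'=\calg_1[1,s-1]\cup\overline{\calg}_2[d',d'+1]\cup\calg_1[t+1,d]$, and since the middle piece is just an edge, the gluings along that edge simply reattach $\calg_1[1,s-1]$ directly to $\calg_1[t+1,d]$. But in $\calg_1$ the tiles $G_{s-1}$ and $G_{t+1}$ are exactly the tiles adjacent to the overlap $i_1(\calg)=\calg_1[s,t]$, so re-gluing them along the edge that replaces the deleted overlap recovers precisely $\calg_1$ with the tiles $G_s,\dots,G_t$ (and the interior edges strictly between $G_{s-1}$ and $G_{t+1}$) removed; that is, $\calg_{56}'$, up to isomorphism, equals $\calg_1\setminus i_1(\calg)$ in the sense appropriate to the statement (the disjoint union $\calg_1[1,s-1]\sqcup\calg_1[t+1,d]$ when both pieces are nonempty, with the obvious degenerations when $s=1$ or $t=d$).

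The remaining point is to check that in cases (2) and (3) the passage from $\calg_{56}'$ to $\calg_{56}$ does not remove anything. In case (2), $s=1$, so $\calg_1[1,s-1]$ is empty and $\calg_{56}'=\overline{\calg}_2[d',d'+1]\cup\calg_1[t+1,d]$; but $\overline{\calg}_2[d',d'+1]$ contributes no tiles, so $\calg_{56}'=\calg_1[t+1,d]$, which already \emph{is} a snake graph, and the prescribed truncation $\calg_{56}'\setminus\pred(e)$ requires finding the first edge $e$ with $f_{56}(e)=f_{56}(b')$ among the interior and NE edges of the (empty-on-tiles) middle piece — there is no such edge, so nothing is removed and $\calg_{56}=\calg_{56}'=\calg_1[t+1,d]=\calg_1\setminus i_1(\calg)$. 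Case (3) with $t=d$ is symmetric: $\calg_1[t+1,d]$ is empty, the edge $e'$ must be sought among interior and SW edges of the tile-free middle piece, no such $e'$ exists, and $\calg_{56}=\calg_{56}'=\calg_1[1,s-1]=\calg_1\setminus i_1(\calg)$. Case (1), with $s>1$ and $t<d$, needs no truncation at all, so the identity is immediate there.

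Finally I would assemble these observations into the statement: in every occurring case the construction yields $\calg_{56}=\calg_1\setminus i_1(\calg)$, where we interpret $\calg_1\setminus i_1(\calg)$ as the (possibly disconnected, possibly empty) graph $\calg_1[1,s-1]\sqcup\calg_1[t+1,d]$ obtained by deleting the overlap together with its bounding interior edges, with the conventions for $s=1$ and $t=d$ fixed earlier in Section~\ref{sect 2}. The main obstacle, I expect, is purely bookkeeping rather than conceptual: one must be careful about the degenerate interval $\overline{\calg}_2[d',t'+1]$ when $t'=d'$ and about how the gluing edges of $\calg_{56}'$ behave when an adjacent piece is empty, so that the claimed isomorphism $\calg_1[1,s-1]\cup_{\text{edge}}\calg_1[t+1,d]\cong\calg_1\setminus i_1(\calg)$ is stated with the right edge identifications; once the degeneracy conventions are pinned down, the argument is a direct unwinding of the definitions with no genuine difficulty.
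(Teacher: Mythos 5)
Your opening step agrees with the paper: once $t'=d'$, the middle piece $\overline{\calg}_2[d',t'+1]$ contributes nothing and $\calg_{56}'=\calg_1[1,s-1]\cup\calg_1[t+1,d]=\calg_1\setminus i_1(\calg)$. The divergence, and the gap, is in how you get from $\calg_{56}'$ to $\calg_{56}$. The paper proves that when $i_2(\calg)=\calg_2$ one necessarily has $s\ne 1$ \emph{and} $t\ne d$, so only case (1) of the construction applies and $\calg_{56}=\calg_{56}'$ with no truncation to discuss. The argument is the real content of the proposition: if $s=1$, then (since $t'=d'$ kills both clauses of Definition~\ref{crossing} involving $t'<d'$) the crossing condition forces $f_1(e_{t})=f_2(e'_{s'-1})=f_2(b')$; but $\calg_1[s,t]$ is by definition the \emph{largest} subgraph $\overline{i_1(\calg)}$ of $\calg_1$ isomorphic to a subgraph of $\calg_2^\infty$, and equality of signs at $e_t$ and $b'$ would let that isomorphism be extended one more tile, contradicting maximality. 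The case $t=d$ is symmetric. You never make this argument; you only exclude the simultaneous case $s=1,\,t=d$ via Remark~\ref{rem2.8}.

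For the remaining cases (2) and (3) you instead argue that the prescribed truncations $\setminus\pred(e)$ and $\setminus\suc(e')$ are vacuous because the edge $e$ (resp.\ $e'$) is to be sought inside $\Int(\overline{\calg}_2[d',t'+1])\cup(\overline{\calg}_2[d',t'+1])^{N\!E}$, which is empty when $t'=d'$. This is an interpretive convention, not something the definition licenses: $\calg_{56}'\setminus\pred(e)$ is simply undefined when no qualifying $e$ exists, and $\overline{\calg}_2[d',d'+1]$ itself falls outside the domain of the interval notation. You are therefore patching an undefined operation in cases that, as the paper shows, cannot occur in the first place. The fix is to replace your treatment of cases (2) and (3) by the maximality-versus-crossing argument above, which eliminates them outright.
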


\begin{proof}
If $i_2(\calg)=\calg_2$ then by definition $\calg_{56}'$ is equal to $\calg_1[1,s-1]\cup\calg_1[t+1,d]=\calg_1\setminus i_1(\calg).$ 
 So it suffices to show that $\calg_{56}=\calg_{56}'$, which amounts to  showing that $s\ne 1$ and $t\ne d$.
Suppose $s=1$. Since $i_2(\calg)=\calg_2$, we also have $t'=d'$. Since the overlap is crossing,  Definition \ref{crossing} implies that $f_1(e_{\overline{t}})=f_2(e'_{s'-1})=f_2(b')$, where $\overline{t}$ is such that $\overline{i_1(\calg)}=\calg_1[1,\overline{t}]$ is the largest subgraph of $\calg_1$ which contains $i_1(\calg)$ and which is isomorphic to a subgraph of $\calg_2^\infty$. The maximality of $\overline{i_1(\calg)}$ implies that $f_1(e_{\overline{t}})\ne f_2(b')$, a contradiction.
Similarly, $t$ cannot be equal to $d$, and thus $\calg_{56}=\calg_{56}'$ and we are done.
\end{proof}

\begin{defn} 
 In the above situation, we say that the element $\calg_{34} + \calg_{56} \in\mathcal{R}$ is the {\em resolution of the crossing} of $\calg_1$ and $\calg_2^\circ$ in $\calg$ and we denote it by 
 $\res_{\calg}(\calg_1,
\band_2).$ 
\end{defn}

\subsection{Resolutions for crossings between two band graphs}\label{sect 3.2}
Let $\band_1$  and $\band_2$ be band graphs which have a crossing overlap $ i_1(\calg)\subset\band_1$ and $i_2(\calg)\subset\band_2$. Denote by $a$ the interior edge of $\calg_1^\circ$ and by $b$ the interior edge of $\calg_2^\circ$ which mark the beginning of the   overlaps, and let  $\calg_1=(\calg_1^\circ)_a$  and $\calg_2=(\calg_2^\circ)_b$ be the snake graphs obtained  by cutting along $a$ and $b$. In other words, we choose $a$ and $b$ such that the overlaps $i_1(\calg)$ in  $\calg_1$ and $i_2(\calg)$ in  $\calg_2$ start with the first tile of $\calg_1$ and $\calg_2$,  respectively. 

As usual, we denote the tiles of these snake graphs as follows. Let $\calg_1=(G_1,G_2,\ldots,G_d)$ and let $\calg_2=(G'_1,G'_2,\ldots,G'_{d'})$.

Let $t, t'$ be such that $i_1(\calg)=\calg_1[1,t]$ and $i_2(\calg)=\calg_2[1,t']$.
Label $a\in {}_{SW}\calg_1$ and $a'\in \calg_1^{N\!E}$ the edges that  correspond to the interior edge $a$ of $\calg_1^\circ$. 
Similarly label $b\in {}_{SW}\calg_2$ and $b'\in \calg_2^{N\!E}$ the edges that  correspond to the interior edge $b$ of $\calg_2^\circ$. 
 We define two band graphs $\band_{34}$ and $\band_{56}$ as follows. An example is given in Figure \ref{torus3}.

\begin{figure}
\begin{center}
\scalebox{0.8}{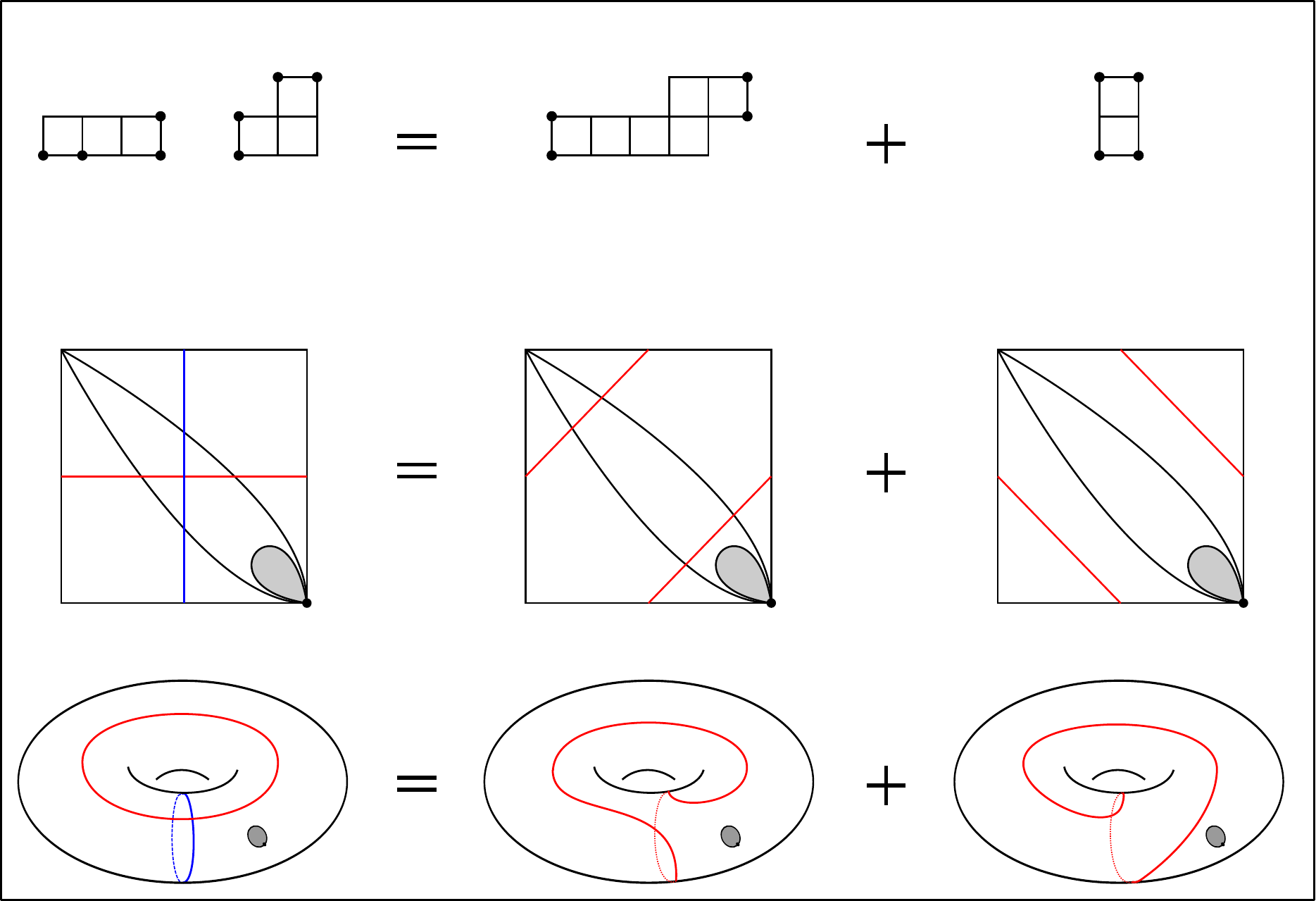}
\caption{ An example of a resolution of a crossing between two band graphs.}
\label{torus3}
\end{center}
\end{figure}

Let \[\band_{34}=(\calg_1\cup_{a'}\calg_2 )^{b'},\]
where the two subgraphs are glued along the edges $a'$  of $\calg_1 $ and the unique edge in $\calgSW_2\setminus\{b\}$, and the resulting snake graph is glued to a band graph along the edges $b'\in \calgNE_2$ and the unique   edge in ${}_{SW}\calg_1\setminus\{a\}$.

Let
\[\band_{56}=(\calg_1[t+1,d]\cup_{e}\ocalg_2[d',t'+1 ])^{e'},\]
where the two subgraphs are glued along the unique edge of $\calgNE_1\setminus\{a'\} $ and the  unique edge of $\calgSW_2\setminus\{b\} $, and the resulting snake graph is glued to a band graph along the unique boundary edge in ${}_{SW}G_{t+1}$ and the unique   boundary edge in $G_{t'+1}^{'N\!E}$. 

\begin{defn} \label{resolution bandgraphs}
 In the above situation, we say that the element $\band_{34}  + \band_{56} \in \mathcal{R}$
   is the {\em resolution of the crossing} of $\band_1$ and $\band_2$  at the overlap $\calg$ and we denote it by $\res_{\calg} (\band_1,\band_2).$ 
\end{defn}

\subsection{Grafting of a snake graph and a band graph} \label{sect 3.3graft}

Let $\calg_1=(\gi 12d)$ and $\calg_2=(\gii 12{d'})$ be two snake graphs which have at least one tile, and let $f_2$ be a sign function on $\calg_2$. Let $\calg_2^\circ=\calg_2^b$ be the band graph obtained from  $\calg_2$ by identifying an edge $b\in {}_{SW}\calg_2$ with the unique edge $b'\in\calg_2^{N\!E}$ such that $f_2(b)=f_2(b')$. Let $\ze$ be the unique edge in ${}_{SW}\calg_2$ that is different from $b$ and let $\ze'$ be the unique edge in $\calg^{N\!E}_2$ that is different from $b'$.
Let $\delta$ denote the north edge of $G_d$ if $\ze$ is the south edge of $G'_1$ and let $\delta$ be the east edge of $G_d$ if $\ze$ is the west edge of $G'_1$. Let $\delta'$ be the unique edge in $\calg_1^{N\!E}$ that is different from $\delta$. See Figure~\ref{figgrafting} for an example.

Define two snake graphs as follows.

\begin{align*}
 \calg_{34}=&\calg_1\cup(\calg_2 \setminus \suc(e')),\  \parbox[t]{.7\textwidth}{glued along the edges $\delta$ and $\ze$, where $e'\in\Int(\calg_2)$  is the last edge such that $f(e')=-f(b')$;}
\\
 \calg_{56}=&\,\calg_{1}\cup\overline{(\calg_2\setminus \pred(e))},\ \parbox[t]{.7\textwidth}{glued along the edges $\delta'$ and $\ze'$, where $e\in \Int(\calg_1)$ is the first edge such that $f(e)=-f(b)$.}&
\end{align*}
\medskip

We also consider the case where the snake graph $\calg_1$ is a single edge.
In this case, we define the two snake graphs as follows, see Figure \ref{figgrafting2} for an example.
\begin{align*}
 \calg_{34}=&\calg_2  \setminus\pred(e),\ \textup{where $e\in \Int(\calg_2) \cup\calg_2^{N\!E}$ is the first edge such that $f(e)=-f(b)\,$;}
\\
\calg_{56}=& \calg_2 \setminus \suc(e')\setminus\pred(e),\ \textup{where}\\&\left\{ \begin{array}{l} 
 \textup{$e'\!\in\Int(\calg_2)\cup\calgSW_2$  is the last edge such that $f(e')=-f(b')\,$;}\\\textup{$e\,\in \Int(\calg_2\setminus 
 \suc(e')) \cup (\Int(\calg_2\setminus 
 \suc(e'))^{N\!E}$ is the first edge such that $f(e)=f(b)$.}\end{array}\right. 
 \end{align*}

\begin{figure}
\begin{center}
\scalebox{1}{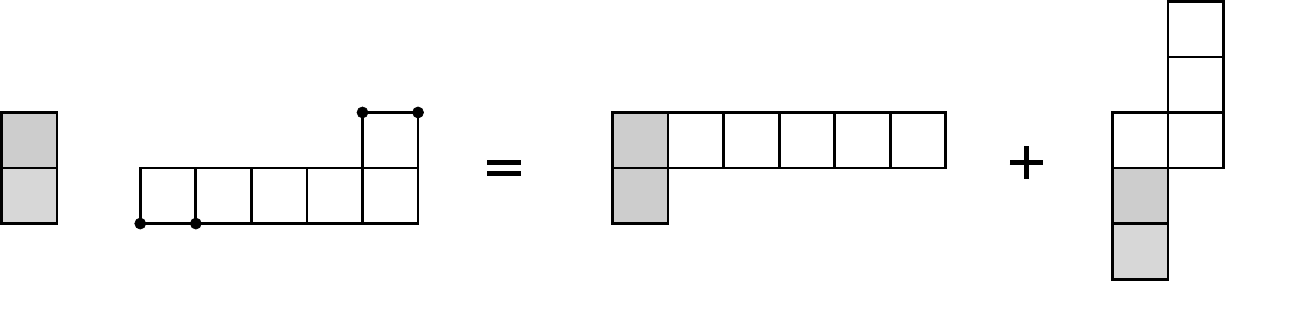}
\caption{ An example of grafting of a snake graph and a band graph.}
\label{figgrafting}
\end{center}
\end{figure}

\begin{figure}
\begin{center}
\scalebox{1}{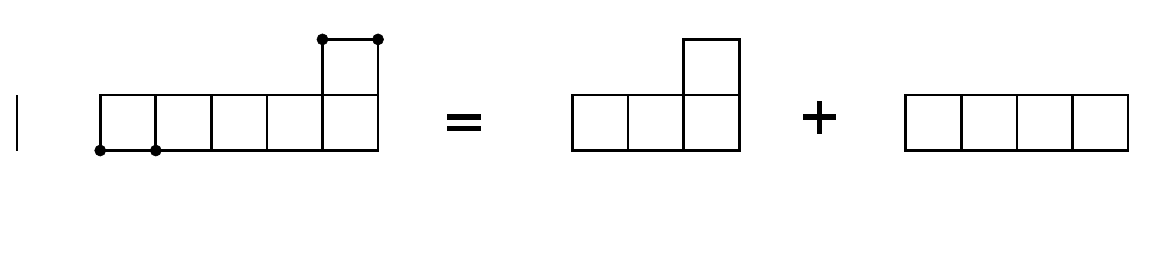}
\caption{ An example of grafting of a band graph and a single edge.}
\label{figgrafting2}
\end{center}
\end{figure}

\begin{defn} \label {grafting}
 In the above situation, we say that the element $\calg_{34} + \calg_{56} \in\mathcal{R}$ is the {\em resolution of the grafting of $\calg_2$ on $\calg_1^\circ$ in $G_d$ }and we denote it by $\graft_{d} (\calg_1^\circ,\calg_2).$ 
\end{defn}

\subsection{Resolutions for self-crossing band graphs}\label{sect 3.3}
Let $\calg_1^\circ$ be a band graph with self-crossing $i_1(\calg)\cong i_2(\calg)$.  According to remark \ref{remsis1}, we may choose $b\in\Int(\band_1)$  such that the overlap $i_1(\calg)$ in  $\calg_1=(\band_1)_b$ starts with the first tile.

Let $\calg_1=(G_1,G_2,\ldots,G_d)$  be the tiles of $\calg_1$, 
let $t$ be such that $i_1(\calg)=\calg_1[1,t]$ and let $s'$ be such that the first tile of $i_2(\calg)$ is $G_{s'}$.
Label $b\in {}_{SW}G_1$ and $b'\in G_d^{N\!E}$ the edges that  correspond to the interior edge $b$ of $\calg_1^\circ$. 

\emph{Case 1. Overlap in the same direction.}

We define the following band graphs.

\[\begin{array}
{rcll}
\calg_3^\circ &=& (\calg_1[s',d])^{b'} ;\\ \\
\calg_4^\circ&= &(\calg_1[1,s'-1])^{c}, \parbox[t]{.70\textwidth}{ where $c=e_{s'-1}$ is the interior edge shared by $G_{s'-1}$ and $G_{s'}$;} \\
\end{array}\]
and $\band_{56}$ depends on several cases and is defined below. 

\begin{enumerate}
\item If $s'\le t$ (see Figure \ref{fig(s'<t)}) 
then
 \[
  \calg_{56}^\circ =\left\{\begin{array}{ll}
  -(\calg_1[2s'-1,d])^{b'}, &\textup{if }2s'-1\le d; \\
  -2, &\textup{if }2s'-2= d. \end{array}\right.\]
  
  We show now that there are no other possibilities for the relation between $s'$ and $d$. 
  If $s' \le t'\le d$ then using the equation $t'-s'=t-1$ we have 
  $d\ge t'=t+s'-1\ge 2s'-1 $. 
  
  Suppose now that $t'\notin[s',d]$. 
  First note that since $s'\le t$ and the intersection of the overlaps is connected 
  then 
 $t'\notin[s',d]$ implies that the overlap is the whole band graph, thus $t=d$ and $t'=s'-1$.
Note that if $2s'-2>d$ then using $b=e_{s'-1}$ as cutting edge to redefine $\calg_1=(\calg_1^\circ)_b$ and making a    change of variables $(s,t) \leftrightarrow (s',t')$, we are in  the case $2s'-1\le d$ above. 

In the case $2s'-2=d$, we have $t=d$ and $\calg_1^\circ$ is the 2-bracelet $\brac_2(\calg_4^\circ)$ of $\calg_4^\circ$.  Moreover $\band_3  \cong \band_4$.

\begin{figure}
\begin{center}
\scalebox{0.8}{ 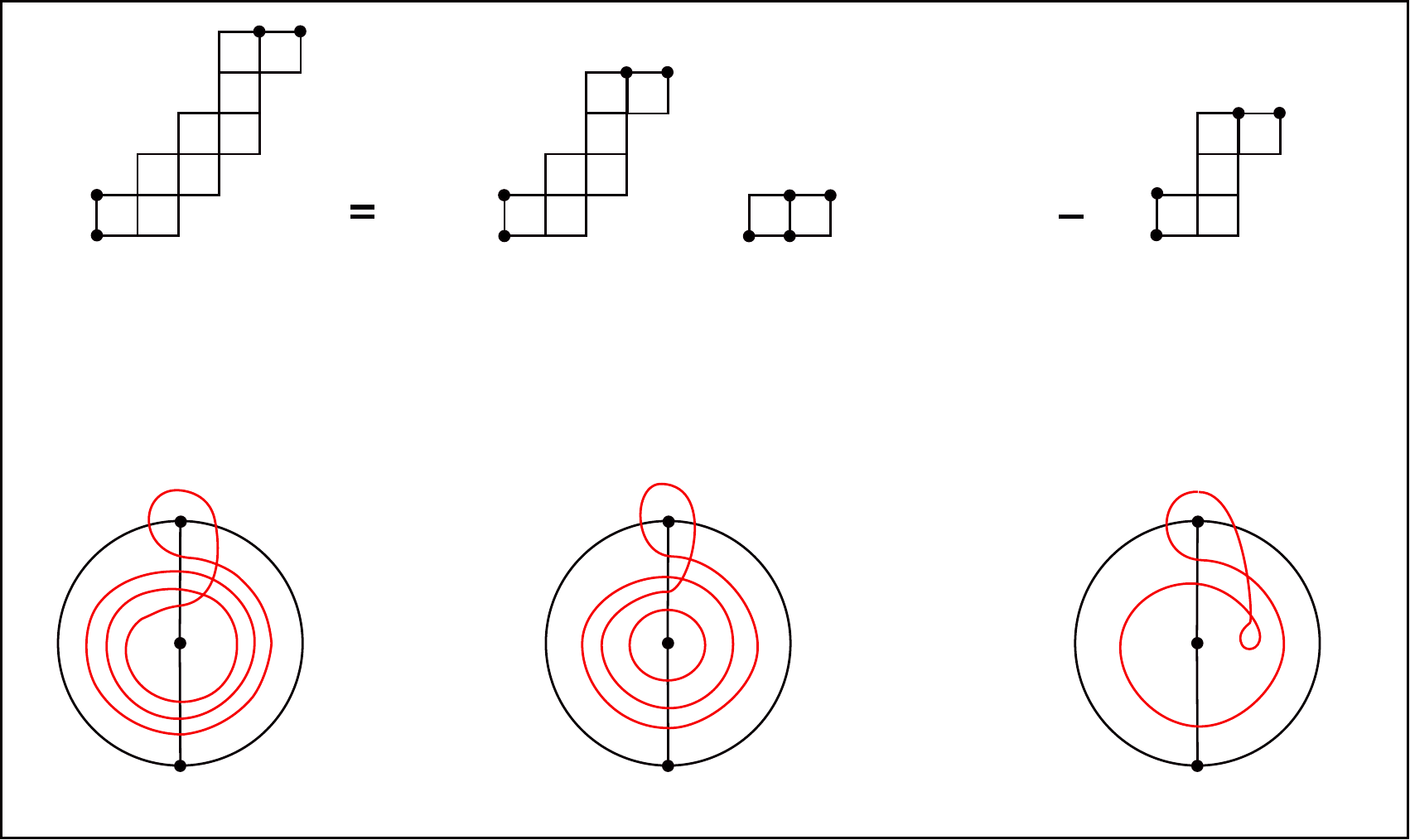} 
\caption{Example of resolution of self-crossing band graph when $s'\le t$ together with geometric realisation on the punctured disk.}
\label{fig(s'<t)}
\end{center}
\end{figure}

\item If $s'> t+1$ 
then, if $t'<d$, let
 \[
  \calg_{56}^\circ =(\overline{\calg}_1[s'-1,t+1]\cup_c \calg_1[t'+1,d])^{a'};
 \]
 where $c\in\GNE_{s'-1}$ and $a'\in\GNE_d$ are the unique edges such that $c\ne e_{s-1}$ and $a'\ne b'$. 

On the other hand, if $t'=d$, then redefine $\calg_1=(\calg_1^\circ)_b$ with $b=e_{s'-1}$ the interior edge of $\calg^\circ_1$ which is the first edge of $i_2(\calg)$ and make a change of variables $(s,t) \leftrightarrow (s',t')$. Then this situation corresponds to the case (3).

\item  If $s'=t+1$ then we have two subcases. 
\begin{enumerate}
\item If $t'=d$ then let 
\[   \calg_{56}^\circ =- \,\emptyset. \]

For an example see Figure \ref{figexample}.
\begin{figure}
\begin{center}
\scalebox{1}{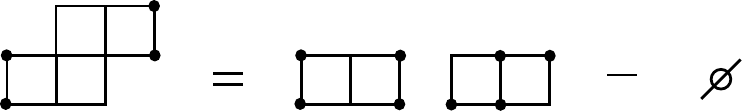}
\caption{ An example of a resolution of a band graph with $s'=t+1$ and $t'=d$.}
\label{figexample}
\end{center}
\end{figure}

\item 
If $t'< d$, we 
let $\ell\ge1$ be the smallest integer such that $f(e_{t'+\ell})=f(e_{d-\ell})$. Note that such an $\ell$ always exist since for $\ell=d-t'$ the above condition becomes $f(e_d)=f(e_{t'})$, which always holds since
$f(e_{t'})=-f(e_{s'-1})=-f(e_t)=f(e_{s-1})=f(e_d)$.
Define
%
\[   \calg_{56}^\circ =\pm \left\{ 
\begin{array}
 {cl}
 (\calg_1\setminus\pred(e_{t'+\ell})\setminus \suc(e_d-\ell))^c &\parbox[t]{.4\textwidth}{if $ d>t'+2\ell $, where $c$ is the unique boundary edge in $\GSW_{t'+\ell+1}$;} \\
 \emptyset &\textup{if $d=t'+2\ell$;}\\
 0 & \textup{if $d<t'+2\ell$;}
\end{array}\right.
\]
where the sign  of $\calg_{56}^\circ$ is $+$ if $f(e_{t'})=f(e_{t'+\ell})$, and it is $-$ otherwise.

Examples of this case are given in  Figure \ref{fig(s'=t+1)}.

\end{enumerate}
\end{enumerate}

\begin{remark}
 It is not necessary to consider the case $t'>d$ in the above definition of $\band_{56}$. Indeed if $t'>d$, then we must have $s'>t$, since the intersection of the overlaps $i_1(\calg)\cap i_2(\calg)$ is connected, and then interchanging the roles of $i_1(\calg) $ and $i_2(\calg)$ will produce the case (1) above. 
\end{remark}

\begin{figure}
\begin{center}
\scalebox{0.8}{ 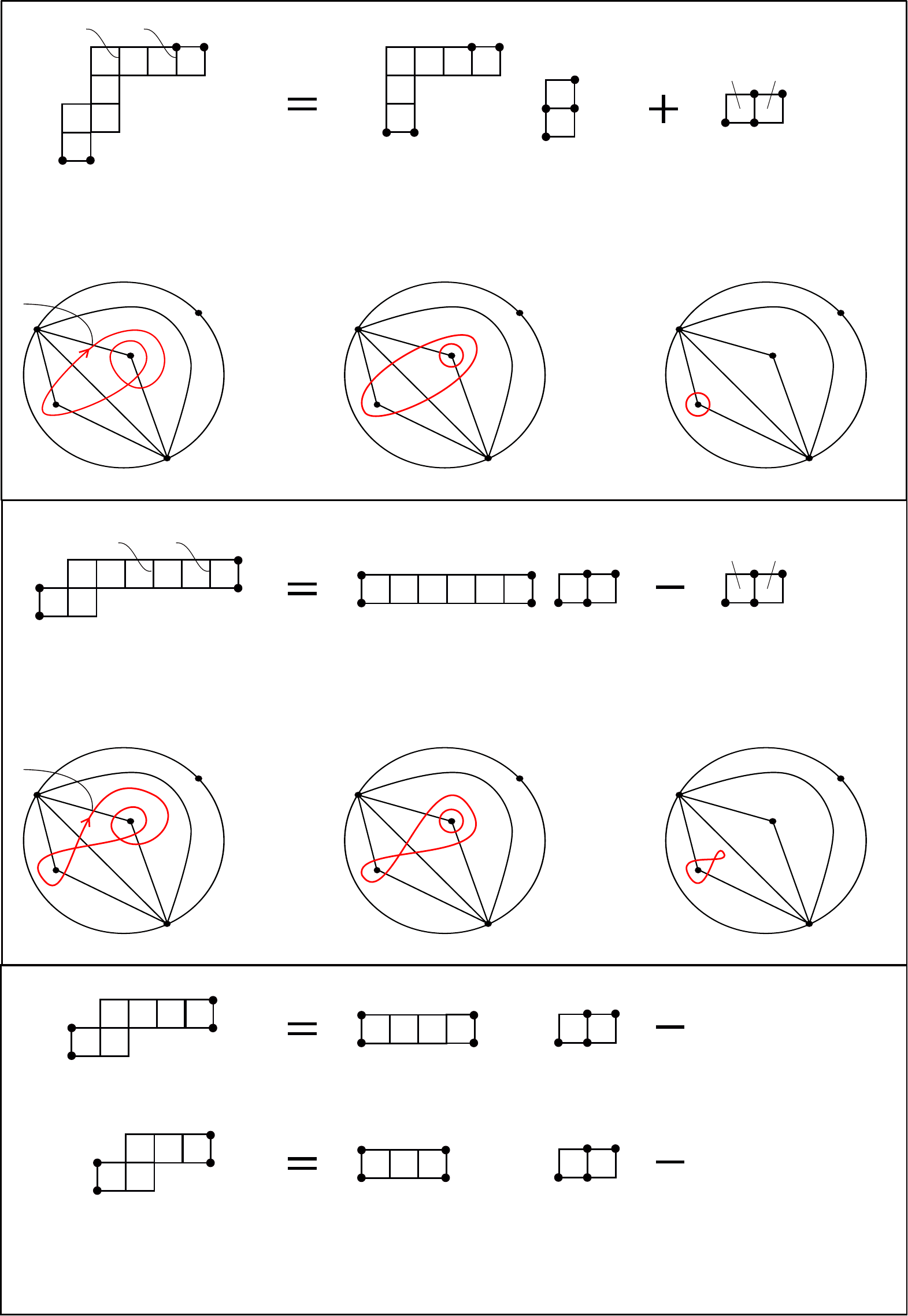}
\caption{Examples of resolutions of self-crossing  band graphs when $s'=t+1$. }
\label{fig(s'=t+1)}
\end{center}
\end{figure}

\begin{defn} \label{resolution self-crossing in same direction}
 In the above situation, we say that the element $\band_{3}\sqcup\band_4  + \band_{56} \in \mathcal{R}$
   is the {\em resolution of the self-crossing} of $\band_1$  at the overlap $\calg$ and we denote it by $\res_{\calg} (\band_1).$ 
\end{defn}

\emph{Case 2. Overlap in the opposite direction.} 
As before, 
let $\band_1=\calg_1^b$ be a band graph with self-crossing overlap
$i_1(\calg)=\calg_1[1,t]\cong \calg_1[s',t']=i_2(\calg).$ We suppose now that the overlap is in the opposite direction, thus the first tile of $\calg$ is mapped to the first tile of $\calg_1[1,t]$ {{under $i_1$}} and to the last tile of $\calg_1[s',t']$ {{under $i_2$}}.

With this notation, we define

\[\begin{array}{rcl} 
\band_{34}&=&\left(\calg_1[1,t]\cup\ocalg_1[{ s'-1,t+1}]\cup\calg_1[s',d]\right)^b,
\parbox[t]{.60 \textwidth}{ where the first two subgraphs are glued along}\\ 
&&\parbox[t]{.88\textwidth}{{ the unique boundary edge of $G_t^{N\!E}$ and the interior edge $e_{s'-1}$, and the last two subgraphs are glued along the interior edge $e_t$ and the unique boundary edge in $_{SW}G_{s'}$; }}
\\ \\
\band_5 &=& \left(\calg_1[1,s-1]\cup \calg_1[t'+1,d]\right)^{b},
\parbox[t]{.55 \textwidth}{where the two subgraphs are glued along the unique boundary edges of $G_{s-1}^{N\!E}$ and $_{SW}G_{t'+1}$.}
  \\ \\
\band_6&=& \left(\calg_1[t+1,s'-1]\right)^c, 
\textup{where $c$ is the unique boundary edge in {$_{SW}G_{t+1}$}.}
\end{array}\]

\begin{defn} \label{resolution self-crossing in opposite direction}
 In the above situation, we say that the element $\band_{34}  + (\band_{5}\sqcup \band_6 )\in \mathcal{R}$
   is the {\em resolution of the self-crossing} of $\band_1$  at the overlap $\calg$ and we denote it by $\res_{\calg} (\band_1).$ 
\end{defn}


\section{Perfect matchings}\label{section 4}

{In this section, we show that there is a bijection between the set of perfect matchings of crossing or self-crossing snake and band graphs and the set of perfect matchings of the resolution. In section~\ref{sect 5}, we will show that for labeled snake and band graphs coming from an unpunctured surface, this bijection is weight preserving and induces an identity in the corresponding cluster algebra.
}

Recall that a {\em perfect matching} $ P$ of a graph $G$ is a subset of the set of edges of $G$ such that each vertex of $G$ is incident to exactly one edge in $ P.$  

For band graphs, we need the notion of \emph{good} perfect matchings, which was introduced in \cite[Definition 3.8]{MSW2}. 
Let $\calg$ be a snake graph and let  $\calg^b$ be the band graph obtained from $\calg $ by glueing along the edge $b$ as defined in section~\ref{sect band}. If $P$ is a perfect matching of $\calg$ containing the edge $b$,  then $P\setminus\{b\}$ is a perfect matching of $\calg^b$.
In the following definition of good perfect matchings for arbitrary band graphs, we start with the band graph and cut it at an interior edge.
\begin{defn}
 Let  $\calg^\circ$ be a  band graph. A perfect matching $P$ of $\calg^\circ$ is called a \emph{good perfect matching} if there exists an interior edge $e$ in $\calg^\circ$ such that $P\sqcup\{e\}$ is a perfect matching of the snake graph  
$(\calg^\circ)_e$ obtained by cutting $\calg^\circ$ along $e$.

\end{defn}

\begin{defn}\ 
\begin{enumerate}
\item If $\calg$ is a snake graph,  let $\match \calg$ denote the set of all perfect matchings of  $\calg$.
\item If $\calg^\circ$ is  a band graph,  let $\match \calg^\circ$ denote the set of all good perfect matchings of  $\calg^\circ$. 
\item If $R=(\calg_1\sqcup\calg_2 + \calg_3\sqcup\calg_4)\in \mathcal{R}$, we let \[\match R=\match \calg_1\times\match\calg_2 \cup \match\calg_3\times\match \calg_4.\]
\end{enumerate}
\end{defn}

The following Lemma will be useful later on.
\begin{lem}\cite[Lemma 4.3]{CS2}
 \label{lemNE}
 Let $\calg$ be a snake graph with sign function $f$ and let $P$ be a matching of $ \calg$ which consists of boundary edges only. Let NE be the set of all north and all east edges of the boundary of $\calg$ and let SW be the set of all south and west edges of the boundary. Then
 \begin{enumerate}
\item $f(a)=f(b)$ if $a$ and $b$ are both in $P \,\cap\,$NE or both in $P\,\cap\,$SW.

\item $f(a)=-f(b)$ if one of $a,b$ is in  $P\,\cap\,$NE and the other  in $P\,\cap\,$SW.
\item If $a\in P\,\cap\,$NE, or if  $a\in $SW but $a\notin P$, then 
\[P= \{b\in \textup{NE}\mid f(b)=f(a)\}\cup \{b\in \textup{SW}\mid f(b)=-f(a)\}.\]

\item If $a\in P\,\cap\,$SW, or if  $a\in $NE but $a\notin P$, then 
\[P= \{b\in \textup{NE}\mid f(b)=-f(a)\}\cup \{b\in \textup{SW}\mid f(b)=f(a)\}.\]
\end{enumerate}
\end{lem}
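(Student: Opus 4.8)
The first reduction is to observe that (1) and (2) are formal consequences of (3) and (4): if $a,b\in P\cap\textup{NE}$, apply (3) to $a$; since $b\in P$ lies in $\textup{NE}$, the explicit description of $P$ forces $f(b)=f(a)$, which is (1), and the mixed case $a\in P\cap\textup{NE}$, $b\in P\cap\textup{SW}$ gives (2) the same way. So everything rests on the following \emph{Key Claim}. Since every boundary edge of $\calg$ is a north/east edge or a south/west edge of its unique tile (an edge shared by two tiles is interior), $\textup{NE}$ and $\textup{SW}$ partition the boundary edges; for a sign $\epsilon$ set $P^\epsilon=\{b\in\textup{NE}\mid f(b)=\epsilon\}\cup\{b\in\textup{SW}\mid f(b)=-\epsilon\}$. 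The claim is that every perfect matching of $\calg$ using only boundary edges equals $P^+$ or $P^-$. Granting this, (3) and (4) follow by bookkeeping: if $a\in P\cap\textup{NE}$ then $a\notin P^{-f(a)}$, so $P=P^{f(a)}$, which is exactly the set in (3); the cases $a\in\textup{SW}\setminus P$, $a\in P\cap\textup{SW}$, $a\in\textup{NE}\setminus P$ are settled identically and yield the four instances of (3) and (4). (We assume throughout that $\calg$ has at least one tile; the degenerate single-edge snake graph is a trivial exception one should mention separately.)

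I would prove the Key Claim by induction on the number $d$ of tiles. For $d=1$ the only boundary matchings of a tile are $\{N,S\}$ and $\{E,W\}$, and the sign axiom $f(N)=f(W)=-f(S)=-f(E)$ shows directly that these are $P^{f(N)}$ and $P^{-f(N)}$. For $d\ge 2$, put $\calg'=\calg[1,d-1]$ and let $G_d$ be glued to $G_{d-1}$ along $e_{d-1}$. Using the reflection of the plane across the anti-diagonal of a tile — which swaps $N\leftrightarrow E$ and $S\leftrightarrow W$, hence preserves the $\textup{NE}/\textup{SW}$ partition and the sign-function axioms — I may assume $e_{d-1}$ is the north edge of $G_{d-1}$ and the south edge of $G_d$. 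Write $n,w,e$ for the north, west, east edges of $G_d$. Two elementary facts drive the step: (a) $\textup{NE}(\calg)=(\textup{NE}(\calg')\setminus\{e_{d-1}\})\cup\{n,e\}$ and $\textup{SW}(\calg)=\textup{SW}(\calg')\cup\{w\}$, with $f$ restricting to the sign function of $\calg'$; and (b) $e_{d-1}\in\textup{NE}(\calg')$, so $e_{d-1}$ lies in exactly one of the two boundary matchings $P'^+,P'^-$ of $\calg'$ (namely $P'^{f(e_{d-1})}$).

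Let $P$ be a boundary matching of $\calg$; then $e_{d-1}\notin P$. Since the corners $NW_d,NE_d$ lie on no tile but $G_d$, each must be covered, giving two cases. If $n\in P$, then $w,e\notin P$, and $P\cap E(\calg')$ is a boundary matching of $\calg'$ not containing $e_{d-1}$; by induction it equals $P'^{-f(e_{d-1})}$, so $P=P'^{-f(e_{d-1})}\cup\{n\}$, which by (a) and $f(n)=-f(e_{d-1})$, $f(e)=f(e_{d-1})$, $f(w)=f(n)$ is precisely $P^{f(n)}_\calg$. If $n\notin P$, then $w,e\in P$, covering $NW_{d-1}$ and $NE_{d-1}$; hence $(P\cap E(\calg'))\cup\{e_{d-1}\}$ is a boundary matching of $\calg'$ containing $e_{d-1}$, equal by induction to $P'^{f(e_{d-1})}$, so $P=(P'^{f(e_{d-1})}\setminus\{e_{d-1}\})\cup\{w,e\}$, which the same computation identifies with $P^{f(e_{d-1})}_\calg=P^{-f(n)}_\calg$. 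Thus $P\in\{P^+_\calg,P^-_\calg\}$, completing the induction.

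The main obstacle is not conceptual but the careful verification that the restriction/extension operations in the inductive step really send perfect matchings to perfect matchings. This comes down to the local "snake geometry" check that the only vertices of $\calg'$ lying on $G_d$ are the two endpoints of $e_{d-1}$ (so that $G_{d-2}$ can share only the vertex $SW_d$ with $G_d$, never $NW_d$ or $NE_d$), together with the degenerate configurations — $d=1$, a corner where a vertex meets three tiles, and the glued-on tile sitting at the very end. Each of these is an elementary case-check, but they must be done before the sign-function bookkeeping above is legitimate.
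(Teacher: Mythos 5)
This lemma is quoted from \cite[Lemma 4.3]{CS2}; the present paper gives no proof of it, so there is no in-paper argument to compare against. Your proof is correct and self-contained. The reduction of (1)--(2) to (3)--(4), the formulation of the Key Claim that the only boundary perfect matchings are $P^+$ and $P^-$, and the induction on the number of tiles (peeling off $G_d$, using that $NW_d$ and $NE_d$ lie only on $G_d$ to force either $n\in P$ or $w,e\in P$, and matching the resulting sign bookkeeping against $f(n)=-f(e_{d-1})$, $f(w)=f(n)$, $f(e)=f(e_{d-1})$) all check out, including the geometric fact that $G_{d-2}$ can meet $G_d$ only at an endpoint of $e_{d-1}$. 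Two cosmetic points: the word ``matching'' in the statement must be read as ``perfect matching'' (otherwise (3) and (4) are false), which is indeed how the lemma is used elsewhere in the paper, e.g.\ in the proof of Lemma \ref{lembracelet}; and the reflection you invoke, which fixes the $SW$ and $NE$ corners of a tile and swaps $N\leftrightarrow E$, $S\leftrightarrow W$, is the reflection in the $SW$--$NE$ diagonal --- the name ``anti-diagonal'' is harmless but worth stating unambiguously since the correctness of the sign-function transport rests on exactly which corners are fixed.
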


\subsection{Orientation and switching position}
 Let $\calg$ be a snake graph. Recall that, by definition, $\calg$ comes with a fixed embedding in the plane and that each edge of $\calg$ is either horizontal or vertical.

\subsubsection{Orientation}
We define two orientations of $\calg$ as follows.
\begin{itemize}
\item [(i)] The \emph{left-right orientation} consists in orienting all horizontal edges of $\calg$ from left to right and all vertical edges upwards.
\item [(ii)] The \emph{right-left orientation} consists in orienting all horizontal edges of $\calg$ from right to left and all vertical edges downwards.\end{itemize}

Let $\calg=(\gi12d)$  be the ordered sequence of tiles defining $\calg$. Thus $G_1$ is the southwest tile of $\calg$ and $G_d$ is the northeast tile. We refer to this ordered sequence as the \emph{direction} of $\calg$. The snake graph $\ocalg=(G_d,\ldots,G_2,G_1)$ is the snake graph $\calg$ in the opposite direction. 

\subsubsection{Order} We also define a partial order on the set of edges of $\calg$ as follows. Let $e_1, e_2$ be two edges in $\gr12d$, and let $k_1,k_2$ be the smallest positive integers such that the tile $G_{k_j}$ contains the edge $e_j$, for $j=1,2$. {Then} we define a partial order by
\[
e_1 \le e_2 \quad \Longleftrightarrow \quad \left\{\begin{array}{ll}
  k_1<k_2 \textup{ or }  \\ 
   k_1=k_2 \textup{ and $e_1\in {}_{SW}G_{k_1}$ and $e_2\in G_{k_1}^{N\!E}$ or }\\
   k_1=k_2 \textup{ and $e_1=e_2$}. 
\end{array}\right.
\]
Note that two edges $e_1,e_2$ are non-comparable with respect to this partial order if and only if they are the south and the west edge of the same tile or if they are the north and the east edge of the same tile. In particular, the partial order $\le$ induces a total order on every perfect matching $P$ of $\calg$. 

\subsubsection{Switching position} 

Fix the left-right orientation on $\calg$.

First consider the case where  $\gr12d$ and $\grr12d$ are two snake or band graphs with overlap $i_1(\calg)\subset\calg_1 $ and $i_2(\calg)\subset\calg_2 $. 
Let $P=(P_1,P_2)\in\match(\calg_1,\calg_2)$ be a perfect matching. Consider the set 
\[ \cals=\{(e_1,e_2)\in P_1\times P_2\mid \textup{$e_1 $ and $e_2$ have the same starting point in $\calg_3$.}\}\] 
Thus if $(e_1,e_2)\in \cals$ then the starting point of $e_1$ in $\calg_1 $ lies inside $i_1(\calg)$ and
the starting point of $e_2$ in $\calg_2 $ lies inside $i_2(\calg)$.

Now consider 
the case where  $\gr12d$ is a snake or band graph with self-overlap $i_1(\calg)\subset\calg_1 $ and $i_2(\calg)\subset\calg_1 $. 
Let $P=P_1\in\match\calg_1$ be a perfect matching. Consider the set 
\[ \cals=\{(e_1,e_2)\in P_1\times P_1\mid \textup{ $e_1\ne e_2$, $e_1 $ and $e_2$ have the same starting point in $\calg_3$.}\}\]
Again,  if $(e_1,e_2)\in \cals$ then  the starting point of $e_1$ in $\calg_1 $ lies inside $i_1(\calg)$ and
the starting point of $e_2$ in $\calg_1 $ lies inside $i_2(\calg)$.

 In both cases the set $\cals$ has a total order defined by 
$(e_1,e_2)\le(e_1',e_2')$ if $e_1\le e_1'$ in $P_1$.

\begin{defn}
 If $\cals$ is non-empty, we call the vertex that is the common starting point of the first pair $(e_1,e_2)\in \cals$ the \emph{switching position of the perfect matching $P$ with respect to the overlap $i_1(\calg)\cong i_2(\calg)$.}

If $\cals$ is empty, we say that the perfect matching $P$ \emph{has no switching position}.
 
\end{defn}
See Figure \ref{figswitching2} for an example.

\begin{figure}
\begin{center}
\scalebox{1}{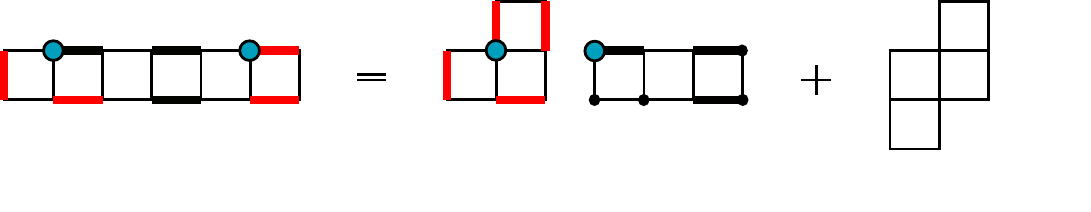}
\caption{An example of the switching operation. The snake graph $\calg_{1}$ has a selfcrossing overlap consisting of a single tile labeled 2. The perfect matching indicated in red and black has a switching position marked in blue. This is the starting point of the edges $e_1$ and $e_2$.  In $\calg_3$ the two edges $e_1,e_2$ start at the same point.}
\label{figswitching2}
\end{center}
\end{figure}

\begin{lem}\label{lemboundary}
If $P$ does not have a switching position, then the restrictions $P|_{i_1(\calg)}$ and  $P|_{i_2(\calg)}$ consist in complementary boundary edges of $\calg$. More precisely, for every edge $a$ in $\calg$, we have
\begin{itemize}
\item[\rm(a)]  if $i_1(a)\in P$ or $i_2(a)\in P$ then $a $ is a boundary edge of $\calg$, 
\item[\rm(b)]  if $i_1(a)\in P$ then $i_2(a)\notin P$,
\item[\rm(c)]  if $i_2(a)\in P$ then $i_1(a)\notin P$,
\item [\rm(d)] if $a$ is a boundary edge in $\calg$ and $a\notin \calgSW\cup\calgNE$ then $a\in P|_{i_1(\calg)}$ or $a\in  P|_{i_2(\calg)}$.
\end{itemize}
\end{lem}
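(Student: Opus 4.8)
The plan is to establish the four clauses in the order (b), (c), (a), (d). Throughout I fix the left-right orientation on all the graphs, and I reduce to the case where $i_1$ and $i_2$ respect it; for a self-overlap in the opposite direction this means reflecting the second copy of $\calg$ and recording the resulting change in the sign function $f$, which affects none of the notions appearing in (a)--(d) (the set $\calgSW\cup\calgNE$, the property of being a boundary edge, and complementarity are all reflection invariant). For band graphs I work with snake-graph representatives $\calg_\ell$ of the (band) graphs and of the good perfect matching $P$. Call a vertex $v$ of $\calg$ a \emph{$j$-source} (resp.\ a \emph{$j$-sink}), $j\in\{1,2\}$, according as the edge of $P_j$ meeting $i_j(v)$ has $i_j(v)$ as its starting (resp.\ ending) point. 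The assumption that $P$ has no switching position says exactly that no vertex of $\calg$ is simultaneously a $1$-source and a $2$-source.

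Clauses (b) and (c) are now immediate: if $i_1(a)\in P$ and $i_2(a)\in P$ for an edge $a$ of $\calg$ with starting point $v$, then $v$ is a $1$-source and a $2$-source, a contradiction.

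For (a), suppose instead that $i_1(a)\in P$ for some interior edge $a=e_k$ of $\calg$; by (b) we may assume $i_2(e_k)\notin P$, and the case $i_2(e_k)\in P$ is symmetric. Let $v$ be the starting point of $e_k$; then $v$ is a $1$-source, hence a $2$-sink, so $P_2$ uses an edge ending at $i_2(v)$. Since $e_k$ is interior, $v$ is not the SW-corner of $\calg$; a short local check (according as $e_k$ is horizontal or vertical and the tile preceding $G_k$ lies below $G_k$, to its left, or is absent) shows that this $P_2$-edge is $i_2(a')$ for a uniquely determined edge $a'$ of $\calg$ strictly below $e_k$ in the edge-order of $\calg$. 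Its starting point is then a $2$-source, hence a $1$-sink, and iterating produces a strictly decreasing sequence of edges of $\calg$ lying in $P$, alternately in the image of $i_1$ and of $i_2$, and therefore marching toward the first tile $G_1$. This sequence must terminate, and it can terminate only at the south or west edge of $G_1$, whose starting point $x$ is the SW-corner of $\calg$: then $x$ is a source in one view, and the no-switching hypothesis forces it to be a sink in the other, i.e.\ $P$ contains an edge of $\calg_\ell$ ending at $i_\ell(x)$ for some $\ell$. If the overlap is normalised, as in Remark \ref{remsis1}, to begin at the first tile in that direction $\ell$, then $i_\ell(x)$ is a bivalent corner of $\calg_\ell$, at which no edge ends -- a contradiction, proving (a). (For a self-overlap the overlap begins at the first tile in the $i_1$-direction, for a snake-band overlap in the $i_2$-direction, and for a band-band overlap in both; the remaining parity case is handled either by re-running the chain after interchanging $i_1$ and $i_2$, or by unrolling the band graph to its universal cover $\calg^\infty$, cutting a good perfect matching appropriately, and invoking the snake-graph instance of the statement from \cite{CS,CS2}.)

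Finally, (a) shows that the sets $Q_\ell:=i_\ell^{-1}\bigl(P\cap E(i_\ell(\calg))\bigr)$, $\ell=1,2$, consist of boundary edges of $\calg$, and together they match every vertex of $\calg$ except the at most four vertices lying on the edges of $\calgSW\cup\calgNE$ that $P$ matches outside the overlap. Applying Lemma \ref{lemNE} to each $Q_\ell$ (in the mild variant that allows the corner vertices to be left uncovered, which follows from the same inward propagation; the case of an overlap with one or two tiles is checked directly) shows that $Q_\ell$ equals $\{b\in\textup{NE}\mid f(b)=\varepsilon_\ell\}\cup\{b\in\textup{SW}\mid f(b)=-\varepsilon_\ell\}$ up to omission of some corner edges, for a sign $\varepsilon_\ell$. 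If $\varepsilon_1=\varepsilon_2$, then $Q_1$ and $Q_2$ contain a common boundary edge, whose starting point would be both a $1$-source and a $2$-source, which is impossible; hence $\varepsilon_1=-\varepsilon_2$. Therefore $Q_1$ and $Q_2$ are disjoint and their union is all of $\textup{NE}\cup\textup{SW}$ apart from the omitted corner edges, which is exactly clause (d). The main obstacle is part (a): both the local case analysis at each step of the chain, and, above all, the proof that the chain cannot escape the overlap before reaching $G_1$ -- this is where the normalisation of the overlaps, and for band graphs a parity bookkeeping (or the passage to $\calg^\infty$), are needed, and the self-overlap of a band graph, where both $i_1$ and $i_2$ land in the same $\calg_1$, is the delicate case.
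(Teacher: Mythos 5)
The paper itself does not reprove this lemma: it simply cites \cite[Section 7]{CS}, where the statement is established for a pair of crossing snake graphs by an analysis of the local configurations of $(P_1,P_2)$ on the overlap (the configurations of \cite[Figures 6--11]{CS}), the band and self-overlap cases being treated as identical. Your source/sink reformulation of ``no switching position'' and your propagation idea are exactly the mechanism behind that analysis, and your proofs of (b) and (c) are correct and complete. The problem is in (a): your single descending chain does not always reach a contradiction. Concretely, take $\calg_1=(G_1,G_2,G_3)$ three tiles in a row, overlap $i_1(\calg)=\calg_1[2,3]$, $i_2(\calg)=\calg_2[1,2]$, and $P_1=\{S(G_1),N(G_1),e_2,E(G_3)\}$, which contains $i_1(e)$ for the interior edge $e$ of $\calg$. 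Your chain runs: the starting point of $e_2$ is a $1$-source, hence a $2$-sink, forcing $S(G'_1)\in P_2$; its starting point (the SW-corner $x$ of $\calg$) is a $2$-source, hence a $1$-sink, forcing an edge of $\calg_1$ ending at $i_1(x)$ --- but the only such edge is $S(G_1)$, which lies \emph{outside} the overlap, and is indeed in $P_1$. The chain terminates with no contradiction, because the direction $\ell$ in which $x$ must be a sink is the direction in which the overlap does \emph{not} start at the first tile. Your proposed repair, ``re-running the chain after interchanging $i_1$ and $i_2$,'' cannot work: the chain and the direction in which it escapes are dictated by $P$, not by how you label the embeddings. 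In this example the contradiction only appears when you launch a \emph{second} chain from the other forced $1$-source (the starting point of $E(G_3)$), which forces $S(G'_2)\in P_2$, clashing with $S(G'_1)$ at their common vertex. So the argument for (a) needs to propagate the constraints from all edges of $P$ meeting the overlap simultaneously (equivalently, argue tile by tile from the first tile of the overlap, as the local-configuration lists in \cite{CS} do), not follow one chain from the given interior edge.

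Two smaller issues: your claim that the chain ``can terminate only at the south or west edge of $G_1$'' is not justified --- when the overlap does not begin at the first tile of $\calg_\ell$, escapes through the predecessor tile are also possible at the NW or SE corner of the first overlap tile, and when $\calg_\ell$ is a band graph whose overlap is all of $(\calg_\ell^\circ)_b$ the glueing edge reintroduces edges ending at $i_\ell(x)$, so the ``bivalent corner'' contradiction is not available there either. And in (d), the ``mild variant'' of Lemma \ref{lemNE} allowing uncovered corner vertices, together with the claim that $Q_1\cup Q_2$ misses only the edges of $\calgSW\cup\calgNE$, is asserted rather than proved; this is where the actual matching count enters and it deserves an argument. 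The architecture of your proof is right and matches the cited one, but as written part (a) has a genuine gap.
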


\begin{proof}
 This has been shown in \cite[Section 7]{CS}.
\end{proof}

\subsection{Switching operation}\label{switching}
The following construction was introduced in \cite{CS} and used also in \cite{CS2}. Let $\calg_1,\calg_2$ be two snake  graphs  with a crossing local overlap $\calg$ and embeddings $i_1:\calg\to\calg_1$ and $i_2:\calg\to \calg_2$. Let $\calg_3,\calg_4$ be the pair of snake graphs in the resolution of the crossing which contain the overlap. Thus $\calg_3$ contains the initial part of $\calg_1$, the overlap, and the terminal part of $\calg_2$, whereas $\calg_4$
contains the initial part of $\calg_2$, the overlap, and the terminal part of $\calg_1$.

Given two perfect matchings $P_1\in\match\calg_1$ and $P_2\in \match\calg_2$ which have a switching position, using the edges of $P_1$ up to that position and the edges of $P_2$ after that position yield a perfect matching on $\calg_3$.  Moreover, using the edges of $P_2$ up to the switching position and those of $P_1$ after the switching position {also} yields a matching of $\calg_4$. {The possible local configurations of the perfect matchings at the switching position are explicitly listed in Figures 6-11 in \cite[section 3]{CS}.}

If a switching position exists, we define the \emph{switching operation} to be the map $(P_1,P_2)\mapsto (P_3,P_4)$, 
where $P_3$, respectively $P_4$, is the matching of $\calg_3$, respectively $\calg_4$, obtained by applying the method above at the first switching position. In the same way, we define the switching operation sending matchings of $ \calg_3\sqcup \calg_4$ to matchings of $\calg_1\sqcup\calg_2$. {
If no switching position exists, then the restrictions $(P_1\cup P_2)|_{\calg_5}$  and $(P_1\cup P_2)|_{\calg_6}$ are perfect matchings of $\calg_5$ and $\calg_6$, respectively.}

\smallskip
The switching operation generalises in a straightforward
way when we replace the pair $(\calg_1,\calg_2)$ of crossing snake graphs by
\begin{itemize}
\item[-] a crossing pair consisting of a snake graph and a band graph,
\item [-] a crossing pair consisting of two band graphs, 
\item[-]
 a single snake graph or band graph  with a crossing self-overlap, as long as the self-overlap does not have an intersection. In particular, this applies to the case where the self-overlap is in the opposite direction.
 \end{itemize}

We illustrate this in the case of a crossing pair consisting of a snake graph and a band graph in the case (4c) of section~\ref{sect 3.1}, where the graph $\calg_{56}'$ consists of a single tile and $\calg_{56}$ is the unique boundary edge of $\calgSW_2$, see Figure \ref{figresolutionwithmatching1}.

\begin{figure}
\begin{center}
\scalebox{0.9}{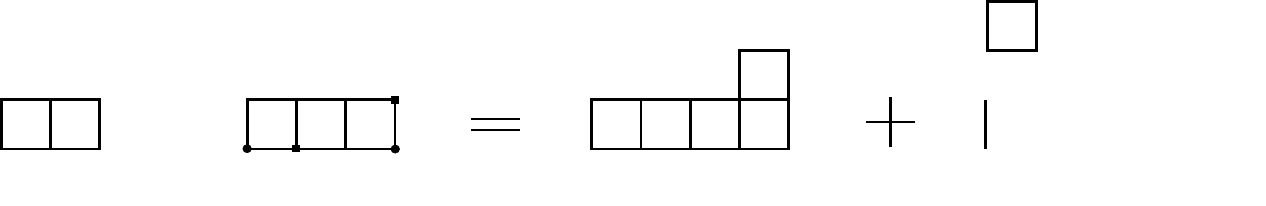}
\caption{ An example of a resolution of a snake and a band graph with $s=1,t=d$}
\label{figresolution}
\end{center}
\end{figure}

\begin{figure}
\begin{center}
\scalebox{0.9}{\Large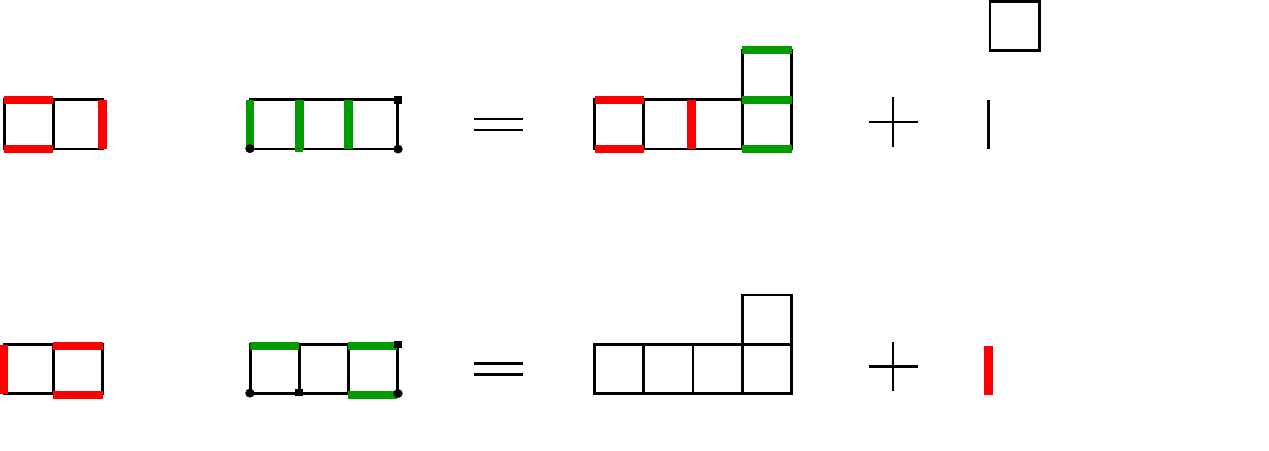}
\caption{An example of the bijection with $s=1,t=d$ with switching position in the top row, without switching position in the bottom row.}
\label{figresolutionwithmatching1}
\end{center}
\end{figure}

\subsection{Some lemmas about perfect matchings} 
Throughout this subsection, we consider a pair of crossing snake or band graphs, or a self-crossing snake or band graph, and we denote the overlap by $\calg$ and the embeddings by $i_1,i_2$. 
\begin{lem}
 \label{lemma1}
 Let $\calg_1$ be a snake or band graph with selfcrossing overlap $i_1(\calg)=\calg_1[s,t] $, $i_2(\calg)=\calg_1[s',t']$ with $s'=t+1$, and let $e_t$ denote the interior edge shared by the tiles $G_t$ and $G_{s'}$. If $P_1\in\match \calg_1$ has no switching position, then  $e_t\in P_1$.
\end{lem}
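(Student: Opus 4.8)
The plan is to argue by contradiction: assume $e_t\notin P_1$ and deduce that $P_1$ must, after all, have a switching position. Throughout, $\calg_1$ denotes the (snake) graph in which the self-overlap lives, and $f$ its fixed sign function; in the band graph case $\calg_1=(\calg_1^\circ)_b$ is the snake graph obtained by cutting along $b$. Since $i_2(\calg)=\calg_1[s',t']$ has to be an honest sub-snake-graph and $s'=t+1$, we get $t+1\le t'\le d$, so $t<d$ and $e_t$ is genuinely the interior edge between $G_t$ and $G_{t+1}$. The first point I would record is that the two embedded copies $i_1(\calg)=\calg_1[s,t]$ and $i_2(\calg)=\calg_1[t+1,t']$ overlap only in $e_t$: by the remark following the definition of a snake graph, tiles with indices differing by at least $2$ share no edge, and $G_t,G_{t+1}$ share exactly $e_t$; hence the edge sets of $i_1(\calg)$ and $i_2(\calg)$ intersect precisely in $\{e_t\}$ (and their only common vertices are the two endpoints of $e_t$). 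Writing $a_1:=i_1^{-1}(e_t)\in\calgNE$ and $a_2:=i_2^{-1}(e_t)\in\calgSW$, the edge $e_t$ is thus the \emph{unique} edge of $\calg_1$ lying in both copies.

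Next I would invoke Lemma \ref{lemboundary}: since $P_1$ has no switching position, the pulled-back restrictions $Q_1:=\{a\in E(\calg):i_1(a)\in P_1\}$ and $Q_2:=\{a\in E(\calg):i_2(a)\in P_1\}$ are matchings of $\calg$ consisting only of boundary edges of $\calg$. By Lemma \ref{lemNE}, a matching of a snake graph built only from boundary edges is necessarily one of the two boundary-edge matchings singled out there, and each of those is in fact a perfect matching of $\calg$ (the minimal, respectively maximal, matching). Consequently $Q_1$ and $Q_2$ are perfect matchings of $\calg$; pushing them forward through the graph isomorphisms $i_1,i_2$, this says that $P_1\cap E(i_1(\calg))$ and $P_1\cap E(i_2(\calg))$ are perfect matchings of the subgraphs $i_1(\calg)$ and $i_2(\calg)$, respectively.

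To conclude, pick an endpoint $p$ of $e_t$; it is a vertex of $i_1(\calg)$ (an endpoint of $i_1(a_1)$) and of $i_2(\calg)$ (an endpoint of $i_2(a_2)$). As $P_1\cap E(i_1(\calg))$ is a perfect matching of $i_1(\calg)$, some edge $\alpha\in P_1$ with $\alpha\in E(i_1(\calg))$ is incident to $p$; as $P_1\cap E(i_2(\calg))$ is a perfect matching of $i_2(\calg)$, some edge $\beta\in P_1$ with $\beta\in E(i_2(\calg))$ is incident to $p$. Since $P_1$ is a matching, $\alpha=\beta$, so $\alpha\in E(i_1(\calg))\cap E(i_2(\calg))=\{e_t\}$; that is, $e_t=\alpha\in P_1$, contradicting the assumption. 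Hence $e_t\in P_1$.

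The only step that calls for care is the passage from ``a matching by boundary edges'' to ``a perfect matching'': one needs to know that the two boundary-edge matchings of $\calg$ produced by Lemma \ref{lemNE} genuinely cover \emph{every} vertex of $\calg$, in particular the endpoints of $e_t$. This is the familiar statement that the minimal and maximal matchings of a snake graph use boundary edges only, and it is implicit in the surrounding theory. Apart from that, the proof uses nothing but the combinatorics of consecutive tiles together with Lemmas \ref{lemboundary} and \ref{lemNE}; the explicit self-crossing sign conditions are used only inasmuch as they put us in the setting where Lemma \ref{lemboundary} applies.
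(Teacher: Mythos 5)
Your argument has a genuine gap at its central step: the claim that the restrictions $Q_1=P_1|_{i_1(\calg)}$ and $Q_2=P_1|_{i_2(\calg)}$ are \emph{perfect} matchings of the two overlap copies. Lemma \ref{lemboundary} only tells you that these restrictions consist of boundary edges of $\calg$ and are complementary away from $\calgSW\cup\calgNE$; it does not tell you that they cover every vertex of their respective copies. Lemma \ref{lemNE} cannot supply the missing coverage, because its conclusions (3)--(4) are only valid for perfect matchings consisting of boundary edges (applied to a partial matching such as a single boundary edge of a two-tile snake graph, item (4) is simply false), so invoking it to upgrade $Q_1$ from ``boundary-edge matching'' to ``one of $P_\pm$'' is circular: you would first need to know $Q_1$ is perfect, which is exactly what your endpoint argument requires. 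The vertices at which perfectness can fail are precisely the endpoints of the entry and exit edges $e_{s-1}$ and $e_t$ of each copy --- that is, exactly the vertex $p$ at which you want to apply the covering argument. Concretely, suppose $G_{t+1}$ sits north of $G_t$ and $P_1$ contains the west edge $b_2$ of $G_{t+1}$ but not $e_t$: then the endpoint $p$ of $e_t$ is matched by $b_2$, which is an edge of $i_2(\calg)$ but not of $i_1(\calg)$, so $Q_1$ misses $p$ and is not a perfect matching of $i_1(\calg)$. This configuration is fully consistent with every conclusion of Lemma \ref{lemboundary} (the edge $b_2$ is the image under $i_2$ of a $\calgSW$-edge of $\calg$, which conditions (a)--(d) do not constrain), so nothing you have established rules it out, and your final step does not go through.

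Ruling out precisely this configuration is the actual content of the lemma, and it is what the paper's proof does: it observes that if $e_t\notin P_1$ then $b_2\in P_1$ or $b_2'\in P_1$ (where $b_2,b_2'$ are the west edge of $G_{t+1}$ and the east edge of $G_t$), and then checks against the three local no-switching configurations of \cite[Figure~7]{CS} that either membership forces a switching position. Your global strategy could in principle be salvaged, but only by importing that same local analysis to show that ``no switching position'' forces each $Q_j$ to cover the endpoints of $e_t$ --- at which point you have essentially reproduced the paper's argument. As a secondary point, in the band graph case with $t'=d$ the two copies $i_1(\calg)$ and $i_2(\calg)$ share the glueing edge in addition to $e_t$, so your identification of $e_t$ as the unique common edge, and hence the conclusion $\alpha=e_t$, needs the endpoint $p$ to be chosen away from the glueing edge; this is fixable but is not addressed in your write-up.
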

\begin{proof} 
 Without loss of generality, we may suppose that $G_{t+1}$ is north of $G_t$. Then let $b_2$ be the west  edge of $G_{t+1}$ and $b_2'$ be the east  edge of $G_t$
see Figure \ref{figswitchingtplusone}. Note that $b_2$ and $b_2'$ are boundary edges of $\calg_1$.
 Suppose that $P_1$ does not contain $e_t$. Then we have $b_2\in P_1$ or $b_2'\in P_1$.

 There are exactly 3 local configurations for which the matching $P_1$ does not have a switching position on $\calg_1[s-1,s+1]$ and $\calg_1[s'-1,s'+1]=\calg_1[t,t+2]$, and these 3 cases are listed in \cite[Figure 7]{CS}.
If $b_2\in P_1$, then $P_1$ is not one of these 3 cases, hence it has a switching position.
The case where $b_2'\in P_1$ is symmetric, where the symmetry is given by considering the opposite snake graph $\ocalg_1$. 
\end{proof}

\begin{figure}
\begin{center}
\small\scalebox{1}{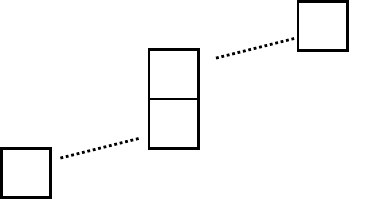}
\caption{Proof of Lemma \ref{lemma1}.}
\label{figswitchingtplusone}
\end{center}
\end{figure}

\begin{lem}\label{lemma3} Let $\calg_{12}$ be a pair of crossing snake graphs or a self-crossing  snake graph, and let $\res_{34}$ be the part of the resolution containing the overlap. Then  the switching operation 
\[\varphi \colon \{P\in\match\calg_{12} \mid P \textup{ has a switching position}\} \to \match\res_{34}\] is injective.
\end{lem}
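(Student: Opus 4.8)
The plan is to prove injectivity by exhibiting a left inverse for $\varphi$, namely the switching operation run in the opposite direction and applied at the switching position of the output. First I would recall from section~\ref{switching} that each of the two graphs comprising $\res_{34}$ carries an embedded copy of the overlap $\calg$: write $j_1\colon\calg\monoto\calg_3$ and $j_2\colon\calg\monoto\calg_4$ for these. By the construction of the resolution, $\calg_3$ is the concatenation of the portion of $\calg_1$ before $i_1(\calg)$, the overlap $j_1(\calg)$, and the portion of $\calg_2$ after $i_2(\calg)$, while $\calg_4$ concatenates the initial portion of $\calg_2$, the overlap, and the terminal portion of $\calg_1$. (In the self-crossing case $\calg_{12}=\calg_1$ one takes the roles of $\calg_1,\calg_2$ to be played by the two snake graphs $(\calg_1^\circ)_{b_1}$ and $(\calg_1^\circ)_{b_2}$, and $P=P_1$ is a single matching; the argument is unchanged.)

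Next I would take $P\in\match\calg_{12}$ with switching position underlying the vertex $v\in\calg$ --- the common starting point of the first pair in $\cals$ --- and set $(P_3,P_4)=\varphi(P)$. Unwinding the definition of the switching operation, $P_3$ restricts to the initial part of $P_1$ on the initial non-overlap part of $\calg_3$, to the relevant edges of $P_1$ before $v$ and of $P_2$ from $v$ on within $j_1(\calg)$, and to the terminal part of $P_2$ afterwards; $P_4$ is described by swapping $P_1$ and $P_2$. I would then show that $v$ is the \emph{first} switching position of the pair $(P_3,P_4)$ with respect to $j_1,j_2$: a switching pair of $(P_3,P_4)$ is by definition recorded by a vertex $w\in\calg$ inside the overlap copies, and if $w$ precedes $v$ then, since $P_3,P_4$ literally restrict to $P_1,P_2$ there, $P$ would have a switching position at $w$, contradicting minimality of $v$; and at $w=v$, reading off the local configuration of $P$ at $v$ from the finite list in \cite[Figures~6--11]{CS} shows that the induced configuration of $(P_3,P_4)$ again places an edge of $P_3$ and an edge of $P_4$ with starting vertex $v$, so $v$ is indeed a switching position of $(P_3,P_4)$.

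Finally I would run the switching operation from $\match\res_{34}$ back to $\match\calg_{12}$ at the first switching position of $(P_3,P_4)$, which by the previous step is $v$. Its output has first component equal to the edges of $P_3$ before $v$ together with the edges of $P_4$ from $v$ onward; by the description above these are precisely the initial part of $P_1$, then $P_1|_{i_1(\calg)}$, then the terminal part of $P_1$ --- that is, $P_1$ itself --- and similarly the second component is $P_2$. Hence reverse switching, restricted to matchings having a switching position, is a left inverse of $\varphi$, so $\varphi$ is injective.

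The hard part will be the middle step: verifying against the explicit local pictures of \cite{CS} that the switching configuration at $v$ is transported faithfully to $(P_3,P_4)$ --- so that $v$ survives as a switching position and no new one is created at or before $v$ --- and, relatedly, pinning down once and for all the convention for which of the two edges meeting at the corner $v$ goes into the ``before'' part and which into the ``after'' part, so that the two invocations of the switching operation (inside $\varphi$ and inside its inverse) are genuinely mutually inverse. Everything else is bookkeeping built on the decomposition of $\calg_3,\calg_4$ into initial part / overlap / terminal part.
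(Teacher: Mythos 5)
Your proposal is correct and matches the paper's argument in substance: the paper likewise shows that the switching position $x$ of $P$ maps to the \emph{first} switching position of $\varphi(P)$ (an earlier one would pull back to an earlier switching position of $P$), and then concludes that $P$ is determined by $\varphi(P)$ before and after that position. Packaging this as an explicit left inverse via reverse switching, rather than as a direct $\varphi(P)=\varphi(P')\Rightarrow P=P'$ argument, is only a cosmetic difference.
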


\begin{proof}
Let $\calg$ denote the overlap. Suppose $ \varphi(P)=\varphi(P') $, where $P$ has switching position $x\in \calg$ and $P'$ has switching position $x'\in \calg$. Let $a,c$ be the edges in $\calg$ with starting point $x$ (or endpoint $x$) such that $i_1(a), i_2(c)\in P $ are the switching edges in $\calg_1$. Then $\varphi(i_1(a))$ and $\varphi(i_1(c))$ have starting points $i_3(x)$ and $i_4(x)$ respectively, where $i_3, i_4$ denote the embeddings of the overlap into $\res_{34}$. Moreover, $x$ is the first such point in $\calg$ because otherwise we would have had an earlier switching position for $P$ in $\calg_1$. Thus $i_3(x), i_4(x)$ is a switching position for $\varphi(P')$, and it follows that $x=x'$. Moreover, the assumption $\varphi(P)=\varphi(P') $ implies that $P=P'$ before and after the switching position, hence $P=P'$.
\end{proof}

\subsection{Bijection}
 The following theorem is the main result of this section.

\begin{thm}\label{thmbijection} Given two crossing snake or band graphs, or a single snake or band graph with a self-crossing, there is a bijection between the set of perfect matchings of the crossing graph and the set of perfect matchings of the resolution of the crossing.
\end{thm}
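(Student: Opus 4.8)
The strategy is to establish the bijection case by case, according to the combinatorial type of the crossing, using the \emph{switching operation} of Section~\ref{switching} as the main construction. The key observation is that every perfect matching $P$ of a crossing snake or band graph either has a switching position or it does not, and these two possibilities correspond exactly to the two summands $\res_{34}$ and $\res_{56}$ of the resolution. So I would first prove that $\varphi$ restricts to a bijection
\[
\{P\in\match\calg_{12}\mid P\textup{ has a switching position}\}\;\longleftrightarrow\;\match\res_{34},
\]
and then prove that the restriction map $P\mapsto (P|_{\res_5},P|_{\res_6})$ (or the analogous map for the band-graph cases) is a bijection
\[
\{P\in\match\calg_{12}\mid P\textup{ has no switching position}\}\;\longleftrightarrow\;\match\res_{56}.
\]
Adding these two bijections gives the theorem, since by definition $\match(\res_{34})\cup\match(\res_{56})$ (a disjoint union, after accounting for signs) is $\match$ of the resolution.

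\textbf{Step 1: the switching bijection onto $\res_{34}$.} Injectivity of $\varphi$ is already Lemma~\ref{lemma3} for the pure snake-graph cases, and the argument there — tracking that the switching position is preserved and that $P$ agrees with $P'$ both before and after it — goes through verbatim once one checks that the local configurations at a switching position are the same whether an endpoint of the overlap sits inside a band graph or a snake graph; this is why Section~\ref{switching} was careful to note the generalisation to band graphs and to self-overlaps without an intersection. For surjectivity I would take a matching $Q$ of $\res_{34}$, locate its (necessarily existing, by the crossing hypothesis and the sign conditions in Definitions~\ref{crossing}, \ref{defcrossbands}, \ref{def self-crossing band}) switching position, and apply the inverse switching operation to produce the preimage pair; one must check the result is a \emph{good} perfect matching when a band graph is involved, which follows because the gluing edge of the band is an interior edge and the switching does not disturb the condition "$P\sqcup\{e\}$ is a matching of the cut snake graph'' — one can take $e$ to be the same gluing edge $b$, $b'$, $a$ or $a'$ as before. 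The crossing conditions are exactly what guarantee that the two candidate switching positions for $Q$ in $\res_3$ and $\res_4$ really do match up, so that the inverse is well-defined.

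\textbf{Step 2: the no-switching-position bijection onto $\res_{56}$.} Here Lemma~\ref{lemboundary} is the workhorse: when $P$ has no switching position, $P|_{i_1(\calg)}$ and $P|_{i_2(\calg)}$ are complementary sets of boundary edges of $\calg$, so $P$ is essentially forced on the overlap and is determined by its restriction to the "outer'' parts, which is precisely a matching of $\res_{56}$. For the snake-and-band case one uses the explicit description of $\calg_{56}$ in the four cases (1)--(4) of Section~\ref{sect 3.1}, invoking Lemma~\ref{lemNE} to see which boundary edges of the trimmed copy $\overline{\calg}_2[d',t'+1]$ are consistent with a matching having no switching position — the trimming by $\pred(e)$ / $\suc(e')$ removes exactly the tiles where no valid boundary configuration survives. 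For self-crossing band graphs one additionally uses Lemma~\ref{lemma1} to pin down that the interior edge $e_t$ lies in any no-switching matching, which is what lets the resolution split off the bracelet pieces $\band_3\sqcup\band_4$ (resp.\ $\band_5\sqcup\band_6$) cleanly. The signs appearing in the definition of $\calg_{56}^\circ$ (the $\pm$, the $-2$, the $-\emptyset$, the $0$) are matched by checking whether the two forced boundary sign-patterns agree or are opposite, i.e.\ by a direct application of parts (1)--(2) of Lemma~\ref{lemNE}.

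\textbf{Main obstacle.} The hard part will be the bookkeeping in the degenerate subcases — in particular case (4) of Section~\ref{sect 3.1} and case (3b) of Section~\ref{sect 3.3}, where $\calg_{56}$ can collapse to a single edge, to $\emptyset$, or to $0$, and where a change of cutting edge $(s,t)\leftrightarrow(s',t')$ is used to reduce to an earlier case. In these situations "$\match$ of the resolution'' has to be interpreted correctly ($\match(\emptyset)$ is a single empty matching, $\match(0)$ is empty, a single edge has a unique matching), and one must verify that the number of no-switching matchings on the original graph agrees with this convention; the bracelet identities $\brac_2(\calg_4^\circ)$ and the sign $\pm$ in front of $\calg_{56}^\circ$ are precisely where a naive count would be off by the wrong sign unless one carefully tracks, via Lemma~\ref{lemNE}(1)--(2), the parity of the forced boundary matching. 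I expect that once Steps 1 and 2 are set up as general principles, each remaining case is a finite, if tedious, verification against the figures, and the whole proof is a structured induction-free case analysis rather than anything requiring a new idea.
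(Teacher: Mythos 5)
The overall architecture you propose --- partition $\match\calg_{12}$ into matchings with and without a switching position, send the former to the matchings of $\calg_{34}$ by switching and the latter to the matchings of $\calg_{56}$ by restriction --- is exactly how the paper (following \cite{CS,CS2}) handles crossings between two graphs and self-crossings whose overlaps do not intersect, so your Steps 1 and 2 are fine there. The genuine gap is in the intersecting self-overlap cases for band graphs, above all case (3)(b) of Section~\ref{sect 3.3} ($s'=t+1$, $t'<d$). You file this under ``main obstacle / tedious bookkeeping,'' but the paper states explicitly that already in the snake-graph version of this case ``the switching operation alone was not enough to define this bijection,'' and it does not attempt your Step 2 there at all. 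Instead it proves the identity $\calg_1=\calg_3\band_4+\calg_{56}$ by an algebraic computation (Section~\ref{sect 441}): multiply by the edge $e_t$, split $\calg_1$ via the grafting identity, resolve the resulting crossings --- which involve a \emph{secondary} overlap $\calh$ in the opposite direction and deliberately avoid the configuration $s'=t+1$ --- then invoke the self-grafting and bracelet formulas and cancel. Your restriction map cannot be made to work here: when the two overlaps share the edge $e_t$, the term $\calg_{56}^\circ$ carries a sign $\pm$ determined by $f(e_{t'})$ versus $f(e_{t'+\ell})$ and can even be $0$, so the statement is an identity of \emph{signed} counts requiring cancellation, and no application of Lemma~\ref{lemNE} to forced boundary configurations produces the cancelling pairs. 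This is a missing idea, not missing bookkeeping.

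A second, smaller error: in Step 1 you assert that every matching $Q$ of $\calg_{34}$ has a ``necessarily existing'' switching position by the crossing hypothesis. This is false in precisely the cases where $\calg_{56}$ is negative: in case (3)(a) there is exactly one exceptional matching $\bar P$ of $\band_3\sqcup\band_4$ with no switching position (Lemma~\ref{lemma4}), and in the bracelet case $2s'-2=d$ there are exactly two (proof of case (1)). In those cases the direction of your dichotomy reverses --- \emph{every} matching of $\calg_{12}$ has a switching position (Lemmas~\ref{lembracelet} and~\ref{lemma2}), the switching map is injective but \emph{not} surjective onto $\match(\band_3\sqcup\band_4)$, and the leftover matchings on the resolution side are what the negative terms $-2$ and $-\emptyset$ subtract off. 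Your framework assumes $\calg_{56}$ always receives the no-switching matchings of the original graph; the paper has to prove, case by case, on which side the exceptional matchings live and that their number matches the negative term exactly.
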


The theorem has been proved for pairs of snake graphs in \cite{CS}, and for self-crossing snake graphs in \cite{CS2}. Except for the case were the self-crossing snake graph has an intersection of overlaps, these proofs all use the switching operation described in section \ref{switching}. 
 The proof is analogous for the cases of  a crossing between a snake graph and a band graph, a crossing between two band graphs, as well as for a self-crossing of a single band graph as long as the self-overlap does not have an intersection.
 
 It therefore remains to show the theorem for a single band graph  with a crossing self-overlap which has an intersection. In other words, we are considering the cases (1) and (3) of section~\ref{sect 3.3}.
 Recall that $\calg_1=(\band_1)_b=(G_1,G_2,\ldots,G_d)$ and the two overlaps are given by
 \[ i_1(\calg) =\calg_1[1,t] \quad i_2(\calg) =\calg_1[s',t']\]
 Case (1). If $s'\le t$, then there are the two subcases $2s'-1\le d$ and $2s'-2=d$.
In the case  $2s'-1\le d$, the overlap is not the whole band graph and we have $1<s'\le t<t'<d$, and the proof is analogous to the proof in the case of a self-crossing snake graph with $s'\le t$ in \cite[Section 4.4]{CS2}.

Suppose therefore that  $2s'-2=d$.
We illustrate this case in an example in Figure \ref{figresolutionwithmatching}.
 We have already seen in section \ref{sect 3.3} that in this case we have $t=d$ and $\band_1$ is the 2-bracelet $\brac_2(\band_4)$ of $\band_4.$ Moreover, $\band_3  \cong \band_4$.
 
\begin{figure}
\begin{center}
\scalebox{0.7}{\Large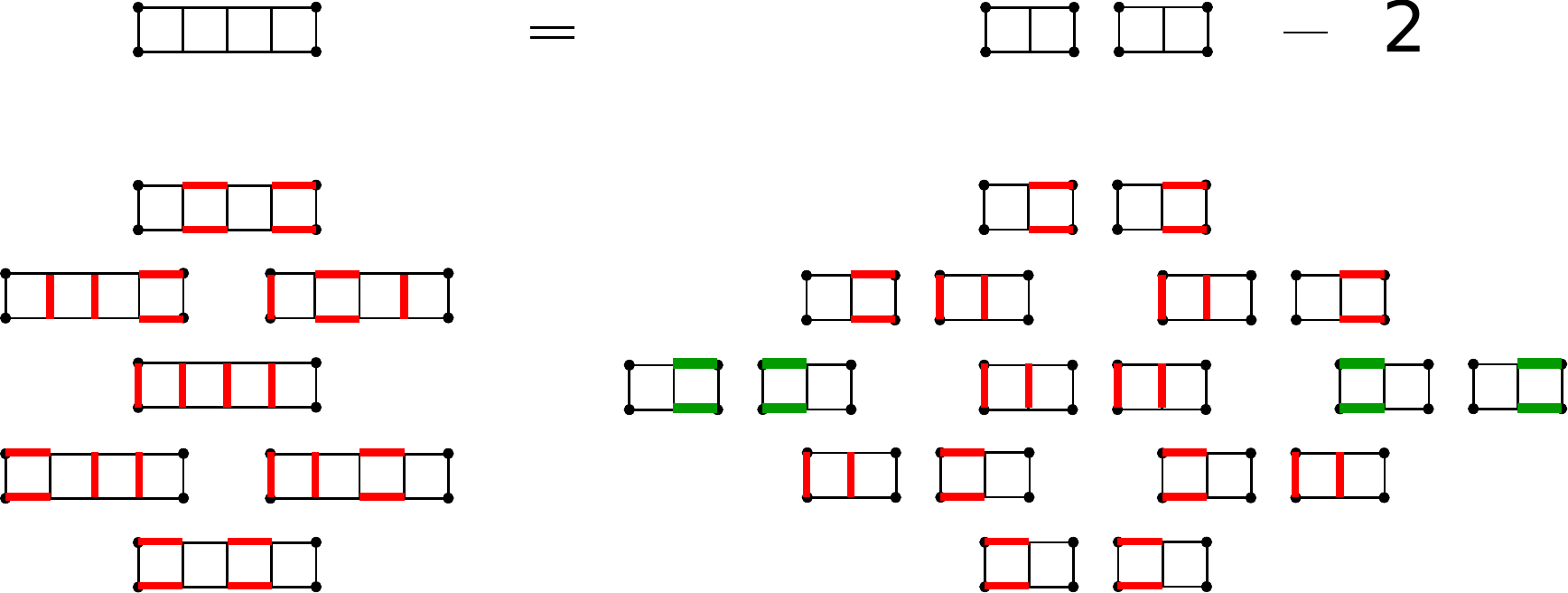}
\caption{The bijection for the resolution of the 2-bracelet of a band graph with two tiles. The 7 perfect matchings of the bracelet on the left are mapped to the 7 perfect matchings of the square of the band graph shown in red on the right. The 2 green perfect matchings are the only matchings of the square of the band graph for which there is no switching position. These two matchings correspond to $\band_{56}=-2$.}
\label{figresolutionwithmatching}
\end{center}
\end{figure}

\begin{lem}
 \label{lembracelet} 
  Let $\band_1$ be a band graph with selfcrossing overlap $i_1(\calg)=\calg_1[1,d] $, $i_2(\calg)=\calg_1[s',d]\cup \calg_1[1,s'-1]$ with $2s'-2=d$. Then every $P\in\match \calg_1$ has a switching position.
\end{lem}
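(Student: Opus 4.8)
The plan is to prove the lemma by contradiction: assume the perfect matching $P$ has no switching position and show that then $P$ cannot even be a good perfect matching of $\band_1$. The starting point is Lemma~\ref{lemboundary}. For this self-overlap the embedding $i_1$ identifies $\calg$ with the whole snake graph $\calg_1=(\band_1)_b=(G_1,\dots,G_d)$, while $i_2$ identifies $\calg$ with the whole snake graph $(\band_1)_{e_{s'-1}}$ obtained by cutting $\band_1$ at the interior edge $e_{s'-1}$, so that $i_2(\calg)=\calg_1[s',d]\cup\calg_1[1,s'-1]$ wraps once around the band. Since the interior edges of $\calg_1$ are $e_1,\dots,e_{d-1}$ and the interior edges of $(\band_1)_{e_{s'-1}}$ are $\{e_0,e_1,\dots,e_{d-1}\}\setminus\{e_{s'-1}\}$, where $e_0=b$ is the gluing edge, applying part (a) of Lemma~\ref{lemboundary} through $i_1$ shows that $P$ contains none of $e_1,\dots,e_{d-1}$, and applying it through $i_2$ shows in addition that $P$ does not contain $e_0$. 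Hence $P$ contains no interior edge of $\band_1$; equivalently, $P$ is a perfect matching of the subgraph $\partial\band_1$ formed by the boundary edges of $\band_1$, which covers every vertex of $\band_1$ because band graphs contain no $2\times 2$ block of tiles, by the Remark following the definition of a snake graph.

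The contradiction is then extracted using the fact that $\band_1=\brac_2(\band_4)$, with $d=2(s'-1)$ and $\band_3\cong\band_4$. Write $\sigma$ for the fixed-point-free order-$2$ automorphism of $\band_1$ given by the half-turn $G_i\mapsto G_{i+s'-1}$ with indices read cyclically modulo $d$; then $\sigma(e_0)=e_{s'-1}$, so $\sigma$ carries the cut-open snake graph $(\band_1)_b$ isomorphically onto $(\band_1)_{e_{s'-1}}$, and under this identification $i_2=\sigma\circ i_1$. Because $P$ is a good perfect matching avoiding every interior edge of $\band_1$, there is an interior edge $e$ with $e\notin P$ such that $P\sqcup\{e\}$ is a perfect matching of the snake graph $(\band_1)_e$; this perfect matching uses only boundary edges of $(\band_1)_e$, so by Lemma~\ref{lemNE} it is the unique boundary matching of $(\band_1)_e$ determined by the sign of the cut edge. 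Running this description simultaneously for the cuts at $e_0$ and at $e_{s'-1}=\sigma(e_0)$, using $f(b)=f(b')$ and transporting one description to the other by $\sigma$, pins $P$ down so tightly that one can read off, from the local configuration of $P$ at the two endpoints of $b$ inside the overlap, two distinct edges of $P$ with a common starting vertex there; this is a switching position, contradicting the hypothesis. (Alternatively, in place of this last step one invokes the list of possible local configurations of a matching at a potential switching vertex from \cite[Figures~6--11]{CS}: ``no switching position'' would force the non-switching configuration at every common starting vertex, and this cannot be sustained consistently all the way around the cyclic overlap, because the two cyclic boundary components of $\band_1$ are even-length cycles on which the half-turn $\sigma$ acts.)

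The step I expect to be the main obstacle is the sign-function bookkeeping in the second paragraph. One has to keep precise track of the two induced sign functions on the two different cut-open presentations $(\band_1)_b$ and $(\band_1)_{e_{s'-1}}$ of $\band_1$ — in particular of the orientation reversal that occurs when $\band_4$ has an odd number of tiles, so that in $\calg_1^\infty$ alternate copies of $\band_4$ are flipped — in order to apply Lemma~\ref{lemNE} on both cuts at once and compare the resulting boundary matchings. One should also dispose separately of the degenerate case $d=2$, where $\band_4$ is a single tile, $v$ and $\sigma(v)$ can be adjacent, and the two edges produced at the end must be checked to be genuinely distinct.
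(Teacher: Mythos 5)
Your first paragraph is sound and coincides with the first step of the paper's proof: Lemma \ref{lemboundary} (applied through both embeddings, whose images together account for every interior edge of $\band_1$ including the glueing edge) forces $P$ to consist of boundary edges only. The second paragraph, however, does not deliver the contradiction, and the gap is not merely the ``sign-function bookkeeping'' you flag. The decisive omission is that your argument never invokes the crossing hypothesis $f(e_t)=f(e_{s'-1})$ (equivalently $f(b)=f(e_{s'-1})$, since $t=d$ makes $e_t$ the glueing edge); you only use $f(b)=f(b')$, which holds for every band graph. But the statement is false without the crossing hypothesis: for the same band graph with a non-crossing whole-band-graph self-overlap (the left-hand configuration in Figure \ref{straight}), the two complementary all-boundary matchings have no switching position, and that is exactly why that overlap is declared non-crossing. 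Any symmetry argument that ``cannot be sustained all the way around the cycle'' must therefore break precisely on the sign comparison between $b$ and $e_{s'-1}$, and your sketch never makes that comparison. A second, smaller problem: the object you propose to exhibit --- ``two distinct edges of $P$ with a common starting vertex'' at an endpoint of $b$ --- is not what a switching position is here. A switching position requires edges $i_1(a),i_2(c)\in P$ where $a$ and $c$ start at the same vertex $x$ of the overlap $\calg$; since $i_1(x)$ and $i_2(x)$ are two \emph{different} vertices of $\band_1$ (related by your half-turn $\sigma$), you are looking for matched edges at two antipodal vertices, not two edges meeting at one vertex.

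The paper's argument is shorter and entirely sign-theoretic, with no appeal to goodness of $P$ or to the local configurations of \cite{CS}: once $P$ is known to consist of boundary edges, Lemma \ref{lemNE}(1)--(2) applied to $\calg_1=(\band_1)_b$ shows all SW edges of $P$ share one sign and all NE edges of $P$ share the opposite sign; the crossing condition gives $f(b)=f(e_{s'-1})$, so the \emph{boundary} edges in ${}_{SW}G_1$ and ${}_{SW}G_{s'}$ also have equal signs and hence are either both in $P$ or both absent from $P$; but these two edges are $i_1(a)$ and $i_2(a)$ for the same SW boundary edge $a$ of the overlap, so the complementarity furnished by Lemma \ref{lemboundary} requires exactly one of them to lie in $P$. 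That is the contradiction. If you want to keep your $\sigma$-equivariance framing, the fix is to observe that the crossing condition is exactly the statement that $\sigma$ preserves (rather than reverses) the induced sign function, and then the Lemma \ref{lemNE} description of $P$ forces $P$ to be $\sigma$-invariant, which is incompatible with complementarity of $P|_{i_1(\calg)}$ and $P|_{i_2(\calg)}$.
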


\begin{proof}
 Suppose that $P$ has no switching position. Then Lemma \ref{lemboundary} implies that the restriction of $P$ to the two overlaps $i_1(\calg)$ and $i_2(\calg)$ consist {of} complementary boundary edges of the overlap. Since both $i_1(\calg)$ and $i_2(\calg)$ are equal to the whole band graph $\band_1$, we see that $P$ consists entirely of boundary edges of $\band_1$. It follows from Lemma \ref{lemNE} that all south and all west edges in $P$ have the same sign $\ze$ and all north and all east edges in $P$ have the same sign $-\ze$. Since $t=d$, the glueing edge $b$ is the same as the edge $e_t$. Moreover, since the overlap is crossing, we know that the sign of $e_{s'-1}$ is equal to the sign of $e_t=b$.  Since $b\in{}_{SW}G_1$ and $e_{s'-1}\in{}_{SW}G_{s'}$ are interior edges, it follows that the boundary in ${}_{SW}G_1$ and ${}_{SW}G_{s'}$ have the same sign. Thus the boundary edge in ${}_{SW}G_1$ is in $P$ if and only if 
  the boundary edge in ${}_{SW}G_{s'}$ is in $P$, which contradicts the assumption that the restriction of $P$ to the two overlaps $i_1(\calg)$ and $i_2(\calg)$ consist {of} \emph{complementary} boundary edges.  
\end{proof}

\begin{proof} [Proof of Theorem \ref{thmbijection} in case (1)]   Let $\varphi:\match \band_1\to\match (\band_3,\band_4) $ be the switching operation. By Lemma~\ref{lembracelet}, every $P\in \match \band_{1}$ has a switching position and thus the map $\varphi $ is well-defined. It is injective, by Lemma \ref{lemma3}. 
The only perfect matchings of $\match (\band_3,\band_4) $ that are not in the image of $\varphi$ are those which have no switching position. 
Since $\band_3\cong \band_4$ and  the overlap  consists in the whole band graph, the only two matchings without switching positions are the pairs $(P_-,P_+)$ and $(P_+,P_-)$ consisting in complementary boundary edges. 
This proves the theorem. 
\end{proof}
%

\noindent Case (3). If $s'=t+1$ then we have two subcases.

(a) First assume that $t'=d$.
We shall need two Lemmas before presenting the proof of the theorem.

\begin{lem}
 \label{lemma2}
 If $\band_1$ is a band graph with crossing self-overlap $i_1(\calg)=\calg_1[1,t], i_2(\calg)=\calg_1[s',t']$ with $s'=t+1$ and $t'=d$, then every matching $P_1\in \match\band_1$ has a switching position.
\end{lem}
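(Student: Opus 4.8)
The plan is to argue by contradiction, in the same spirit as the proof of Lemma~\ref{lembracelet}. Assume $P_1\in\match\band_1$ has no switching position. The hypotheses $s'=t+1$ and $t'=d$ force $d=2t$, with $i_1(\calg)=\calg_1[1,t]$ and $i_2(\calg)=\calg_1[t+1,d]$, so that the two copies of the overlap together exhaust $\band_1$ and meet exactly along the edge $e_t$ and the glueing edge $b=e_0=e_d$. By Lemma~\ref{lemma1} we already know $e_t\in P_1$, and by Lemma~\ref{lemboundary} the restrictions $P_1|_{i_1(\calg)}$ and $P_1|_{i_2(\calg)}$ consist of complementary boundary edges of $\calg$; since $i_1(\calg)\cup i_2(\calg)=\band_1$, every edge of $P_1$ is then the image under $i_1$ or $i_2$ of a boundary edge of $\calg$, and every such image except possibly $b$ is a boundary edge of $\band_1$. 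A short argument using the definition of good perfect matching (cut $\band_1$ at the interior edge witnessing goodness and observe that $P_1$ can meet the interior of the resulting snake graph only in $b$) lets me assume that, after cutting $\band_1$ along $e_t$, the set $P_1\sqcup\{e_t\}$ is a perfect matching of the snake graph $(\band_1)_{e_t}\cong\calg_1$ consisting entirely of boundary edges. Then Lemma~\ref{lemNE} applies: there is a sign $\ze$ such that all south and west boundary edges in $P_1$ have sign $\ze$ and all north and east ones have sign $-\ze$.

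The second step transports this sign information across the two copies of the overlap. Using that $i_1$ and $i_2$ are orientation preserving, write $\alpha\in\calgSW$ and $\beta\in\calgNE$ for the edges of the abstract overlap $\calg$ with $i_1(\alpha)=b$ and $i_1(\beta)=e_t$, and $\gamma\in\calgSW$, $\delta\in\calgNE$ for those with $i_2(\gamma)=e_t$ and $i_2(\delta)=b$. Equality of the two sign functions induced on $\calg_1$ gives $f(\alpha)=f(\delta)=f(b)$ and $f(\beta)=f(\gamma)=f(e_t)$, while $f(b)=f(b')$ holds automatically by the band-graph construction. Feeding these relations into Lemma~\ref{lemNE} (applied to $(\band_1)_{e_t}$) and into the complementarity of Lemma~\ref{lemboundary}(b), I would deduce that a boundary edge of $\band_1$ incident to the first tile of one copy of the overlap must lie in $P_1$ exactly when the corresponding boundary edge incident to the first tile of the other copy does, whereas complementarity forbids both; this is the contradiction. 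Depending on which of the two edges of $\calgSW$ equals $\gamma$, one runs this comparison at the ${}_{SW}$ end or, symmetrically via the opposite snake graph $\ocalg_1$, at the ${}_{NE}$ end.

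The main obstacle will be the sign and position bookkeeping of the second step: one must keep track of which of the two edges of $\calgSW$ and of $\calgNE$ is hit by $i_1$ versus by $i_2$ (this is where ``same direction'' is essential), and verify that the edges being compared are genuine boundary edges of $\band_1$ rather than the interior edges $e_t$ or $b$. Since the crossing condition of Definition~\ref{def self-crossing band}(a) is vacuously satisfied when $s'=t+1$ and $t'=d$, all the structural content sits in the fact that $t'=d$ glues the two copies of $\calg$ head to tail; making this precise, together with the small technical reduction to a boundary matching at the end of the first step, is the delicate part. Once the dictionary between the edges of $\band_1$ near the two ends of the overlap and the edges of $\calgSW\cup\calgNE$ is nailed down, the sign clash and the contradiction follow exactly as in Lemma~\ref{lembracelet}.
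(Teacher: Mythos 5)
Your first step is fine and agrees with the paper: Lemma~\ref{lemma1} gives $e_t\in P_1$. But the reduction that follows breaks down, and it breaks down precisely because of that first step. You claim that, after cutting along $e_t$, the set $P_1\sqcup\{e_t\}$ is a perfect matching of $(\band_1)_{e_t}$ consisting entirely of boundary edges, so that Lemma~\ref{lemNE} applies. This cannot happen: $e_t$ is an interior edge of $\band_1$ and you have just shown $e_t\in P_1$, so $P_1$ is not an all-boundary matching of any cut of $\band_1$ (and $P_1\sqcup\{e_t\}$ is not even a disjoint union). The situation is genuinely different from Lemma~\ref{lembracelet}, where each overlap copy is the \emph{whole} band graph and Lemma~\ref{lemboundary} really does push $P$ onto the boundary of $\band_1$; here each copy is only half of $\band_1$, and the ``boundary edges of $\calg$'' that Lemma~\ref{lemboundary} permits include $e_t$ and the glueing edge $b$, which are interior in $\band_1$. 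A second, independent problem is that the sign clash you are aiming for is not available: with $s'=t+1$ and $t'=d$ the crossing condition $f(e_t)=f(e_{s'-1})$ is an identity and carries no information, and nothing in the hypotheses relates $f(b)$ to $f(e_t)$, so the Lemma~\ref{lembracelet}-style parity argument has no input to work with.

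The missing idea is much simpler and is what the paper does: apply Lemma~\ref{lemma1} a \emph{second} time with the roles of $i_1(\calg)$ and $i_2(\calg)$ interchanged. In the cyclic order on $\band_1$, the copy $\calg_1[1,t]$ immediately follows the copy $\calg_1[t+1,d]$ across the glueing edge $b=e_0=e_d$, so the hypotheses of Lemma~\ref{lemma1} hold again after the swap and yield $b\in P_1$ as well. Now $e_t$ and $b$ are edges of $P_1$ incident to the northeast corners of $G_t$ and $G_d$, the last tiles of the two overlap copies, and these two vertices correspond under the overlap isomorphism; that is exactly a switching position, giving the contradiction with no sign bookkeeping at all.
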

\begin{proof}
 Suppose that $P_1$ does not have a switching position. By Lemma \ref{lemma1} we know that $P_1$ contains the edge $e_t$. 
 Interchanging the roles of $i_1(\calg)$ and $i_2(\calg)$, the same argument shows that  $P_1$ contains the glueing edge $b'$ of the band graph $\band_1$. But the endpoint of this glueing edge and the endpoint of $e_t$ form a switching position for $P_1$, a contradiction.
\end{proof}

\begin{lem}
 \label{lemma4}   If $\band_1$ is a band graph with crossing self-overlap $i_1(\calg)=\calg_1[1,t], i_2(\calg)=\calg_1[s',t']$ with $s'=t+1$ and $t'=d$, then there exists a unique matching $\bar P$ such that
 every matching $P'$ of $\match(\band_3,\band_4)\setminus\{\bar P\}$  has a switching position $i_3(x'), i_4(x')$ where $x'\in\calg$ is the starting point of two edges $a,c$ in $\calg$ such that $i_3(a), i_4(c)\in P'$.
 
Moreover, the  exceptional matching $\bar P$  consists of complementary boundary edges of $\band_3,\band_4$ such that the boundary edges in $\calgNE_3$ and $\calgNE_4$ are in $\bar P$.
\end{lem}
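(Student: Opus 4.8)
The plan is to follow the strategy used for Theorem~\ref{thmbijection} in the cases treated in \cite{CS,CS2} and for Lemma~\ref{lembracelet}: the matching $\bar P$ will be identified as the unique matching of $\match(\band_3,\band_4)$ not in the image of the switching operation $\varphi$. First note that, since $s'=t+1$ and $t'=d$ and the two embedded copies of $\calg$ have the same number $t$ of tiles, we have $d=2t$, $i_1(\calg)=\calg_1[1,t]$ and $i_2(\calg)=\calg_1[t+1,2t]$; moreover $\calg_3^\circ=(\calg_1[t+1,2t])^{b'}$ and $\calg_4^\circ=(\calg_1[1,t])^{e_t}$, so that $i_3$ embeds $\calg$ onto the full cut-open snake graph $(\calg_3^\circ)_{b'}=\calg_1[t+1,2t]$ underlying $\band_3$, and $i_4$ embeds $\calg$ onto $(\calg_4^\circ)_{e_t}=\calg_1[1,t]$ underlying $\band_4$. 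By Lemma~\ref{lemma2}, every $P_1\in\match\band_1$ has a switching position, so $\varphi\colon\match\band_1\to\match(\band_3,\band_4)$ is everywhere defined, and it is injective by Lemma~\ref{lemma3}, whose proof applies verbatim. Running the switching moves of \cite[Figures 6--11]{CS} backwards shows that a matching $P'\in\match(\band_3,\band_4)$ lies in the image of $\varphi$ if and only if it has a switching position $i_3(x'),i_4(x')$ with $i_3(a),i_4(c)\in P'$ for some edges $a,c$ of $\calg$ starting at $x'$. It therefore suffices to show that there is exactly one matching $\bar P$ lacking such a switching position, and that it has the asserted form.

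So suppose $P'=(P_3,P_4)\in\match(\band_3,\band_4)$ has no switching position. Applying Lemma~\ref{lemboundary} to the overlap $\calg$ with the embeddings $i_3,i_4$ shows that the restrictions $P_3|_{i_3(\calg)}$ and $P_4|_{i_4(\calg)}$ consist of complementary boundary edges of $\calg$. Since $i_3(\calg)$ is all of $\band_3$ and $i_4(\calg)$ is all of $\band_4$, the matching $P_3$ consists entirely of boundary edges of $\band_3$ and $P_4$ entirely of boundary edges of $\band_4$.

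Next I would determine these boundary matchings explicitly. Cutting $\band_3$ along the glueing edge $b'$ gives the snake graph $\calg_1[t+1,2t]$, and cutting $\band_4$ along $e_t$ gives $\calg_1[1,t]$; by Lemma~\ref{lemNE}, a matching of a snake graph consisting of boundary edges only is determined by the sign of any single one of its edges, so each of $P_3,P_4$ has at most two candidates, distinguished according to whether the north and east boundary edges, or the south and west boundary edges, belong to it. The relation needed to cut this down comes from the self-overlap being in the same direction, which forces the two induced sign functions on $\calg_1$ to agree; evaluating at the interior edge $e_t$ between the two halves and at the glueing edge $e_0=e_d=b$ of $\band_1$ yields $f(e_t)=f(b)=f(b')$, so the glueing of $\band_3$ identifies $e_t$ with $b'$ and that of $\band_4$ identifies $e_t$ with $b$. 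Combining this with the complementarity of $P_3$ and $P_4$ on the interior boundary edges of $\calg$ from Lemma~\ref{lemboundary}(b)--(d), and feeding it into Lemma~\ref{lemNE}(3)--(4), one is left with a single pair, namely the one in which the boundary edges of $\calgNE_3$ and of $\calgNE_4$ belong to $P'$; this is the claimed matching $\bar P$. Conversely, $\bar P$ consists of complementary boundary edges of $\band_3$ and $\band_4$, so, as in the proof of Lemma~\ref{lembracelet}, it has no switching position; hence it is genuinely not in the image of $\varphi$. This completes the proof.

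I expect the main obstacle to be the sign bookkeeping of the third paragraph, i.e.\ proving that exactly one boundary pair survives. This contrasts with Lemma~\ref{lembracelet}, where both $(P_-,P_+)$ and $(P_+,P_-)$ survive; the difference is structural, in that there the overlap is the whole band graph on each side and the two boundary matchings of $\band_3$ and $\band_4$ can be chosen independently, whereas here the overlap is only ``half'' of $\band_1$, so the complementarity forced by Lemma~\ref{lemboundary} together with the identifications $f(e_t)=f(b)=f(b')$ links the orientations of the two boundary matchings, leaving a single valid pair. Small cases such as $t=1$ can be verified directly.
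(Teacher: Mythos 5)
Your setup is fine and agrees with the paper's first step: $d=2t$, the embeddings $i_3,i_4$ identify $\calg$ with all of $\band_3$ and all of $\band_4$, and Lemma~\ref{lemboundary} reduces a matching without switching position to one consisting of complementary boundary edges. The gap is in your third paragraph. The sign identity $f(e_t)=f(b)=f(b')$ is false: the same-direction condition only forces corresponding \emph{interior} edges of the two copies of the overlap to agree, i.e.\ $f(e_j)=f(e_{t+j})$ for $j=1,\dots,t-1$, and says nothing about $e_t$ versus $e_{2t}=b$. In the crossing configuration of this lemma one in fact has $f(e_t)=-f(b)$ (otherwise the overlap could be extended by wrapping around, and the glueing edges of $\band_3$ and $\band_4$ would correspond to the \emph{same} edge of $\calg$); the two-tile example of Figure~\ref{band2tiles} already shows this, since there the crossing case is the straight band graph with $f(e_1)=-f(e_2)$ and the zigzag one with $f(e_1)=f(e_2)$ is non-crossing.

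More importantly, even with the signs corrected, no amount of sign bookkeeping will leave you with ``a single pair'': both complementary boundary matchings are genuine good perfect matchings of $(\band_3,\band_4)$, exactly as in Lemma~\ref{lembracelet}. What singles out $\bar P$ is not existence but the \emph{form} of the switching position required in the statement. The other complementary pair contains the unique boundary edges in $\calgSW_3$ and $\calgSW_4$; these are the images under $i_3,i_4$ of the south and west edges of the first tile of $\calg$, which share their \emph{starting} point, so that pair has a switching position of the required kind and is not exceptional. For $\bar P$ the relevant boundary edges are images of the two edges in the northeast of the \emph{last} tile of $\calg$, which share their \emph{ending} point; this is precisely where the paper uses that the glueing edges of $\band_3$ and $\band_4$ correspond to different edges of $\calg$, and it is the content of the first sentence of the paper's proof. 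Your proposal never makes this starting-point/ending-point distinction, so after repairing the sign error you would still be left with two candidates and no way to discard one of them.
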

\begin{proof}
 First note that the switching position can always be realised as the starting point of two edges in the overlap unless it occurs in the last tile of the overlap.
  
 Suppose $P'$ does not have a switching position. Then Lemma~\ref{lemboundary} implies that $P'$ is one of the two matchings that consist of complementary boundary edges of $\band_3,\band_4$. Since, by definition of the resolution, the glueing edges are different in $\band_3$ and $\band_4$, we see that either $P'$ is the exceptional matching $\bar P$ of the statement, or $P'$ contains the unique boundary edges in $\calgSW_3$ and $\calgSW_4$. In the latter case, the starting point of these two edges form a switching position as in the Lemma.
 
 Clearly the matching $\bar P$ does not have a switching position as in the statement.
\end{proof}

\begin{proof} [Proof of Theorem \ref{thmbijection} in case (3)(a)]  Let $\varphi:\match \band_1\to\match (\band_3,\band_4) $ be the switching operation. According to Lemma~\ref{lemma2}, every $P\in \match \band_{1}$ has a switching position, and thus the map $\varphi $ is well-defined. It is injective, by Lemma \ref{lemma3}. 

We claim that the image of $\varphi$ is $\match (\band_3,\band_4)\setminus\{\bar P\}$ where $\bar P$ is the matching of Lemma \ref{lemma4}. This will complete the proof, since $\bar P$ corresponds to the unique matching of $\band_{56}=\emptyset$.

Let $P'\in \match (\band_3,\band_4)\setminus\{\bar P\}$. Lemma \ref{lemma4} implies that $P'$ has a switching position which is the starting point of two edges $i_3(a), i_4(c)$. Then switching produces a matching $P$ of $\band_{1}$ whose switching position is given by the same point which now is the starting point of $i_1(a)$ and $i_2(c)$. Switching back shows that $P'=\varphi(P)$ and we are done.\end{proof}
(b) Now assume that $t'<d$. 
We start by giving a new proof of the theorem in the case where $\calg_1$ is a snake graph. In \cite{CS2}, we have proved this case  by specifying an explicit bijection between the set of perfect matchings. But  when $s'=t+1$, the switching operation alone was not enough to define this bijection. 
Here we give a new proof, which does not give the bijection explicitly but rather uses  identities from  snake graph calculus avoiding the case $s'=t+1$. The computation is carried out in a simple example in Figure \ref{proofsprimeistplusone}. For the notation used in the proof, we refer to the more complicated example in Figure \ref{figbigpicture}.
The proof in the case where $\band_1$ is a band graph will use a very similar argument.
\begin{figure}
\begin{center}
\scalebox{0.7}{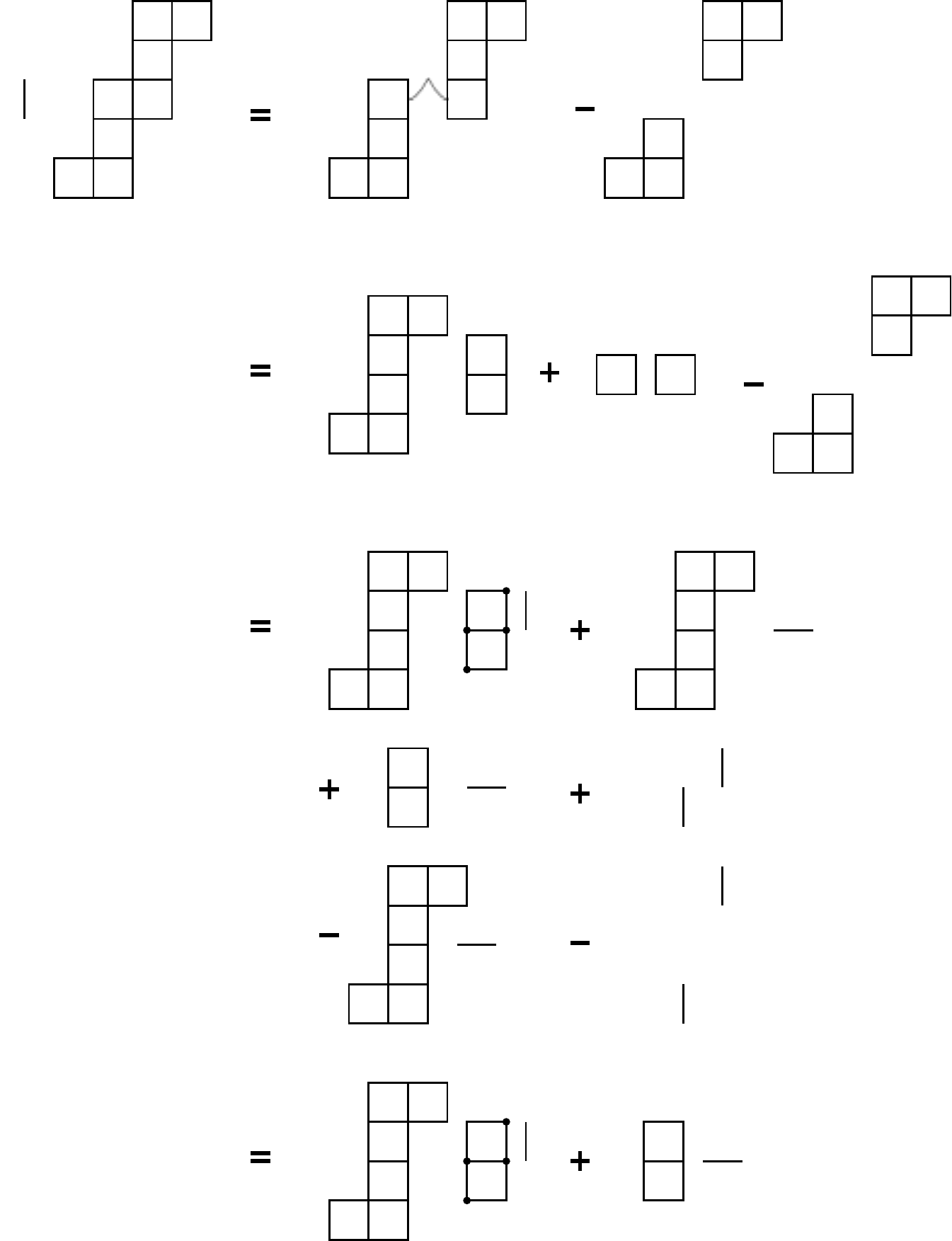}
\caption{An example of the computation in section \ref{sect 441}. Here $u=2,s=3, t=4$, $\ell_1=r_1= 3, \ell_2=1$, $r_2'=2'$.}
\label{proofsprimeistplusone}
\end{center}
\end{figure}

\begin{figure}
\begin{center}
\scalebox{1}{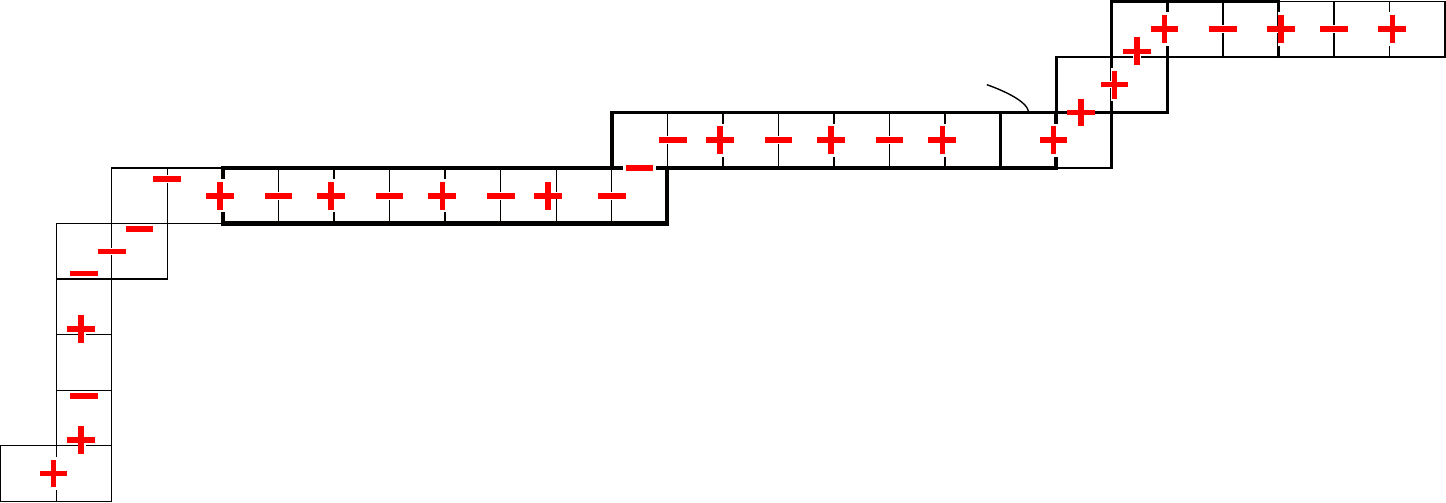}
\caption{Notation in section \ref{sect 441}.}
\label{figbigpicture}
\end{center}
\end{figure}

\subsubsection{Proof of Theorem \ref{thmbijection} for a self-crossing snake graph with $s'=t+1$} \label{sect 441}
To fix notation, let $\calg_1=\calg_1[1,d]$, let $i_1(\calg)=\calg_1[s,t] \cong \calg_1[s',t']= i(\calg_2)$ and let $j_1(\calh)=\calg_1[u,s-1] \cong \ocalg_1[u',t'+1]=j_2(\calh)$ be the secondary overlap in opposite direction. In other words, $u<s-1$ is the least positive integer such that the snake graphs $\calg_1[u,s-1] $ and $\ocalg_1[u',t'+1]$ are isomorphic. Thus $f(e_{s-2})\ne f(e_{t'+1}), f(e_{s-3})\ne f(e_{t'+2}), \ldots, f(e_u)\ne f(e_{u'-1})$ and, if $u\ne 1$ and $u'\ne d$ then $f(e_{u-1})= f(e_{u'})$. 
In the example in Figure~\ref{proofsprimeistplusone}, we have $u=2$ and $\calh$ is a single tile.

Throughout this proof we will use the following notational convention. If $a$ is an edge or a tile in $i_1(\calg) $ (respectively $j_1(\calh)$) so that $a=i_1(e)$ (respectively $a=j_1(e)$) for some edge or tile $e$ in $\calg$ (respectively $e$ in $\calh$) then we denote by $a'$ the corresponding edge or tile in $i_2(\calg)$ (respectively $j_2(\calh)$); thus $a'=i_2(e)$ (respectively $a'=j_2(e)$). 
Note that this notation extends our notation $s,s'$, $t,t'$ and $u,u'$.
Also note that $(u+1)'=u'-1, (u+2)' = u'-2 , \ldots, (s-1)'= t' +1$, since the overlap $\calh$ is in the opposite direction. Moreover, $(e_u)'=e_{u'-1}, (e_{u+1})'=e_{u'-2} , \ldots, (e_{s-1})'= e_{t'} $ for the interior edges.
Fix a sign function $f$ on $\calg$ and suppose without loss of generality that $f(e_t)=-$. It then follows from the crossing condition that $f(e_{s-1})=f(e_{t'})=+$. 

We will first consider the case where the $\calh$ overlap is not crossing. That is 
\begin{equation}
\label{notcross}
f(e_{u-1})=+  \quad \textup{ and } \quad f(e_{u'})=+. 
\end{equation}
The other case will follow easily from this one.

First we use the grafting of the snake graphs $\calg_1[1,t]$ and $\calg_1[t+1,d]$ to get the following expression for $\calg_1$
\begin{equation}
 \label{eqbij1}
 e_t \,\calg_1 =  \calg_1[1,t]\, \calg_1[t+1,d]  - \calg_1[1,\ell_1]\, \calg_1[r_1'+1,d]  ,
\end{equation}
where 
\[\begin{array}{ccc}
 \ell_1<t &\textup{ is the largest integer such that }& f(e_{\ell_1})=f(e_t)=-,
\\
r_1'>t &\textup{ is the least integer such that } &f(e_{r'_1})=f(e_t)=-.
\end{array}
\]
In other words, the edges $e_{\ell_1}$ is the last interior edge in $i_1(\calg)$ that has sign $-$ and $e_{r_1} $ is the first interior edge in $i_1(\calg)$ that has sign $-$. In particular, we have $r_1\le \ell_1$.
The first two snake graphs on the right hand side of equation (\ref{eqbij1}) have a crossing overlap $i_1(\calg)$, $i_2(\calg)$. Resolving this crossing, we get

\begin{equation}
 \label{eqbij2}
  \calg_1[1,t]\, \calg_1[t+1,d]   = \calg_3 \,\calg_4 + \calg_1[1,\ell_2]\,\calg_1[r'_2+1,d]
\end{equation}
where $\calg_3$ is the snake graph of the Theorem, $\calg_4$ is the snake graph obtained from the band graph $\band_4$ of the Theorem by cutting along the edge $e_t$ and
\[\begin{array}{ccc}
 \ell_2< s-1 &\textup{ is the largest integer such that }& f(e_{\ell_2})=f(e_{s-1})=+,
\\
r_2'>t' &\textup{ is the least integer such that } &f(e_{r_2'})=f(e_{t'})=+.
\end{array}
\]
Note that it follows from our assumption (\ref{notcross}) that $\ell_2\ge u-1$ and $r_2'\le u'$.
In other words the edge
$e_{\ell_2}$ is the last interior edge in $j_1(\calh)\cup G_{u-1} $ that has sign + and 
$e_{r'_2}$ is the first interior edge in $j_2(\calh) \cup G_{u'+1} $ that has sign +.

Furthermore, the self-grafting formula of \cite[section 3.4]{CS2} gives 
\begin{equation}
 \label{eqbij3}
  \calg_4 = \band_4 \,e_t + \calg_1 [r_1 +1, \ell_1], \end{equation} 
  where we agree that if $r_1=\ell_1$ then $ \calg_1 [r_1 +1, \ell_1]= e_{\ell_1}$ is the single edge shared by $G_{r_1+1}$ and $G_{\ell_1}$.  Recall that $r_1\le \ell_1$.
  
On the other hand, the snake graphs $ \calg_1[1,\ell_2]$ and $\calg_1[r'_2+1,d]
$ on the right hand side of equation (\ref{eqbij2}) have an overlap $\calg_1[u, \ell_2]\cong \ocalg_1[\ell_2',u']$, unless $\ell_2=u-1$. Recall that $u$ is the first tile in $j_1(\calh)$ and $u'$ is the last tile in $j_2(\calh)$. Let us suppose without loss of generality that $\ell_2'\ge r_2'+1$.
 This overlap is crossing  because, on the one hand, condition (\ref{notcross})  gives $f(e_{u'}) = + $, and on the other hand, $f(e_{\ell_2'-1}) = f((e_{\ell_2})')=-f(e_{\ell_2})=-$.
 The resolution of this crossing overlap yields
 \begin{equation}
 \label{eqbij4}
  \calg_1[1,\ell_2]\,\calg_1[r'_2+1,d] = ( \calg_1[1,\ell_2]\cup\ocalg[\ell_2'-1,r_2'+1]) \, \calg_1[\ell_2',d]
  +\calg_{56}\, e_t
\end{equation}
where $\calg_{56}=\calg_1[1,u-1]\cup \calg_1[u'+1,d]$ is the snake graph of the theorem and $e_t$ is a single edge $a\in \GSW_{r_2'+1}$ because all the interior edges of $\calg_1[r_2'+1,\ell_2'-1]$ have sign +. We denote this single edge by $e_t$ because if the snake graph comes from a triangulated surface then this edge has the same label as the edge $e_t$ and $r_2'$, which can be seen from the following local configuration.
\[ 
\begingroup%
  \makeatletter%
  \providecommand\color[2][]{%
    \errmessage{(Inkscape) Color is used for the text in Inkscape, but the package 'color.sty' is not loaded}%
    \renewcommand\color[2][]{}%
  }%
  \providecommand\transparent[1]{%
    \errmessage{(Inkscape) Transparency is used (non-zero) for the text in Inkscape, but the package 'transparent.sty' is not loaded}%
    \renewcommand\transparent[1]{}%
  }%
  \providecommand\rotatebox[2]{#2}%
  \ifx\svgwidth\undefined%
    \setlength{\unitlength}{146.01238251bp}%
    \ifx\svgscale\undefined%
      \relax%
    \else%
      \setlength{\unitlength}{\unitlength * \real{\svgscale}}%
    \fi%
  \else%
    \setlength{\unitlength}{\svgwidth}%
  \fi%
  \global\let\svgwidth\undefined%
  \global\let\svgscale\undefined%
  \makeatother%
  \begin{picture}(1,0.49858785)%
    \put(0,0){\includegraphics[width=\unitlength]{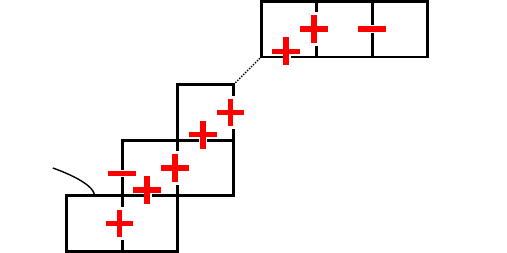}}%
    \put(0.15894367,0.04116263){\color[rgb]{0,0,0}\makebox(0,0)[lb]{\smash{$t'$}}}%
    \put(0.77208162,0.42891063){\color[rgb]{0,0,0}\makebox(0,0)[lb]{\smash{$\ell_2'$}}}%
    \put(0.27276843,0.04412426){\color[rgb]{0,0,0}\makebox(0,0)[lb]{\smash{$r_2'$}}}%
    \put(0.18972819,0.17242508){\color[rgb]{0,0,0}\makebox(0,0)[lb]{\smash{$a$}}}%
    \put(-0.00180849,0.17458412){\color[rgb]{0,0,0}\makebox(0,0)[lb]{\smash{$(e_t)'$}}}%
  \end{picture}%
\endgroup%

\]
In the case where $\ell_2=u-1$, we still obtain equation (\ref{eqbij4}) by grafting the two snake graphs on the left hand side of the equation.

Now let us consider the second pair of snake graphs $ \calg_1[1,\ell_1]$, $ \calg_1[r_1'+1,d] $ on the right hand side of equation (\ref{eqbij1}). Suppose first that $r_1\ne\ell_1$. Then this  pair has an overlap $\calg_1[r_1+1,\ell_1] \cong \calg_1 [r_1'+1,\ell_1']$. This is a crossing overlap because $f(e_{r_1})= -$ and $f(e_{\ell_1'})=-$. The resolution of this crossing yields

\begin{equation}
 \label{eqbij5}
  -\calg_1[1,\ell_1]\,\calg_1[r_1'+1,d]  = -\calg_3 \,\calg_1[r_1+1,\ell_1] 
  - \calg_1[1,\ell_3]\,\calg_1[r_3'+1,d],
\end{equation}
where $\calg_3= (\calg_1[1,\ell_1] \cup\calg_1[\ell_1'+1,d])$ is the snake graph from the theorem and
\[\begin{array}{ccc}
 \ell_3< r_1 &\textup{ is the largest integer such that }& f(e_{\ell_3})=f(e_{r_1})=-,
\\
r_3'>\ell_1' &\textup{ is the least integer such that } &f(e_{r_3'})=f(e_{\ell_1'})=-.
\end{array}
\]
In the case where $r_1=\ell_1$, we use grafting of the two snake graphs $ \calg_1[1,\ell_1]$, $ \calg_1[r_1'+1,d] $ and still obtain equation (\ref{eqbij5}) where $\calg_1[r_1+1,\ell_1]$ becomes a single edge.

 It will be useful to compare $\ell_3,r_3' $ with $\ell_2,r_2'$ defined earlier. 
Since $e_{r_1}$ is the first interior edge in $i_1(\calg)$ whose sign is $-$ and since $f(e_{s-1})=+$, it follows that 
$e_{\ell_3}$ is the last interior edge in $j_1(\calh) $ that has sign $-$.
On the other hand, we have already seen earlier that 
$e_{r'_2}$ is the first interior edge in $j_2(\calh)\cup G_{u'+1} $ that has sign +, and this implies that $e_{r_2-1}$ is the last interior edge in $j_1(\calh)$ that has sign $-$. {Therefore} we have  $\ell_3=r_2-1$ and thus
\begin{equation}
 \label{eqbij6}\ell_3'=r_2'+1.
\end{equation}
A similar argument shows that 
\begin{equation}
 \label{eqbij7}r_3'+1 = \ell_2'.
\end{equation}

We are now ready to combine all these identities. 
Using equations (\ref{eqbij2})-(\ref{eqbij4}) on the first term of the right hand side of equation (\ref{eqbij1}) and equation (\ref{eqbij5}) on the second term, we get
\[
\begin{array}{rcl}
\displaystyle  e_t\calg_1 &=& \calg_3  \band_4 \,e_t +  \calg_3 \calg_1 [r_1 +1, \ell_1]
  \\ \\
  &+&    ( \calg_1[1,\ell_2]\cup\ocalg[\ell_2'-1,r_2'+1]) \, \calg_1[\ell_2',d]
  +\calg_{56}\, e_t 
  \\ \\
  &-& 
 \calg_3\,\calg_1[r_1+1,\ell_1] 
  - \calg_1[1,\ell_3]\,\calg_1[r_3'+1,d].
\end{array}\]

Finally using equations (\ref{eqbij6}) and (\ref{eqbij7}) and simplifying by $e_t$ we get

\begin{equation} 
 \label{eqbij8} \calg_1 = \calg_3  \band_4  
  +\calg_{56}.
\end{equation}
This shows the formula in the case where the $\calh$-overlaps are non-crossing.

Now suppose that the $\calh$-overlaps are crossing. Resolving this crossing gives
\begin{equation}
 \label{eqbij9}
 \calg_1 = \calg_1[1,u-1]\cup\ocalg_1[u',u]\cup\calg_1[u'+1,d] + \calg_{56} \, \brac_2(\band_4).
\end{equation}
Let $\widetilde{\calg_1} = \calg_1[1,u-1]\cup\ocalg_1[u',u]\cup\calg_1[u'+1,d] $. Then $\widetilde{\calg_1}$ is a selfcrossing snake graph with $s'=t+1$ and where the $\calh$ overlaps are not crossing and we can apply our formula from the first case to get $\widetilde{\calg_1}=\widetilde{\calg_3}\widetilde{\band_4}+\widetilde{\calg_{56}}$. Moreover, by definition we have $\widetilde{\calg_{56}}=\calg_{56}$ and $\widetilde{\band_4}=\band_4$.
On the other hand, the bracelet formula yields $ \brac_2(\band_4)=\band_4\band_4-2$. Thus equation (\ref{eqbij9}) becomes 

\begin{equation}
 \label{eqbij10}
 \calg_1 = \widetilde{\calg_3}{\band_4}+{\calg_{56}} +\calg_{56} (\band_4\band_4-2).
\end{equation}

Furthermore, resolving the $\calh$-overlap in $\calg_3$ we see that 
$\calg_3=\widetilde{\calg_3} +\calg_{56}\band_4$ and therefore equation (\ref{eqbij10}) becomes
\[
\begin{array}{rcl}
 \label{eqbij11}
 \calg_1 &=& ({\calg_3}- \calg_{56}\band_4) {\band_4}+{\calg_{56}} +\calg_{56} (\band_4\band_4-2) \\
 &=& \calg_3\band_4-\calg_{56}.
\end{array}
\]
This completes the proof for a self-crossing snake graph with $s'=t+1$.

\begin{proof}
 [Proof of Theorem \ref{thmbijection} in case (3)(b), thus for $s'=t+1$ and $t'<d$]
 The proof for the case of a self-crossing band graph follows the same computation as the one used for self-crossing snake graph in section \ref{sect 441}. The details are left to the reader. In Figure \ref{proofsprimeistplusoneband}, we illustrate the computation for the band graph obtained from the snake graph in the example in Figure \ref{proofsprimeistplusone}.

 \begin{figure}
\begin{center}
\scalebox{0.7}{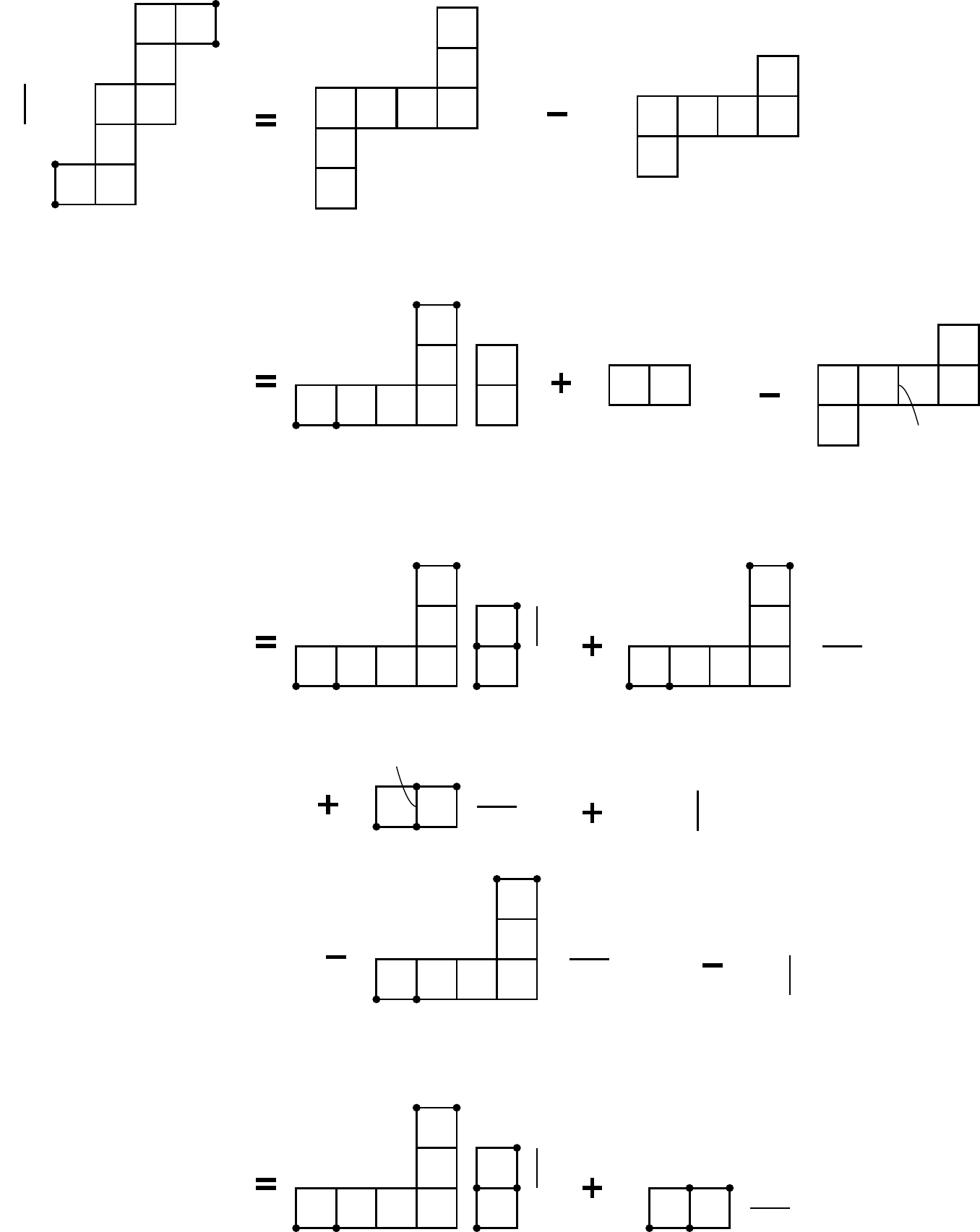}
\caption{The band graph version of the example of Figure \ref{proofsprimeistplusone}.}
\label{proofsprimeistplusoneband}
\end{center}
\end{figure}

Finally, let us take a look at the two  extreme cases $\calg_{56}=\emptyset $ if $d=t'+2\ell$ and $\calg_{56}=0 $ if $d<t'+2\ell$.
 The case $d=t'+2\ell$ can be visualised from the example in Figure \ref{proofsprimeistplusoneband} by removing the tiles with labels $1$ and 5. In this case the band graph $\band_{56} $ is the empty set, see Figure \ref{proofsprimeistplusonebandemptyset}.  In the case $d<t'+2\ell$, we have an even smaller band graph obtained by removing also the tile with label 2. This example is illustrated in Figure  \ref{proofsprimeistplusonebandzero}.
 \begin{figure}
\begin{center}
\scalebox{0.7}{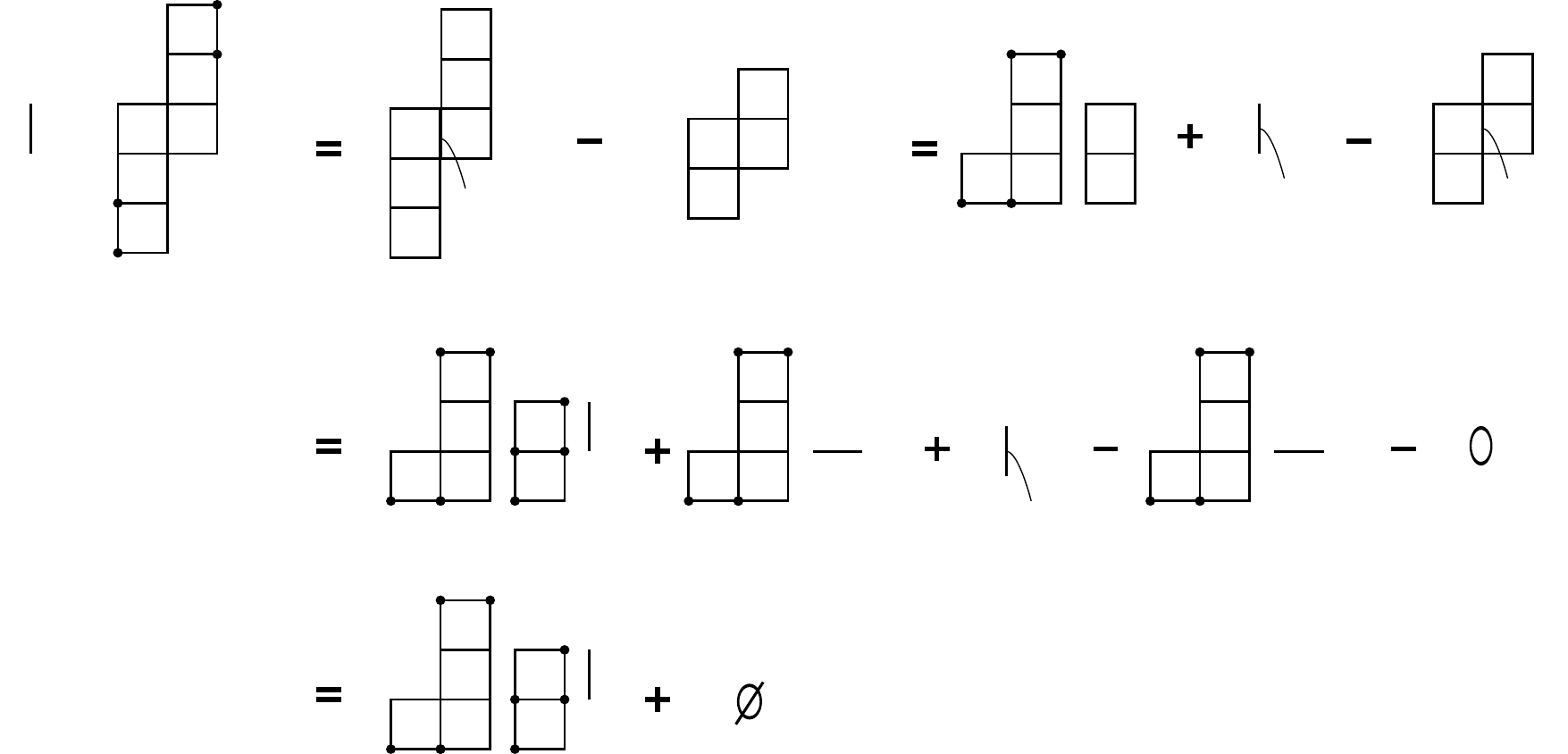}
\caption{The case $<t'+2\ell$. The empty set is obtained as the band graph of a single edge. Note that the empty set is the multiplicative identity of the snake ring, since the product of two graphs is the disjoint union. Thus $\emptyset=1$.}
\label{proofsprimeistplusonebandemptyset}
\end{center}
\end{figure}
 \begin{figure}
\begin{center}
\scalebox{0.7}{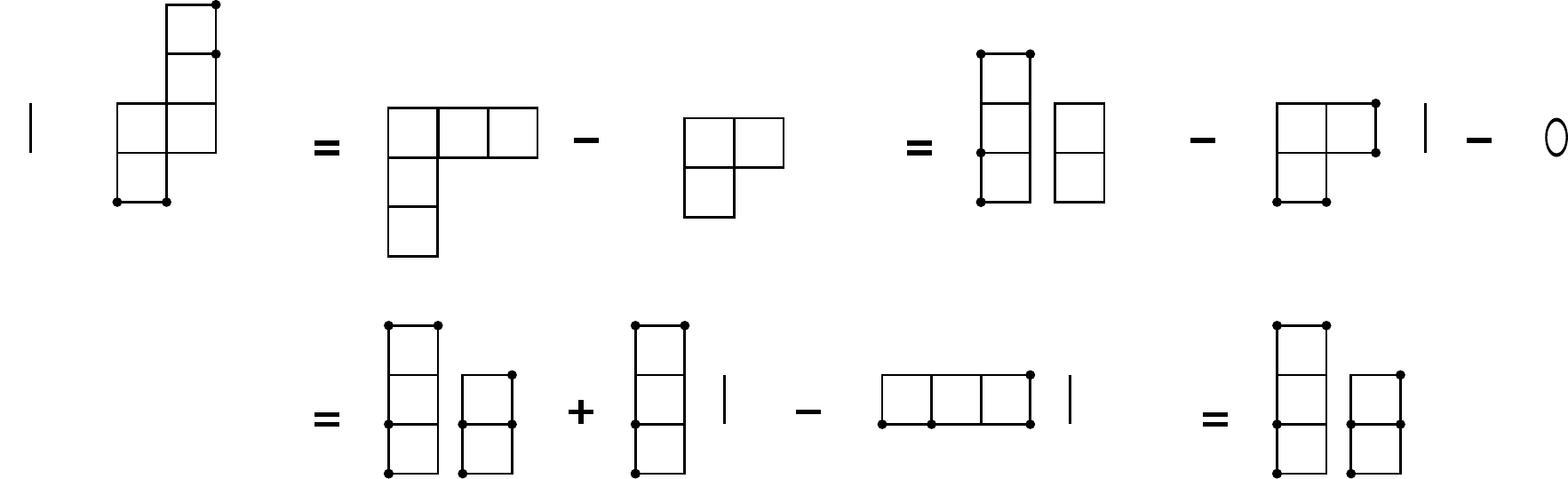}
\caption{The case $d=t'+2\ell$. Note that 0 is the additive identity of the snake ring.}
\label{proofsprimeistplusonebandzero}
\end{center}
\end{figure}
\end{proof}


\section{Labeled snake and band graphs arising from  cluster algebras of unpunctured surfaces}\label{sect 4}

In this section we recall how snake graphs {and band graphs} arise naturally in the theory of cluster algebras. We follow the exposition in \cite{MSW2}.

\subsection{Cluster algebras  from unpunctured
    surfaces}\label{sect surfaces} 
%

Let $S$ be a connected oriented 2-dimen\-sional Riemann surface with
nonempty
boundary, and let $M$ be a nonempty finite subset of the boundary of $S$, such that each boundary component of $S$ contains at least one point of $M$. The elements of $M$ are called {\it marked points}. The
pair $(S,M)$ is called a \emph{bordered surface with marked points}.
 
For technical reasons, we require that $(S,M)$ is not
a disk with 1, 2 or 3 marked points.

\begin{defn}   \label{gen-arc}
A \emph{generalised arc}  in $(S,M)$ is a curve $\gamma$ in $S$, considered up
to isotopy,  such that:
\begin{itemize}
\item[(a)] the endpoints of $\gamma$ are in $M$; 
\item[(b)] except for the endpoints,
$\gamma$ is disjoint  from the boundary of $S$;
\item[(c)]
$\gamma$ does not cut out a monogon or a bigon; and
\item[(d)] $\gamma$ has only finitely many self-crossings.
\end{itemize}

A  generalised arc $\zg$ is called an \emph{arc} if in  addition $\zg$ does not cross itself, except that its endpoints may coincide.

\end{defn}

Curves that connect two
marked points and lie entirely on the boundary of $S$ without passing
through a third marked point are \emph{boundary segments}.
Note that boundary segments are not   arcs.

For any two arcs $\zg,\zg'$ in $S$, let $e(\zg,\zg')$ be the minimal
number of crossings of 
arcs $\za$ and $\za'$, where $\za$ 
and $\za'$ range over all arcs isotopic to 
$\zg$ and $\zg'$, respectively.
We say that arcs $\zg$ and $\zg'$ are  \emph{compatible} if $e(\zg,\zg')=0$. 

A \emph{triangulation} is a maximal collection of
pairwise compatible arcs (together with all boundary segments). 

Triangulations are connected to each other by sequences of 
{\it flips}.  Each flip replaces a single arc $\gamma$ 
in a triangulation $T$ by a (unique) arc $\gamma' \neq \gamma$
that, together with the remaining arcs in $T$, forms a new 
triangulation.

\begin{defn}
Choose any   triangulation
$T$ of $(S,M)$, and let $\tau_1,\tau_2,\ldots,\tau_n$ be the $n$ arcs of
$T$.
For any triangle $\Delta$ in $T$, we define a matrix 
$B^\Delta=(b^\Delta_{ij})_{1\le i\le n, 1\le j\le n}$  as follows.
\begin{itemize}
\item $b_{ij}^\Delta=1$ and $b_{ji}^{\Delta}=-1$ if $\tau_i$ and $\tau_j$ are sides of 
  $\Delta$ with  $\tau_j$ following $\tau_i$  in the 
  clockwise order,
\item $b_{ij}^\Delta=0$ otherwise.
\end{itemize}
 
Then define the matrix 
$ B_{T}=(b_{ij})_{1\le i\le n, 1\le j\le n}$  by
$b_{ij}=\sum_\Delta b_{ij}^\Delta$, where the sum is taken over all
triangles in $T$.
\end{defn}

Note that  $B_{T}$ is skew-symmetric and each entry  $b_{ij}$ is either
$0,\pm 1$, or $\pm 2$, since every arc $\tau$ is in at most two triangles.

\begin{thm} \cite[Theorem 7.11]{FST} and \cite[Theorem 5.1]{FT}
\label{clust-surface}
Fix a bordered surface $(S,M)$ and let $\Acal$ be the cluster algebra associated to
the signed adjacency matrix of a   triangulation. Then the (unlabeled) seeds $\Sigma_{T}$ of $\Acal$ are in bijection
with  the triangulations $T$ of $(S,M)$, and
the cluster variables are  in bijection
with the arcs of $(S,M)$ (so we can denote each by
$x_{\gamma}$, where $\gamma$ is an arc). Moreover, each seed in $\Acal$ is uniquely determined by its cluster.  Furthermore,
if a   triangulation $T'$ is obtained from another
  triangulation $T$ by flipping an arc $\gamma\in T$
{to the arc} $\gamma'$,
then $\Sigma_{T'}$ is obtained from $\Sigma_{T}$ by the seed mutation
replacing $x_{\gamma}$ by $x_{\gamma'}$.
\end{thm}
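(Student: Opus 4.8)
The plan is to fix an initial triangulation $T_0$, put $B_0=B_{T_0}$, and let $\Acal=\Acal(B_0)$ be the cluster algebra it defines. First I would attach to \emph{every} arc $\gamma$ of $(S,M)$ an element $x_\gamma\in\Acal$: for $\gamma\in T_0$ it is the corresponding initial cluster variable, and in general $x_\gamma$ is the cluster variable produced by mutating $\Sigma_{T_0}$ along a sequence of flips carrying $T_0$ to a triangulation that contains $\gamma$; likewise I would define $\Phi(T)$ to be the unlabeled seed obtained from $\Sigma_{T_0}$ by performing the mutations corresponding to a sequence of flips from $T_0$ to $T$. For this to be well defined, two structural inputs are needed. (i) Flipping an arc $\tau$ in a triangulation $T$ changes the signed adjacency matrix by the matrix mutation $\mu_\tau$; since $(S,M)$ has no punctures a flip is local — it replaces one diagonal of an embedded quadrilateral by the other — so this is a bounded case analysis comparing the triangle contributions $b^\Delta_{ij}$ before and after the flip with the mutation rule. (ii) The flip graph of $(S,M)$ is connected, and any two flip sequences joining the same pair of triangulations differ by elementary moves supported either on two disjoint quadrilaterals (a pair of commuting flips) or on a pentagon (the $5$-cycle of flips through the triangulations of a pentagon); since mutations at non-adjacent vertices commute and the rank-two mutations of a type-$A_2$ seed satisfy the pentagon relation, $\Phi$ and the $x_\gamma$ are well defined. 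By construction the cluster of $\Phi(T)$ is $\{x_\tau:\tau\in T\}$, and flipping $\tau\mapsto\tau'$ in $T$ corresponds to the mutation replacing $x_\tau$ by $x_{\tau'}$, which is the last assertion of the theorem.

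It remains to show that $\Phi$ is a bijection onto the seeds of $\Acal$ and that $\gamma\mapsto x_\gamma$ is a bijection onto the cluster variables. Surjectivity is soft: at any triangulation every one of its arcs can be flipped, so at $\Phi(T)$ every mutation direction is realised within the image of $\Phi$; hence the image is closed under mutation, and since every seed of $\Acal$ is reached from $\Sigma_{T_0}$ by a finite chain of mutations, $\Phi$ is onto. Consequently every cluster variable lies in some cluster $\{x_\tau:\tau\in T\}$, so $\gamma\mapsto x_\gamma$ is onto as well.

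The real content is injectivity, and here I would pass through denominator vectors. Expanding $x_\gamma$ as a Laurent polynomial in the initial cluster $\{x_\tau:\tau\in T_0\}$ via the combinatorial formula of \cite{MS,MSW} — a sum over perfect matchings of the snake graph $\calg_\gamma$ associated to $\gamma$ and $T_0$, divided by the crossing monomial $\prod_{\tau\in T_0}x_\tau^{e(\gamma,\tau)}$ — one sees that the denominator of $x_\gamma$ in lowest terms is exactly this monomial, so $x_\gamma$ determines the crossing vector $\big(e(\gamma,\tau)\big)_{\tau\in T_0}$, the vector of minimal crossing numbers of $\gamma$ with the arcs of $T_0$. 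A direct geometric argument — reconstructing $\gamma$ from the record of how it enters, crosses, and leaves each triangle of $T_0$ — shows the crossing vector determines the isotopy class of $\gamma$, so $\gamma\mapsto x_\gamma$ is injective on arcs. Hence a cluster $\{x_\tau:\tau\in T\}$ determines the arc set $T$: if $\Phi(T)$ and $\Phi(T')$ have equal clusters then $T=T'$, which gives at once the injectivity of $\Phi$ and the statement that a seed is determined by its cluster. I expect this last step — reconstructing the geometric object $\gamma$ from the algebraic object $x_\gamma$, thereby establishing simultaneously that distinct arcs carry distinct cluster variables and that seeds are determined by clusters — to be the main obstacle; the matrix-mutation compatibility is a finite check, and everything else reduces to connectivity of the flip and exchange graphs.
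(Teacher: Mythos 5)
This statement is not proved in the paper at all: it is imported verbatim from Fomin--Shapiro--Thurston \cite[Theorem 7.11]{FST} and Fomin--Thurston \cite[Theorem 5.1]{FT}, so there is no internal argument to compare yours against. Judged on its own terms, your outline follows the standard architecture (flip $=$ mutation locally; connectivity for surjectivity; a separating invariant for injectivity), but two of its load-bearing steps are asserted rather than secured.

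First, well-definedness. Your claim that any two flip sequences between the same triangulations differ by commuting squares and pentagon moves is the simple connectivity of the flip complex with square and pentagon $2$-cells; this is a genuine theorem about the arc complex (Harer, Hatcher, Penner), not a routine observation, and you would also have to check that the geometric squares and pentagons are exactly the configurations with $b_{\alpha\beta}=0$ and $b_{\alpha\beta}=\pm1$ (ruling out, for unpunctured surfaces, arcs sharing two triangles with cancelling signs). The route actually taken in \cite{FST} sidesteps this entirely by building the candidate seed $\Sigma_T$ intrinsically from $T$ and checking it transforms correctly under a single flip, so that the map from the mutation tree automatically factors through triangulations; you may want to restructure your argument this way. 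Second, injectivity. Invoking the matching formula of \cite{MS,MSW} as a black box risks circularity: that formula is stated and proved for the cluster variable $x_\gamma$ whose existence and well-definedness is precisely the content of the present theorem, and its proof is an induction on flips using the arc--variable correspondence. The dependency can be repaired -- once well-definedness is in hand, the flip induction establishing the expansion (and hence, via positivity, that the reduced denominator is the crossing monomial) is legitimate -- but you must say so explicitly and order the steps accordingly. Alternatively, the second cited source \cite{FT} gets injectivity directly by realizing the $x_\gamma$ as lambda-length functions on decorated Teichm\"uller space, which separates arcs without any denominator computation; that is presumably why the paper cites both references. Finally, the fact that the crossing vector $\bigl(e(\gamma,\tau)\bigr)_{\tau\in T_0}$ determines the isotopy class of $\gamma$ (normal coordinates) is true for unpunctured surfaces but also deserves a proof or citation rather than the phrase ``a direct geometric argument.''
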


{From now on suppose that $\cala$ has principal coefficients in the initial seed $\zS_T=(\mathbf{x}_T,\mathbf{y}_T,B_T)$.}


\begin{defn}
A \emph{closed loop} in $(S,M)$ is a closed curve
$\gamma$ in $S$ which is disjoint from the 
boundary of $S$.  We allow a closed loop to have a finite
number of self-crossings.
As in Definition~\ref{gen-arc}, we consider closed
loops up to isotopy.
%
A closed loop in $(S,M)$ is called \emph{essential} if
  it is not contractible
and it does not have self-crossings.
\end{defn}

\begin{defn} 
A \emph{multicurve} is  a finite multiset of generalised  
arcs and closed loops such that there are only a finite number of pairwise crossings among the collection.
We say that a multicurve is \emph{simple}
if there are no pairwise crossings among the collection and 
no self-crossings.
\end{defn}

If a multicurve is not simple, 
then there are two ways to \emph{resolve} a crossing to obtain a multicurve that no longer contains this crossing and has no additional crossings.  This process is known as \emph{smoothing}.

\begin{defn}\label{def:smoothing} (Smoothing) Let $\gamma, \gamma_1$ and $\gamma_2$ be generalised  
arcs or closed loops such that we have
one of the following two cases:

\begin{enumerate}
 \item $\gamma_1$ crosses $\gamma_2$ at a point $x$,
  \item $\gamma$ has a self-crossing at a point $x$.
\end{enumerate}

\noindent Then we let $C$ be the multicurve $\{\gamma_1,\gamma_2\}$ or $\{\gamma\}$ depending on which of the two cases we are in.  We define the \emph{smoothing of $C$ at the point $x$} to be the pair of multicurves $C_+ = \{\alpha_1,\alpha_2\}$ (resp. $\{\alpha\}$) and $C_- = \{\beta_1,\beta_2\}$ (resp. $\{\beta\}$).

Here, the multicurve $C_+$ (resp. $C_-$) is the same as $C$ except for the local change that replaces the crossing {\Large $\times$} with the pair of segments 
  $\genfrac{}{}{0pt}{5pt}{\displaystyle\smile}{\displaystyle\frown}$ (resp. {$\supset \subset$}).    
\end{defn}   

Since a multicurve  may contain only a finite number of crossings, by repeatedly applying smoothings, we can associate to any multicurve  a collection of simple multicurves.  
  We call this resulting multiset of multicurves the \emph{smooth resolution} of the multicurve $C$.

{\begin{remark}\label{rem smoothing}
 This smoothing operation can be rather complicated, since the multicurves are considered up to isotopy. Thus after performing the local operation of smoothing described above, one needs to find representatives of the isotopy classes of $C_+$ and $C_-$ which have a minimal number of crossings with the triangulation, 
 in order to obtain a good representation of the corresponding cluster algebra element that allows one to see compatibility with elements of the initial cluster and to compute its Laurent expansion, for example by constructing its snake graph.  In practice, this can be quite difficult especially if one needs to smooth several crossings. This difficulty was one of the original motivations to develop the snake graph calculus. The isotopy is already contained in the definition of the resolutions of the (self-)crossing snake {and band} graphs. 
\end{remark}}

\begin{thm}\label{th:skein1}(Skein relations) \cite[Propositions 6.4,6.5,6.6]{MW}
Let $C,$ $ C_{+}$, and $C_{-}$ be as in Definition \ref{def:smoothing}. 
Then we have the following identity in $\cala$,
\begin{equation*}
x_C = \pm Y_1 x_{C_+} \pm Y_2 x_{C_-},
\end{equation*}
where $Y_1$ and $Y_2$ are monomials in the variables $y_{\tau_i}$.    
The monomials $Y_1$ and $Y_2$ can be expressed using the intersection numbers of the elementary laminations (associated to triangulation $T$) with  
the curves in $C,C_+$ and $C_-$.
\end{thm}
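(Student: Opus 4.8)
The plan is to deduce the skein relation from the snake graph calculus of Sections~\ref{section3} and~\ref{section 4} rather than from the geometry of smoothing directly; this is what makes the argument uniform across all cases and avoids the delicate isotopy issues in higher genus. The three ingredients are: (i) the combinatorial formula of \cite{MS,MSW,MSW2} expressing $x_\gamma$, for a generalised arc $\gamma$, and the bracelet elements attached to closed loops, as weighted generating functions of perfect matchings of the associated (band) graph; (ii) the fact that a crossing of curves translates into a crossing overlap, a crossing between a snake and a band graph, a crossing between two band graphs, or a self-crossing of a single band graph, with the two smoothings $C_+,C_-$ matching the two summands of the corresponding resolution from Section~\ref{section3}; and (iii) the matching bijection of Theorem~\ref{thmbijection}, which promotes the formal identity $\calg_C=\pm Y_1\calg_{C_+}\pm Y_2\calg_{C_-}$ in $\mathcal R$ to an identity in $\cala$ once one checks that this bijection is weight preserving up to the monomial factors.

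First I would set up the dictionary. Recall from Section~\ref{sect surfaces} that, relative to the fixed triangulation $T$, a generalised arc $\gamma$ determines a labeled snake graph $\calg_\gamma$ whose tiles and edges carry the cluster variables $x_{\tau_i}$, and that
\[
x_\gamma\ \cross(\gamma)\ =\ \sum_{P\in\match\calg_\gamma} x(P)\,y(P),
\]
where $\cross(\gamma)$ is the product of the $x_{\tau_i}$ over the arcs of $T$ crossed by $\gamma$, $x(P)$ is the product of the labels of the edges in $P$, and $y(P)$ is the height monomial in the $y_{\tau_i}$. A closed loop $\gamma$ similarly determines a labeled band graph $\band_\gamma$, and the analogous sum over \emph{good} perfect matchings computes the bracelet $\Brac_1$ of the loop (and $\Brac_m$ for an $m$‑fold loop); the identities $\Brac_2=\Brac_1^2-2$ and the Chebyshev recursion are recorded in Section~\ref{sect 7}. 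A multicurve $C$ is sent to the disjoint union of the corresponding graphs, and disjoint union corresponds to multiplication in $\mathcal R$; since matchings of a disjoint union are products of matchings of the components, this gives $x_C\prod\cross=\sum_{P}x(P)y(P)$ over $\match\calg_C$.

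Next I would identify the geometric smoothing with the abstract resolution. Given a crossing of $C$ at a point $x$, the two arcs (or the self-crossing arc, or the loops) meeting at $x$ share a common subsegment, hence a common subgraph of the associated (band) graphs; maximality of this subgraph is precisely the overlap/self-overlap condition of Section~\ref{section2point5}, and the local crossing pattern at $x$ translates through the sign function into the crossing conditions of Definitions~\ref{crossing},~\ref{defcrossbands} and~\ref{def self-crossing band}. I would then run through the list of geometric configurations and check, case by case, that $\calg_{C_+}$ and $\calg_{C_-}$---after passing to minimal‑crossing representatives, a step already built into the definition of the resolutions---are the graphs $\calg_{34},\calg_{56}$ of Section~\ref{sect 3.1}, the band graphs $\band_{34},\band_{56}$ of Section~\ref{sect 3.2}, the pair $\band_3\sqcup\band_4$ and $\band_{56}$ of Section~\ref{sect 3.3}, and so on; the degenerate outputs $\emptyset$, $0$, $-2$ of a resolution correspond respectively to a curve becoming a boundary segment or contractible loop, a curve becoming null, and the constant in the bracelet identity. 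The figures with ``geometric realisation'' captions (e.g. Figures~\ref{selfcrossingA6},~\ref{selfcrossingA5},~\ref{torus3},~\ref{fig(s'<t)},~\ref{fig(s'=t+1)}) are the templates for these checks.

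Finally I would invoke Theorem~\ref{thmbijection}, which gives a bijection between $\match\calg_C$ and $\match$ of the resolution. What remains is to verify that under this bijection the monomial $x(P)$ is multiplied by a fixed monomial in the $x_{\tau_i}$ and $y(P)$ by a fixed monomial $Y_i$ in the $y_{\tau_i}$, independent of $P$ within each summand; dividing out the (also fixed) crossing monomials then yields $x_C=\pm Y_1 x_{C_+}\pm Y_2 x_{C_-}$, and identifying $Y_i$ with the product of $y_{\tau_j}^{\,e_j}$, $e_j$ the intersection numbers of the elementary laminations with the smoothed curves, is a direct computation of how height monomials change under the resolution moves. The signs are read off from the relative orientations at the switching position and from the $-$ signs already attached to the $\band_{56}$ terms in Section~\ref{sect 3.3}. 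I expect the weight‑preservation statement to be the main obstacle: the switching operation of Section~\ref{switching} visibly preserves the \emph{union} of the two matchings, but one must track how an individual boundary edge migrates between $\calg_C$ and the resolution graphs through each of the local configurations of \cite[Figures~6--11]{CS}, and confirm that the net change in $x(P)y(P)$ is the same monomial for matchings with and without a switching position. For the intersecting self-overlap cases ($s'=t+1$ and the $2$-bracelet), where Theorem~\ref{thmbijection} itself was proved indirectly via snake graph identities rather than by switching, one additionally needs that those identities are weight preserving, which follows inductively from the snake‑ and band‑graph cases already handled.
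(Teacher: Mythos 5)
Your proposal is correct and follows essentially the same route as the paper's own proof of this statement, which is given later as Corollary \ref{skein}: translate the smoothings $C_\pm$ into the resolutions of (self-)crossing snake and band graphs (Theorems \ref{thm cross}, \ref{smoothing1}, \ref{smoothing2}), apply the weight-preserving matching bijection of Theorem \ref{thmbijection} to obtain the Laurent identities of Theorems \ref{laurent}, \ref{laurent2} and \ref{laurent2self}, and read off the coefficients $Y_1,Y_2$ from the $y$-monomials of the tiles missing from $\calg_{56}$ (equations (\ref{laurent12})--(\ref{eq722b})). The only cosmetic difference is that the paper expresses $Y_1,Y_2$ via these height monomials rather than via elementary laminations as in the original proof of \cite{MW}, a translation you already flag as a direct computation.
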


\subsection{Labeled snake graphs from surfaces}\label{sect tiles}\label{sect graph}

 Let 
$\zg$ be an   arc in $(S,M)$ which is not in $T$. 
Choose an orientation on $\zg$, let $s\in M$ be its starting point, and let $t\in M$ be its endpoint.
We denote by
$s=p_0, p_1, p_2, \ldots, p_{d+1}=t$
the points of intersection of $\zg$ and $T$ in order.  
Let $\tau_{i_j}$ be the arc of $T$ containing $p_j$, and let 
$\zD_{j-1}$ and 
$\zD_{j}$ be the two   triangles in $T$ 
on either side of 
$\tau_{i_j}$. Note that each of these triangles has three distinct sides, but not necessarily three distinct vertices, see Figure \ref{figr1}.
\begin{figure}
\includegraphics{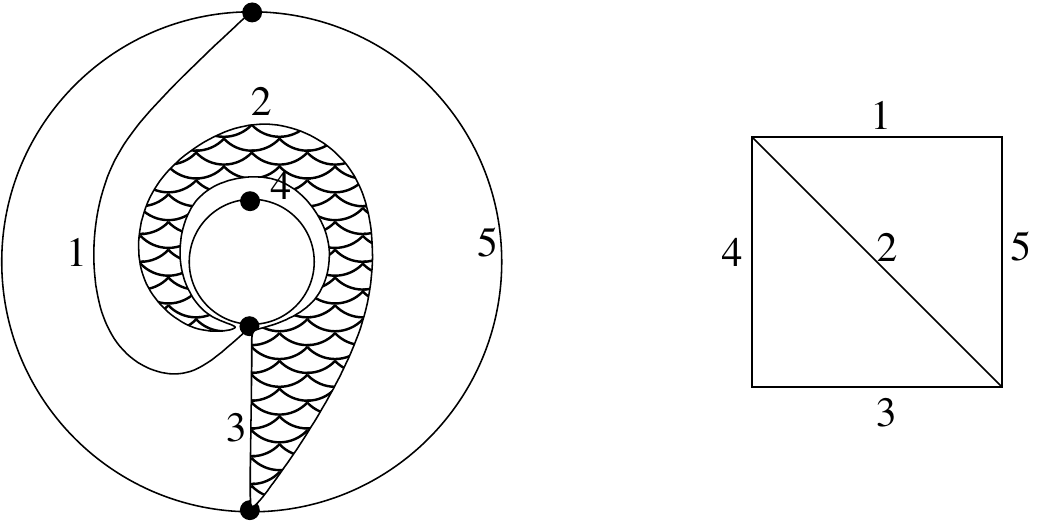}
\caption{On the left, a triangle with two vertices; on the right the tile $G_{j}$ where $i_j=2$. }\label{figr1}
\end{figure}
Let $G_j$ be the graph with 4 vertices and 5 edges, having the shape of a square with a diagonal, such that there is a bijection between the edges of $G_j$ and the 5 arcs in the two triangles $\zD_{j-1}$ and $\zD_j$, which preserves the signed adjacency of the arcs up to sign and such that the diagonal in $G_j$ corresponds to the arc $\tau_{i_j}$ containing the crossing point $p_j$. Thus $G_j$ is given by the quadrilateral in the triangulation $T$ whose diagonal is $\tau_{i_j}$. 

Given a planar embedding $\tilde G_j$ 
of $G_j$, we define the \emph{relative orientation} 
$\mathrm{rel}(\tilde G_j, T)$ 
of $\tilde G_j$ with respect to $T$ 
to be $\pm 1$, based on whether its triangles agree or disagree in orientation with those of $T$.  
For example, in Figure \ref{figr1},  $\tilde G_j$ has relative orientation $+1$.

Using the notation above, 
the arcs $\tau_{i_j}$ and $\tau_{i_{j+1}}$ form two edges of a triangle $\zD_j$ in $T$.  Define $\tau_{e_j}$ to be the third arc in this triangle.

We now recursively glue together the tiles $G_1,\dots,G_d$
in order from $1$ to $d$, so that for two adjacent   tiles, 
 we glue  $G_{j+1}$ to $\tilde G_j$ along the edge 
labeled $\tau_{e_j}$, choosing a planar embedding $\tilde G_{j+1}$ for $G_{j+1}$
so that $\mathrm{rel}(\tilde G_{j+1},T) \not= \mathrm{rel}(\tilde G_j,T).$  See Figure \ref{figglue}.

\begin{figure}
\begin{center}
 \scalebox{1}{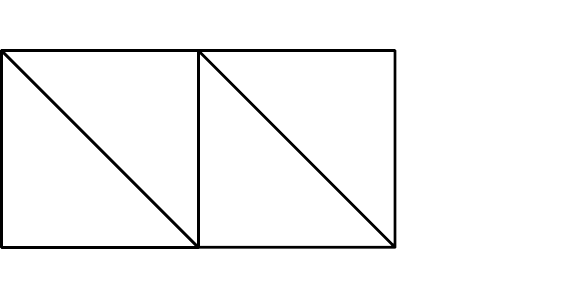}
\caption{Glueing tiles $\tilde G_j$ and $\tilde G_{j+1}$ along the edge labeled  $\tau_{e_j}$}
\label{figglue}
\end{center}
\end{figure}

After gluing together the $d$ tiles, we obtain a graph (embedded in 
the plane),
which we denote  by
${\calg}^\triangle_{\zg}$. 
\begin{defn}
The \emph{(labeled) snake graph} $\calg_{\gamma}$ associated to $\gamma$ is obtained 
from ${\calg}^\triangle_{\zg}$ by removing the diagonal in each tile.
\end{defn}

In Figure \ref{figsnake}, we give an example of an 
 arc $\gamma$ and the corresponding snake graph 
${\calg}_{\zg}$. Since $\gamma$ intersects $T$
five times,  
${\calg}_{\zg}$ has five tiles. 

\begin{remark}
 \label{rem sign}{
 Let $f$ be a sign function on $\calg_\zg$ as in section \ref{sect 2}. The interior edges $e_1,\ldots,e_{d-1}$  are corresponding to the sides of the triangles $\zD_1,\ldots,\zD_{d-1}$ that are not crossed by $\zg$. Two interior edges $e_j,e_k$ have the same sign $f(e_j)=f(e_k)$ if and only if the sides $\tau_{e_j},\tau_{e_k}$ lie on the same side of the segments of $\zg$ in $\zD_j$ and $\zD_k$, respectively.
 }
\end{remark}

 \begin{defn}If $\tau \in T$ then we define its (labeled) snake graph $\calg_{\tau}$ to be the graph consisting of one single edge with weight $x_{\tau}$ and two distinct endpoints (regardless whether the endpoints of $\tau$ are distinct).
 \end{defn}

\begin{figure}
\input{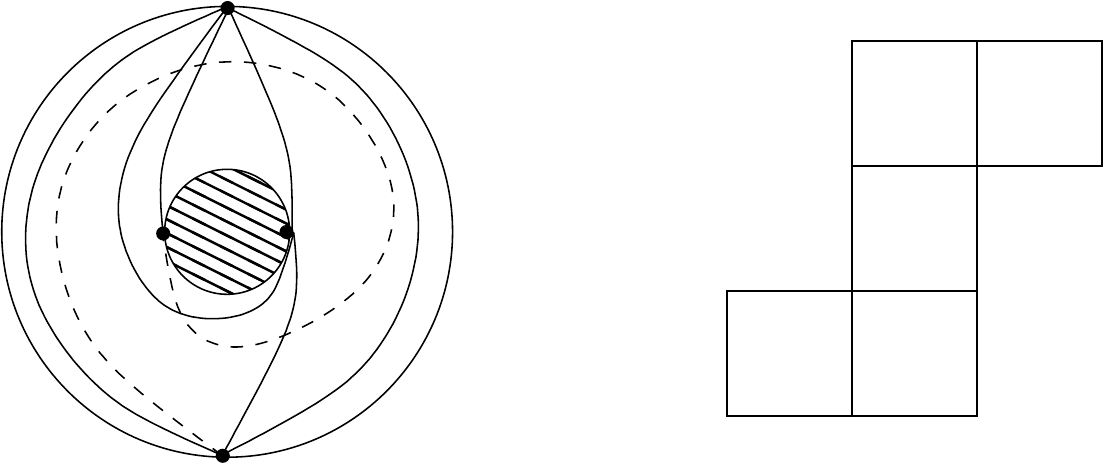_t}
\caption{An arc $\gamma$ in a triangulated annulus on the left and the corresponding {labeled} snake graph $\calg_{\gamma}$ on the right. The tiles labeled 1, 3, 1 have positive relative orientation and the tiles 2, 4 have negative relative orientation.}\label{figsnake}
\end{figure}

Now we associate a similar graph to closed loops.
Let $\zeta$ be a closed loop in $(S,M)$, which may or may not have self-intersections, 
but which is not contractible and has no contractible kinks.  Choose
an orientation for $\zeta$, and a 
triangle $\Delta$ which is crossed by $\gamma$.  Let $p$ be a point in the interior of $\Delta$ which lies on $\gamma$, and let $b$ and $c$ be the two sides of the triangle crossed by $\gamma$
immediately before and following its travel through point $p$.
Let $a$ be the third side of $\Delta$.   
We let $\tilde{\gamma}$ denote the arc from $p$ back to itself that exactly
follows closed loop $\gamma$. 

\begin{figure}
\scalebox{1.3}
{\small
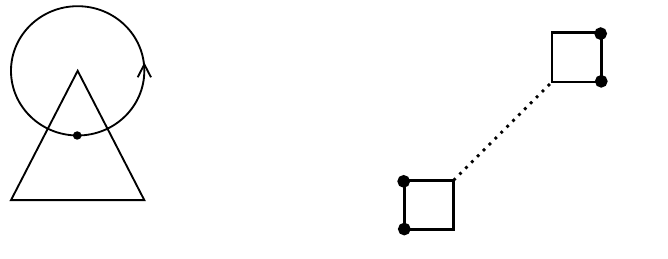
}
\caption{A triangle containing $p$ along a closed loop $\zeta$ (on the left) and the  corresponding band graph with $x\sim x'$, $y\sim y'$ (on the right). }
\label{fig band}
\end{figure}

We start by building the snake graph $\calg_{\tilde{\gamma}}$ 
as defined above.  In the first tile of $\calg_{\tilde{\gamma}}$,  
let $x$ denote the vertex
at the corner of the edge labeled $a$ and the edge labeled $b$, and let $y$ denote the vertex at the other end of the edge labeled $a$.  Similarly, in the last tile of
$G_{\tilde{\gamma}}$,  
let $x'$ denote the vertex at the corner of the edge labeled
$a$ and the edge labeled $b$, and let $y'$ denote the vertex at the other
end of the edge labeled $a$.  See the right of Figure \ref{fig band}.
Our convention for $x'$ and $y'$ are exactly opposite to those in \cite{MSW2}.

\begin{defn} \label{def band} The (labeled) \emph{band graph}  $\calg^\circ_{\zeta}$ associated to the loop $\zeta$
is the graph obtained from $\calg_{\tilde{\zeta}}$ by identifying the edges labeled $a$ in the first and last tiles so that the vertices $x$ and $x'$ and the vertices $y$ and $y'$ are glued together.  \end{defn}

\subsection{Snake graph formula for cluster variables}\label{secdefloop}
Recall that if $\tau$ is a boundary segment then $x_{\tau} = 1$.

If $\calg$ is a (labeled) snake graph and the edges of  a perfect matching $P$ of  
$\calg $ are labeled $\tau_{j_1},\dots,\tau_{j_r}$, then 
the {\it weight} $x(P)$ of $P$ is 
$x_{\tau_{j_1}} \dots x_{\tau_{j_r}}$.

Let $\zg$ be a generalised  arc and  $\tau_{i_1}, \tau_{i_2},\dots, \tau_{i_d}$
be the sequence of arcs in $T$ which $\zg$ crosses. The \emph{crossing monomial} 
of $\gamma$ with respect to $T$ is defined as
$$\mathrm{cross}(T, \gamma) = \prod_{j=1}^d x_{\tau_{i_j}}.$$
%
  

By induction on the number of tiles it is easy to see that the snake graph
$\calg_{\zg}$  
has  precisely two perfect matchings which we call
the {\it minimal matching} $P_-=P_-(\calg_{\zg})$ and 
the {\it maximal matching} $P_+
=P_+(\calg_{\zg})$, 
which contain only boundary edges.
To distinguish them, 
if  $\mathrm{rel}(\tilde G_1,T)=1$ (respectively, $-1$),
we define 
$e_1$ and $e_2$ to be the two edges of 
${\calg}^\triangle_{\zg}$ which lie in the counterclockwise 
(respectively, clockwise) direction from 
the diagonal of $\tilde G_1$.  Then  $P_-$ is defined as
the unique matching which contains only boundary 
edges and does not contain edges $e_1$ or $e_2$.  $P_+$
is the other matching with only boundary edges.
In the example of Figure \ref{figsnake}, the minimal matching $P_-$ contains the bottom edge of the first tile labeled 4.

\begin{lem}\cite[Theorem 5.1]{MS}
\label{thm y}
The symmetric difference $P_-\ominus P$ is the set of boundary edges of a 
(possibly disconnected) subgraph $\calg_P$ of $\calg_\zg$,
which is a union of cycles.  These cycles enclose a set of tiles 
$\cup_{j\in J} G_{j}$,  where $J$ is a finite index set.
\end{lem}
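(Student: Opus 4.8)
The statement is the classical symmetric-difference lemma for perfect matchings of snake graphs, due to Musiker--Schiffler, and the plan is to follow the standard argument adapted to our setup. Fix a perfect matching $P$ of $\calg_\zg$ together with the distinguished minimal matching $P_-$. First I would recall the basic fact that if $P_1,P_2$ are any two perfect matchings of a graph $G$, then the symmetric difference $P_1\ominus P_2$ (viewed as a set of edges) has the property that every vertex of $G$ is incident to either $0$ or $2$ of its edges: indeed each vertex is covered exactly once by $P_1$ and exactly once by $P_2$, so in the symmetric difference it is covered $0$ times (if the two covering edges coincide) or $2$ times (if they differ). Hence $P_-\ominus P$ is a disjoint union of simple closed paths, i.e.\ a union of cycles, and in a planar graph embedded as $\calg_\zg$ these cycles are vertex-disjoint and edge-disjoint.

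\textbf{Key steps.} Second, I would argue that each such cycle consists entirely of boundary edges of $\calg_\zg$ and encloses a connected block of tiles. The point is that the only interior edges of $\calg_\zg$ are the $e_i$ shared by consecutive tiles $G_i,G_{i+1}$; a cycle in $P_-\ominus P$ that used an interior edge $e_i$ would, by the structure of a snake graph (two tiles meeting along a single edge whose other sides are boundary edges), be forced to ``turn around'' and this is incompatible with the cycle being a union of alternating $P_-$/$P$ edges unless it degenerates. More carefully, since $P_-$ consists only of boundary edges, any edge of a cycle in $P_-\ominus P$ that lies in $P_-$ is a boundary edge; and the edges in $P\setminus P_-$ along the cycle alternate with those, so by examining the four vertices of each tile one checks that an interior edge can never appear. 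Thus every cycle lies on the boundary of $\calg_\zg$, and since $\calg_\zg$ is a planar graph that is topologically a disk, each boundary cycle bounds a region which is a union of finitely many tiles; letting $J$ index those tiles gives the desired description $\calg_P=\partial(\cup_{j\in J}G_j)$.

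\textbf{Main obstacle.} The routine but slightly delicate part is the verification that interior edges cannot occur in a cycle of $P_-\ominus P$, and dually the identification of the enclosed region with a genuine union of tiles $\cup_{j\in J}G_j$; this requires a careful local case analysis at each tile using the staircase shape of the snake graph (the fact that consecutive tiles are glued north-to-south or east-to-west) and the defining property that $P_-$ avoids the two distinguished edges $e_1,e_2$. I would handle this by induction on the number of tiles $d$, peeling off the last tile $G_d$: either $G_d$ is ``matched internally'' the same way in $P$ and $P_-$, in which case the cycles of $P_-\ominus P$ live in $\calg_\zg[1,d-1]$ and the inductive hypothesis applies, or $P$ and $P_-$ differ on $G_d$, in which case one of the cycles picks up exactly the boundary of $G_d$ (possibly extended into the prior tiles), and again one reduces to a smaller snake graph. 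Since $\calg_\zg$ has only two matchings built from boundary edges, namely $P_\pm$, and $P$ is arbitrary, the base cases are immediate, and the inductive step is exactly the local tile analysis indicated above. This is the only place real bookkeeping is needed; everything else is the general parity observation about symmetric differences of matchings.
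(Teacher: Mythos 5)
There is a genuine error in your key step. You claim that every cycle of $P_-\ominus P$ consists entirely of boundary edges of $\calg_\zg$ and that ``an interior edge can never appear.'' This is false, and it rests on a misreading of the statement: the lemma asserts that $P_-\ominus P$ is the set of boundary edges \emph{of the subgraph} $\calg_P=\cup_{j\in J}G_j$, not that its edges are boundary edges of the ambient snake graph $\calg_\zg$. An edge on the boundary of the enclosed region $\calg_P$ is very often an interior edge of $\calg_\zg$. Concretely, take the snake graph with two tiles $G_1,G_2$ glued along the interior edge $e_1$. It has exactly three perfect matchings: the two boundary matchings $P_\pm$ and a third matching $P$ that contains $e_1$. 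Then $P_-\ominus P$ is the four-edge cycle bounding a single tile, and that cycle contains the interior edge $e_1$. So the local case analysis you propose (ruling out interior edges) would fail at the first nontrivial example, and the subsequent assertion that ``every cycle lies on the boundary of $\calg_\zg$'' cannot be the route to the conclusion.

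The repair keeps your first, sound observation --- every vertex has degree $0$ or $2$ in the symmetric difference of two perfect matchings, so $P_-\ominus P$ is a disjoint union of cycles --- and then uses only planarity: $\calg_\zg$ is embedded in the plane with bounded faces exactly the tiles $G_j$, so each cycle of $P_-\ominus P$ encloses a union of tiles, and the union of all enclosed tiles is the subgraph $\calg_P$ whose boundary edges are precisely the edges of these cycles. No dichotomy between interior and boundary edges of $\calg_\zg$ is needed (nor is one available). Your closing induction on the number of tiles could be made to work, but as written it leans on the false intermediate claim. Note also that the paper itself does not reprove this lemma; it is quoted from \cite{MS}, so there is no internal proof to compare against.
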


\begin{defn} \label{height} With the notation of Lemma \ref{thm y},
we  define the \emph{height monomial} $y(P)$ of a perfect matching $P$ of a snake graph $\calg_\gamma$ by
\begin{equation*}
y(P) = \prod_{j\in J} y_{\tau_{i_j}}.
\end{equation*}\end{defn}

Following \cite{MSW2}, for each generalised arc $\gamma$, we now define a Laurent polynomial $x_\zg$, as well as a polynomial 
$F_\zg^T$ obtained from $x_{\gamma}$ by specialization.
\begin{defn} \label{def:matching}
Let $\zg$ 
be a generalised arc and let $\calg_\zg$
be its snake graph.  
\begin{enumerate}
\item If $\zg$ 
has a contractible kink, let $\overline{\zg}$ denote the 
corresponding generalised arc with this kink removed, and define 
$x_{\zg} = (-1) x_{\overline{\zg}}$.  
\item Otherwise, define
\[ x_{\gamma}= \frac{1}{\mathrm{cross}(T,\zg)} \sum_P 
x(P) y(P),\]
 where the sum is over all perfect matchings $P$ of $G_{\zg}$.
\end{enumerate}
Define $F_{\gamma}^T$ to be the polynomial obtained from 
$x_{\gamma}$ by specializing all the $x_{\tau_i}$ to $1$.

If $\gamma$ is a curve that 
cuts out a contractible monogon, then we define $\gamma =0$.     
\end{defn}

\begin{thm}\cite[Thm 4.9]{MSW}
\label{thm MSW}
If $\gamma$ is an arc, then 
$x_{\gamma}$ 
is a the cluster variable in $ \A$,
written as a Laurent expansion with respect to the seed $\Sigma_T$,
and $F_{\gamma}^T$ is its \emph{F-polynomial}.
\end{thm}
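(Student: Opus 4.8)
The plan is to prove the statement by induction on $d$, the number of crossings of $\gamma$ with the fixed triangulation $T$, comparing the snake-graph generating function of Definition~\ref{def:matching} with the exchange relations of $\Acal$. Since by Theorem~\ref{clust-surface} the arcs of $(S,M)$ are in bijection with the cluster variables and each seed is determined by its cluster, it is enough to show that the family $\{x_\gamma\}_\gamma$ defined by the perfect-matching formula satisfies the same initial conditions and the same exchange (Ptolemy-type) relations as the cluster variables; by the Fomin--Zelevinsky separation-of-additions formula this is in turn equivalent, in the principal-coefficient setting, to checking that $F_\gamma^T$ is the $F$-polynomial of the cluster variable attached to $\gamma$ and that the minimal-matching term $x(P_-)y(P_-)/\mathrm{cross}(T,\gamma)$ is the correct $g$-vector monomial.

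First I would dispose of the base cases. If $\gamma\in T$ then $\calg_\gamma$ is a single edge and $\mathrm{cross}(T,\gamma)=1$, so $x_\gamma=x_\tau$ is an initial cluster variable. If $d=1$, then $\gamma=\tau'$ is the flip of a single arc $\tau\in T$ across a quadrilateral with sides $a,b,c,d$; the one-tile snake graph $\calg_\gamma$ has exactly the two perfect matchings $\{a,c\}$ and $\{b,d\}$, and reading off $x(P)y(P)$ for each and dividing by $\mathrm{cross}(T,\gamma)=x_\tau$ reproduces exactly the exchange relation $x_\tau x_{\tau'}=Y_1 x_a x_c+Y_2 x_b x_d$, with $Y_1,Y_2$ equal to the height monomials $y(P_-),y(P_+)$ --- this is the skein relation of Theorem~\ref{th:skein1} in its simplest case.

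For the inductive step, given an arc $\gamma$ with $d\ge 2$, I would pick an arc $\tau\in T$ crossed by $\gamma$, flip it to obtain $T'$ and $\tau'$, and use the exchange relation together with the induction hypothesis applied to the (shorter) arcs appearing on its right-hand side, which cross the triangulation fewer than $d$ times. What then remains is a purely combinatorial assertion: on the level of graphs, $\calg_\gamma$ is built from the snake and band graphs of these shorter arcs by the grafting and resolution operations of section~\ref{section3}, and under the matching bijection of Theorem~\ref{thmbijection} the generating function $\sum_P x(P)y(P)$ transforms exactly as the exchange relation prescribes, once one re-expresses everything back in the initial cluster $\mathbf{x}_T$ and the initial $y$-variables. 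The hard part will be precisely this coefficient bookkeeping: tracking how the minimal matching, the symmetric-difference cycles of Lemma~\ref{thm y}, and hence the height monomials $y(P)$ behave when $T$ is replaced by $T'$, so that the monomials $Y_1,Y_2$ occurring in the exchange relation agree with the height-monomial weights coming from the snake graph. This forces a careful case analysis of tiles, relative orientations, and the local shape of $\calg^\triangle_\gamma$ near the flipped quadrilateral, and it is the technical core of the argument (carried out in detail in \cite{MSW}); granting it, $x_\gamma$ and the cluster variable obey the same recursion with the same initial data and hence coincide, and setting all $x_{\tau_i}=1$ yields the $F$-polynomial statement. An alternative organization, closer in spirit to the present paper, would instead prove directly that the matching formula satisfies all skein relations via the snake graph calculus and then invoke Theorem~\ref{th:skein1} together with the fact that every cluster variable is reached from $T$ by iterated skein resolutions.
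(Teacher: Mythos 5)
This statement is quoted verbatim from \cite[Thm 4.9]{MSW}; the paper under review gives no proof of it, so there is nothing internal to compare your argument against. Judged on its own terms, your sketch correctly identifies the two standard strategies (verify the matching formula against the exchange recursion, or verify it against skein relations), but the inductive step as you have written it does not go through. If you flip an arc $\tau\in T$ crossed by $\gamma$, the resulting exchange relation $x_\tau x_{\tau'}=Y_1x_ax_c+Y_2x_bx_d$ involves only $\tau'$ and the sides of the flipped quadrilateral --- the arc $\gamma$ does not appear in it, so there are no ``shorter arcs on its right-hand side'' to which an induction hypothesis on the crossing number of $\gamma$ could be applied. The two workable versions of the induction are genuinely different from what you describe: either (i) induct on the number of flips needed to pass from $T$ to a triangulation $T_\gamma$ containing $\gamma$, and prove that the perfect-matching Laurent polynomial transforms correctly under a single flip (this is what \cite{MSW} actually does, and it is where the tile-by-tile bookkeeping lives), or (ii) smooth a crossing of $\gamma$ with some $\tau\in T$, producing a relation $x_\tau x_\gamma=Y_1x_{\alpha_1}x_{\alpha_2}+Y_2x_{\beta_1}x_{\beta_2}$ in which the arcs $\alpha_i,\beta_i$ really do cross $T$ fewer times. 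You should commit to one of these and set up the induction accordingly.

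There is also a circularity in your fallback plan. Theorem~\ref{th:skein1} is imported from \cite{MW}, whose proof of the skein relations is itself built on the expansion formula of \cite{MSW}, i.e.\ on the very theorem you are trying to prove; so you cannot invoke Theorem~\ref{th:skein1} here without an independent derivation of the skein identities for the matching polynomials. The present paper does supply such an independent derivation (Theorems~\ref{laurent}--\ref{laurent2self} and Corollary~\ref{skein}, via the bijections of Theorem~\ref{thmbijection}), and route (ii) above becomes viable if you use those results as the engine and anchor the induction at $d\le 1$, where the formula is checked directly against the initial seed and a single Ptolemy exchange. Finally, note that even in the base case the identification of $Y_1,Y_2$ with the height monomials $y(P_\pm)$ is exactly the principal-coefficient normalization that the separation-of-additions formula is meant to certify; it should be stated as something to be verified, not as automatic.
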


 Again following \cite{MSW2}, we define for every closed loop $\zeta$, a Laurent polynomial $x_\zeta$, as well 
as a polynomial 
$F_\zeta^T$ obtained from $x_{\zeta}$ by specialization.
\begin{defn} \label{def closed loop} 
Let $\zeta$  
be a closed loop.  
\begin{enumerate}
\item If $\zeta$ is a contractible loop,
 then let $x_\zeta = -2$.
\item If $\zeta$ 
has a contractible kink, let $\overline{\zeta}$ denote the 
corresponding closed loop with this kink removed, and define 
$x_{\zeta} = (-1) x_{\overline{\zeta}}$.
\item Otherwise, let 
$$x_{\zeta} = \frac{1}{\mathrm{cross}(T,\zg)} \sum_{P} 
x(P) y(P),$$ where the sum is over all good matchings $P$ of the  band graph $\calg^\circ_{\zeta}$. 
\end{enumerate}
Define $F_\zeta^T$ to be the Laurent polynomial obtained from 
$x_\zeta$ by specializing all the $x_{\tau_i}$ to $1$.
\end{defn}

\subsection{Bases of the cluster algebra}\label{sec:bangbrac}
{We recall the construction of the two bases given in \cite{MSW2} in terms of bangles and bracelets.}

\begin{defn}
Let $\zeta$ be an essential loop in $(S,M)$.  
The 
\emph{bangle} $\Bang_k \zeta$ is the union of $k$ loops isotopic to $\zeta$.
(Note that $\Bang_k \zeta$ has no self-crossings.)  And the 
\emph{bracelet} $\Brac_k \zeta$ is the closed loop obtained by 
concatenating $\zeta$ exactly $k$ times, see Figure \ref{figbangbrac}.  (Note that it will have $k-1$ self-crossings.)
\end{defn}

\begin{figure}
\scalebox{0.8}{\includegraphics{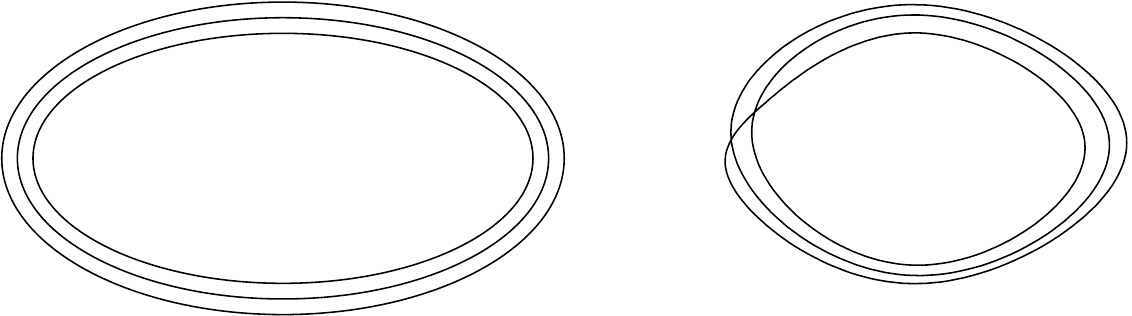}}
\caption{A bangle $\Bang_3 \zeta$, on the left, and a bracelet $\Brac_3 \zeta$, on the right.}\label{figbangbrac}
\end{figure}
Note that $\Bang_1 \zeta = \Brac_1 \zeta = \zeta$.

\begin{defn}
\label{def C0-compatible}
A collection $C$ of arcs and essential loops is called 
\emph{$\C^{\circ}$-compatible} if no two elements of $C$ cross each other.
Let $\C^{\circ}(S,M)$ be the set of all 
$\C^{\circ}$-compatible collections in $(S,M)$.
\end{defn}

\begin{defn}
A collection $C$ of arcs and bracelets is called 
\emph{$\C$-compatible} if:
\begin{itemize}
\item no two elements of $C$ cross each other except for the self-crossings of a bracelet; and
\item given an essential loop $\zeta$ in $(S,M)$, 
there is at most one $k\ge 1$ such
that the $k$-th bracelet $\Brac_k\zeta$ lies in $C$, and, moreover, there is at
most one copy of this bracelet $\Brac_k\zeta$ in $C$.
\end{itemize}
Let $\C(S,M)$  be the set of all $\C$-compatible
collections in $(S,M)$.
\end{defn}

Note that a $\C^{\circ}$-compatible collection may contain 
bangles $\Bang_k \zeta$ for $k \geq 1$, but it will not contain
bracelets $\Brac_k \zeta$ except when $k=1$.
And  a $\C$-compatible collection may contain bracelets, but will never
contain a bangle $\Bang_k \zeta$ except when $k=1$.

\begin{defn}
Given an arc or closed loop $c$, let 
$x_c$ denote the corresponding Laurent polynomial
defined in Section \ref{secdefloop}.
Let $\B^\circ$ 
be the set of all cluster algebra 
elements corresponding to the set $C^{\circ}(S,M)$, 
\[\B^{\circ} = \left\{\prod_{c\in C} x_c \ \vert \ C \in \C^{\circ}(S,M) \right\}.\]
Similarly, let
\[\B = \left\{\prod_{c\in C} x_c \ \vert \ C \in \C(S,M) \right\}.\]
\end{defn}

\begin{remark}
Both $\B^{\circ} $ and $\B $ contain
the cluster monomials of $\A $.
\end{remark}

We are now ready to state the main result of \cite{MSW2}.

\begin{thm}\cite[Theorem 4.1]{MSW2} If the surface has no punctures and at least two marked points then
the sets $\B^\circ$ and $\B$ are bases of the cluster algebra $\A$.                             
\end{thm}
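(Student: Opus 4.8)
The statement is Theorem 4.1 of \cite{MSW2}. The plan is to reprove it inside the framework of the present paper, using the snake and band graph calculus of Sections \ref{section3}--\ref{section 4} in place of the isotopy manipulations hidden in the smoothing operation (Remark \ref{rem smoothing}). As for any basis claim there are two tasks: that $\B^\circ$ and $\B$ \emph{span} $\A$, and that their elements are \emph{linearly independent}.

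\emph{Spanning.} By Theorem \ref{thm MSW} and Definition \ref{def closed loop}, every cluster variable and every loop element is realised by a labeled snake or band graph, and every element of $\A$ is a $\Z[y_{\tau_1},\dots]$-linear combination of products $x_C=\prod_{c\in C}x_c$, where $C$ ranges over (not necessarily simple) multicurves. If such a $C$ has a crossing of two curves or a self-crossing, pass to the corresponding crossing pair of snake/band graphs, or to the self-crossing band graph, and apply the resolution of Section \ref{section3}. By Theorem \ref{thmbijection} this resolution induces a bijection of perfect matchings, and --- as shown for graphs coming from a surface in Section \ref{sect 5} --- it is weight preserving and reproduces exactly the skein relation of Theorem \ref{th:skein1}, so it rewrites $x_C$ as $\pm Y_1 x_{C_+}\pm Y_2 x_{C_-}$ with $C_\pm$ strictly simpler. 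Here ``simpler'' is a crossing statistic (the total number of pairwise crossings, with bracelets counted so that their unavoidable self-crossings are accounted for by the $\brac_k$ identity below); each resolution strictly decreases it, so iterating terminates and expresses $x_C$ as a $\Z$-combination of $x_{C'}$ for $C'$ a disjoint union of pairwise non-crossing arcs, essential loops, and bracelets. Finally the bracelet identity $\brac_k(\calg^\circ)=\brac_{k-1}(\calg^\circ)\,\calg^\circ-\brac_{k-2}(\calg^\circ)$, whose case $k=2$ is the relation $\brac_2(\calg^\circ)=\calg^\circ\calg^\circ-2$ of Section \ref{sect 3.3}, lets us pass freely between a bangle $\Bang_k\zeta$ (whose element is $x_\zeta^k$) and bracelets $\Brac_j\zeta$, $j\le k$; this shows simultaneously that $\B^\circ$ spans and that $\B$ spans.

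\emph{Linear independence.} This cannot be settled combinatorially on graphs, and here the plan is to keep the argument of \cite{MSW2}. Fix the elementary laminations associated to $T$ and the induced partial order on Laurent monomials in the $x_{\tau_i}$. For each curve $c$, the Laurent polynomial $x_c$ has a unique maximal term in this order: for an arc it is the term of the maximal matching $P_+(\calg_c)$, shifted by the $\mathbf{g}$-vector, and for a bracelet $\Brac_k\zeta$ it is the $k$-fold ``stacked'' maximal-matching term of the band graph $\calg^\circ_\zeta$. One then checks that $c\mapsto(\text{leading exponent of }x_c)$ is additive over $\C^\circ$-compatible (resp.\ $\C$-compatible) collections, so that the leading term of $x_C$ is the product of the leading terms of its factors, and that the resulting assignment $C\mapsto(\text{leading exponent of }x_C)$ is injective on $\C^\circ(S,M)$ (resp.\ $\C(S,M)$). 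Distinct collections then give distinct leading terms, so $\B^\circ$ and $\B$ are linearly independent.

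The hard part is exactly this injectivity. It requires the theory of shear coordinates of \cite{FST,FT}: identifying the maximal matching of $\calg_c$ with the shear coordinate vector of the elementary lamination that crosses $c$, so that a simple multicurve is reconstructed from the total shear vector; recognising that concatenating $\zeta$ with itself $k$ times multiplies the relevant shear coordinates by $k$, so that bracelets are detected as those shear vectors that are multiples $\geq 2$ of a primitive one; and using the compatibility conditions defining $\C^\circ(S,M)$ (Definition \ref{def C0-compatible}) and $\C(S,M)$ to rule out the two ambiguities --- a $k$-fold bracelet versus $k$ parallel loops, and a bracelet of $\zeta$ versus a bracelet of a different loop --- while the arc part reduces to the already-known linear independence of cluster monomials. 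The contribution of the present paper to this proof is to replace the delicate isotopy inside the smoothing operation by the explicit resolutions of Section \ref{section3} together with Theorem \ref{thmbijection}, making the spanning step algorithmic and, en route, yielding a new proof of the skein relations of Theorem \ref{th:skein1}.
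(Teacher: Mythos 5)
This statement is not proved in the paper at all: it is imported verbatim from \cite{MSW2} (Theorem 4.1 there), and the present paper only cites it, so there is no in-paper proof to compare against. Judged on its own terms, your proposal is an accurate reconstruction of the \cite{MSW2} strategy, with one genuine variation that is very much in the spirit of this paper: you route the \emph{spanning} step through the explicit resolutions of Sections \ref{section3}--\ref{section 4} and Theorem \ref{thmbijection} instead of the isotopy-based smoothing of \cite{MW}, which is exactly the substitution the authors themselves make later when proving Theorem \ref{thm Gamma}. The termination-by-crossing-number argument and the Chebyshev-type passage between $\Bang_k\zeta$ and $\Brac_j\zeta$ are correct as sketched (you should also invoke Definitions \ref{def:matching}(1) and \ref{def closed loop}(1)--(2) to dispose of the contractible kinks and contractible loops that smoothing can produce, since these carry the signs $-1$ and $-2$ needed for the induction to close).

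The weak point is that your \emph{linear independence} step is a description of a proof rather than a proof: the additivity of leading exponents over compatible collections and, above all, the injectivity of $C\mapsto(\text{leading exponent of }x_C)$ on $\C^{\circ}(S,M)$ and $\C(S,M)$ are precisely where all the work of \cite{MSW2} lives (via $\mathbf{g}$-vectors and the shear-coordinate duality of \cite{FST,FT}), and you explicitly defer it. That is a legitimate thing to do for a cited theorem, but it means the proposal is a proof outline with a black box, not a self-contained argument; nothing in the snake-graph calculus of this paper replaces that step, and you correctly identify it as the part that ``cannot be settled combinatorially on graphs.''
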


\begin{remark}
 This result has been extended to surfaces with only one marked point in \cite{CLS}.
\end{remark}

\section{Relation to cluster algebras}\label{sect 5} 
In this   section, we show how our results on abstract snake {and band} graphs are related to computations in cluster algebras from unpunctured surfaces. For these cluster algebras,  each cluster variable can be computed using its labeled snake graph. 
Yet another way of representing a cluster variable is by an arc in the surface. We show that two arcs cross if and only if {their} corresponding labeled snake graphs cross, and that the smoothing of the crossing arcs corresponds to the resolution of the crossing labeled snake graphs. As a consequence, two cluster variables are compatible if and only if their corresponding labeled snake graphs do not cross.

\subsection{Crossing curves and crossing graphs}
In this subsection, we show that the notion of crossings for arcs and loops and the notion of crossings for snake graphs and band graphs coincide.
\begin{thm}\label{thm cross} Let $\gamma,\gamma_1, \gamma_2$ be each a generalised arc or a closed loop and let $\calg_\gamma,\calg_1, \calg_2$ the corresponding {labeled} snake graphs or band graphs. 
\begin{itemize}
\item[a)] $\gamma_1, \gamma_2$ cross with a nonempty local overlap $(\tau_{i_s}, \cdots, \tau_{i_t})=(\tau_{i'_{s'}}, \cdots, \tau_{i'_{t'}})$ if and only if $\calg_1, \calg_2$ cross in the corresponding overlap.
\item[b)] $\gamma$ has a self-crossing with a nonempty local overlap $(\tau_{i_s}, \cdots, \tau_{i_t})=(\tau_{i_{s'}}, \cdots, \tau_{i_{t'}})$ if and only if $\calg_{\gamma}$ has a self-crossing in the corresponding overlap.
\end{itemize}
\end{thm}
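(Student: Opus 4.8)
The plan is to reduce the statement to a careful comparison of two dictionaries: the geometric one, which translates crossing/self-crossing data of curves in $(S,M)$ into the combinatorics of the triangulation $T$, and the combinatorial one, which is the Definitions~\ref{crossing}, \ref{defcrossbands}, and \ref{def self-crossing band} for (self-)crossing snake and band graphs. The key bridge is Remark~\ref{rem sign}: the sign function on $\calg_\gamma$ records, for each interior edge $e_j$ corresponding to a triangle side $\tau_{e_j}$, on which side of the segment of $\gamma$ through $\Delta_j$ that side lies. So I would first isolate the following geometric fact, which is really the crux: if $\gamma_1,\gamma_2$ share a maximal common run of triangles $\Delta_s,\dots,\Delta_t$ (an overlap), then $\gamma_1$ and $\gamma_2$ actually cross (have an unremovable intersection) if and only if, at one of the two ends of this run, $\gamma_1$ and $\gamma_2$ ``peel apart to opposite sides,'' and this peeling-apart condition is exactly an equality or inequality of the relevant $\tau_{e}$-sides that, under Remark~\ref{rem sign}, becomes precisely the sign conditions (i) and (ii) of Definition~\ref{crossing}.

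Concretely, I would organize the argument as follows. First, recall from the construction in Section~\ref{sect graph} that an overlap of curves $(\tau_{i_s},\dots,\tau_{i_t})=(\tau_{i'_{s'}},\dots,\tau_{i'_{t'}})$ is exactly a maximal common subsequence of crossed arcs, which corresponds bijectively to a common subgraph $\calg=\calg_1[s,t]\cong\calg_2[s',t']$, and this subgraph is maximal in the sense required by the definition of overlap of snake graphs (the single-tile cases (ii)(a),(ii)(b) correspond geometrically to: the overlap tile is at an end of $\gamma_i$, or $\gamma_1$ and $\gamma_2$ continue through the same pair of adjacent triangles with the same straight/zigzag pattern); I would verify this correspondence carefully, using the annulus/disk local pictures. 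Second, I would treat the ``interior'' end conditions: at the end of the overlap where $\gamma_1$ continues (so $s>1$ or $t<d$ in the appropriate normalization), the curve $\gamma_1$ either stays on the same side of the $\tau$'s as the overlap or switches; by Remark~\ref{rem sign} ``switching sides within $\calg_1$'' is the condition $f_1(e_{s-1})=-f_1(e_t)$, giving case (i). Third, for the ``boundary'' end conditions ($s=1$, $t=d$, etc.), the relevant comparison is between the side of $\tau$ seen by $\gamma_1$ and the side seen by $\gamma_2$, which by Remark~\ref{rem sign} applied across the two graphs is $f_1(e_t)=f_2(e'_{s'-1})$, giving case (ii). For band graphs I would pass to the universal cover $\calg_2^\infty$ (resp.\ $\calg_1^\infty$) introduced in Section~\ref{section2point5}, since geometrically a loop crossing another curve is detected by a lift, and the edge $b=e_0'$ of the cut snake graph plays the role of the interior edge where the overlap begins; Remark~\ref{remsis1} lets me normalize so $s'=1$, matching Definition~\ref{defcrossbands}. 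Part~b), the self-crossing case, is then the same analysis applied with $\gamma_1=\gamma_2=\gamma$ and two embeddings $i_1,i_2$ of the overlap into $\calg_\gamma$ (same vs.\ opposite direction corresponding to the sign functions induced by $f$ being equal vs.\ opposite), reproducing Definition~\ref{def self-crossing band}(a),(b); the subtle point here is condition (iii) on connectedness of the intersection and the degenerate case where the overlap is the whole band graph, where part~(b) of Definition~\ref{def self-crossing band} applies and one only needs $f(e_t)=f(e_{s'-1})$.

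\textbf{Main obstacle.} I expect the hard part to be the single-tile overlap cases and the bookkeeping of orientations. When the overlap is a single triangle, ``$\gamma_1$ and $\gamma_2$ cross'' is a genuinely local-but-delicate statement: two curve-segments through one triangle either cross or not depending on which pairs of sides they enter/exit through, and one has to check that the straight/zigzag dichotomy in Definition~\ref{crossing}(ii)(b) captures exactly the crossing configurations — this requires enumerating the finitely many local pictures of two segments through a triangle together with the adjacent triangles, keeping track of relative orientation $\mathrm{rel}(\tilde G,T)$, and matching each against the sign function via Remark~\ref{rem sign}. The second source of friction is consistency of orientations and the reduction to maximal overlaps: one must show that the ``largest subgraph of $\calg_1$ isomorphic to a subgraph of $\calg_2^\infty$'' in the band-graph definitions corresponds geometrically to following the lift of the loop as far as it agrees with $\gamma_1$, and that the maximality of $\overline{i_1(\calg)}$ forces the expected sign inequality (this is exactly the argument already used in the proof of the Proposition right after the $\calg_{56}$ definition in Section~\ref{sect 3.1}, which I would invoke/adapt). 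Once these local verifications are in place, assembling the global equivalence is routine casework, organized by the same five branches ($s,t$ each $=$ or $\neq$ the boundary index) that structure the definitions, and I would present it as a table rather than prose.
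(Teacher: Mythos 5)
Your overall strategy --- re-deriving the equivalence from scratch by translating the sign function into ``which side of the curve the triangle side lies on'' via Remark~\ref{rem sign} and then matching the peel-apart-at-the-ends condition against Definitions~\ref{crossing} and~\ref{defcrossbands} --- is sound, but it is a genuinely different and much heavier route than the paper takes. The paper observes that crossing is a local condition and therefore quotes the already-established snake graph results (\cite[Theorem 5.3]{CS} for part~a) and \cite[Theorem 6.1]{CS2} for self-crossing arcs in part~b)) for every configuration in which the overlap has an end at which the curves separate; your plan would essentially reprove those theorems, including the delicate single-tile enumeration you correctly identify as the main obstacle. What your approach buys is self-containedness; what it costs is that all of the new content of Theorem~\ref{thm cross} relative to the earlier papers sits precisely in the cases your framework does not reach.

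Those cases are where the gap lies. In part~a), when the two band graphs are isomorphic and the overlap is the whole band graph, there are no ends at which to test ``peeling apart to opposite sides,'' so your crux criterion is vacuous; Definition~\ref{defcrossbands} simply declares this non-crossing, and the geometric half of the equivalence is an isotopy argument (the two loops run parallel along the common crossing sequence and can be separated), not a sign computation. In part~b), your plan of ``the same analysis with $\gamma_1=\gamma_2=\gamma$'' implicitly reduces a self-crossing to a crossing of two copies of $\gamma$; this works when $\gamma$ is not a bracelet, but fails exactly when the overlap is the whole band graph, since two copies of a bracelet do not cross even though the bracelet self-crosses. There the paper argues directly at the crossing point $p$ with Remark~\ref{rem sign} to get $f(e_t)=f(e_{s'-1})$, and --- for the converse, which your proposal does not address --- shows that a band graph whose self-overlap is the whole band graph forces the segments of $\gamma$ to run parallel, so that $\gamma$ is a bracelet and genuinely self-crosses. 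You would need to add both of these arguments for your proof to close.
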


\begin{proof} 
a) Since crossing is a local condition, this statement follows directly from Theorem 5.3 of \cite{CS}, except for the case where we have two labelled band graphs which are isomorphic and the overlap is isomorphic to both of them. In this case, our Definition \ref{defcrossbands} implies that the band graphs do not cross in this overlap. For the  corresponding closed loops $\zg_1,\zg_2$ we can choose starting points and orientations such that the sequence of crossed arcs of the triangulation is the same for both, and this sequence corresponds to the overlap under consideration. Therefore along this overlap the two loops are isotopic to each other and do not cross. See the right hand side of Figure~\ref{fig28} for an example.

b) For self-crossing arcs, this has been shown in \cite[Theorem 6.1]{CS2}.  Therefore let us assume that $\gamma$ is a loop.

First suppose that $\gamma $ is not a bracelet. 
Choose a parametrization $\zg=\zg(t)$, and say the self-crossing occurs at the times $t_1$ and $t_2.$ Take now two copies $\gamma_1, \gamma_2$ of $\gamma$ and consider their crossing at $\gamma_1(t_1)=\gamma_2(t_2).$ The left picture in Figure~\ref{fig28} shows an example of a figure 8 loop on the torus. The local overlap of $\gamma_1$ and $\gamma_2$ at this crossing is the same as the local overlap of the self-crossing, and the local overlap in the corresponding snake graphs $\calg_1, \calg_2$ of $\gamma_1, \gamma_2$ is the same as the local self-overlap of the snake graph $\calg_{\gamma}.$ Now the result follows from part a).

Finally, suppose that $\gamma $ is a bracelet, {and} $\zg=\Brac_k(\zeta)$ {for some loop $\zeta$}. 
If $\gamma $ has a self-crossing such that the corresponding overlap is not the whole band graph then the same argument as above applies. 
Therefore assume that $\gamma $ has a self-crossing at a point $p$ such that the corresponding overlap  is the whole band graph. In this case,
we cannot use the same argument, because two copies of the same bracelet do not always cross. For example if $\zeta$ is a simple loop, then two copies of $\Brac_k(\zeta)$ do not cross each other because using isotopy we can separate one from the other. See  the right hand side of Figure \ref{fig28} for an example on the torus.
\begin{figure}
\scalebox{.6}{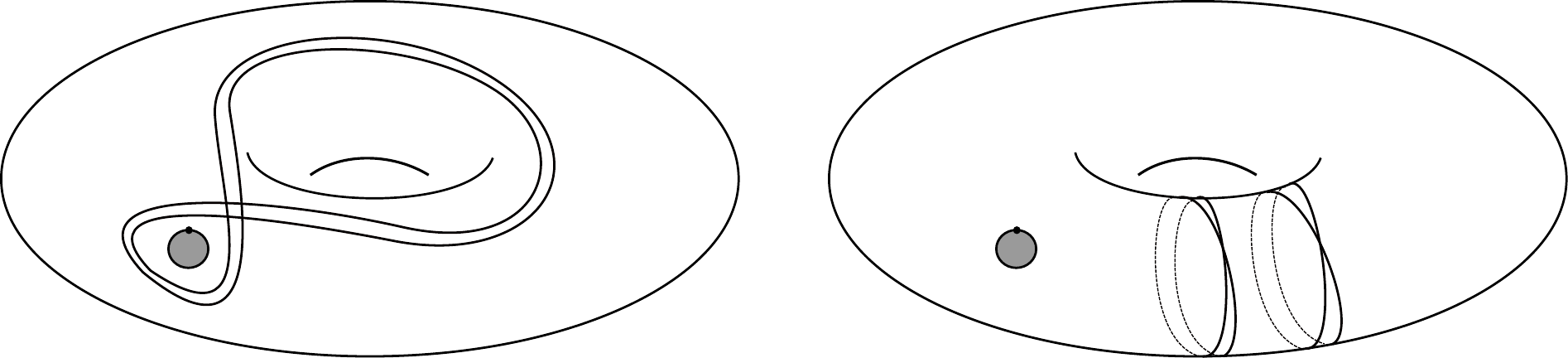}
\caption{Two copies of a figure 8 on the torus crossing each other twice (left); two copies of a bracelet on the torus not crossing each other (right).}\label{fig28}
\end{figure}
Consider the loop $\zg$ as a curve starting and ending at $p$.  
Let $i_1,i_2,\ldots,i_t$ be the sequence of crossing points of the loop $\zg$ with the triangulation in the order determined by  $\zg$, such that the point $i_j$ lies on the arc $\tau_{i_j}$ of the triangulation. Then there exist a unique $s'>1$ such that the overlap is given by two sequences $\tau_{i_1}=\tau_{i_{s'}}$, $\tau_{i_2}=\tau_{i_{s'+1}}$, \ldots, $\tau_{i_{t}}=\tau_{i_{s'-1}}$. Now consider the corresponding labeled band graph $\band$ cut to a snake graph $\calg=\band_b$ in such a way that the first tile of $\calg$ corresponds to the crossing point $i_1$.  Let $f$ be a sign function on $\calg$.  Recall from Remark~\ref{rem sign} that for an interior edge $e_j$ of $\band$,
 the sign $f(e_j)$ is determined by the way that the segment of $\gamma$ between the points $i_j$ and $i_{j+1}$ runs through the triangle formed by $\tau_{i_j}, \tau_{i_{j+1}}$ and $e_j$. In particular, $f(e_t)=f(e_{s'-1})$, since $s'-1=t'$, $\tau_{i_t}=\tau_{i_{t'}}$ and the two segments of $\zg$ from $\tau_{i_t}$ to $\tau_{i_s}$ and $\tau_{i_{t'}}$ to $\tau_{i_{s'}}$ run through the triangle formed by $\tau_{i_t}, \tau_{i_{s}}$ and $e_t$ in the same direction and crossing the same sides. 
Therefore Definition \ref{def self-crossing band} implies that the $\band$ has a self-crossing overlap.

For the reverse implication, suppose we have a  labeled band  $\band=\calg^b$ coming from a surface without punctures such that $\band$ has a self-crossing overlap $\band[s,t]\cong\band[s',t']$ which is the whole band graph. Then the corresponding loop $\zg$ in the surface is crossing the arcs $\tau_{i_{s}},\ldots,\tau_{i_t}$ and this sequence is equal to the sequence $\tau_{i_{s'}},\ldots,\tau_{i_{t'}}$. Using the notation $\zg[j,k]$ for the segment of $\zg$ from the point $i_j$ to $i_k$, we see that the segments $\zg[s,s'-1], \zg[s',2s'-s-1], \zg[2s'-s,3s'-2s-1],\ldots$ run parallel in the surface. So either $\zg$ is a bracelet of the loop corresponding to the segment $\zg[s,s'-1]$ or $\zg$ is a bracelet of a loop corresponding to a segment $\zg[s,s''-1]$ with $s''<s'$. In both cases $\zg$ has a self-crossing with overlap $\tau_{i_{s}},\ldots,\tau_{i_t}$ and $\tau_{i_{s'}},\ldots,\tau_{i_{t'}}$.
\end{proof}

\subsection{Smoothing crossings and resolving snake graphs}
{In this subsection, we show that the smoothing operation for arcs corresponds to the resolution of crossings for snake graphs and band graphs.}

\begin{thm} \label{smoothing1}  Let $\gamma_1, \gamma_2$ be each a generalised arc or a closed loop which cross with a non-empty local overlap, and let $\calg_1, \calg_2$ the corresponding {labeled} snake graphs or band graphs with overlap $\calg$. Then the {labeled} snake graphs and band graphs of the arcs and loops obtained by smoothing the crossing of $\gamma_1$ and $\gamma_2$ in the overlap are given by the resolution $\re 12$ of the crossing of the   $\calg_1$ and $\calg_2$ at the overlap $\calg.$
\end{thm}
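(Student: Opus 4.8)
\textbf{Proof plan for Theorem \ref{smoothing1}.}

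The plan is to reduce everything to a purely local statement about the triangulated surface near the crossing point, and then to check case by case that the combinatorial recipe defining the resolution $\re 12$ (sections \ref{sect 3.1}--\ref{sect 3.3}) produces exactly the snake/band graphs of the smoothed curves. First I would set up notation following section \ref{sect graph}: orient $\gamma_1$ and $\gamma_2$ so that the common local overlap $(\tau_{i_s},\dots,\tau_{i_t})=(\tau_{i'_{s'}},\dots,\tau_{i'_{t'}})$ is traversed in the same direction by both, as in Remark \ref{remsis1}, so that $s'=1$ and $i_2(\calg)=\calg_2[1,t']$ (and similarly for the band-graph and self-crossing cases). The curves $\gamma_1,\gamma_2$ run parallel along the overlap and cross at exactly one of the two endpoints of the overlap segment, determined by the sign data; this is precisely what the crossing conditions in Definitions \ref{crossing}, \ref{defcrossbands}, \ref{def self-crossing band} encode, by Theorem \ref{thm cross} together with Remark \ref{rem sign}. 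The smoothing $C_+,C_-$ replaces the $\times$ at that crossing point with $\smallsmile\!/\!\smallfrown$ or $\supset\subset$; the two resulting curves each consist of an initial segment of one $\gamma_i$, followed (through the overlap) by a terminal segment of the other $\gamma_i$ — but possibly with kinks or contractible pieces that must be isotoped away.

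The key steps, in order, are: (1) identify which pair of the two smoothings gives the curves whose snake graphs are $\calg_{34}$ (the part containing the overlap) versus $\calg_{56}$; the curve for $\calg_{34}$ clearly crosses the same arcs as $\calg_1[1,s-1]$, then $\tau_{i'_1},\dots,\tau_{i'_{d'}}$ (the arcs of $\gamma_2$), then $\tau_{i_s},\dots,\tau_{i_d}$, matching the definition $\calg_{34}=\calg_1[1,s-1]\cup_{e_{s-1}}\calg_2\cup_{b'}\calg_1[s,d]$; (2) for the other smoothing, produce the naively-glued curve whose crossing sequence is that of $\calg_{56}'$, and then show that the necessary isotopy — removing contractible monogons/bigons when $s=1$ or $t=d$ — removes exactly the tiles $\pred(e)$ and $\suc(e')$ prescribed in cases (2)--(4) of section \ref{sect 3.1}, where the defining sign conditions $f_{56}(e)=f_{56}(b')$ and $f_{56}(e')=f_{56}(e'_{t'})$ are exactly the conditions (via Remark \ref{rem sign}) for the curve to be isotopic past the corresponding triangle; (3) handle the band-graph cases (section \ref{sect 3.2}) by noting that the glueing edges of the bracelet-type band graphs correspond to the points where a smoothed closed loop closes up on itself, and the self-crossing cases (section \ref{sect 3.3}) by passing to the universal cover $\calg_1^\infty$ so that a self-crossing of a single curve becomes a crossing of two parallel lifts, reducing to step (2); (4) track the height and crossing monomials to confirm the identification is weight-compatible (used in the next subsection), though strictly for this theorem only the underlying graphs matter. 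The degenerate subcases — $\calg_{56}=\emptyset$, $\calg_{56}=0$, the single-edge cases 4(c), the 2-bracelet case, $\band_{56}=-2$ — each correspond to a specific geometric degeneration: a contractible loop ($x_\zeta=-2$), a curve cutting out a contractible monogon ($\gamma=0$), or a curve isotopic to a boundary segment or to a single arc; these I would verify against Definitions \ref{height}, \ref{def:matching}, \ref{def closed loop}.

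The main obstacle will be step (2), and especially the band-graph and self-crossing analogues in step (3): carefully matching the isotopy of a smoothed curve — unwinding it to minimal position with respect to $T$, which Remark \ref{rem smoothing} emphasizes can be delicate — against the precise truncations $\setminus\pred(e)$ and $\setminus\suc(e')$ in the definition of $\calg_{56}$. The sign-function bookkeeping must be exactly right: one has to show that the first edge $e$ with $f_{56}(e)=f_{56}(b')$ is the first place where the naive curve fails to be in minimal position, i.e. the first triangle it can be pushed across, and this requires a local picture argument triangle-by-triangle. Because the abstract definitions were, by design, built to already incorporate this isotopy, the proof is conceptually just "unwind the definitions," but the number of configurations (two curves vs.\ a curve and a band graph vs.\ two band graphs vs.\ a self-crossing band graph, each with the $s=1$ and $t=d$ boundary subcases) makes a complete verification lengthy; I would present the snake-graph/band-graph case in detail and indicate that the remaining cases, including the self-crossing cases via the universal cover $\calg_1^\infty$, follow by the same local analysis, pointing to the worked examples in Figures \ref{selfcrossingA6}, \ref{selfcrossingA5}, \ref{torus3}, \ref{figgrafting}, \ref{fig(s'<t)}, \ref{fig(s'=t+1)} as the templates.
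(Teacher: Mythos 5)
Your plan is workable but follows a genuinely different route from the paper. The paper's proof of this theorem is essentially two sentences: the case where both curves are arcs is quoted from the earlier paper in the series (\cite[Theorem 5.4]{CS}), and the cases involving a loop are reduced to the arc case by a puncture trick --- introduce a puncture on the loop inside a triangle it traverses, complete $T$ to a triangulation of the punctured surface, apply the arc result to the resulting generalised arc, and then remove the puncture again (the details of this reduction are written out only in the proof of the subsequent theorem on self-crossings, where the band-graph bookkeeping of which tiles get inserted and later deleted is carried out explicitly). You instead propose to redo the entire local analysis from scratch: identifying the crossing endpoint of the overlap from the sign data, matching the truncations $\setminus\pred(e)$ and $\setminus\suc(e')$ against triangle-by-triangle isotopies, and passing to $\calg_1^\infty$ for the band-graph cases. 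That approach is sound in principle --- it is exactly what \cite{CS} does for arcs --- and it has the virtue of being self-contained, but it forfeits the main economy the paper exploits, namely that the resolutions involving band graphs were designed so that cutting a loop at a puncture turns every band-graph case into an already-settled snake-graph case. Your step (4) on weights is not needed for this statement, and your step (3) on self-crossings belongs to the next theorem rather than this one; conversely, if you do go the direct route, you should at minimum cite the arc--arc case from \cite{CS} rather than reprove it, and be aware that the "unwinding to minimal position" you flag as the main obstacle is precisely what the puncture trick lets the paper sidestep.
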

\begin{proof}
 In the case where $\zg_1$ and $\zg_2$ are arcs, this is 
\cite[Theorem 5.4]{CS}. If one of the curves $\zg_1,\zg_2$ is a loop then we can prove the result by introducing a puncture on the curve and then using the result for arcs and then removing the puncture again. We do not include the details here because  we are using this technique in the proof of the following Theorem for selfcrossing loops, which is the more interesting case.
\end{proof}

\begin{thm}\label{smoothing2} Let $\gamma_1$ be a self-crossing arc or loop with nonempty local overlap and let $\calg_1$ be the corresponding {labeled} snake graph or band graph with crossing overlap $i_1(\calg)=\calg_1[s,t]$ and $i_2(\calg)=\calg_1[s',t'].$ Then the {labeled} snake graphs and band graphs of the arcs and loops obtained by smoothing the crossing of $\gamma_1$ on the overlap are given by the resolution $\res_{\calg}(\calg_1)$ of the self-crossing of the   $\calg_1$ at the overlap $\calg.$
\end{thm}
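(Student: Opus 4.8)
The plan is to reduce the statement for a self-crossing loop to the already-established statements for arcs, using the standard "puncture trick" of \cite{CS2}. I would first treat the case of a self-crossing \emph{arc}: here the result is exactly \cite[Theorem 5.4]{CS2} (or the analogous smoothing statement proved for self-crossing snake graphs there), so nothing new is required. It therefore remains to handle the case where $\gamma_1$ is a closed loop with a self-crossing at a point $p$. I would split this into two subcases according to whether the overlap $i_1(\calg)\cong i_2(\calg)$ is the whole band graph or a proper subgraph, mirroring the case division used in Definitions \ref{def self-crossing band} and in section~\ref{sect 3.3}.

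For the subcase where the overlap is \emph{not} the whole band graph: introduce a new marked point $q$ on the surface at a generic point of the loop $\gamma_1$ disjoint from $p$ and from the triangulation arcs, cutting $\gamma_1$ open into a generalised arc $\widehat\gamma_1$ from $q$ to $q$. Choosing $q$ appropriately, the self-crossing of $\gamma_1$ at $p$ becomes a self-crossing of the arc $\widehat\gamma_1$ with the \emph{same} local overlap, and the band graph $\calg_1$ is obtained from the snake graph $\calg_{\widehat\gamma_1}$ by gluing along the edge corresponding to $q$. By Theorem~\ref{thm cross}(b) the overlap condition is preserved, and by the arc case the smoothed arcs $\widehat\alpha,\widehat\beta$ (resp.\ $\widehat\alpha_1,\widehat\alpha_2$) have snake graphs given by the resolution $\res_{\calg}(\calg_{\widehat\gamma_1})$. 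Now remove the marked point $q$ again: each $\widehat\alpha_i$ becomes a loop or arc obtained by smoothing $\gamma_1$ at $p$, and the corresponding gluing of snake graphs along the $q$-edge turns $\res_{\calg}(\calg_{\widehat\gamma_1})$ into $\res_{\calg}(\calg_1)$, using that the resolution commutes with the band-graph gluing operation (this is precisely the content of the explicit resolution formulas in section~\ref{sect 3.3}, Case 1 and Case 2). One must check that $q$ can always be placed so that introducing and then removing it does not create or destroy kinks or monogons, which follows from choosing $q$ in a triangle crossed by $\gamma_1$ and away from $p$.

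For the subcase where the overlap \emph{is} the whole band graph, i.e.\ $\gamma_1=\Brac_k(\zeta)$ is a bracelet (this is Case (1) with $2s'-2=d$ or the boundary situations of Case (3) in section~\ref{sect 3.3}): here the puncture trick must be applied more carefully since two copies of the bracelet need not cross. As in the proof of Theorem~\ref{thm cross}(b), I would view $\Brac_k(\zeta)$ as a curve from $p$ to $p$ and smooth at $p$; the smooth resolution produces, in the generic case, the curves $\Brac_{k-1}(\zeta)\sqcup\zeta$-type configurations and a shorter bracelet, matching the formula $\brac_k=\brac_{k-1}\brac_1 - \brac_{k-2}$ and the definitions of $\calg_3^\circ\sqcup\calg_4^\circ$ and $\calg_{56}^\circ$. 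The degenerate outcomes---a contractible loop contributing $-2$, or a curve cutting out a monogon contributing $0$, or the empty curve contributing $\emptyset=1$---correspond exactly to the special values $\calg_{56}^\circ=-2$, $\calg_{56}^\circ=0$, $\calg_{56}^\circ=\emptyset$ listed in section~\ref{sect 3.3}, via Definitions \ref{def closed loop} and \ref{def:matching}. Verifying this dictionary between degenerate geometric outcomes and the combinatorial special cases is, I expect, the main obstacle: one has to go through the geometric realisations (as in Figures \ref{fig(s'<t)}, \ref{figexample}, \ref{fig(s'=t+1)}) and confirm that in each case the number of times $\zeta$ wraps, together with the sign function data $f(e_t)=f(e_{s'-1})$, predicts precisely which of the listed band graphs and signs occurs. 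Once this correspondence is set up, the theorem follows by combining it with Theorems~\ref{thm cross} and \ref{smoothing1}.
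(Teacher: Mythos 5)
Your overall strategy---convert the loop into an arc by introducing an auxiliary marked point, invoke the arc case, then undo the surgery, and treat the ``overlap equals the whole band graph'' situation by a direct bracelet analysis---is the same general shape as the paper's argument, but there are concrete gaps in the reduction step. First, a marked point $q$ in the interior of the surface is a puncture, and to build snake graphs you must complete $T$ to a triangulation containing $q$; the three new arcs incident to $q$ are then crossed by $\widehat\gamma_1$ \emph{every} time $\gamma_1$ passes through the triangle containing $q$, not just once. So $\calg_{\widehat\gamma_1}$ is not ``$\calg_1$ cut open along the $q$-edge'': it acquires extra tiles, and the paper spends real effort placing the puncture in the specific triangle $\Delta$ adjacent to the overlap, tracking the inserted tiles, and deleting them after the resolution. (The paper also splits $\gamma_1$ there into \emph{two} curves $\gamma_{11},\gamma_{12}$ and uses the pair-crossing result, rather than producing a single self-crossing arc.) Second, your key step---``the resolution commutes with the band-graph gluing operation''---is not the content of the formulas in section~\ref{sect 3.3}; those formulas \emph{define} $\res_{\calg}(\band_1)$, and the compatibility between the snake-graph resolution of the cut-open/punctured graph and the band-graph resolution of $\calg_1^\circ$ is exactly the nontrivial verification the theorem requires. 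As stated, this step assumes what is to be proved.

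Third, your first subcase silently includes the intersecting self-overlap situation ($s'\le t$, or $i_2(\calg)$ wrapping around the glueing edge), where the puncture trick is problematic: when $i_2(\calg)=\calg_1[s',d]\cup\calg_1[1,t']$ every point of the loop lies inside an overlap, so there is no generic place to put $q$ without altering the overlap structure of $\widehat\gamma_1$. The paper handles this case by an entirely different direct argument, showing that the two overlap sequences must be equal (not opposite), that $\gamma[s,s']$ closes up to a non-contractible loop, and then analyzing the smoothing of the resulting locally-bracelet-shaped curve by hand, including the isotopies that cancel pairs of crossings and create kinks. Your treatment of the whole-band-graph case is on the right track (and you correctly identify the dictionary between degenerate smoothings and the values $-2$, $0$, $\emptyset$ as the hard part), but that verification, together with the two issues above, is where the actual proof lives; the proposal as written does not yet supply it.
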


\begin{proof} If $\zg_1$ is an arc, this  is \cite[Theorem 6.3]{CS2}. So let $\zg_1$ be a loop and let $\band_1$ be its band graph.
As usual we may choose a snake graph $\calg_1=(\band_1)_b=(G_1,G_2,\ldots,G_d)$ such that  the two overlaps are given by
 \[ i_1(\calg) =\calg_1[1,t] \quad i_2(\calg) =\calg_1[s',t'].\]

We start with the case where the overlap is in the same direction and $s'\ge t+1$. 
Let $\Delta$ be the triangle 
in the surface which contains the segment of $\gamma_1$ between the $t$-th and the $(t+1)$-st crossing point. We introduce a puncture $p$ on this segment of $\zg_1$ and in the interior of $\zD$, see Figure~\ref{fig puncture}, {where the triangulation arcs are black and the arc $\zg$ is red.} We complete $T$  to a triangulation $\dot T$ by adding three arcs $a,b,c$ from the puncture $p$ to the vertices of $\zD$. 
Let $\gamma_{11}$ be the segment of $\gamma_1$ up to the puncture $p$, and let $ \gamma_{12}$ the segment of $\gamma_1$ after   $p$. 
\begin{figure}\begin{center}
\scalebox{0.8}{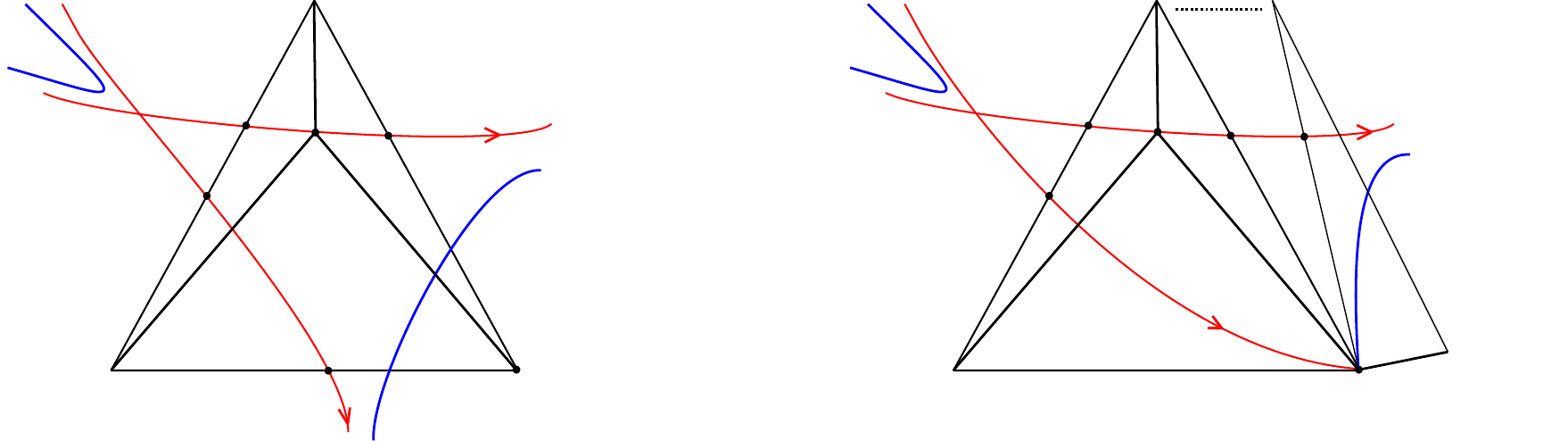}
\caption{Introducing a puncture}\label{fig puncture}\end{center}
\end{figure}

On the other hand, consider the snake graphs $\calg_{11}=\calg_1[1,t]$ and $\calg_{12}=\calg_1[t+1,d].$
Observe that these snake graphs do not necessarily correspond to snake graphs of $\gamma_{11}$ and $\gamma_{12}$, since the arc $\gamma$ might run through the triangle $\Delta$ several times, and introducing a puncture in the surface might create crossings with the new arcs in $\dot T \backslash T$. Then the snake graphs $\widetilde{\calg}_{11}, \widetilde{\calg}_{12}$ corresponding to $\gamma_{11}$ and $\gamma_{12}$ will be obtained from $\calg_{11}$ and $\calg_{12},$ respectively, by inserting single tiles which correspond to these new crossings.

The  triangle $\zD$ has sides  $\tau_{i_t}=\tau_{i_{t'}},\tau_{i_{t+1}},\tau_{i_{t'+1}}$ and $i_{t+1}\ne i_{t'+1}$, since we have an overlap.
Smoothing the self-crossing of $\zg_1$ is the same as smoothing the corresponding crossing of the two curves $\zg_{11}$ and $\zg_{12}$ and then removing the puncture again. This will produce a pair of loops $(\zg_3,\zg_4)$,  as well as a loop $\zg_{56}$ which crosses $\tau_{i_{t+1}}$ and $\tau_{i_{t'+1}}$. In Figure~\ref{fig puncture}, the loop $\zg_{56}$ is the blue one. In terms of band graphs, $\res_{\calg}(\band_1)$ is obtained from $\res_{\calg}(\widetilde{\calg}_{11}, \widetilde{\calg}_{12} )$ by removing the tiles corresponding to the crossings with the arcs at the puncture and glueing. Each of the graphs $\band_3$ and $\band_4$ is glued along the edge labeled $\tau_{i_{t'+1}}$, and they correspond to $\zg_3$ and $\zg_4$. The graph $\band_{56}$ is glued along the edge labeled $\tau_{i_t}$ and corresponds  to $\zg_{56}$.

When the overlap is in the opposite direction, the proof is similar.

Now consider  the case where the overlap is in the same direction and $s'\le t$. Thus the self-overlap has an intersection $\calg_1[s',t]$.

Let $\tau_{i_1},\tau_{i_{2}},\ldots,\tau_{i_s}$ be the sequence of arcs of the triangulation crossed by $\gamma_1$ in order. 
By the definition of overlap, we have that the sequences $\tau_{i_s},\tau_{i_{s+1}},\ldots,\tau_{i_t}$ and
$\tau_{i_{s'}},\tau_{i_{s'+1}},\ldots,\tau_{i_{t'}}$
are  equal or opposite to each other. Since $s'<t$ it follows that they have to be equal, {because} otherwise the segment of $\gamma$ crossing $\tau_{i_{s'+1}},\tau_{i_{s'+2}},\ldots,\tau_{i_{t}}$ would be isotopic to a curve not crossing these arcs at all, see Figure \ref{homotopy2}.

\begin{figure}\begin{center}
\scalebox{1}{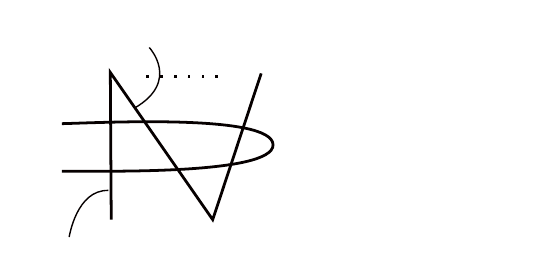}
\caption{Proof of Theorem \ref{smoothing2}}\label{homotopy2}\end{center}
\end{figure}

Let $\gamma[s,t]$ and $\gamma[s',t']$ be the segments of $\gamma$ corresponding to the overlaps and let $\gamma[s',t]$ be their common subsegment corresponding to the intersection of the overlaps.

Since the 
sequences $\tau_{i_s},\tau_{i_{s+1}},\ldots,\tau_{i_t}$ and
$\tau_{i_{s'}},\tau_{i_{s'+1}},\ldots,\tau_{i_{t'}}$
are  equal, it follows that the curves $\gamma[s,t]$ and $\gamma[s',t']$ run parallel before and after their crossing at $p$. Moreover, since $s'<t$, the following sequences  are equal as well:
\[\begin{array}
 {ll}
 \tau_{i_s},\tau_{i_{s+1}},\ldots,\tau_{i_{s'-1}}\\
 \tau_{i_{s'}},\tau_{i_{s'+1}},\ldots,\tau_{i_{2s'-s-1}}\\
 \tau_{i_{2s'-s}},\tau_{i_{2s'-s+1}},\ldots,\tau_{i_{3s'-2s-1}}\\
 \ldots \tau_{i_{t}}\\
\end{array}
\]
Thus the curve $\gamma[s,s']$ after identifying its endpoints is a closed and non-contractible curve. This implies that the curve $\gamma[s,t]$ is of the form as in Figure \ref{fig shape}, where points with equal labels $a,b,c,d,e$ are identified. The crossing point $p$ can be any of the points labeled 1,2,3,4,5. For example, if $p$ is the point labeled 5,4,3,2,1 respectively, then the crossing point $s'$ at the beginning of the second overlap must be the point on $\tau_{i_s}$ crossed by $\gamma$ after the point $e,d,c,b,a$ respectively, and the crossing point $t$  at the end of the first overlap must be  the point on $\tau_{i_{t'}}$ first crossed by $\gamma$ after passing through the point $a,b,c,d,e$ respectively.

\begin{figure}\begin{center}
\scalebox{1}{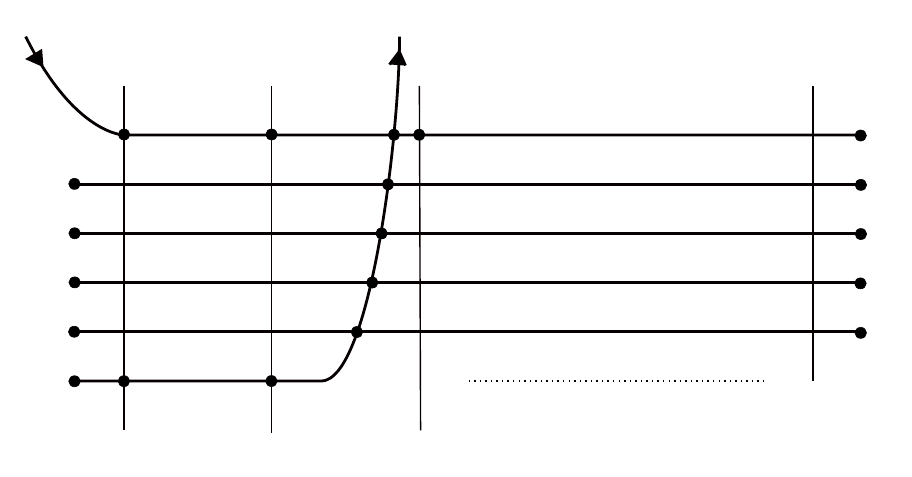}
\caption{Proof of Theorem \ref{smoothing2}.
This situation can arise in any non-simply connected surface. The curve $\gamma$ between the points 5 and 5 is the concatenation of an essential loop $\zeta$ with itself 5 times, thus $\textup{Brac}_5\zeta$.}\label{fig shape}\end{center}
\end{figure}

Therefore the condition $s'\le t$ implies that in the example in Figure \ref{fig shape} the point $p$ must be  the point 2 or 1.
We now study the smoothing of these self-crossings.

If $p=1$  then the smoothing at $p$ will produce the two multicurves $$\{\zeta, \textup{loop with 4 self-crossings at 2,3,4,5}\},$$
 and
  $$\{ \textup{loop with  self-crossings at 3,4,5 and a kink at 2} \}.$$

If $p=2$ then the smoothing at $p$ will produce the two multicurves 
$$\{\textup{Brac}_1\zeta, \textup{loop with 3 self-crossings at 3,4,5}\},$$
 and
  $$\{ \textup{loop with  self-crossings at 4,5 and a kink at 3} \}.$$
  
Again we conclude that the band graphs of the loops obtained by smoothing the self-crossing of $\gamma$ are given by $\res_{\calg}(\calg_1).$

Finally, consider the case where the overlap is the whole loop $\zg_1$, thus $s=1$ and $t=d$. 
As shown in section \ref{sect 3.3}, we can assume that $2s'-2\le d$, because we can exchange the roles of $s$ and $s'$ if necessary. Also assume without loss of generality that our crossing point $p$ lies in the triangle $\zD$ with sides $\tau_{i_1}=\tau_{i_{s'}}$, $\tau_{i_d}=\tau_{i_{t'}}$ and a third side $\tau_a$. 
Then $\zg_1$ crosses the arc $\tau_{i_1}$ at least twice and the arc $\tau_{i_d}$ also at least twice. If $i_{\tilde s}$ is a crossing of $\zg_1$ and $\tau_{i_1}$ and $i_{\tilde t}$ is a crossing of $\zg$ and $\tau_{i_d}$, we denote by $\zg_1[\tilde s, \tilde t]$ the loop obtained by starting at the point $p$ and leaving the triangle $\zD$ through the point $i_{\tilde s}$ on $\tau_{i_1}$, then following $\zg_1$ until the point $i_{\tilde t}$ on $\tau_{i_d}$ and then entering the triangle $\zD$ at this point and ending at $p$.

Thus $\zg_1[s,t]=\zg_1$ is the loop at   $p$ following the first overlap, and $\zg_1[s',t']$ is the loop at   $p$  following the second overlap. Then both curves  $\zg_1[s,t]$ and $\zg_1[s',t']$ are isotopic. 
Moreover there exists a loop $\zeta= \zg[s,t'']$, for some $t''$ with $i_{t''}$ a crossing point of $\zg_1$ and $\tau_{i_d}$, such that $\zg_1$ is a $k$-bracelet of $\zeta$, for some $k,$ {namely}
\[ \zg_1=\Brac_k\zeta.\]
Note that $\zeta $ may have self-crossings.
In this case smoothing the crossing at $p$ will produce on the one hand the pair of loops $(\zg_3,\zg_4)$  given by 
\[\zg_3=\zg_1[s',t], \quad \zg_4=\zg_1[s,t'],\]
and on the other hand the loop  $\zg_{56}=$  starting at $p$ following $\zg_1$ and crossing $\tau_{i_1},\tau_{i_2},\ldots,\tau_{i_{t'}}$ coming to the point $p$ again, then following $\zg_1^{-1}$ and crossing $\tau_{i_t},\tau_{i_{t-1}},\ldots,\tau_{i_{s'}}$ and then ending at $p$.   
Since $p$ is not a marked point, there is an isotopy that removes the pairs of crossing points 
 $({i_{s'}}, i_1)$, $({i_{s'+1}}, i_{2}),\cdots ( i_{s'+t'-1},{i_{t'}}) $. Using this isotopy and the fact that $t'=s'-1$ we get that the loop $\zg_{56} $ has a kink and 
is crossing the arcs 
$\tau_{2s'-1},\tau_{2s'},\tau_{2s'+1},\ldots \tau_{i_t}$. In particular, in the extreme case where $2s'-2=d=t$, we see that $\zg_{56}$ does not cross any arc of the triangulation.

This shows that the corresponding band graphs $\band_3,\band_4$ and $\band_{56}$ are precisely those given in the definition of the resolution in section \ref{sect 3.3}
\[\calg_3^\circ = (\calg_1[s',d])^{b'},
\calg_4^\circ= (\calg_1[1,s'-1])^{c}
\]
\[
  \calg_{56}^\circ =\left\{\begin{array}{ll}
  -(\calg_1[2s'-1,d])^{b'}, &\textup{if }2s'-1\le d; \\
  -2, &\textup{if }2s'-2= d. \end{array}\right.\]
This completes the proof.
\end{proof}

So far, we have considered crossings with a non-empty local overlap. Now we study  crossings with an empty local overlap. Such a crossing point must lie in some triangle in the triangulation, and the condition that the overlap is empty means that the crossing curves do not cross the same side of the triangle. In particular at least one of the two curves must end in this triangle. Therefore we cannot get this situation for the crossing of two loops or the self-crossing of a single loop.

For the other cases we have the following results.
\begin{thm} \label{smoothing2} Let $\gamma_1$ be a generalised arc and $\gamma_2$ be an arc or a loop, such that $\zg_1$ and $\zg_2$ cross in a triangle $\Delta$ with an empty local overlap, and let $\calg_1$ and $\calg_2$ be the corresponding snake or band graphs. Assume $\Delta=\Delta'_0$ is the first triangle $\gamma_1$ meets. Then the snake graphs of the  arcs and loops obtained by smoothing the crossing of $\gamma_1$ and $\gamma_2$ in $\Delta$ are given by the resolution $\gt s{\delta_3}12$ of the grafting of $\calg_2$ on $\calg_1$ in $G_s,$ where $0 \leq s \leq d$ is such that $\Delta=\Delta_s$ and if $s=0$ or $s=d$ then $\delta_3$ is the unique side of $\Delta$ that is not crossed by neither $\gamma_1$ nor $\gamma_2.$
\end{thm}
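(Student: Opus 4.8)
The plan is to follow the strategy that has already been used for analogous statements in \cite{CS,CS2} and in the proofs of Theorems~\ref{smoothing1} and \ref{smoothing2} above, namely to reduce the grafting situation to the case of a genuine crossing with a non-empty overlap by introducing an auxiliary puncture, and then to translate back. First I would set up the notation: let $\gamma_1$ be the generalised arc, with crossing points $p_1,\ldots,p_d$ with $T$, so that the triangle $\Delta=\Delta_s$ sits between the $s$-th and $(s+1)$-st crossing points (with the conventions $\Delta_0$ being the triangle containing the starting point of $\gamma_1$, and $\Delta_d$ the one containing its endpoint), and let $\gamma_2$ be the second curve crossing $\gamma_1$ transversally at a point $x$ in the interior of $\Delta$. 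The hypothesis that the local overlap is empty means exactly that $\gamma_1$ and $\gamma_2$ enter and leave $\Delta$ through different sides, so $\gamma_2$ must terminate in $\Delta$ (or, if $\gamma_2$ is a loop, we handle it by the same puncture trick used elsewhere), and we record which sides of $\Delta$ are crossed by each curve; when $s=0$ or $s=d$ this picks out the distinguished edge $\delta_3$ (resp.\ $\delta$, $\delta'$ in the notation of section~\ref{sect 3.3graft}).

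The main step is to introduce a puncture $q$ in the interior of $\Delta$ placed on the arc $\gamma_2$ near its endpoint inside $\Delta$, completing $T$ to a triangulation $\dot T$ of the punctured surface by adding three arcs from $q$ to the vertices of $\Delta$; cutting $\gamma_2$ at $q$ produces a short arc $\tilde\gamma_2$ ending at $q$. After this modification, the crossing of $\gamma_1$ and $\tilde\gamma_2$ at $x$ now \emph{does} have a non-empty local overlap: the segment of $\tilde\gamma_2$ between $q$ and $x$ crosses one of the new arcs at $q$, and so does the segment of $\gamma_1$ through $\Delta$. I would then apply Theorem~\ref{smoothing1} (smoothing crossings with non-empty overlap) to $\gamma_1$ and $\tilde\gamma_2$ in $\dot T$: this expresses the smoothing in terms of the resolution of the crossing of the corresponding snake/band graphs $\calg_1^{\dot T}$ and $\calg_{\tilde\gamma_2}^{\dot T}$. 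Finally I would remove the puncture: isotoping the smoothed curves back and deleting the tiles of the graphs that correspond to the (now spurious) crossings with the arcs incident to $q$, exactly as in the proof of Theorem~\ref{smoothing2}. Matching up the two resulting curves and their graphs with the definitions in section~\ref{sect 3.3graft} — comparing the edge $\delta$ (the north or east edge of $G_d$ determined by whether $\varepsilon$ is the south or west edge of $G_1'$) and the cutting data $e,e'$ with the sign-function description from Remark~\ref{rem sign} — one checks that what comes out is precisely $\graft_s(\calg_1,\calg_2)$, with the band-graph factor $\calg_2^\circ$ in the formula corresponding to the fact that $\gamma_2$ (before cutting) closed up around the puncture, i.e.\ the role of $b$ and $b'$ in Definition~\ref{grafting}.

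The hard part will be bookkeeping the two extreme cases $s=0$ and $s=d$, where $\calg_1$ can degenerate (for $s=0$, to a single edge, matching the second grafting formula in section~\ref{sect 3.3graft}) and where the distinguished side $\delta_3$ of $\Delta$ not crossed by either curve must be correctly identified with the boundary edge along which the grafting is performed; here one has to be careful that the puncture is placed on the correct side so that, after removal, the surviving edge of the smoothed graph is exactly $\delta_3$. A secondary subtlety, as in the proof of Theorem~\ref{smoothing2}, is that introducing the puncture may create extra crossings of $\gamma_1$ with the new arcs of $\dot T\setminus T$ when $\gamma_1$ passes through $\Delta$ more than once; one deals with this exactly as there, by noting that the snake graphs $\widetilde\calg_{1}$ relevant to the punctured picture differ from $\calg_1$ only by insertion of single tiles which are then deleted upon removing the puncture, so the final identification is unaffected. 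Once these cases are checked, the equality of the smoothed curves' graphs with $\graft_s(\calg_1,\calg_2)$ follows, and the theorem is proved. The details of the isotopies involved are routine and I would leave them to the reader, referring to Figures~\ref{figgrafting} and \ref{figgrafting2} for the two regimes.
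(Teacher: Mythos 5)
Your reduction has a genuine gap at its central step. You claim that placing a puncture $q$ on $\gamma_2$ inside $\Delta$ and adding the three arcs from $q$ to the vertices of $\Delta$ turns the empty local overlap into a non-empty one, "since the segment of $\tilde\gamma_2$ between $q$ and $x$ crosses one of the new arcs at $q$, and so does the segment of $\gamma_1$." This is false in the relevant configuration. The empty-overlap hypothesis forces the picture in which $\gamma_2$ starts at a vertex $v$ of $\Delta$, exits through the opposite side, and $\gamma_1$ cuts off the vertex $v$ (crossing the two sides incident to $v$); the crossing point $x$ lies between $q$ and the exit side. The new arc from $q$ to $v$ crosses neither curve; the two new arcs from $q$ to the other vertices each cross $\gamma_1$, but $\tilde\gamma_2$, being an arc \emph{emanating from} $q$, leaves $q$ into the sub-triangle bounded by those two arcs and exits $\Delta$ without crossing either of them (a curve starting at a vertex of a triangulation does not cross the arcs incident to that vertex near its endpoint). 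Hence there is still no arc of $\dot T$ crossed by both curves adjacent to $x$: the overlap remains empty, Theorem~\ref{smoothing1} does not apply, and the rest of your argument has nothing to stand on. The subsequent "bookkeeping" you defer (matching $\delta_3$, $e$, $e'$ with the formulas of section~\ref{sect 3.3graft}) is therefore moot, but it would in any case be the real content of the proof rather than a routine check.

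The paper's argument is structured differently and avoids this issue entirely. When $\gamma_2$ is an arc, the statement is exactly the grafting result already established for pairs of snake graphs in \cite[Theorem 5.7]{CS}, so it is simply cited. When $\gamma_2$ is a loop, the puncture is used for a different purpose: it is placed on the segment of $\gamma_2$ inside $\Delta$ only in order to cut the loop open into an arc, so that the crossing with $\gamma_1$ is still an empty-overlap (grafting) crossing of two \emph{arcs}; one then applies the already-known arc--arc grafting resolution and removes the puncture, which converts the resulting snake graphs back into the band graphs of Definition~\ref{grafting}. If you want to keep your write-up, you should replace the "create an overlap" step by this reduction: invoke \cite[Theorem 5.7]{CS} for the arc case and use the puncture solely to open the loop in the remaining case.
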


\begin{proof} If $\zg_2$ is an arc, this is \cite[Theorem 5.7]{CS}. On the other hand, if $\zg$ is a loop then 
as in the proof of Theorem \ref{smoothing2}, we can introduce a puncture on the segment  of $\gamma_2$
between the two crossing points and complete to a triangulation. Then the two segments of $\gamma_1$ before and after the puncture still have the same crossing. We can use Theorem \ref{smoothing2} to resolve that crossing and then remove the puncture to get the desired resolution.
\end{proof}

For self-crossing arcs with empty local overlap, we have the following result from \cite{CS2}.

\begin{thm}\label{smoothingselfgrafting} \cite[Theorem 6.5]{CS2} Let $\gamma_1$  be a generalised arc which has a self-crossing in a triangle $\Delta$ with an empty local overlap, and let $\calg_1$ be the corresponding snake graph. Thus $\Delta=\Delta_0$ is the first triangle $\gamma_1$ meets and  $\Delta=\Delta_s$ is met again after $s$ crossings. Then the snake graphs of the {two arcs and the band graph of the loop} obtained by smoothing the self-crossing of $\gamma_1$ in $\Delta$ are given by the resolution $\graft_{s,\e_3} (\calg_1)$ of the self-grafting of $\calg_1$  in $G_s,$ and if  $s=d$ then $\delta_3$ is the unique side of $\Delta$ that is not crossed by  {$\gamma_1$.}
\end{thm}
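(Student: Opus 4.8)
Since the statement is \cite[Theorem 6.5]{CS2}, the proof in the present setting is to invoke that reference; the paragraphs below sketch the argument I would give if a self-contained proof were wanted, patterned on the puncture technique already used in this section for loops.

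The plan is to reduce the self-crossing of $\gamma_1$ to a crossing of two arcs with an empty local overlap, for which the conclusion is already Theorem~\ref{smoothing2} (on the level of snake graphs, \cite[Theorem 5.7]{CS}). First I would parametrise $\gamma_1$ so that its self-crossing point $x$ lies in $\Delta=\Delta_0=\Delta_s$ and is met at parameter values $t_1<t_2$, with $t_1$ occurring before the first crossing of $\gamma_1$ with $T$ and $t_2$ between the $s$-th and the $(s+1)$-st crossing. Place a puncture $p$ on the portion of $\gamma_1$ between $t_1$ and $t_2$ that still lies inside $\Delta$, complete $T$ to a triangulation $\dot T$ of the punctured surface by adding the three arcs from $p$ to the vertices of $\Delta$, and let $\gamma_{11},\gamma_{12}$ be the segments of $\gamma_1$ before and after $p$. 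Then $\gamma_{11}$ is a simple arc lying inside $\Delta$, $\gamma_{12}$ is a generalised arc, and the two cross at $x$ inside $\Delta$ with an empty local overlap, $\Delta$ being the first triangle met by $\gamma_{11}$. Theorem~\ref{smoothing2} then expresses the snake and band graphs of the curves obtained by smoothing this crossing as the grafting resolution of the snake graph of $\gamma_{11}$ (a single edge) and that of $\gamma_{12}$, both computed with respect to $\dot T$.

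The remaining step is to undo the puncture, exactly as in the loop cases treated earlier in the section: the snake graph of $\gamma_{12}$ with respect to $\dot T$ differs from a subgraph of $\calg_1$ only by single tiles inserted for the crossings of $\gamma_{12}$ with the arcs of $\dot T\setminus T$ at $p$, and smoothing the self-crossing of $\gamma_1$ is the same as smoothing the crossing of $\gamma_{11}$ and $\gamma_{12}$ and then filling in $p$; on graphs this amounts to deleting those inserted tiles and regluing along the appropriate boundary edges. One then checks, tile by tile and using Remark~\ref{rem sign} to control the induced sign function, that the result is precisely the self-grafting resolution $\graft_{s,\delta_3}(\calg_1)$ of \cite[Section 3.4]{CS2}: one summand is the snake graph of the arc produced by one smoothing, and the other is the disjoint union of the snake graph of the second arc with the band graph of the loop produced by the other smoothing, the gluing edge of that band graph being labelled by the side of $\Delta$ crossed by $\gamma_1$ at its $s$-th crossing; in the degenerate case $s=d$ the arc $\gamma_1$ ends in $\Delta$ and $\delta_3$ is forced to be the side of $\Delta$ crossed by neither strand. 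I expect the main obstacle to be exactly this reconciliation: one must verify that the ``first/last interior edge with a prescribed sign'' truncation conditions in the definition of the self-grafting are insensitive to the insertion and deletion of the puncture tiles, and that a possibly contractible or kinked loop is correctly absorbed into the signs and the $0$ or $\emptyset$ outputs of $\graft_{s,\delta_3}$, so that the band graph is a genuine band graph in the sense of Definition~\ref{def band}; this is the content of \cite[Section 6]{CS2}, and requires no idea beyond the puncture trick already used here.
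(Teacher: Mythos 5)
The paper gives no proof of this statement at all: it is quoted from the earlier paper and attributed to \cite[Theorem 6.5]{CS2}, so your opening sentence --- invoke that reference --- is exactly the paper's own treatment. Your supplementary puncture-trick sketch is consistent in spirit with the arguments the paper does write out for the neighbouring loop cases, but it is not required here.
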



\subsection{Snake graph calculus for cluster algebras}\label{sect 6.4}  In this section, we show that  snake graph calculus can be used to make explicit computations in  cluster algebras from unpunctured surfaces. Recall that cluster algebras are generated by cluster variables. Now
 since cluster variables correspond to snake graphs, it follows that arbitrary elements of the cluster algebra correspond to linear combinations of monomials of snake graphs, where the product of snake graphs is given by disjoint union. We show that  the resolution of each crossing or self-crossing of the snake graphs in such a monomial  corresponds to an identity in the cluster algebra. 
 In particular, we give a new proof of the skein relations. Resolving all crossings in the snake graph monomials corresponds to expressing the element of the cluster algebra as a linear combination of the bangles basis of \cite{MSW2}. We distinguish two kinds of crossings depending on whether the overlap is empty or not.

\subsubsection{Non-empty overlaps}

If $\calg$ is a snake graph associated to an arc $\gamma$ in a triangulated surface $(S,M,T)$ then each tile of $\calg$ corresponds to a quadrilateral in the triangulation $T,$ and we denote by $\tau_{i(G)}\in T$ the diagonal of that quadrilateral. With this notation we define
\begin{align*}
 x(\calg) = \prod_{ G \mbox{ tile in } \calg} x_{i(G)}\\
 y(\calg) = \prod_{ G \mbox{ tile in } \calg} y_{i(G)}
\end{align*}
If $\calg=\{\tau\}$ consists of a single edge, we let  $x(\calg)=1$ and $y(\calg)=1$.

Let $\gamma_1$ and $\gamma_2$ be arcs or loops which cross with a non-empty  overlap. Let $x_{\gamma_1}$ and $x_{\gamma_2}$ be the corresponding Laurent polynomials and $\calg_1$ and $\calg_2$ be the snake or band graphs with corresponding overlap $\calg.$ 
Denote by $\calg_{34},\calg_{56}$ the elements of $\mathcal{R}$ given by the resolution $\calg_1\calg_2=\calg_{34}+\calg_{56}$, such that 
the number of tiles in $\calg_{34}$ is equal to the number of tiles in $\calg_1\sqcup\calg_2$, whereas the number of tiles in $\calg_{56}$ is strictly smaller, since $\calg_{56}$ does not contain the overlaps.
Define 
$\tcalg_{56}$  to be the union of all tiles in $\calg_1 \sqcup \calg_2$ which are not in $\calg_{56}.$

Similarly, if $\gamma_1$ is a self-crossing arc or loop with non-empty local overlap, let $x_{\gamma_1}$ be the corresponding Laurent polynomial and $\calg_1$ be  the snake or band graph with corresponding self-overlap $\calg.$ Again, denote by $\calg_{34}$ and $\calg_{56}$ the two elements of $\mathcal{R}$, given by the resolution $\calg_1=\calg_{34}+\calg_{56}$, such that 
the number of tiles in $\calg_{34}$ is equal to the number of tiles in $\calg_1$, whereas the number of tiles in $\calg_{56}$ is strictly smaller, since $\calg_{56}$ does not contain the overlaps.
If $\calg_{56} $ is a positive element of $\mathcal{R}$, define
$\tcalg_{56}$  to be the union of all tiles in $\calg_1 $ which are not in $\calg_{56}$, and if $\calg_{56} $ is a negative element of $\mathcal{R}$, define
$\tcalg_{56}$  to be the union of all tiles in $\calg_{34} $ which are not in $\calg_{56}$.

In all cases, under the bijections of section \ref{section 4} the matchings $P_{56} $ of $\calg_{56}$  are completed to matchings of $\calg_1\sqcup\calg_2$ (respectively $\calg_1$ or $\calg_{34} $) in   a unique way which does not depend on $P_{56}$.  
Moreover, the $y$-monomial of the completion is maximal on a connected subgraph of $\tcalg_{56}$ and trivial on its complement.
We  denote by $\tcalg_{max}$ the component on which the $y$-monomial is maximal.

 Let $\re 12$ be the resolution of the crossing of $\calg_1$ and $\calg_2$ at $\calg$ and  $\ree 1$ the resolution of the self-crossing of $\calg_1$  at $\calg$. 
Define the {\em Laurent polynomial of the resolutions} by

\begin{equation}\label{laurent12}
 \call (\re 12) = \call (\calg_{34}) + y( {\tcalg_{max}}) \call (\calg_{56}),
\end{equation}
and

\begin{equation} \label{laurentself}
 \call (\ree 1) = \call (\calg_{34}) + y( {\tcalg_{max}}) \call (\calg_{56}),
\end{equation}
where 

\begin{equation*}
 \call (\calg ) = \frac{1}{x(\calg)} \sum_{P \in \match(\calg)} x(P) y(P),  \quad\textup{if $\calg$ is positive in $\mathcal{R}$;}
\end{equation*}
and 
\[ \call(\calg)=-\call(-\calg), \quad\textup{if $\calg$ is negative in $\mathcal{R}$.}
\]
\begin{thm} \label{laurent}
\begin{enumerate}
\item 
 Let $\gamma_1$ and $\gamma_2$ be arcs or loops which cross with a non-empty local overlap and let $\calg_1$ and $\calg_2$ be the corresponding snake or band graphs with local overlap $\calg.$ Then
 
\begin{equation*}
 \call(\calg_1 \sqcup \calg_2) = \call (\re 12).
\end{equation*}

\item
 Let $\gamma_1$ be a self-crossing generalised  arc or self-crossing loop with  a non-empty local overlap and let $\calg_1$ be the corresponding snake or band graph with local overlap $\calg.$ Then
 
\begin{equation*}
 \call(\calg_1 ) = \call (\ree 1).
\end{equation*}

\end{enumerate}\end{thm}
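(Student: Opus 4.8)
The plan is to reduce the statement to the bijection of Theorem \ref{thmbijection} together with a bookkeeping argument about $y$-monomials, exactly as in the earlier papers \cite{CS,CS2}. First I would recall that, by Theorem \ref{thmbijection}, the switching operation (and its extensions for band graphs) gives a bijection $\varphi\colon\match(\calg_1\sqcup\calg_2)\to\match(\re12)$, and similarly in the self-crossing case. So it suffices to show that this bijection is weight-preserving in the appropriate normalized sense: for every matching $P$, writing $\varphi(P)$ for the corresponding matching of the resolution, one has
\[
\frac{x(P)y(P)}{x(\calg_1)x(\calg_2)}
=\frac{x(\varphi(P))y(\varphi(P))}{x(\calg_{34})}
\quad\text{or}\quad
\frac{x(\varphi(P))y(\varphi(P))}{x(\calg_{56})}\,y(\tcalg_{max}),
\]
depending on whether $\varphi(P)\in\match\calg_{34}$ (i.e. $P$ has a switching position) or $\varphi(P)\in\match(\calg_{56}\sqcup\tcalg_{56})$ (no switching position). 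Summing over all $P$ then gives the two identities of the theorem. I would handle the crossing and self-crossing cases uniformly, and split each into the ``switching position exists'' subcase and the ``no switching position'' subcase.

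In the subcase where a switching position exists, the key point is that the switching operation only reroutes the matching at one vertex and does not change the number of tiles: the tiles of $\calg_{34}$ are in bijection with the tiles of $\calg_1\sqcup\calg_2$ (resp. $\calg_1$), so $x(\calg_{34})=x(\calg_1)x(\calg_2)$ (resp. $=x(\calg_1)$). One must check that the edge weights used by $P$ and by $\varphi(P)$ coincide as multisets — this is a purely local verification at the switching position, using the explicit local configurations listed in \cite[Figures 6--11, section 3]{CS}, extended to the band-graph cases by cutting along an interior edge as in section \ref{sect band}. The $y$-monomial equality $y(P)=y(\varphi(P))$ follows from Lemma \ref{thm y}: the symmetric difference $P_-\ominus P$ and $\varphi(P_-)\ominus\varphi(P)$ enclose the same set of tiles, because switching at a single vertex changes the enclosed region only inside the overlap and the overlap tiles are identified under the embeddings $i_1,i_2,i_3,i_4$. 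For band graphs the minimal matching $P_-$ must be replaced by a chosen good perfect matching used as reference, and one uses that any two good perfect matchings of a band graph differ by a union of cycles enclosing tiles, so the relative $y$-monomial is still well-defined.

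In the subcase where no switching position exists, Lemma \ref{lemboundary} says the restrictions $P|_{i_1(\calg)}$ and $P|_{i_2(\calg)}$ consist of complementary boundary edges of the overlap, so $P$ restricts to a matching $P_{56}$ of $\calg_{56}$ together with a forced matching on $\tcalg_{56}$; and by the paragraph preceding the theorem, this forced completion is independent of $P_{56}$ and carries the $y$-monomial $y(\tcalg_{max})$ on a connected subgraph and trivial elsewhere. Thus $x(P)y(P)=x(P_{56})y(P_{56})\cdot x(\tcalg_{56})y(\tcalg_{max})$ up to the boundary-edge weights on $\tcalg_{56}$, and dividing by $x(\calg_1)x(\calg_2)=x(\calg_{56})\,x(\tcalg_{56})$ produces exactly the term $y(\tcalg_{max})\call(\calg_{56})$. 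The signs work out because $\call(\calg)=-\call(-\calg)$ for negative $\calg\in\mathcal R$, matching the sign conventions in the definition of the resolutions in section \ref{section3} (e.g. the $-(\calg_1[2s'-1,d])^{b'}$ and $\pm$ cases).

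The main obstacle I expect is the ``no switching position'' subcase when the self-overlap has an intersection — i.e. cases (1) and (3) of section \ref{sect 3.3} with $s'\le t$ or $s'=t+1$ — because there the bijection in Theorem \ref{thmbijection} is not produced by the switching operation alone but by the chain of snake-graph-calculus identities (\ref{eqbij1})--(\ref{eqbij11}). For those cases I would not argue matching-by-matching; instead I would observe that each identity in that chain is, by induction on the number of tiles (the resolutions $\calg_{56}$, $\calg_1[1,\ell_i]$, etc. all have strictly fewer tiles), already known to be a Laurent-polynomial identity in the cluster algebra by the parts of the theorem already proved, and that the grafting and bracelet formulas \cite[section 3.4]{CS2} are likewise weight-respecting. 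Combining them exactly as in (\ref{eqbij8}) and (\ref{eqbij11}), with the substitution $y(\tcalg_{max})$ inserted at each step, yields $\call(\calg_1)=\call(\ree1)$ directly. The bracelet relation $\brac_2(\band_4)=\band_4\band_4-2$ must be checked to be compatible with $\call$, which is where the contractible-loop convention $x_\zeta=-2$ of Definition \ref{def closed loop} enters; this is the one genuinely new computation, and it is short.
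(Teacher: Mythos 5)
Your proposal is correct and takes essentially the same route as the paper: the paper's proof likewise reduces to showing the switching operation is weight preserving (so the bijection of Theorem \ref{thmbijection}, built from switching and restriction, is weight preserving) and then checks that $y(\tcalg_{max})$ absorbs the discrepancy from the tiles missing in $\calg_{56}$, citing \cite[Theorem 7.1]{CS2} for the arc case and declaring the loop case analogous. Your extra care with the intersecting self-overlap cases --- deriving the Laurent identity from the chain of snake-graph-calculus identities by induction rather than matching-by-matching --- is a correct and welcome elaboration of a point the paper leaves implicit.
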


\begin{proof}
 If none of the curves $\zg_1,\zg_2$ is a loop, this is Theorem 7.1 of \cite{CS2}.  The essential step of the proof is to show that the switching operation of section \ref{switching} is weight preserving. {That is, if $\calg$ is a (union of) labeled snake and band graphs coming from an unpunctured surface,  $P\in\match \calg$, and $P'$ is obtained from $P$ by a switching operation, then $x(P)=x(P')$ and $y(P)=y(P')$.} 
Then, since the bijection on perfect matchings of section~\ref{section 4} is defined using switching and restriction, it is also weight preserving. To finish the proof one needs to take care of the missing tiles in $\calg_{56}$ and show that the $y(\tcalg_{max})$ is absorbing this discrepancy. 
The proof for the case where $\zg_1$ or $\zg_2$ or both  are loops is analogous.
\end{proof}

\subsubsection{Empty overlaps} For completeness we recall the following results from \cite{CS, CS2}. Let $\gamma_1$ be an arc or a loop and let $\zg_2$ be an arc which cross in a triangle $\Delta$ with an empty overlap. We may assume without loss of generality that $\Delta$ is the first triangle $\gamma_2$ meets. Let $x_{\gamma_1}$ and $x_{\gamma_2}$ be the corresponding Laurent polynomials and $\calg_1$ and $\calg_2$ be their associated snake or band graphs, respectively. 

We know from \cite{CS2} that the snake graphs of the arcs obtained by smoothing the crossing of $\gamma_1$ and $\gamma_2$ are given by the resolution $\gt s{\e_3}12$ of the grafting of $\calg_2$ on $\calg_1$ in $G_s,$ where $s$ is such that $\Delta=\Delta_s$ is the triangle $\gamma_1 $ meets after its $s$-th crossing point, and, if $s=0,$ then $\e_3$ is the unique side of $\Delta$ which is not crossed neither by $\gamma_1$ nor $\gamma_2.$

The edge of $G_s$ which is the glueing edge for the grafting is called the {\em grafting edge}. We say that the grafting edge is {\em minimal in $\calg_1$} if it belongs to the minimal matching on $\calg_1.$

Recall that $\gt s {\e_3}12$ is a pair $(\calg_3 \sqcup \calg_4), (\calg_5 \sqcup \calg_6).$ Let
$\tcalg_{34}$ to be the union of all tiles in $\calg_1\sqcup\calg_2$ that are not in $\calg_{34}$ and
$\tcalg_{56}$ be the union of all tiles in $\calg_1\sqcup\calg_2$ that are not in $\calg_{56}$.
Define

\begin{align*}
 \call (\gt s{\e_3}12) = y_{34} \call (\calg_3 \sqcup \calg_4) + y_{56} \call (\calg_5 \sqcup \calg_6),
\end{align*}
where

\begin{align} \label{skeincoeff}
\left\{\begin{array}{llll}
 y_{34}=1& \textup{and} &y_{56} = y(\tcalg_{56})
 & \mbox{ if the grafting edge is minimal in $\calg_1$; } \\
  y_{34} = y(\tcalg_{34})
  &\textup{and} &y_{56}=1 
  & \mbox{ otherwise. } 
\end{array}\right.
\end{align}
\begin{thm}\cite[Theorem 6.3]{CS} \label{laurent2}
 With the notation above, we have
\begin{align*}
 \la 12 = \call (\gt s{\e_3}12).
\end{align*}
\end{thm}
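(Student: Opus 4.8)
The plan is to reduce Theorem \ref{laurent2} to the already established case of Theorem \ref{laurent}(1), together with the combinatorial description of the grafting resolution from section~\ref{sect 3.3graft} and the geometric interpretation in Theorem \ref{smoothing2}. First I would set up notation: write $\zg_1,\zg_2$ for the crossing curves, $\calg_1,\calg_2$ for their snake or band graphs, and let $\Delta=\Delta_s$ be the triangle where the empty-overlap crossing occurs, so $\zg_2$ starts in $\Delta$. The grafting resolution $\graft_{s,\e_3}(\calg_1,\calg_2)=(\calg_3\sqcup\calg_4)+(\calg_5\sqcup\calg_6)$ is, by construction, obtained by glueing $\calg_2$ (or $\calg_2^\circ$, in the band-graph case) onto $\calg_1$ along the appropriate edge. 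The key point is that introducing a single extra tile — the quadrilateral of $\Delta$ with a chosen diagonal — turns the empty overlap into a genuine \emph{nonempty} overlap consisting of exactly that one tile, so that the grafting becomes an instance of an ordinary resolution of a crossing already covered by Theorem \ref{laurent}(1).

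Concretely, the second step is to exhibit an auxiliary pair of curves $\zg_1',\zg_2'$ (or the corresponding snake/band graphs $\calg_1',\calg_2'$) whose overlap is the single tile $G$ coming from $\Delta$, and whose resolution of the crossing specialises to the grafting resolution of $\calg_1,\calg_2$ after deleting that extra tile. In the arc case this is exactly what is done in the proof of \cite[Theorem 6.3]{CS}; in the band-graph case one uses the puncture technique already employed in the proof of Theorem \ref{smoothing2}: introduce a puncture on the segment of $\zg_2$ near $\Delta$, complete to a triangulation, apply the nonempty-overlap statement, and then remove the puncture. The third step is to track the $y$-monomials through this reduction. The subtlety is the factor $y_{34}$ versus $y_{56}$ in \eqref{skeincoeff}, which depends on whether the grafting edge is minimal in $\calg_1$; this corresponds precisely to which of the two completed matchings in Theorem \ref{laurent}(1) carries the maximal $y$-monomial on the missing-tiles subgraph $\tcalg_{max}$. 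I would verify, using Lemma \ref{lemNE} and the sign-function description of the minimal matching in section~\ref{secdefloop}, that "grafting edge minimal in $\calg_1$" translates into "the missing tile sits on the $P_-$-side", which forces $y_{34}=1$ and $y_{56}=y(\tcalg_{56})$, and the opposite otherwise.

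The fourth step is to assemble the Laurent-polynomial identity. By Theorem \ref{smoothing2}, the snake and band graphs of the smooth resolution of $\{\zg_1,\zg_2\}$ are exactly $(\calg_3\sqcup\calg_4)$ and $(\calg_5\sqcup\calg_6)$, so the left-hand side $\call(\calg_1\sqcup\calg_2)$ is the product $x_{\zg_1}x_{\zg_2}$ written in the snake-graph language, and the skein relation of Theorem \ref{th:skein1} already tells us that $x_{\zg_1}x_{\zg_2}=Y_1 x_{C_+}+Y_2 x_{C_-}$ with $Y_1,Y_2$ monomials in the $y_\tau$. It then remains to match $Y_1,Y_2$ with $y_{34},y_{56}$; this is a bookkeeping of intersection numbers of elementary laminations versus the $y(\tcalg)$ monomials, carried out tile by tile just as in \cite{CS}. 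Alternatively, and more in the spirit of the abstract development, one bypasses \cite{MW} entirely and derives the identity directly from Theorem \ref{laurent}(1) applied to the one-tile-enlarged overlap, dividing out the contribution of the extra tile $G$ — whose label is $x_{i(G)}$ and whose $y$-variable is $y_{i(G)}$ — from both sides; the cancellation is exactly what the normalisation of $\call$ by $\tfrac{1}{x(\calg)}$ is designed to produce.

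The main obstacle I expect is the $y$-monomial bookkeeping in step three and four, i.e.\ checking that the case distinction \eqref{skeincoeff} on minimality of the grafting edge really is equivalent to the position of $\tcalg_{max}$ under the matching bijection, and that no stray $y_{\tau}$ factor is lost when the auxiliary extra tile is deleted. This is purely combinatorial and local to the triangle $\Delta$, so it is not conceptually hard, but it requires care with the two sign functions, with the convention for $P_-$ from section~\ref{secdefloop}, and — in the band-graph case — with the fact that a band graph may wrap through $\Delta$ several times, forcing the insertion of several single tiles as in the proof of Theorem \ref{smoothing2}. Everything else is a direct citation of Theorem \ref{laurent}(1), Theorem \ref{smoothing2}, and the grafting definition, so the proof can in fact be kept short: reduce to the nonempty-overlap case via one extra tile, invoke the weight-preserving bijection, and read off the coefficients.
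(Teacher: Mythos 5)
This theorem is not reproved in the present paper; it is quoted from \cite{CS}, where the argument is direct: one constructs the grafting bijection $\match(\calg_1\sqcup\calg_2)\to\match(\graft_{s,\e_3}(\calg_1,\calg_2))$ explicitly (the case distinction being whether the grafting edge lies in the restriction of the matching), checks that it preserves the $x$-weight, and computes the discrepancy of the height monomials on the tiles missing from $\calg_{56}$ (respectively $\calg_{34}$), which is exactly absorbed by the coefficients of (\ref{skeincoeff}) via the minimality of the grafting edge. Your proposal takes a genuinely different route, and its central step has a gap.

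The gap is the reduction to the nonempty-overlap case. Inserting an extra tile $G$ to promote the empty overlap to a one-tile overlap does not act multiplicatively on $\sum_P x(P)y(P)$: the perfect matchings of the enlarged graph are not obtained from those of the original graph by adjoining a fixed local contribution at $G$, so there is nothing to ``divide out'', and the normalisation by $1/x(\calg)$ only removes the crossing monomial, not the structural change in the numerator. Geometrically, introducing a puncture replaces $(S,M,T)$ by a different triangulated surface with new arcs at the puncture; the identity supplied by Theorem \ref{laurent}(1) then lives in a different Laurent polynomial ring, and you provide no cutting or specialisation lemma transporting it back to $\cala(S,M,T)$. In this paper and in \cite{CS,CS2} the puncture trick is used only to identify the \emph{shapes} of the graphs appearing in the resolution (as in the proof of Theorem \ref{smoothing2}), never to transfer a weight identity. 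Your fallback through the skein relations of \cite{MW} is logically admissible but inverts the paper's stated aim of deriving Corollary \ref{skein} \emph{from} the snake-graph identities, and in any case the entire content of the theorem is the identification $Y_1=y_{34}$, $Y_2=y_{56}$, which you defer to ``bookkeeping''. What is actually required is the direct computation: verify edge by edge that the grafting bijection preserves $x$-weights, and use the description of $P_-$ (section~\ref{secdefloop} and Lemma \ref{lemNE}) to show that the completed matching is maximal or trivial on $\tcalg_{34}$, $\tcalg_{56}$ according to whether the grafting edge is minimal in $\calg_1$.
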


Similarly, if $\gamma_1$ is a generalised arc which self-crosses in a triangle $\Delta$ with an empty overlap, let $\calg_1$ be the associated snake graph and $x_{\gamma_1}$ be the corresponding Laurent polynomial. We know from Theorem \ref{smoothingselfgrafting} that the snake graphs of the arcs obtained by smoothing the self-crossing of $\gamma_1$ are given by the resolution $\graft _{s,{\e_3}}(\calg_1)$ of the self-grafting of $\calg_1$  in $G_s,$ where $s$ is such that $\Delta=\Delta_s$ and,  if $s=d,$ then $\e_3$ is the unique side of $\Delta$ which is not crossed by $\gamma_1$.

Let $\tcalg_{34}$ be the union of tiles in $\calg_1$ that are not in $\calg_3
\sqcup\calg^{\circ}_4$ and $\tcalg_{56}$ be the union of tiles in $\calg_1$ that are not in $\calg_{56}.$

If $s<d$,  then  $\e_3 $ is  the north or the east edge in $G_s$, and   we let

\begin{align} \label{eq722a}
\left\{\begin{array}{llll}
 y_{34}=1& \textup{and} &y_{56} = y(\tcalg_{56})
 & \mbox{ if $\zd_3$ is minimal in $\calg_1$; } \\
  y_{34} = y(\tcalg_{34})
  &\textup{and} &y_{56}=1 
  & \mbox{ otherwise. } 
\end{array}\right.
\end{align}

If $s=d$, then $\calg_1$ and $\calg_{3}\sqcup\calg^{\circ}_4$ have the same tiles, so 
$y_{34}=1.$
On the other hand,  $\calg_1 \setminus\calg_{56}$ has two components $\calg_1[1,k'-1]$ containing the glueing edge $\delta_3'$ and  $\calg_1[k+1,d]$ containing the glueing edge  $\delta_3$. We let
\begin{equation}\label{eq722b}y_{34}=1,\qquad y_{56}=y_{56}' y_{56}'' ,\end{equation}
where
\[y_{56}'=\left\{\begin{array}{ll}  y(\calg_1[1\,,k'\!-\!1]) &\textup{if  $\delta_3'$ is minimal in $\calg_1$};\\
           1&\textup{otherwise;}\end{array}\right.\]
\[y_{56}''= \left\{\begin{array}{ll}y(\calg_1[k+1\,,d]) &\textup{if  $\delta_3$ is minimal in $\calg_1$};\\
          1 & \textup{otherwise. }\end{array}\right.\]
%
With this notation define

\begin{align*}
 \call (\graft _{s,{\e_3}}(\calg_1)) = y_{34} \call (\calg_3 \sqcup \calg_4^\circ) + y_{56} \call (\calg_{56}).
\end{align*}

\begin{thm} \cite[Theorem 7.4]{CS2}\label{laurent2self}
 With the notation above, we have
\begin{align*}
 \call (\calg_1) = 
 \call (\graft _{s,{\e_3}}(\calg_1)) 
\end{align*}
\end{thm}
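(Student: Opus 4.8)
\textbf{Proof proposal for Theorem \ref{laurent2self}.}

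The plan is to reduce the self-grafting identity, in the case of a single self-crossing generalised arc $\gamma_1$ with empty local overlap, to the combinatorial bijection of section~\ref{section 4} together with a weight-preservation argument, exactly as was done for the crossing and grafting results in \cite{CS2}. First I would recall from Theorem~\ref{smoothingselfgrafting} that smoothing the self-crossing of $\gamma_1$ in the triangle $\Delta=\Delta_s$ produces, on the geometric side, a pair of generalised arcs $\gamma_3,\gamma_4$ (with $\gamma_4$ a loop, hence $\calg_4^\circ$ a band graph) together with a loop $\gamma_{56}$, and that their (band) graphs are precisely $\calg_3\sqcup\calg_4^\circ$ and $\calg_{56}$ appearing in the definition of $\graft_{s,\e_3}(\calg_1)$. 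Then, by the skein relation of Theorem~\ref{th:skein1} applied to the self-crossing of $\gamma_1$, we have in $\Acal$ an identity $x_{\gamma_1}=\pm Y_3 x_{\gamma_3}x_{\gamma_4}\pm Y_{56}x_{\gamma_{56}}$ for suitable monomials $Y_3,Y_{56}$ in the $y_{\tau_i}$; the content of the theorem is that, after translating $x$-polynomials into $\call$-values of graphs via Definitions~\ref{def:matching} and \ref{def closed loop}, the coefficient monomials are exactly the $y_{34},y_{56}$ of \eqref{eq722a}--\eqref{eq722b}.

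The main steps, in order, would be: (i) verify that the bijection $\match\calg_1\to\match(\graft_{s,\e_3}(\calg_1))$ of Theorem~\ref{thmbijection} is weight-preserving for the $x$-weights, i.e. that the switching operation (section~\ref{switching}) and the restriction operation each preserve $x(P)$ and $y(P)$ up to the tiles removed in passing to $\calg_{56}$ — this is the statement proved for the crossing cases in the proof of Theorem~\ref{laurent} and I would invoke the same argument, noting that the self-grafting case without intersection is covered by the switching operation just as the opposite-direction self-overlap is; (ii) account for the ``missing tiles'': the tiles of $\calg_1$ not present in $\calg_{56}$ form the subgraph $\tcalg_{56}$, and the completion of a matching $P_{56}$ of $\calg_{56}$ to a matching of $\calg_1$ (or of $\calg_3\sqcup\calg_4^\circ$) via the inverse bijection is forced and independent of $P_{56}$, so the discrepancy in $y$-weight is a fixed monomial; (iii) identify that fixed monomial with $y_{56}$, which is where the minimality condition on the grafting edge $\delta_3$ (resp.\ $\delta_3',\delta_3$ in the $s=d$ case) enters — the completion uses the minimal matching on $\tcalg_{56}$ or its opposite, and Definition~\ref{height} then reads off either $y(\tcalg_{56})$ or $1$ (resp.\ the product $y_{56}'y_{56}''$); (iv) in the special case $s=d$, separately track the two components $\calg_1[1,k'-1]$ and $\calg_1[k+1,d]$ of $\calg_1\setminus\calg_{56}$, each contributing its own minimality dichotomy, yielding \eqref{eq722b}.

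I expect step (iii)--(iv), the precise bookkeeping of the height monomial on $\tcalg_{56}$, to be the main obstacle: one must show that the $y$-monomial of the forced completion is maximal on a single connected subgraph $\tcalg_{max}$ and trivial elsewhere, and that this $\tcalg_{max}$ is exactly $\tcalg_{56}$ (or one of its two components) precisely when the relevant glueing edge is minimal in $\calg_1$. This is the same phenomenon isolated in the proof of Theorem~\ref{laurent} via Lemma~\ref{lemboundary} and Lemma~\ref{lemNE}: the restriction of a matching with no switching position to the overlap consists of complementary boundary edges, which pins down whether the completion sits inside the minimal matching. Once this is established, the identity $\call(\calg_1)=\call(\graft_{s,\e_3}(\calg_1))$ follows by summing the weight-preserving bijection over all matchings and comparing with \eqref{laurent12}. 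As with the analogous statements, the bulk of the verification is already in \cite{CS2} and I would present the loop case as a routine adaptation, spelling out only the band-graph-specific points (the good-matching condition of section~\ref{section 4} and the band graph $\calg_4^\circ$).
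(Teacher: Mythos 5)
The paper does not actually prove this statement: it is imported verbatim as \cite[Theorem 7.4]{CS2}, and your overall plan (a weight-preserving bijection on perfect matchings, plus bookkeeping of the height monomial on the tiles of $\calg_1$ missing from $\calg_{56}$) is exactly the strategy used there and sketched in this paper for the analogous Theorem \ref{laurent}. So the architecture is right, but two points in your sketch would fail as written. First, step (i) invokes Theorem \ref{thmbijection} and the switching operation of section \ref{switching}; those are built on a \emph{non-empty} overlap --- the switching position is by definition a common starting point of two matched edges lying inside the two copies of the overlap --- whereas here the local overlap is empty. The bijection for (self-)grafting is a different one: matchings of $\calg_1$ are split according to the local configuration at the tile $G_s$, i.e.\ according to whether the grafting edge $\e_3$ (resp.\ $\delta_3,\delta_3'$ when $s=d$) belongs to the matching, and it is precisely this dichotomy that produces the case distinction ``$\delta_3$ minimal in $\calg_1$ or not'' in (\ref{eq722a})--(\ref{eq722b}). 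Without replacing the switching operation by this local analysis at $G_s$, your steps (iii)--(iv) have nothing to hang on. Second, a smaller slip in the setup: among the curves produced by smoothing the self-crossing, $\gamma_{56}$ is a generalised arc (with a kink), not a loop; the loop is $\gamma_4$, which is why the resolution contains the band graph $\calg_4^\circ$ while $\calg_{56}$ is a snake graph.

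Separately, be careful with the appeal to Theorem \ref{th:skein1}: the paper derives the skein relations (Corollary \ref{skein}) as a \emph{consequence} of these $\call$-identities, so using the skein relation of \cite{MW} as an ingredient in the proof would make that corollary circular. Your combinatorial steps must stand on their own, with the skein relation serving only as motivation.
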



\subsection{Skein relations}\label{sect skein}
As a corollary we obtain a new proof of the skein relations.
\begin{cor} \label{skein}
Let $C,$ $ C_{+}$, and $C_{-}$ be as in Definition \ref{def:smoothing}.
Then we have the following identity in the cluster algebra $\cala$,
\begin{equation*}
x_C = \pm Y_1 x_{C_+} \pm Y_2 x_{C_-}.
\end{equation*}
Moreover the coefficients $Y_1$ and $Y_2$ are given by
\textup{(\ref{laurent12}), (\ref{laurentself}), (\ref{skeincoeff}), (\ref{eq722a}) and (\ref{eq722b}).}\qed
\end{cor}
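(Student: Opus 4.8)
The plan is to deduce Corollary~\ref{skein} directly from the structural results already assembled in this section, combining Theorems~\ref{laurent}, \ref{laurent2} and \ref{laurent2self} (the snake-graph identities) with Theorems~\ref{smoothing1}, \ref{smoothing2} and \ref{smoothingselfgrafting} (which identify smoothing of curves with resolution of graphs) and Theorem~\ref{thm MSW}/Definition~\ref{def closed loop} (which translate the snake and band graphs back into Laurent polynomials). The only genuinely new observation is bookkeeping: the $y$-monomials $y(\widetilde\calg_{max})$, the factors $y_{34},y_{56}$ etc.\ appearing in \eqref{laurent12}--\eqref{eq722b} are precisely the monomials $Y_1,Y_2$ of the skein relation.

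First I would set up the case division according to Definition~\ref{def:smoothing}, i.e.\ whether $C=\{\gamma_1,\gamma_2\}$ is a pair of crossing curves or $C=\{\gamma\}$ is a single self-crossing curve, and then, within each, whether the local overlap of the crossing is non-empty or empty. In the non-empty crossing-pair case, apply Theorem~\ref{smoothing1} to see that the snake/band graphs of $C_+$ and $C_-$ are exactly $\calg_{34}$ and $\calg_{56}$ in the resolution $\re 12$; then $\call(\calg_1\sqcup\calg_2)=\call(\re 12)$ by Theorem~\ref{laurent}(1). Since $\call$ evaluates (after clearing the crossing monomial) to the Laurent polynomials $x_C$, $x_{C_+}$, $x_{C_-}$ up to the explicit $y$-factors, this gives $x_C = x_{C_+} + y(\widetilde\calg_{max})\,x_{C_-}$, which is the asserted relation with $Y_1=1$, $Y_2=y(\widetilde\calg_{max})$ (or with signs when a curve has a contractible kink, a contractible monogon, or is a contractible loop, handled by Definitions~\ref{def:matching} and~\ref{def closed loop}). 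The non-empty self-crossing case is identical using Theorem~\ref{smoothing2} and Theorem~\ref{laurent}(2); the empty-overlap cases use Theorems~\ref{smoothing2}/\ref{smoothingselfgrafting} together with Theorems~\ref{laurent2}/\ref{laurent2self} and the coefficients \eqref{skeincoeff}, \eqref{eq722a}, \eqref{eq722b}.

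The substantive part of the argument is not the logical scaffolding but verifying that the bracelet/bangle degenerations ($\calg_{56}=-2$, $\calg_{56}=-\emptyset$, $\calg_{56}=0$, and the $\Brac_2$ terms in Case~(1) and Case~(3) of Section~\ref{sect 3.3}) translate correctly into cluster-algebra identities: one must check that $\call$ respects the bracelet identity $\Brac_2(\band_4)=\band_4\band_4-2$ and that $x_\zeta=-2$ for a contractible loop, so that the resolution-of-graphs formula reproduces exactly the skein relation for a self-crossing bracelet whose overlap is the whole band graph. I would treat this by quoting Definition~\ref{def closed loop}(1) for the $-2$ and, for the bracelet identity, by the same weight-preserving bijection of perfect matchings that underlies Theorem~\ref{thmbijection} case~(1); since $\call$ is defined as a generating function over (good) perfect matchings and the bijections of Section~\ref{section 4} are weight preserving on graphs from unpunctured surfaces, the identity on graphs in $\calr$ pushes forward to the identity in $\cala$.

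I expect the main obstacle to be purely expository: assembling the many cases of Definition~\ref{def:smoothing} against the many cases of the resolution operations without omission, and tracking signs uniformly (the $\pm$ in the statement) through the kink/monogon conventions. Because every ingredient—the crossing correspondence (Theorem~\ref{thm cross}), the smoothing correspondence (Theorems~\ref{smoothing1}, \ref{smoothing2}, \ref{smoothingselfgrafting}), and the weight-preserving Laurent identities (Theorems~\ref{laurent}, \ref{laurent2}, \ref{laurent2self})—has already been established, the proof itself is short: it is a case-by-case citation. I would therefore write it as a brief paragraph invoking those theorems for each case and remarking that the coefficients $Y_1,Y_2$ are read off from the displayed formulas, exactly as the statement of Corollary~\ref{skein} claims.
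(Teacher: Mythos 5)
Your proposal is correct and is essentially the paper's own argument: the paper states the corollary with a terminal \qed and no separate proof, treating it as an immediate consequence of Theorems~\ref{laurent}, \ref{laurent2} and \ref{laurent2self} combined with the smoothing--resolution correspondence of Theorems~\ref{smoothing1}, \ref{smoothing2} and \ref{smoothingselfgrafting} and the matching formulas of Definitions~\ref{def:matching} and \ref{def closed loop}. Your writeup simply makes explicit the case division and coefficient bookkeeping that the paper leaves implicit.
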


\section{The snake ring}\label{sect 7}
 In this section, we introduce several rings, called snake rings, that are related to cluster algebras. To define these rings we first introduce a ring structure on the group $\mathcal{R}$, where the multiplication is given by the disjoint union, and then take the quotient of this ring by the ideal generated by all resolutions.
 For several choices of labels on snake graphs we can do  a similar construction replacing  $\mathcal{R}$ by its labeled version $\mathcal{LR}$.

\subsection{The ideal of resolutions}\label{sect coeff}
If $\calg=(G_1,\ldots,G_d)$ is a labeled snake graph with sign function $f$, we define the {\em minimal matching} $P_-$ of $\calg$ 
to be the unique matching of $\calg$ that consists of boundary edges of $\calg$ only and such that the unique edge in $\calgSW\cap P_-$ has sign $-$.
The  {\em maximal matching} $P_+$ of $\calg$ is defined to be 
 the matching complementary
to $P_-$  that consists of boundary edges of $\calg$ only. Thus $P_-\cup P_+$ is the set of \emph{all} boundary edges of $\calg$.

In \cite{CS,CS2} and the current paper, we have established a list of identities of snake and band graphs, the resolutions of crossing overlaps and grafting, which were motivated by bijections on the corresponding sets of perfect matchings. These identities are all of the form 
 \begin{equation}\label{eq 7.1}\calg_{12} =y_{34} \calg_{34}+y_{56}\calg_{56}, \end{equation}
where $\calg_{12},\calg_{34}, \calg_{56} $ are snake or band graphs or pairs of snake or band graphs, and $y_{34}$, $y_{56}$ are monomials in the face labels of $\calg_{12}$ according to section~\ref{sect 6.4}.
Recall that the possible cases for $\calg_{12}$ are the following:
\begin{itemize}
\item[-] a pair of snake graphs with crossing overlap, see \cite[section 2.4]{CS},
\item[-] a snake graph with crossing self-overlap, see \cite [section 3.2]{CS2},
\item[-] a snake graph and a band graph with crossing overlap, see section \ref{sect 3.1},
\item[-] a pair of band graphs with crossing overlap, see section  \ref{sect 3.2},
\item[-] a band graph with crossing self-overlap, see section  \ref{sect 3.3},
\item[-] a pair of snake graphs with grafting, see  \cite[section 2.5]{CS},
\item[-] a snake graph with self-grafting, see \cite[section 3.3]{CS2},
\item[-] a snake graph and a band graph with grafting, see section  \ref{sect 3.3graft}.
\end{itemize}

We shall work in several rings of snake and band graphs, and for each of these rings we consider 
the  ideal generated by all 
the relations (\ref{eq 7.1}). We call this ideal the \emph{ideal of resolutions}.

\subsection{Geometric type and unpunctured type}
A labeled snake or band graph $\calg$ is called \emph{geometric} if there exists a triangulated surface $(S,M,T)$ and a curve $\zg$ in $S$ whose labeled snake or band graph $\calg_{S,M,T,\zg}$ is equal to $\calg$. A geometric labeled snake or band graph is called \emph{unpunctured} if the surface $(S,M)$ has no punctures. An unlabeled snake graph is called geometric or unpunctured, if there exists a labelling on $\calg$ which is geometric or unpunctured, respectively.
So in particular, for a geometric snake graph, the edge labels must be cluster variables $x_{S,M,T,i}$ and the face labels must be principal coefficient variables $y_{S,M,T,i}$ of the cluster algebra $\cala(S,M,T)$.
In Figure \ref{unpunctured}, the snake graph on the left is unpunctured since it is the snake graph of an arc in a triangulated hexagon, whereas the snake graph on the right is not geometric since the face labels do not match the corresponding edge labels as described in the following Lemma.

\begin{figure}
\scalebox{1}{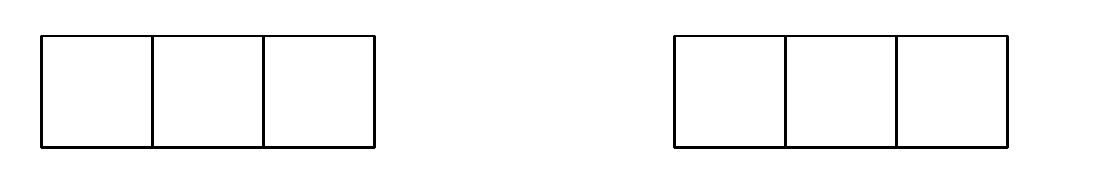}
\caption{An example of an unpunctured snake graph (left) and a non-geometric snake graph (right).}\label{unpunctured}
\end{figure}

\begin{figure}
\scalebox{.95}{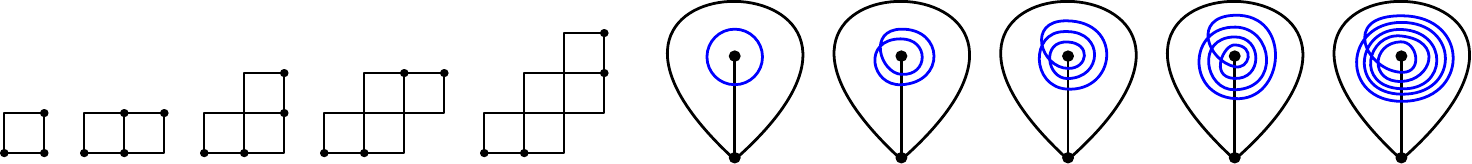}
\caption{ Band graphs all of whose interior edges have the same sign (left) and their geometric realisation in a punctured monogon (right).}\label{fig selffoldloop}
\end{figure}

\begin{lem} \begin{itemize}
\item [\textup{(a)}]
Let $\calg=(G_1,G_2,G_3)$ be a  connected subsnake graph of a labeled geometric snake graph and let $y_{S,M,T,i}$ be the label of the tile $G_2$. 
Then the labels of the boundary edge in $G_1^{N\!E}$ and the boundary edge in ${}_{SW}G_3$ are both equal to $x_{S,M,T,i}$.
\item [\textup{(b)}] In a labeled geometric snake graph, the labels $y_{S,M,T,i},y_{S,M,T,j}$ of any two adjacent tiles together with the label $x_{S,M,T,k}$  of the interior edge shared by the two tiles correspond to three sides $i,j,k$ of the same triangle in the triangulated surface $(S,M,T)$. In particular, if the surface is unpunctured then $i,j,k$ are distinct.
\end{itemize}
\end{lem}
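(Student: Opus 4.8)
The statement is a purely combinatorial fact about how the snake graph is built from the triangulation, so the plan is to trace through the construction of $\calg_\zg$ in Section~\ref{sect graph} and read off the labels directly. I would treat part (a) first, since part (b) follows from it together with the definition of the tiles.

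\emph{Part (a).} Recall from Section~\ref{sect graph} that the tile $G_2$ corresponds to the arc $\tau_{i_2}$ crossed by $\zg$ at the point $p_2$, and that $G_2$ is the quadrilateral in $T$ whose diagonal is $\tau_{i_2}$. Thus the edge label $y_{S,M,T,i}$ of $G_2$ corresponds to the index of $\tau_{i_2}$, i.e.\ $x_{S,M,T,i}=x_{\tau_{i_2}}$. Now $G_1$ and $G_2$ are glued along the edge labeled $\tau_{e_1}$, the third side of the triangle $\zD_1$ whose other two sides are $\tau_{i_1}$ and $\tau_{i_2}$; similarly $G_2$ and $G_3$ are glued along $\tau_{e_2}$, the third side of $\zD_2$ with sides $\tau_{i_2}$, $\tau_{i_3}$. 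Inside the planar tile $G_2$, the diagonal is $\tau_{i_2}$, and the four sides of the square $G_2$ are the four remaining sides of the two triangles $\zD_1,\zD_2$ adjacent to $\tau_{i_2}$: two of them are the glueing edges $\tau_{e_1}$ (shared with $G_1$) and $\tau_{e_2}$ (shared with $G_3$), and the other two are boundary edges of $\calg$. The key geometric observation is that in a square with a diagonal, the diagonal together with one boundary edge and one glueing edge forms one of the two triangles, and the diagonal together with the opposite boundary edge and the other glueing edge forms the other triangle. Concretely, the triangle $\zD_1=\{\tau_{i_1},\tau_{i_2},\tau_{e_1}\}$ in $T$ is represented in $G_2$ by the triangle on one side of the diagonal, which has sides: the diagonal $\tau_{i_2}$, the glueing edge $\tau_{e_1}$, and a boundary edge; since the three sides of $\zD_1$ are $\tau_{i_1},\tau_{i_2},\tau_{e_1}$, that boundary edge must carry the label $\tau_{i_1}$. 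Tracing which of the four corners of $G_2$ this boundary edge borders, and using that $G_1$ was glued along $\tau_{e_1}$ and $G_3$ along $\tau_{e_2}$ with alternating relative orientation, one sees this boundary edge lies in $G_1^{N\!E}$ — wait, more precisely, the boundary edge of the triangle represented in $G_2$ together with $G_1$: it is the edge of $G_1$ in $G_1^{N\!E}$ that is glued, no — let me instead argue via $G_1$ itself: the edge in $G_1^{N\!E}$ not equal to the glueing edge $\tau_{e_1}$ belongs to the triangle $\zD_1$ represented inside $G_1$, and the three sides of that triangle are $\tau_{i_1},\tau_{i_2},\tau_{e_1}$, with $\tau_{i_1}$ being the diagonal of $G_1$ and $\tau_{e_1}$ the glueing edge, so the remaining boundary edge in $G_1^{N\!E}$ carries the label $\tau_{i_2}=x_{S,M,T,i}$. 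The analogous argument applied to $\zD_2$ and $G_3$ shows the boundary edge in ${}_{SW}G_3$ carries label $\tau_{i_2}=x_{S,M,T,i}$, since the triangle $\zD_2$ has sides $\tau_{i_2},\tau_{i_3},\tau_{e_2}$ with $\tau_{i_3}$ the diagonal of $G_3$ and $\tau_{e_2}$ the glueing edge.

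\emph{Part (b).} For two adjacent tiles $G_j,G_{j+1}$ with edge labels $y_{S,M,T,i}=y_{\tau_{i_j}}$, $y_{S,M,T,j}=y_{\tau_{i_{j+1}}}$, the interior edge $e_j$ shared by them is the edge labeled $\tau_{e_j}$, which by construction is the third side of the triangle $\zD_j$ whose other two sides are $\tau_{i_j}$ and $\tau_{i_{j+1}}$. Hence $i=i_j$, $j=i_{j+1}$, $k=e_j$ are the three sides of the single triangle $\zD_j$ in $(S,M,T)$. If the surface is unpunctured, then every triangle has three distinct sides (self-folded triangles only occur at punctures), so $i,j,k$ are distinct.

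\textbf{Main obstacle.} The only real work is the bookkeeping in part (a): one must carefully track, in each tile $G_j$, which side of the square represents which side of which of the two adjacent triangles $\zD_{j-1},\zD_j$, and in particular that the boundary edge of $G_j$ lying ``between'' the diagonal and the glueing edge $\tau_{e_{j-1}}$ is labeled $\tau_{i_{j-1}}$ while the one ``between'' the diagonal and $\tau_{e_j}$ is labeled $\tau_{i_{j+1}}$. This is exactly the content of the glueing rule in Figure~\ref{figglue} together with the fact that $\mathrm{rel}(\tilde G_{j+1},T)\ne\mathrm{rel}(\tilde G_j,T)$; the alternation of relative orientations is what guarantees that the edge in $G_1^{N\!E}$ adjacent to the glueing edge, and the edge in ${}_{SW}G_3$ adjacent to the other glueing edge, are the two boundary edges corresponding to $\tau_{i_2}$. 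I would include a small local picture (similar to Figure~\ref{figr1}) showing the three consecutive tiles with all five labels on the middle tile to make the argument transparent, rather than relying on words alone.
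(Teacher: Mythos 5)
Your proposal is correct and takes essentially the same approach as the paper, whose entire proof is the single sentence that the lemma ``follows from the construction of snake graphs from surfaces in section 5.2''; you have simply unpacked that construction, correctly locating the representation of the triangle $\zD_j$ as the $N\!E$ triangle of $G_j$ and the $SW$ triangle of $G_{j+1}$. The bookkeeping issue you flag (that the non-glueing edge of $G_1^{N\!E}$ lies in the same triangle of the quadrilateral as the glueing edge $\tau_{e_1}$, which is what forces its label to be $\tau_{i_2}$) is indeed the only content, and your resolution of it via the glueing rule of Figure 24 is the intended one.
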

\begin{proof}
 This follows from the construction of snake graphs from surfaces in section \ref{sect graph}.
\end{proof}

\begin{lem}\label{lem annulus} Let $\band$ be an unlabeled band graph.
\begin{itemize}
\item[\textup{(a)}] If not all interior edges in $\band $ have the same sign, then there exists a triangulated annulus $(S,M,T)$ such that $\band$  is equal to the unlabeled band graph associated to the simple loop in $(S,M)$.
\item[\textup{(b)}]  If  all interior edges in $\band $ have the same sign, then $\band$ is equal to the unlabeled band graph associated to a bracelet around the puncture inside a self-folded triangle in a punctured polygon, see Figure \ref{fig selffoldloop}.
\end{itemize}
\end{lem}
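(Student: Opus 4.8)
\textbf{Proof plan for Lemma \ref{lem annulus}.}

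The plan is to treat the two cases separately, constructing an explicit triangulated surface in each and then invoking the construction of band graphs from closed loops in Section \ref{sect graph} (Definition \ref{def band}), together with Remark \ref{rem sign} which describes when two interior edges of a snake or band graph have the same sign in geometric terms.

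For part (a), suppose $\band$ has $d$ tiles with interior edges $e_1,\dots,e_d$ (indices cyclic), not all of the same sign. First I would choose a cut edge $b=e_k$ so that $f(e_k)\ne f(e_{k+1})$, i.e. the first two tiles of $\calg=\band_b$ do not form a straight segment but make a corner; this is possible precisely because not all signs coincide. Then $\calg=(G_1,\dots,G_d)$ is a snake graph in which the sequence of signs is not constant. I would build a triangulated annulus $(S,M)$ with one marked point on each of the two boundary components by taking $d$ triangles glued in a cyclic fashion so that the $i$-th triangle has $\tau_{i}$ as the diagonal crossed by the simple loop $\zeta$, and so that the third side $\tau_{e_i}$ of each triangle lies on the ``same side'' of the loop exactly when $f(e_i)=f(e_{i+1})$, matching Remark \ref{rem sign}. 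Concretely, one realises the annulus as a cyclic strip of triangles whose underlying polygon has the combinatorial pattern dictated by the sign function; the loop $\zeta$ runs once around the strip and crosses each diagonal once. The content to verify is that this gluing really closes up into an annulus (two boundary circles, not a Möbius-type identification or a surface of higher genus): this follows because the band graph is planar, hence the strip of tiles with its two boundary sequences closes to an honest cylinder. Then one checks the unlabeled band graph $\calg^\circ_\zeta$ produced by Definition \ref{def band} is isomorphic to $\band$, which is immediate from the construction since tiles, interior edges, and their sign pattern all correspond.

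For part (b), suppose all interior edges of $\band$ have the same sign. Equivalently, after any cut the resulting snake graph $\calg=(G_1,\dots,G_d)$ is \emph{straight} — all tiles lie in a single row or column. I would take $(S,M)$ to be a once-punctured polygon with a self-folded triangle, i.e.\ a triangle two of whose sides are identified to the single ``radius'' $\tau$ from the boundary to the puncture $p$, with the loop $\ell$ enclosing $p$ being the third side, so that a concatenation $\Brac_d(\zeta)$ of the small loop $\zeta$ around $p$ with itself $d$ times crosses the radius $\tau$ exactly $d$ times. By Remark \ref{rem sign}, successive segments of this bracelet pass through the (unique, self-folded) triangle on the same side each time, so all interior edges of the associated band graph receive the same sign — matching $\band$. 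Again the combinatorial data (number of tiles, edge and tile identifications, constant sign) determines the unlabeled band graph up to isomorphism, so $\band$ is isomorphic to the band graph of this bracelet, as depicted in Figure \ref{fig selffoldloop}.

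The main obstacle I expect is part (a): making precise that the cyclic strip of triangles built from the sign pattern is genuinely a triangulated annulus satisfying the technical requirements of Section \ref{sect surfaces} (each boundary component carrying a marked point, the surface not being an excluded small case), and that one may always choose the cut edge $b$ so that the first tiles form a corner — the latter is exactly where the hypothesis ``not all interior edges have the same sign'' is used, and it is what prevents the annulus from degenerating. Verifying that the loop drawn on this strip is simple (no self-crossings) and crosses the triangulation minimally is the other point requiring care, but this follows from planarity of the band graph together with the one-tile-per-crossing correspondence of Section \ref{sect graph}.
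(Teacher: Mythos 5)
Your route for part (a) is genuinely different from the paper's. You propose to build the annulus directly, by gluing $d$ triangles in a cyclic strip whose diagonals are the arcs crossed by the loop. The paper instead first realises the cut-open snake graph $\calg=\band_b$ as the snake graph of an arc $\zg'$ in a triangulated polygon with $d+3$ marked points, then identifies two boundary segments of the polygon (and the two endpoints of $\zg'$) and erases the resulting interior arc and marked point to obtain the annulus. The advantage of the paper's construction is that the surface is manifestly an orientable annulus from the start, whereas in your construction the statement that ``the strip of tiles with its two boundary sequences closes to an honest cylinder'' is exactly the point that needs proof, and ``because the band graph is planar'' is not an argument: the band graph is defined by an abstract identification of two edges of a planar snake graph, and one must check that this identification is realised by an orientation-compatible gluing of the two end triangles. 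This is the one substantive gap in your plan for (a). A second, smaller error: your annulus cannot have ``one marked point on each of the two boundary components'' for general $d$; a triangulated annulus with $d$ triangles has $d$ marked points in total, and the sign pattern forces $p$ of them onto one boundary circle and $q$ onto the other, where $p$ and $q$ are the numbers of interior edges of each sign. You do correctly locate where the hypothesis enters: $p,q\ge 1$ is what guarantees both boundary circles carry a marked point, i.e.\ that the surface does not degenerate to a punctured disk.

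In part (b) you have the combinatorics of the sign function backwards. On a tile the north and west edges share a sign and the south and east edges share the opposite sign, so two consecutive interior edges have the \emph{same} sign exactly when the three tiles form a corner, and \emph{opposite} signs when they are straight. Hence ``all interior edges have the same sign'' characterises the complete \emph{zigzag} band graph, not the straight one. This slip does not propagate — your geometric argument that the bracelet around the puncture of a self-folded triangle yields constant signs, and that a band graph is determined up to isomorphism by its cyclic sign sequence, still delivers the conclusion, and it matches the paper's (one-line) treatment of (b) — but the mischaracterisation should be corrected, since the straight/zigzag dichotomy versus the sign function is used repeatedly elsewhere in the paper.
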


\begin{proof}
 (a) Fix a sign function on $\band$. Let $p$ be the number of interior edges in $\band$ that have sign $+$ and $q$ be the number of interior edges having sign $-$. By our hypothesis, we have $p\ge 1$ and $q\ge 1$. Choose an interior edge $b$ of $\band$ that has sign $+$ and let $\calg=\band_b$ be the snake graph obtained from $\band$ by cutting along $b$. Denote by $b'\in \calgSW$ and $b''\in \calgNE$ the unique edges corresponding to $b$. Let $\calg=(G_1,G_2,\ldots,G_d)$ denote the sequence of tiles of $\calg$. Thus $d=p+q$.
 
 We can realise $\calg$ in a polygon with $d+3$ marked points. More precisely, there is a triangulation of the polygon whose diagonals $\tau_1,\tau_2,\ldots,\tau_d$ correspond to the tiles $G_1,G_2,\ldots,G_d$ of $\calg$ and there is an arc $\zg'$ starting at a marked point  $s'$ on the boundary, then crossing $\tau_1,\tau_2,\ldots,\tau_d$ in order and ending at a marked point $t'$, such that the unlabeled snake graph associated to $\zg'$ is equal to $\calg$. See the left picture in Figure \ref{fig polygon}.
 
 Moreover the edge $b'$ of $\calg$ corresponds to a boundary segment of the polygon that is incident to the starting point $s'$ of $\zg'$ and the edge $b''$ of $\calg$ to a boundary segment incident to the ending point $t'$ of $\zg'$. Furthermore, since $b'$ and $b''$ have the same sign in $\calg$, it follows that $b'$ and $b''$ lie on the same side of $\zg'$ in the polygon. Moreover, there are precisely $p+1$ boundary edges in the polygon that lie on the same side of $\zg'$ as $b'$ and $q+2$ boundary edges on the other side. 
 Let $a'\ne b'$ be the unique other boundary edge of the polygon that is incident to $s'$ and let $a''\ne b''$ be the unique other boundary edge of the polygon that is incident to 
$t'$.

We are now ready to construct the annulus in two steps. These steps are also illustrated in Figure   \ref{fig polygon}.

\begin{figure}
\scalebox{.8}{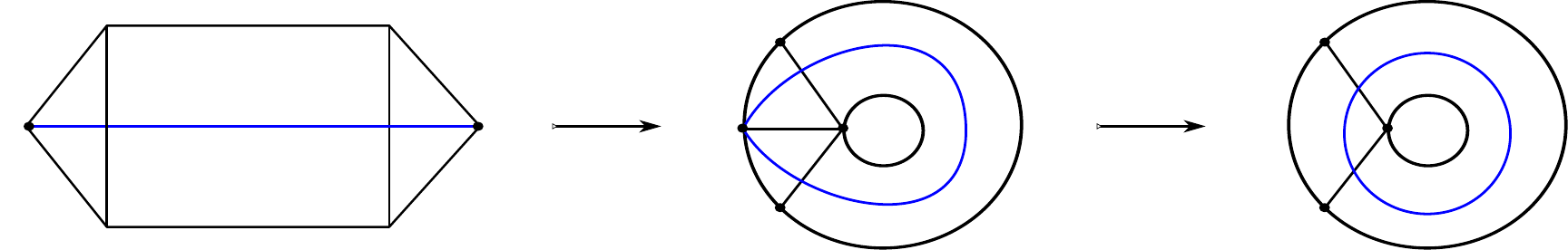}
\caption{Construction of the annulus in Lemma \ref{lem annulus}(a).}\label{fig polygon}
\end{figure}

In the first step, we identify the edges $a'$ and $a''$ and the endpoints $s'$ and $t'$, and we call the resulting arc $a$ and the marked point $s$. We thus obtain an annulus with triangulation $\{\tau_1,\tau_2\ldots,\tau_d,a\}$ and with $p+1$ marked points on one boundary component and $q$ marked points on the other. The boundary component with $p+1$ marked points contains the segments $b'$ and $b''$, and the arc $a$ is an interior arc contained in 2 triangles with sides $a,b',\tau_1$ and $a,b'',\tau_d$, respectively. The curve $\zg'$ becomes an arc starting and ending at the point $s$.

 In the second step, we remove the marked point $s$ as well as the arc $a$, and the two boundary segments $b',b''$ become a single boundary segment $b$. This annulus has $p$ marked points on one boundary component and $q$ marked points on the other. It is triangulated by the arcs $\tau_1,\tau_2\ldots,\tau_d$. The arc $\zg'$ gives rise to the simple loop $\zg$ such that $\calg_{\zg'}^b=\calg^\circ_{\zg}= \band$. This shows (a).

(b) This is immediate as illustrated in Figure \ref{fig selffoldloop}.
\end{proof}

\begin{thm}\label{lem unpunctured}
 \begin{itemize}
\item[\textup{(a)}] Every unlabeled snake graph is geometric and unpunctured.
\item[\textup{(b)}] Every unlabeled band graph is geometric.
\item[\textup{(c)}] An unlabeled band graph is unpunctured if and only if not all of its interior edges have the same sign.
\end{itemize}
\end{thm}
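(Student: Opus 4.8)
The plan is to establish the three parts in order (a), (b), (c), using the constructions of Section~\ref{sect graph}, Remark~\ref{rem sign}, and Lemma~\ref{lem annulus}; the one substantial point will be the ``only if'' direction of part~(c).

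For part~(a), I would realise an arbitrary unlabeled snake graph $\calg=(G_1,\dots,G_d)$ equipped with a chosen sign function $f$ as the snake graph of an arc $\gamma$ in a triangulated disk, by induction on the number of tiles $d$. (If $\calg$ is a single edge it is $\calg_\tau$ for a diagonal $\tau$ of any triangulated polygon with at least four marked points, so assume $d\ge1$.) The base case $d=1$ is a triangulated quadrilateral with $\gamma$ running along the diagonal that is not an arc of the triangulation. For the inductive step, the snake graph axioms say $G_{j+1}$ is attached to $G_j$ along the north or the east edge of $G_j$; in the disk this amounts to adding one boundary marked point and one triangle and prolonging $\gamma$ across one further diagonal, with the sign $f(e_j)$ determining on which of the two possible sides the new triangle is glued (straight versus zigzag), precisely so that, by Remark~\ref{rem sign}, the snake graph of the prolonged arc is $\calg[1,j+1]$. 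After $d$ steps we obtain a triangulated disk with $d+3\ge4$ marked points --- an admissible surface with no punctures --- together with an arc whose snake graph is $\calg$. Labelling the edges and tiles of $\calg$ by the corresponding cluster variables and principal coefficients of this triangulated disk produces a geometric, unpunctured labelling, so $\calg$ is geometric and unpunctured.

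Part~(b) is then immediate from Lemma~\ref{lem annulus}: if not all interior edges of the band graph $\band$ have the same sign, Lemma~\ref{lem annulus}(a) exhibits $\band$ as the band graph of the simple loop in a triangulated annulus, and if all interior edges have the same sign, Lemma~\ref{lem annulus}(b) exhibits it as the band graph of a bracelet around the puncture of a self-folded triangle in a punctured polygon. In both cases $\band$ arises from a curve in a triangulated surface, hence, equipped with the induced labelling, it is geometric.

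For part~(c), the ``if'' direction is immediate, since the triangulated annulus supplied by Lemma~\ref{lem annulus}(a) has no punctures. The core of the theorem --- and the step I expect to be the main obstacle --- is the converse, for which I would prove the contrapositive: if all interior edges of $\band$ carry the same sign, then $\band$ is not the band graph of any loop in an unpunctured surface. Suppose, to the contrary, that $\band=\calg^\circ_{\zeta}$ for a loop $\zeta$ in an unpunctured triangulated surface $(S,M,T)$. Cut $\band$ to a snake graph $(G_1,\dots,G_d)$, let $\tau_{i_1},\dots,\tau_{i_d}$ be the arcs crossed by $\zeta$ in cyclic order, let $\Delta_j$ be the triangle with sides $\tau_{i_j},\tau_{i_{j+1}},\tau_{e_j}$ (indices modulo $d$), and let $v_j$ be the common endpoint of $\tau_{i_j}$ and $\tau_{i_{j+1}}$, i.e.\ the vertex of $\Delta_j$ opposite $\tau_{e_j}$. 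Inside $\Delta_j$ the segment of $\zeta$ separates the corner at $v_j$ from the side $\tau_{e_j}$. Because $e_j$ and $e_{j+1}$ have the same sign for every $j$, Remark~\ref{rem sign} gives that $\tau_{e_j}$ and $\tau_{e_{j+1}}$, hence also the corners at $v_j$ and $v_{j+1}$, lie on the same side of $\zeta$ across the shared arc $\tau_{i_{j+1}}$; since $\zeta$ crosses $\tau_{i_{j+1}}$ transversally in a single point, the two halves of $\tau_{i_{j+1}}$ so determined lie on opposite sides of $\zeta$, which forces $v_j=v_{j+1}$. Running around the loop we get a single marked point $v=v_1=\dots=v_d$ incident to all of $\tau_{i_1},\dots,\tau_{i_d}$, so that $\Delta_1,\dots,\Delta_d$ is a cyclic chain of triangles closing up around $v$ (here we use that $\zeta$, having no contractible kinks, does not backtrack across an arc). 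A closed chain of triangles can wrap around $v$ only if $v$ is an interior point of $S$, since the triangles at a boundary marked point form a fan with two free ends rather than a cycle; but $v$ is a marked point, and in an unpunctured surface all marked points lie on the boundary --- a contradiction. Hence $\band$ is not unpunctured, which proves the contrapositive and completes the proof. The two places to be careful when writing this out are the bookkeeping of ``sides of $\zeta$'' across consecutive triangles and the assertion that a closed chain of triangles around a marked point forces that point to be interior.
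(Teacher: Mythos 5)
Your proposal is correct and follows essentially the same route as the paper: part (a) by realising the snake graph as the snake graph of an arc in a triangulated polygon, part (b) by citing Lemma \ref{lem annulus}, and part (c) by showing that a constant sign on the interior edges forces all the crossed arcs to share a common vertex around which the triangles close up, which must therefore be a puncture. Your write-up merely supplies the details (the induction in (a), and the Remark \ref{rem sign} bookkeeping of sides of $\zeta$ in (c)) that the paper's terser proof leaves implicit.
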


\begin{proof}
  Every unlabeled snake graph can be realised as the snake graph of an arc in a polygon. This implies (a).
  Statement (b) follows from Lemma \ref{lem annulus}.
  
To show (c), let $\band$ be a band graph with tiles $G_1,G_2,\ldots,G_d$ and suppose we have a geometric realisation of $\band$ by a loop $\zg$ in some triangulated surface $(S,M,T)$. Each tile of $\band$ corresponds to a crossing point of the loop $\zg$ with the triangulation $T$. Let $\tau_{i_1},\tau_{i_2},\ldots,\tau_{i_d}$ denote the sequence of arcs of $T$ given by this sequence of crossing points. 
 If the interior edges of $\band$ all have the same sign, then the arcs $\tau_{i_1},\tau_{i_2},\ldots,\tau_{i_d}$ have a common vertex $p$ and the arcs $\tau_{i_j}$ form a fan at $p$. Since $\zg$ is a loop, it then follows that $p$ is a puncture. Thus in this case, $\band $ is not unpunctured.  On the other hand,  if the interior edges of $\band$ do not all have the same sign then $\band$ is unpunctured by Lemma \ref{lem annulus}.
 \end{proof}

\subsection{The geometric unpunctured snake ring}

Let $\calf=\mathbb{Z}[x_{S,M,T,i}^{\pm 1},y_{S,M,T,i}]$ be the ring of Laurent polynomials in $x_{S,M,T,i}$ with coefficients in $\mathbb{Z}[{y_{S,M,T,i}}] $, where $(S,M)$ runs over all   surfaces with marked points and $T$ runs over all triangulations of $(S,M)$.

Let  $\mathcal{LR}$  denote the  $\mathbb{Z}[y_{S,M,T,i}]$-module generated by all isomorphism classes of unions of unpunctured labeled snake graphs and unpunctured labeled band graphs with labels in the ring $\calf$. 
Thus the face labels of the elements of $\callr$ are of the form ${y_{S,M,T,i}}$ and the edge labels are of the form ${x_{S,M,T,i}}$. 

Define a multiplication on $\callr$ by disjoint union of graphs 
\[\calg_1\calg_2=\calg_1\sqcup\calg_2.\]
This defines a ring structure on $\callr$.

\begin{prop}
 $\callr$ is an integral domain with unity element $1=\emptyset$.
\end{prop}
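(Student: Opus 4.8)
The statement asserts three things: that $\callr$ is commutative with unit $\emptyset$, that it has no zero divisors, and (implicitly in ``integral domain'') that it is a nonzero ring. The first and last are essentially immediate and I will dispatch them quickly. Commutativity and associativity of the product follow because disjoint union of graphs is commutative and associative up to isomorphism, and the $\mathbb{Z}[y_{S,M,T,i}]$-module structure is compatible with this since multiplying a product by a scalar just multiplies one of the factors; distributivity is the statement that $\calg_1\sqcup(\calg_2+\calg_3)=\calg_1\sqcup\calg_2+\calg_1\sqcup\calg_3$, which holds because disjoint union distributes over the formal sum defining the free module. The empty graph $\emptyset$ acts as the identity because $\calg\sqcup\emptyset=\calg$, and the ring is nonzero because, say, the single-edge snake graph is a nonzero element. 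So the real content is that $\callr$ has no zero divisors.

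For the zero-divisor part, the plan is to introduce a $\mathbb{Z}$-grading (or at least a filtration) on $\callr$ by the total number of tiles: for a monomial $\calg$ (a single connected snake or band graph, or a disjoint union thereof) let $\deg\calg$ be the total number of tiles, and extend to the module by declaring a general element to be a $\mathbb{Z}[y_{S,M,T,i}]$-linear combination of such monomials. Since disjoint union adds tile counts, the product respects this grading: $\deg(\calg_1\sqcup\calg_2)=\deg\calg_1+\deg\calg_2$. Now take nonzero $u,v\in\callr$ and look at their top-degree (or bottom-degree) homogeneous components $u_{\mathrm{top}},v_{\mathrm{top}}$; the top-degree part of $uv$ is $u_{\mathrm{top}}v_{\mathrm{top}}$ provided this is itself nonzero. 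Thus it suffices to prove the claim when $u,v$ are homogeneous, and then one is reduced to showing that the set of isomorphism classes of disjoint unions of labeled snake and band graphs forms a commutative monoid under $\sqcup$ that is \emph{cancellative} and has \emph{unique factorization into connected pieces}; granting that, the monoid algebra over the integral domain $\mathbb{Z}[y_{S,M,T,i}]$ (localized appropriately for the $x$'s, which does not affect the domain property) is again an integral domain by the standard argument comparing leading terms with respect to any term order on the monoid.

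The key structural fact, then, is: every isomorphism class of a finite disjoint union of labeled snake and band graphs decomposes uniquely (up to permutation) as a multiset of connected labeled snake and band graphs, and $\sqcup$ on these multisets is just multiset union, which is visibly cancellative with unique factorization. The connected components of a disjoint union are an isomorphism invariant, and a connected snake or band graph is indecomposable for $\sqcup$ (a disjoint union of two nonempty graphs is disconnected), so this is really just the observation that the free commutative monoid on the set of iso-classes of connected labeled snake/band graphs is what we are dealing with. I expect the main (very mild) obstacle to be bookkeeping: one must be a little careful that ``labeled snake graph'' and ``labeled band graph'' are the two flavors of connected building block, that the empty graph is the monoid identity and not itself a connected building block, and that the labels, living in the field-like ring $\calf$, do not interfere — this is why the coefficient ring is taken to be $\mathbb{Z}[y_{S,M,T,i}]$ (a polynomial ring, hence a domain) with the $x$-variables only appearing as labels decorating edges rather than as ring elements multiplying graphs. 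Once these identifications are in place, the integral domain property is the classical fact that a monoid algebra $R[\mathcal{M}]$ over an integral domain $R$, with $\mathcal{M}$ a cancellative torsion-free commutative monoid (here free, so certainly torsion-free), has no zero divisors, proved by choosing a total order on $\mathcal{M}$ refining the natural partial order and multiplying leading terms.
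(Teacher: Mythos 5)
Your proof is correct. The paper's own proof is a one-sentence assertion (``$\callr$ is clearly commutative and has no zero divisors''), so your argument — unique decomposition of a disjoint union into its connected components, hence $\callr$ is the monoid algebra over the domain $\mathbb{Z}[y_{S,M,T,i}]$ of the free commutative monoid on isomorphism classes of connected labeled snake and band graphs, and such a monoid algebra has no zero divisors — is precisely the justification the paper leaves implicit, and it is the natural one. The only blemish is the parenthetical about ``localizing appropriately for the $x$'s'': the $x$-variables never enter the coefficient ring at all (they only decorate edges), as you yourself observe a line later, so no localization is involved and that remark should simply be deleted.
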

\begin{proof} $\callr $ is clearly commutative and has no zero divisors. 
\end{proof}
 Let $I_{\callr}$ be the ideal of resolutions in $\callr$ as defined in section \ref{sect coeff}. Thus $I_{\callr}$ is generated by all resolutions of labeled crossing overlaps and all labeled grafting in $\callr$.
 Recall that the embeddings of the overlaps must be label preserving. The grafting must be label preserving in the following sense. The grafting edges must have the same label in both graphs, and if the tile containing the grafting edge in one of the graphs has label $y_{S,M,T,i}$ then the label  of the non-grafting edge in the tile containing the grafting edge in  the other graph is $x_{S,M,T,i}$ and thus the face labels of the tiles containing the grafting edges are different from each other, see Figure~\ref{graftinglabels}.

\begin{figure}
\scalebox{1}{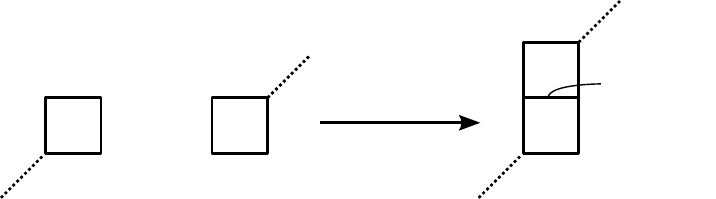}
\caption{Labels for grafting.}\label{graftinglabels}
\end{figure}

\begin{defn}
 The \emph{(unpunctured) snake ring} is the quotient ring 
 \[\calls=\callr/I_{\callr}.\] 
\end{defn}

\begin{remark}
More generally, one can define the \emph{geometric snake ring} $\calls_{geo}=\callr_{geo}/I_{\callr_{geo}}$ by removing the condition that the snake and band graphs are unpunctured.  Similarly one can define the \emph{full snake ring} $\calls_{full}=\callr_{full}/I_{\callr_{full}}$ over an arbitrary integral domain by removing the condition that the snake graphs are geometric. 
\end{remark}
For the rest of this paper, we will be mostly concerned with the unpunctured snake ring, and we will simply refer to it as the snake ring.

\subsection{The homomorphism $\zG$} 
Let $(S,M,T)$ be a triangulated (unpunctured) surface and $\cala$ be the corresponding cluster algebra with principal coefficients in the seed corresponding to the triangulation $T$. Let $\B$ be the bracelet basis defined in section \ref{sect 4}.
Let $\zG:\cala(S,M)\to \calls$ be the map defined on the bracelet basis $\B$ by mapping a basis element to (the class of) its  union of snake and band graphs, by $\zG(1)=\emptyset$, and extended linearly to the whole cluster algebra.

\begin{thm} \label{thm Gamma}The map 
 $\zG$ is an injective ring homomorphism
 \[\zG\colon\cala(S,M)\longrightarrow\calls.\]
\end{thm}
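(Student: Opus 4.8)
The plan is to verify that $\zG$ is well-defined, that it is a ring homomorphism, and that it is injective, in that order.

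\textbf{Well-definedness.} First I would check that the assignment on the basis $\B$ actually lands in $\calls$. A basis element of $\B$ is a product $\prod_{c\in C}x_c$ over a $\C$-compatible collection $C$ of arcs and bracelets. Each $x_c$ corresponds (via Section~\ref{sect tiles}) to a labeled snake graph if $c$ is an arc, or to a labeled band graph if $c$ is a closed loop (for a bracelet $\Brac_k\zeta$ this is the band graph of the bracelet); we send the product to the disjoint union of these graphs, which is an element of $\callr$ since by Theorem~\ref{lem unpunctured} all these graphs are unpunctured, and then take its class in $\calls=\callr/I_{\callr}$. Since $\B$ is a $\mathbb{Z}$-basis of $\cala(S,M)$ by \cite[Theorem 4.1]{MSW2}, extending $\mathbb{Z}$-linearly produces a well-defined $\mathbb{Z}$-module map $\zG\colon\cala(S,M)\to\calls$; there is nothing to check for linearity itself because we are defining $\zG$ on a basis.

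\textbf{Multiplicativity.} This is the substantive step. Given two basis elements $x_{C_1},x_{C_2}$, I must show $\zG(x_{C_1}x_{C_2})=\zG(x_{C_1})\zG(x_{C_2})$, i.e.\ that the class of the disjoint union $\calg_{C_1}\sqcup\calg_{C_2}$ in $\calls$ equals $\zG$ applied to the expansion of the product $x_{C_1}x_{C_2}$ in the basis $\B$. The product $x_{C_1}x_{C_2}$ is $x_{C_1\cup C_2}$ where $C_1\cup C_2$ is a multicurve, and one expands it into the basis by repeatedly smoothing crossings (and self-crossings of concatenations), applying the skein relations of Theorem~\ref{th:skein1}. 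On the snake-graph side, each such smoothing corresponds exactly to a resolution of a (self-)crossing or a grafting of the corresponding snake and band graphs — this is the content of Theorems~\ref{smoothing1}, \ref{smoothing2}, \ref{smoothingselfgrafting} together with Theorem~\ref{thm cross} (curves cross iff their graphs cross, including the subtle band-graph cases) — and the coefficients match by Theorem~\ref{laurent}, \ref{laurent2}, \ref{laurent2self} and Corollary~\ref{skein}. The key point is that every relation used in the smoothing process is one of the generators of the ideal of resolutions $I_{\callr}$, so in the quotient $\calls$ the disjoint union $\calg_{C_1\cup C_2}$ is already equal to the $\callr$-linear combination of the basis graphs that $\zG(x_{C_1}x_{C_2})$ names. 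One does have to track that the $y$-coefficients appearing in the skein relations (the monomials $Y_1,Y_2$) are precisely the monomials $y_{34},y_{56}$ attached to the resolutions in $I_{\callr}$; this is exactly what Corollary~\ref{skein} asserts, so it can be invoked. I also need $\zG(1)=\emptyset=1_{\calls}$, which holds by definition. I would present multiplicativity as: it suffices to check it on basis elements since $\zG$ is $\mathbb{Z}$-linear and multiplication is $\mathbb{Z}$-bilinear on both sides, then run the smoothing/resolution dictionary.

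\textbf{Injectivity.} For this I would exhibit a retraction or use a linear-independence argument. The cleanest route: the elements $\zG(x_C)$ for $C\in\C(S,M)$ are the classes of disjoint unions of unpunctured snake and band graphs that arise from $\C$-compatible collections, and distinct such $C$ give non-isomorphic graphs (distinct arcs give non-isomorphic snake graphs, distinct bracelets distinct band graphs, and disjoint unions remember their components up to isomorphism); one must check these classes remain distinct \emph{and} $\mathbb{Z}$-linearly independent \emph{in the quotient} $\calls$, not merely in $\callr$. To get independence modulo $I_{\callr}$, I would use the number-of-tiles filtration: the generators of $I_{\callr}$ have the form \eqref{eq 7.1} $\calg_{12}=y_{34}\calg_{34}+y_{56}\calg_{56}$ where $\calg_{34}$ has the same number of tiles as $\calg_{12}$ and is ``more resolved'' (fewer overlaps), while $\calg_{56}$ has strictly fewer tiles; iterating resolutions terminates, and the terminal non-crossing graphs are exactly the $\zG(x_C)$. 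So every element of $\callr$ is congruent mod $I_{\callr}$ to a unique $\mathbb{Z}$-linear combination of such terminal graphs, which gives a well-defined $\mathbb{Z}$-linear splitting $\calls\to\bigoplus_{C}\mathbb{Z}\cdot x_C=\cala(S,M)$ whose composite with $\zG$ is the identity. Hence $\zG$ is injective (and this splitting argument simultaneously re-confirms $\zG$ is well-defined and a ring map on the nose, though one should be careful to phrase it so as not to become circular).

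\textbf{Main obstacle.} I expect the hardest part to be the termination/confluence claim underlying injectivity: showing that iterating resolutions starting from an arbitrary disjoint union of unpunctured graphs always reaches a $\mathbb{Z}[y]$-combination of non-crossing collections, and that the result is independent of the order of resolutions, so that the splitting $\calls\to\cala(S,M)$ is genuinely well-defined. Termination follows from the tile-count filtration (each resolution either leaves tile count fixed while strictly decreasing a secondary overlap-complexity statistic, or strictly decreases the tile count), but order-independence really rests on the fact that on the cluster-algebra side $\B$ is a basis — so one can argue that any two resolution sequences compute the same element of $\cala(S,M)$ via Theorem~\ref{laurent} and its companions, hence the same coefficients. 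In other words, the well-definedness of the retraction is \emph{imported} from the basis property of $\B$ rather than proved combinatorially; the proof of the theorem is essentially the statement that the snake-graph calculus developed in Sections~\ref{section3}--\ref{sect 5} is a faithful shadow of the skein relations in $\cala(S,M)$.
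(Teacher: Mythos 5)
Your treatment of well-definedness and multiplicativity coincides with the paper's proof: $\zG$ is defined on the basis $\B$ and extended $\mathbb{Z}$-linearly, and multiplicativity holds because the expansion of a product $bb'$ in $\B$ is computed by successive smoothings, each of which corresponds to a generator of $I_{\callr}$ by the results of section~\ref{sect 5}, so that $\zG(b)\zG(b')-\zG(bb')\in I_{\callr}$.

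The divergence, and the gap, is in injectivity. The paper argues directly: if $\sum_b\zl_b\,\zG(b)\in I_{\callr}$, then since all labels involved come from the fixed $(S,M,T)$, this element lies in the sub-ideal generated by the resolution and grafting relations of that surface alone; these relations are identities of Laurent polynomials by Theorems~\ref{laurent}, \ref{laurent2} and \ref{laurent2self}, so applying the Laurent-expansion map gives $\sum_b\zl_b\, b=0$ in $\cala(S,M)\subseteq\calf$, whence all $\zl_b=0$. Your retraction has two problems. First, a splitting $\calls\to\cala(S,M)$ cannot be defined on all of $\calls$: the ring contains graphs labeled by every unpunctured surface, so terminal (non-crossing) collections coming from other surfaces have no image; at best you get a retraction defined on the image of the subring of graphs labeled by $(S,M,T)$, and isolating that subring modulo $I_{\callr}$ already requires the label-homogeneity reduction that the paper performs. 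Second, and more seriously, the uniqueness of your normal form --- equivalently the $\mathbb{Z}$-linear independence of the terminal graphs modulo $I_{\callr}$ --- is precisely the content to be proved, and your plan to ``import'' it from the basis property of $\B$ is circular unless you exhibit a homomorphism out of $\callr$ that kills $I_{\callr}$ and separates the terminal graphs. That homomorphism is the Laurent-polynomial evaluation $\bar\varphi$ of section~\ref{sect 5} (made explicit as $\varphi$ in Theorem~\ref{thm varphi}), and once you invoke it you need neither termination nor confluence of the rewriting system: the paper's short argument goes through. Either adopt the evaluation-map argument directly, or recognize that your normal-form machinery does no work beyond it and cannot stand on its own.
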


\begin{proof}
 $\zG$ is well-defined, since there are no relations among the basis elements, and $\zG$ is $\mathbb{Z}$-linear by definition. 
 If $b,b'$ are two elements of the bracelet basis $\B$  then let 
 \[bb'=\sum_{b''\in\B} \zl_{b,b'}^{b''} b''\] 
 be the expansion of their product in the basis. 
 By \cite{MSW2} this expansion is obtained by consecutive smoothing of all crossings between the curves associated to $b$ and $b'$ and replacing powers of loops by bracelets (inverse smoothing).  Each smoothing of a crossing in the surface corresponds to a resolution or a grafting relation in $I_{\callr}$
 and thus we have
 \[\zG(b)\zG(b')-\zG\left(\sum_{b''\in\B} \zl_{b,b'}^{b''} b''\right) \in I_{\callr},\] 
and thus $\zG(bb')=\zG(b)\zG(b')$.
Hence $\zG$ is a ring homomorphism.

In order to show injectivity, suppose $\zG(\sum_{b\in\B} \zl_b b)=0$, with $\zl_b\in\mathbb{Z}$. Thus $\sum_{b\in\B} \zl_b \zG(b)$ lies in the ideal $I_{\callr}$.
Now this ideal is generated by all resolutions of crossing overlaps and all grafting relations coming from all triangulated surfaces $(S',M',T')$. Since the labels of $\zG(b)$ come from the fixed surface $(S,M)$ with triangulation $T$, the expression $\zG(b)$ must lie in the ideal generated by the resolution and grafting relations that come from this particular triangulated surface $(S,M,T)$. From the results in section \ref{sect 5} it then follows that  $\sum_{b\in\B} \zl_b b=0$ in the cluster algebra $\cala(S,M)$.
\end{proof}

\begin{cor}
 The ring $\calls$ contains each cluster algebra of unpunctured surface type as a subring.
\end{cor}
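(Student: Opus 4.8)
The corollary is an immediate consequence of Theorem \ref{thm Gamma}, so the plan is very short. First I would recall that Theorem \ref{thm Gamma} produces, for each triangulated unpunctured surface $(S,M,T)$, an injective ring homomorphism $\zG\colon\cala(S,M)\longrightarrow\calls$. By definition, an injective ring homomorphism identifies its domain with its image, so $\zG(\cala(S,M))$ is a subring of $\calls$ isomorphic to $\cala(S,M)$. Hence each cluster algebra of unpunctured surface type embeds in $\calls$ as a subring, which is exactly the statement of the corollary.

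The one point that deserves a sentence of justification is that the notion ``cluster algebra of unpunctured surface type'' ranges over all bordered surfaces $(S,M)$ with $M\ne\emptyset$ lying on the boundary and no punctures, and all choices of triangulation only affect the presentation of $\cala(S,M)$, not the isomorphism class of the algebra (by Theorem \ref{clust-surface}). Since $\calls$ is built from snake and band graphs with labels drawn from the ring $\calf=\mathbb{Z}[x_{S,M,T,i}^{\pm1},y_{S,M,T,i}]$ where $(S,M)$ runs over \emph{all} such surfaces and $T$ over \emph{all} triangulations, the homomorphism $\zG$ of Theorem \ref{thm Gamma} is available for every one of these cluster algebras simultaneously inside the single ring $\calls$. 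I would therefore simply invoke Theorem \ref{thm Gamma} once for an arbitrary such $(S,M,T)$ and conclude.

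There is no real obstacle here; the content is entirely in Theorem \ref{thm Gamma}, whose proof in turn rests on the results of section \ref{sect 5} (that smoothing of crossing curves matches resolution/grafting of the associated snake and band graphs, Theorems \ref{smoothing1}, \ref{smoothing2}) together with the fact from \cite{MSW2} that the bracelet basis $\B$ has no linear relations. If I wanted to be maximally self-contained I would also remark that principal coefficients can be taken without loss of generality, since passing to a cluster algebra with arbitrary coefficients of geometric type is a further specialization, but for the stated corollary it suffices to work with the principal-coefficient model used throughout section \ref{sect 7}.

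\begin{proof}
Let $\cala$ be a cluster algebra of unpunctured surface type. By Theorem \ref{clust-surface}, $\cala$ is, up to isomorphism, the cluster algebra $\cala(S,M)$ associated to some triangulation $T$ of an unpunctured bordered surface $(S,M)$, and we may assume $\cala$ has principal coefficients in the seed $\zS_T$. By Theorem \ref{thm Gamma}, the map $\zG\colon\cala(S,M)\to\calls$ is an injective ring homomorphism. Therefore $\zG$ restricts to an isomorphism of $\cala(S,M)$ onto its image $\zG(\cala(S,M))$, which is a subring of $\calls$. Hence $\cala$ is isomorphic to a subring of $\calls$. Since $\cala$ was an arbitrary cluster algebra of unpunctured surface type, the ring $\calls$ contains each such cluster algebra as a subring.
\end{proof}
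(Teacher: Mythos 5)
Your proof is correct and matches the paper's (implicit) argument: the corollary is stated as an immediate consequence of Theorem \ref{thm Gamma}, and identifying $\cala(S,M)$ with its image under the injective ring homomorphism $\zG$ is exactly the intended reasoning. The additional remarks about the range of surfaces and triangulations are accurate but not needed.
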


\subsection{The homomorphism $\varphi$}
 Recall that if $P$ is a perfect matching of a snake graph $\calg$, then $x(P)$ is the weight of $P$, $y$  is the height of $P$ and $x(\calg)=\mathrm{cross}(\calg)$ is the crossing monomial, see section \ref{secdefloop}.
Let $\bar\varphi\colon \callr\to\calf$ be the map given on unions of snake and band graphs by the formula 
\[\bar \varphi(\calg)=\frac{1}{x(\calg)}\sum_{P\in\match\calg} x(P) y(P),\]
and extended to $\calls$ additively. 
Thus  for any two unions of snake and band graphs $\calg_1$ and $\calg_2$, we have $\bar\varphi(\calg_1+\calg_2)=\bar\varphi(\calg_1)+\bar\varphi(\calg_2).$
Moreover
\[
\begin{array}{rcl}
\bar\varphi(\calg_1\calg_2) &= &\bar\varphi(\calg_1\sqcup\calg_2) \\
 &=&\displaystyle \frac{1}{x(\calg_1\sqcup\calg_2)} \sum_{P\in\match(\calg_1\sqcup\calg_2)} x(P) y(P)\\
 &=&\displaystyle\frac{1}{x(\calg_1)}\frac{1}{x(\calg_2)} \sum_{P_1\in\match\calg_1\atop P_2\in\match\calg_2} x(P_1)x(P_2) y(P_1)y(P_2)\\
 &=&\displaystyle\frac{1}{x(\calg_1)}\sum_{P_1\in\match\calg_1}   x(P_1)y(P_1)
 \
 \frac{1}{x(\calg_2)}  \sum_{ P_2\in\match\calg_2}
 x(P_2) y(P_2)
 \\
&=&\bar\varphi(P_1)\bar\varphi(P_2).\end{array}
\]
We have shown the following lemma.

\begin{lem}
 $\bar\varphi\colon \callr\to \calf$ is a ring homomorphism.
\end{lem}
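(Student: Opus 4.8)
The statement to be proved is that $\bar\varphi\colon\callr\to\calf$ is a ring homomorphism. Almost all of the work has in fact already been carried out in the displayed computation immediately preceding the Lemma statement; the proof will simply observe that this computation establishes multiplicativity, note that additivity is automatic from the definition, and deal with the single routine point left implicit, namely that $\bar\varphi$ sends the unit of $\callr$ to the unit of $\calf$. Concretely, I would structure the proof in three short steps.

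\textbf{Step 1 (additivity).} By definition $\bar\varphi$ is extended additively from its values on unions of snake and band graphs to all of $\callr$ (respectively $\calls$), so $\bar\varphi(\calg_1+\calg_2)=\bar\varphi(\calg_1)+\bar\varphi(\calg_2)$ holds by construction, and $\bar\varphi(-\calg)=-\bar\varphi(\calg)$ follows from the convention $\call(\calg)=-\call(-\calg)$ for negative elements of $\mathcal{R}$. \textbf{Step 2 (multiplicativity).} This is exactly the displayed chain of equalities: for unions of snake and band graphs $\calg_1,\calg_2$ one uses that the product in $\callr$ is disjoint union, that the set of perfect matchings of a disjoint union is the Cartesian product $\match\calg_1\times\match\calg_2$, that the crossing monomial, weight, and height are all multiplicative over disjoint unions (each tile/edge of $\calg_1\sqcup\calg_2$ lies in exactly one of $\calg_1,\calg_2$), and then one factors the double sum as a product of two single sums. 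Multiplicativity on general elements then follows by bilinearity, combining with Step 1. \textbf{Step 3 (unit).} The empty graph $\emptyset$ is the multiplicative identity of $\callr$, it has a unique (empty) perfect matching, and $x(\emptyset)=x(P_\emptyset)=y(P_\emptyset)=1$, so $\bar\varphi(\emptyset)=1\in\calf$.

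\textbf{Main obstacle.} There is no real obstacle here: the one subtlety worth a sentence is that $\callr$ is defined over the coefficient ring $\mathbb{Z}[y_{S,M,T,i}]$ and its labels live in the large Laurent polynomial ring $\calf$, so one should check that $\bar\varphi$ is well-defined as a map of sets on isomorphism classes of labeled graphs (isomorphic labeled graphs have the same matchings, crossing monomial, weights and heights) and that it is $\mathbb{Z}[y_{S,M,T,i}]$-linear so that the additive extension makes sense. Once that is noted, the proof reduces entirely to the already-displayed computation, and I would write it in a few lines by simply quoting that computation for Step 2 and adding Steps 1 and 3.
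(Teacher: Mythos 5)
Your proposal is correct and follows essentially the same route as the paper: the paper's proof of this lemma is precisely the displayed chain of equalities immediately before the statement (multiplicativity over disjoint unions via the factorization of the double sum over $\match\calg_1\times\match\calg_2$), with additivity holding by the definition of $\bar\varphi$ as an additive extension. Your additional remarks on the unit $\bar\varphi(\emptyset)=1$ and well-definedness on isomorphism classes are routine points the paper leaves implicit.
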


 $\bar\varphi $ is not injective because it maps a pair of crossing snake graphs to the same element as the resolution of the crossing. More precisely, it follows from section \ref{sect 5} that the kernel of $\bar\varphi$ consists exactly in the ideal $I_{\callr}$. Thus $\bar\varphi $ induces a homomorphism $\varphi$ on the quotient $\calls$. We have the following result.

\begin{thm}\label{thm varphi} The map
 $\varphi$ is an injective ring homomorphism  \[\varphi\colon \calls \longrightarrow \calf .\] \qed
\end{thm}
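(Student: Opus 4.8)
The plan is to deduce Theorem \ref{thm varphi} from Theorem \ref{thm Gamma} together with the results of section \ref{sect 5}, rather than proving injectivity of $\varphi$ from scratch. We already know from the discussion preceding the statement that $\bar\varphi\colon\callr\to\calf$ is a ring homomorphism and that it factors through $\calls=\callr/I_{\callr}$, inducing the ring homomorphism $\varphi\colon\calls\to\calf$; so the only thing that remains is injectivity. The key observation is that on a \emph{single} triangulated unpunctured surface $(S,M,T)$ the composite $\varphi\circ\zG$ is precisely the map sending a bracelet basis element $b$ to the Laurent polynomial $x_b\in\cala(S,M)\subset\calf$. Indeed, $\zG(b)$ is the union of (labeled) snake and band graphs of the curves in the collection defining $b$, and by the snake graph formula of section \ref{secdefloop} (Definitions \ref{def:matching} and \ref{def closed loop}), applying $\bar\varphi$ to this union reproduces exactly the product $\prod_{c}x_c=x_b$. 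Hence $\varphi\circ\zG=\iota$, where $\iota\colon\cala(S,M)\hookrightarrow\calf$ is the inclusion of the cluster algebra into its Laurent polynomial ring, which is injective by construction.

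First I would make the factorization $\varphi\circ\zG=\iota$ precise, checking the bookkeeping of crossing monomials: $x(\calg_\zg)=\mathrm{cross}(T,\zg)$ for each curve, $x(\calg_1\sqcup\calg_2)=x(\calg_1)x(\calg_2)$, and the height monomials $y(P)$ match those in Definitions \ref{height} and \ref{def closed loop}; this is routine given the definitions already in place. Next, suppose $z\in\calls$ satisfies $\varphi(z)=0$. Lift $z$ to an element $\widetilde z\in\callr$; since $\callr$ is generated by isomorphism classes of unions of unpunctured labeled snake and band graphs, and by Theorem \ref{lem unpunctured} every such graph is geometric and unpunctured, each generator appearing in $\widetilge z$ is realized by a curve collection in \emph{some} triangulated unpunctured surface. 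The subtlety is that different generators may live on different surfaces, so $\widetilde z$ need not lie in the image of a single $\zG$.

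To handle this I would argue surface by surface. Group the terms of $\widetilde z$ according to the surface-with-triangulation $(S,M,T)$ supplying their labels (the labels $x_{S,M,T,i}$ and $y_{S,M,T,i}$ record this unambiguously). For a fixed $(S,M,T)$, the corresponding part of $\widetilde z$ is, up to elements of $I_{\callr}$, a $\mathbb{Z}$-linear combination of images $\zG(b)$ of bracelet basis elements of $\cala(S,M)$ — here one uses that resolving all crossings via the relations in $I_{\callr}$ expresses any union of snake/band graphs from $(S,M,T)$ in terms of the bracelet basis, exactly as in the injectivity argument of Theorem \ref{thm Gamma}. Applying $\varphi$ to this part gives an element of $\cala(S,M)$, and since the Laurent polynomial rings for distinct triangulated surfaces are ``independent'' inside $\calf$ (they involve disjoint sets of cluster variables), $\varphi(z)=0$ forces the image of each surface-part to vanish separately in the respective $\cala(S,M)$. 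By the injectivity half of Theorem \ref{thm Gamma} (via $\varphi\circ\zG=\iota$), the corresponding combination of bracelet basis elements is zero in $\cala(S,M)$, whence that part of $z$ is zero in $\calls$; summing over surfaces gives $z=0$.

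The main obstacle I anticipate is the ``independence'' step: justifying that a relation in $\calf$ among Laurent polynomials coming from several different triangulated surfaces can be split into relations internal to each surface. This needs a clean statement that $\calf=\mathbb{Z}[x_{S,M,T,i}^{\pm1},y_{S,M,T,i}]$ is (a localization of) a polynomial ring in the disjoint union of all these variable sets, so that a polynomial identity involving variables from surface $A$ only cannot be a consequence of relations using only variables from surface $B$ unless each side already lies in the sub-Laurent-polynomial ring generated by the common (here, empty) variables. Once that algebraic separation is in hand, the rest is assembling Theorem \ref{lem unpunctured}, Theorem \ref{thm Gamma}, and the factorization $\varphi\circ\zG=\iota$, which is essentially formal.
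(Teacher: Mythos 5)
Your overall strategy is sound and is, in substance, the argument the paper leaves implicit: the paper disposes of this theorem by asserting, in the sentence immediately preceding the statement, that $\ker\bar\varphi=I_{\callr}$ ``follows from section~\ref{sect 5}'', so that injectivity of the induced map on $\calls=\callr/I_{\callr}$ is automatic. The content of that assertion is exactly what you spell out --- reduce modulo $I_{\callr}$ to unions of basis graphs and then invoke linear independence of the corresponding Laurent polynomials --- so your proof is the same approach with the details made explicit, packaged through $\zG$ and Theorem~\ref{thm Gamma}.

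One step, however, is stated in a way that would fail as written. You propose to ``group the terms of $\widetilde z$ according to the surface-with-triangulation supplying their labels'' and then split the relation $\varphi(z)=0$ into relations internal to each surface. But a single generator of $\callr$ is a disjoint union of labeled graphs, and nothing forces all components of one union to be geometric over the same $(S,M,T)$: a term such as $\calg_A\sqcup\calg_B$, with $\calg_A$ realized in a triangulated annulus and $\calg_B$ in a once-holed torus, belongs to no single surface group, and its image $\varphi(\calg_A)\varphi(\calg_B)$ lies in no single $\cala(S,M)$. The decomposition of $\calf$ you actually need is therefore not a direct sum indexed by surfaces but a tensor product over $\mathbb{Z}$ of the Laurent polynomial rings $\mathbb{Z}[x_{S,M,T,i}^{\pm1},y_{S,M,T,i}]$ for the individual triples $(S,M,T)$, and the independence statement to prove is that the products $\prod_j\varphi(\zG(b_j))$, where the $b_j$ are basis elements of the cluster algebras of pairwise distinct triangulated surfaces, are linearly independent; this does follow from the linear independence of each $\varphi(\zG(\B))$ inside its own tensor factor, so the argument closes once restated this way. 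Two smaller remarks: the appeal to Theorem~\ref{lem unpunctured} is unnecessary, since the generators of $\callr$ are unpunctured \emph{labeled} graphs by definition and geometric realizability is built in; and for the reduction step it is cleaner to land in the bangle basis $\B^\circ$ rather than the bracelet basis $\B$, since resolving crossings naturally produces disjoint unions of isotopic loops, whereas passing to bracelets requires the additional inverse-smoothing relations.
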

Since $\calf$ is an integral domain, we obtain the following corollary.
\begin{cor} The snake ring
 $\calls$ is an integral domain.
\end{cor}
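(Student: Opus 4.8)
The statement to prove is Corollary, that the snake ring $\calls$ is an integral domain, given the immediately preceding Theorem \ref{thm varphi} asserting that $\varphi\colon\calls\to\calf$ is an injective ring homomorphism, and given that $\calf=\mathbb{Z}[x_{S,M,T,i}^{\pm1},y_{S,M,T,i}]$ is an integral domain.

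The plan is to argue that an injective ring homomorphism into an integral domain forces the source to be an integral domain as well. First I would note that $\calls$ is a nonzero commutative ring with unity $1=\emptyset$ (this was recorded in the Proposition on $\callr$ and survives to the quotient since $\varphi(1)=1\ne 0$ in $\calf$, so in particular $1\ne 0$ in $\calls$). Next, suppose $a,b\in\calls$ satisfy $ab=0$. Applying the ring homomorphism $\varphi$ gives $\varphi(a)\varphi(b)=\varphi(ab)=\varphi(0)=0$ in $\calf$. Since $\calf$ is an integral domain, this forces $\varphi(a)=0$ or $\varphi(b)=0$. By injectivity of $\varphi$ (Theorem \ref{thm varphi}), we conclude $a=0$ or $b=0$. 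Hence $\calls$ has no zero divisors, and being a nonzero commutative unital ring, it is an integral domain.

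This is a genuinely routine deduction, so there is no real obstacle: the content is entirely packaged into Theorem \ref{thm varphi} (injectivity of $\varphi$) and the standard fact that $\calf$, being a localisation of a polynomial ring over $\mathbb{Z}$ in the variables $x_{S,M,T,i}$ and $y_{S,M,T,i}$, is an integral domain. The only thing worth being slightly careful about is confirming $\calls\ne 0$, which as noted follows because $\varphi$ sends the unit to the unit of the nonzero ring $\calf$. I would present the proof in two or three sentences and close with \qed, matching the terse style already used for the surrounding corollaries.

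\begin{proof}
Since $\calf=\mathbb{Z}[x_{S,M,T,i}^{\pm 1},y_{S,M,T,i}]$ is a localisation of a polynomial ring over $\mathbb{Z}$, it is an integral domain. The ring $\calls$ is nonzero, since $\varphi(\emptyset)=1\ne 0$ in $\calf$ and hence $1\ne 0$ in $\calls$. Now suppose $a,b\in\calls$ with $ab=0$. Applying the ring homomorphism $\varphi$ of Theorem \ref{thm varphi} gives $\varphi(a)\varphi(b)=\varphi(ab)=0$ in $\calf$, so $\varphi(a)=0$ or $\varphi(b)=0$ because $\calf$ has no zero divisors. Since $\varphi$ is injective, we conclude $a=0$ or $b=0$. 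Therefore $\calls$ is a nonzero commutative ring with unity and without zero divisors, that is, an integral domain.
\end{proof}
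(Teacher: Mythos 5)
Your proof is correct and is exactly the argument the paper intends: the corollary is stated immediately after Theorem \ref{thm varphi} with the remark that $\calf$ is an integral domain, so the injective homomorphism $\varphi$ transfers the absence of zero divisors back to $\calls$. You have simply written out the standard deduction in full, which matches the paper's (implicit) reasoning.
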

Combining Theorems \ref{thm Gamma} and \ref{thm varphi} we get an injective ring homomorphism
\[\xymatrix{\cala(S,M)\ar[r]^-\zG&\calls\ar[r]^\varphi&\calf.}\]
\begin{thm}\label{thm 7.12}
%
 If $b\in\B$ is an element of the bracelet basis of $\cala(S,M)$ then $ \varphi(\zG(b)) $ is the expansion of $x$ in the cluster corresponding to the triangulation $T$ that is used in the definition of $\zG$.
\end{thm}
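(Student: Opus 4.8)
\textbf{Proof proposal for Theorem \ref{thm 7.12}.}

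The plan is to unwind the definitions of the two homomorphisms $\zG$ and $\varphi$ and to observe that their composition, evaluated on a bracelet basis element, recovers exactly the snake/band graph formula of \cite{MSW2}. First I would recall that by definition $\zG(b)$ is the class in $\calls$ of the union of labeled snake and band graphs associated to the curves making up $b$; concretely, if $b=\prod_{c\in C}x_c$ for a $\C$-compatible collection $C$, then $\zG(b)=\bigsqcup_{c\in C}\calg_c$, where $\calg_c$ is the labeled snake graph of an arc $c$ or the labeled band graph of a loop/bracelet $c$, all constructed from the fixed triangulation $T$ as in section~\ref{sect graph}. Next I would apply $\varphi$. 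Since $\varphi$ is induced by $\bar\varphi$ and $\bar\varphi$ is multiplicative with respect to disjoint union (this is the computation preceding Lemma~\ref{thm varphi}), we get
\[
\varphi(\zG(b))=\prod_{c\in C}\bar\varphi(\calg_c)
=\prod_{c\in C}\frac{1}{x(\calg_c)}\sum_{P\in\match\calg_c}x(P)\,y(P).
\]

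The core of the argument is then to identify each factor $\bar\varphi(\calg_c)$ with the Laurent polynomial $x_c$ of section~\ref{secdefloop}. For an arc $c=\zg$ this is precisely Definition~\ref{def:matching}(2): $x_\zg=\frac{1}{\mathrm{cross}(T,\zg)}\sum_P x(P)y(P)$, and one only has to note that the face and edge labels of $\calg_\zg$ are exactly the $y_{\tau_i}$ and $x_{\tau_i}$ of the triangulation $T$, so that $x(\calg_\zg)=\mathrm{cross}(T,\zg)$ as weighted monomials, and $\match\calg_\zg$ is the set of all perfect matchings. For a loop or bracelet $c=\zeta$ (resp. $\Brac_k\zeta$) the corresponding statement is Definition~\ref{def closed loop}(3), where now $\match$ of a band graph means the set of \emph{good} perfect matchings, which is exactly the convention built into our definition of $\bar\varphi$ on band graphs. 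Hence $\bar\varphi(\calg_c)=x_c$ for every $c\in C$, and therefore $\varphi(\zG(b))=\prod_{c\in C}x_c=b$, which by Theorem~\ref{thm MSW} (and the loop analogue) is the Laurent expansion of the cluster algebra element $b$ with respect to the cluster $\mathbf{x}_T$.

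The main obstacle is bookkeeping rather than substance: one must check carefully that the labeling conventions of section~\ref{sect 7} (edge labels in $x_{S,M,T,i}$, face labels in $y_{S,M,T,i}$) match the conventions of section~\ref{sect graph} used by \cite{MSW2}, so that weights, height monomials, and crossing monomials agree on the nose, and that the notion of $\match$ used in defining $\bar\varphi$ for band graphs really is the set of good matchings as in Definition~\ref{def closed loop}(3). Once this identification is in place, the theorem follows immediately. A minor additional point is the degenerate cases — boundary segments ($x_\tau=1$) and single-edge snake graphs ($x(\calg)=1$, one matching) — which must be handled so that the product formula is literally correct; these are routine and already covered by the conventions set up in sections~\ref{secdefloop} and~\ref{sect 6.4}.
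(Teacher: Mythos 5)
Your proposal is correct and follows essentially the same route as the paper: the paper's proof is just the compressed version, citing the expansion formula of [MS]/[MSW] for cluster variables and the definition of $x_c$ in [MSW2] for loops and bracelets, which is exactly the identification $\bar\varphi(\calg_c)=x_c$ that you spell out together with the multiplicativity of $\bar\varphi$ over disjoint unions. No gap.
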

\begin{proof}
 If $b$ is a  cluster variable this has been shown in \cite{MS} and for the other elements of the basis, this is the definition of $\B$ in \cite{MSW2}.
\end{proof}

\subsection{Specializations}
The snake ring is very large. We are interested in smaller rings that we can obtain by restricting the labels considerably. On the level of the Laurent polynomials, our restrictions will correspond to specializations of the variables $x_{S,M,T,i}$ and $y_{S,M,T,i}$. We consider the specializations
sending all variables $x_{S,M,T,i}$ to a single variable $x$ or to the constant 1, as well as the specializations sending all variables $y_{S,M,T,i}$ to a single variable $y$ or to the constant 1. 
 Many other specializations are interesting and can be defined in a similar way; for example, identifying all the edge labels in the orbit of a cluster automorphims acting on the cluster algebra as defined in \cite{ASS}.

\subsubsection{The ring $F\cals$}
Let $\zs_F$ be the specialization sending each variable $x_{S,M,T,i}$ to the constant 1. Thus 
\[\zs_F\colon\calf=\mathbb{Z}[x_{S,M,T,i}^{\pm1},y_{S,M,T,i}]\to \mathbb{Z}[y_{S,M,T,i}]\] is a surjective ring homomorphism.

On the level of snake rings we define
 $F\calr$ to be the free  $\mathbb{Z}[y_{S,M,T,i}]$-module generated by all isomorphism classes of unions of unpunctured $F$-labeled snake graphs and unpunctured $F$-labeled band graphs, where $F$-labeled means that the labels on all edges are set equal to 1 and the face labels remain unchanged.
Let $I_{F\calr}$ be the ideal or resolutions inside the ring $F\calr$, and let $F\cals=F\calr/I_{F\calr}$.

Clearly, if $z $ is an element of the ideal $I_{\callr}$ of the ring $\callr$ then changing all edge labels in $z$ to 1 will produce an element $\zs_F(z)$ in the ideal $I_{F\calr}$ of the ring $F\calr$. Thus we obtain a commutative diagram of ring homomorphisms  with surjective vertical maps.
\[\xymatrix@C80pt{\calls\,\ar@{^(->}[r]^\varphi\ar@{>>}[d]_{\zs_F}&\calf\ar@{>>}[d]^{\zs_F} \\
F\cals\ar[r]^{\varphi_F}&\mathbb{Z}[y_{S,M,T,i}].
}
\]

 Note that in contrast to $\varphi$, the morphism $\varphi_F$ is not injective. Indeed, $\varphi_F$ maps a 2-tile snake graph with face labels $y_1,y_2$ and the corresponding 2-tile (unpunctured) band graph to the same polynomial $y_1y_2+ y_i+1$, with $i=1,2$ depending on the sign function.

Restricting to the images of the elements of a cluster algebra
we have the following result. 

\begin{thm}
 Let $z$ be an element of a cluster algebra $\cala(S,M,T)$. Then $\zs_F(\varphi(\zG(z)))$ is the $F$-polynomial of $z$.
\end{thm}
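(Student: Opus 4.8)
The plan is to trace the element $z$ through the two homomorphisms $\zG$ and $\varphi$ and then apply the specialization $\zs_F$, using the explicit description of each map in terms of snake and band graphs. First I would recall that by Theorem~\ref{thm 7.12}, for $z\in\cala(S,M)$ written in the bracelet basis $\B$ as $z=\sum_{b\in\B}\zl_b\,b$, the image $\varphi(\zG(z))$ is precisely the Laurent expansion of $z$ in the cluster corresponding to the triangulation $T$; that is, $\varphi(\zG(z))=x_z\in\calf=\mathbb{Z}[x_{S,M,T,i}^{\pm1},y_{S,M,T,i}]$. Since $\zs_F$ is a ring homomorphism and the diagram
\[
\xymatrix@C80pt{\calls\,\ar@{^(->}[r]^\varphi\ar@{>>}[d]_{\zs_F}&\calf\ar@{>>}[d]^{\zs_F} \\
F\cals\ar[r]^{\varphi_F}&\mathbb{Z}[y_{S,M,T,i}]}
\]
commutes, we get $\zs_F(\varphi(\zG(z)))=\zs_F(x_z)$, so it suffices to identify $\zs_F(x_z)$ with the $F$-polynomial $F_z^T$ of $z$.

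Next I would reduce the claim to the case of a single basis element $b\in\B$ by linearity: both $z\mapsto\zs_F(\varphi(\zG(z)))$ and $z\mapsto F_z^T$ are $\mathbb{Z}$-linear in $z$ (the $F$-polynomial of a cluster algebra element, obtained by specializing all $x_{\tau_i}$ to $1$ in the Laurent expansion, is additive in $z$), so it is enough to check $\zs_F(x_b)=F_b^T$ for each $b\in\B$. For a basis element $b=\prod_{c\in C}x_c$ coming from a $\C$-compatible collection $C$, the polynomial $x_b$ is a product of the Laurent polynomials $x_c$ over the arcs and bracelets $c\in C$, and by Definitions~\ref{def:matching} and \ref{def closed loop} each $x_c$ equals $\tfrac{1}{\mathrm{cross}(T,c)}\sum_P x(P)\,y(P)$. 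Applying $\zs_F$, which sends every $x_{\tau_i}$ to $1$, kills all the $\mathrm{cross}(T,c)$ denominators and all the $x(P)$ weights, leaving $\zs_F(x_c)=\sum_P y(P)$, which is exactly the definition of $F_c^T$; taking products over $c\in C$ gives $\zs_F(x_b)=\prod_{c\in C}F_c^T=F_b^T$. For bracelets one uses the good-matching version in Definition~\ref{def closed loop}(3) and the convention $F_\zeta^T$ there; the contractible-loop and contractible-kink cases are handled by the $(-2)$ and $(-1)$ conventions, which are preserved under $\zs_F$ since they involve no $x_{\tau_i}$.

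Finally I would note that the only subtlety is making sure the two routes to $\mathbb{Z}[y_{S,M,T,i}]$ genuinely agree, i.e. that $\varphi_F\circ\zG_F$ (where $\zG_F$ denotes the induced map $\cala\to F\cals$) computes $F_b^T$ the same way as $\zs_F\circ\varphi\circ\zG$; this is immediate from commutativity of the square above together with injectivity of $\varphi$ from Theorem~\ref{thm varphi}, which guarantees $\zG(z)$ is the correct element of $\calls$ realizing $x_z$. The main obstacle, such as it is, is bookkeeping rather than conceptual: one must be careful that the snake-graph formula $\varphi$ uses (weight $x(P)$, height $y(P)$, crossing monomial $x(\calg)$) matches term-by-term with the surface formula $x_c$ of Definitions~\ref{def:matching} and \ref{def closed loop} after specialization, including the boundary-segment convention $x_\tau=1$ and the band-graph good-matching condition; all of this is already established in section~\ref{sect 5}, in particular in Theorem~\ref{thm 7.12}, so the argument is essentially an assembly of those results together with the functoriality of $\zs_F$.
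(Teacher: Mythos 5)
Your proposal is correct and takes essentially the same route as the paper: the paper's proof is a one-line citation of Theorem \ref{thm 7.12} together with \cite{MSW2}, i.e.\ it identifies $\varphi(\zG(z))$ with the Laurent expansion $x_z$ in the cluster corresponding to $T$ and then observes that specializing all $x_{S,M,T,i}$ to $1$ is exactly the definition of the $F$-polynomial. Your write-up merely makes the linearity reduction to bracelet basis elements and the term-by-term matching of the matching formulas explicit.
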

\begin{proof}
 This follows from Theorem \ref{thm 7.12} and \cite{MSW2}. 
\end{proof}

\subsubsection{The ring $\calxs$}
Let $\zs_{\calx}$ be the specialization sending each variable $x_{S,M,T,i}$ to the single variable $x$. Thus 
\[\zs_{\calx}\colon\calf=\mathbb{Z}[x_{S,M,T,i}^{\pm1},y_{S,M,T,i}]\to \mathbb{Z}[x^{\pm 1},y_{S,M,T,i}]\] is a surjective ring homomorphism.

On the level of snake rings we define
 $\calx\calr$ to be the free   $\mathbb{Z}[y_{S,M,T,i}]$-module  generated by all isomorphism classes of unions of unpunctured $\calx$-labeled snake graphs and unpunctured $\calx$-labeled band graphs, where $\calx$-labeled means that the labels on all edges are set equal to $x$ and the face labels remain unchanged.
Let $I_{\calxr}$ be the ideal of resolutions inside the ring $\calxr$, and let $\calxs=\calxr/I_{\calxr}$.

 Setting edge labels equal to $x$ does not behave as nicely as specialization to 1. For example the snake graph $\calg$ consisting of a single tile with edge labels equal to 1 and tile label equal to $y_{S,M,T,1}$ for $(S,M)$ a disk with 4 marked points on the boundary, is an element of $\calls$ since it occurs as the snake graph of the unique non-initial cluster variable in the cluster algebra of type $\mathbb{A}_1$. However, this snake graph is not an element of $\calxs$ because the edge labels are not equal to $x$. Thus the specialization map $\zs_{\calx}$ is not a well-defined map from $\calls$ to $\calxs$, not even from $\callr$ to $\calxr$.

However, restricting $\zs_\calx$ to the subring $\calls_\calx$ of $\calls$ given by all elements that have edge labels equal to  some variable $x_{S,M,T,i}$, we obtain a ring homomorphism

\[\zs_{\calx}\colon\calls_{\calx}\to\calxs.\]

 Thus we obtain a commutative diagram of ring homomorphisms
\[\xymatrix@C80pt{\calls_{\calx}\ar@{^(->}[r]^\varphi\ar@{>>}[d]_{\zs_{\calx}}&\calf\ar@{>>}[d]^{\zs_{\calx} }\\
\calxs\ar[r]^{\varphi_{\calx}}&\mathbb{Z}[x^{\pm 1},y_{S,M,T,i}].
}
\]

\subsection{The ring $\calx\caly\cals$}
Let $\zs_{\calx\caly}$ be the specialization sending each variable $x_{S,M,T,i}$ to the single variable $x$ and each $y_{S,M,T,i}$ to the single variable $y$. Thus 
\[\zs_{\calx\caly}\colon\calf=\mathbb{Z}[x_{S,M,T,i}^{\pm1},y_{S,M,T,i}]\to \mathbb{Z}[x^{\pm 1},y]\] is a surjective ring homomorphism.

On the level of snake rings we define
 $\calx\caly\calr$ to be the free $\mathbb{Z}[y]$-module generated by all isomorphism classes of unions of unpunctured $\calx\caly$-labeled snake graphs and unpunctured $\calx\caly$-labeled band graphs, where $\calx\caly$-labeled means that the labels on all edges are set equal to $x$ and the labels on each face are set equal to $y$. 
Let $I_{\calx\caly\calr}$ be the ideal of resolutions inside the ring $\calx\caly\calr$, and let $\calx\caly\cals=\calx\caly\calr/I_{\calx\caly\calr}$.

 Recall from \cite{MSW2} that the poset of perfect matchings is a graded distributive lattice whose rank function is given by the total degree of the $y$-monomial of a perfect matching, that is, if $y(P)=\prod_i y_i^{d_i}$ then the rank of $P$ is $\sum_i d_i$. We denote by $a_i(\calg)$ the number of perfect matchings of rank $i$ in the lattice of perfect matchings of $\calg$. Then $\sum_i a_i(\calg)y^i$ is the rank generating function of the lattice, see \cite[Chapter 3]{Stanley}.
\begin{thm}\label{thm 7.15}
$ \varphi\colon\calx\caly\cals\to \mathbb{Z}[x,y]$ is a ring homomorphism.   Moreover 
\begin{itemize}
\item[\textup(a)] 
if $\calg$ is a snake graph with $d$ tiles then \[\varphi(\calg) = x\sum_{i=0}^d a_i(\calg) y^i \in x\mathbb{Z}[y],\]
\item [\textup(b)] 
if $\band$ is a band graph with $d$ tiles  then \[\varphi(\band) = \sum_{i=0}^d a_i(\calg) y^i \in \mathbb{Z}[y],\]
\item [\textup(c)] 
if $M$ is a union of snake and band graphs then  the multiplicity $m_x(\varphi(M))$ of $x$ in $\varphi(M)$ is the number of snake graphs in $M$.
\end{itemize}
\end{thm}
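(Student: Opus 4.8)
\textbf{Proof proposal for Theorem \ref{thm 7.15}.}

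The plan is to reduce everything to the structure already established for $\calls$ and $\varphi$, then carry out the specialisation bookkeeping. First I would observe that $\calx\caly\cals$ is obtained from (a labelled subring of) $\calls$ by the surjective specialisation $\zs_{\calx\caly}$, exactly as in the cases $F\cals$ and $\calxs$ treated just above; since every generator of the ideal of resolutions maps to a generator of $I_{\calx\caly\calr}$ under $\zs_{\calx\caly}$, the map $\varphi$ on $\calls$ descends to a well-defined ring homomorphism $\varphi\colon\calx\caly\cals\to\Z[x,y]$. Concretely, for a union $M$ of snake and band graphs one has $\varphi(M)=\sum_{P\in\Match M}x(P)y(P)/x(M)$ where now every edge weight is $x$ and every face weight is $y$; multiplicativity on disjoint unions was already verified for $\bar\varphi$ in the lemma preceding Theorem \ref{thm varphi}, and this property survives specialisation, so it remains only to compute $\varphi$ on a single snake or band graph.

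For part (a), let $\calg$ have $d$ tiles. Every perfect matching $P$ of $\calg$ uses exactly the same number of edges, namely the number of vertices of $\calg$ divided by two; since $\calg$ has $d$ tiles it has $2d+2$ vertices, so $|P|=d+1$ for every $P$, hence $x(P)=x^{d+1}$ and $x(\calg)=\mathrm{cross}(\calg)=x^d$. Thus $\varphi(\calg)=x^{d+1-d}\sum_P y(P)=x\sum_P y(P)$. Now $y(P)=\prod_i y_i^{d_i}$ specialises to $y^{\sum_i d_i}$, and $\sum_i d_i$ is precisely the rank of $P$ in the lattice of perfect matchings by the definition of the rank function recalled just before the statement; grouping matchings by rank gives $\varphi(\calg)=x\sum_{i=0}^d a_i(\calg)y^i$. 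The bound $i\le d$ follows because the maximal matching $P_+$ differs from the minimal matching $P_-$ by a union of cycles enclosing at most all $d$ tiles (Lemma \ref{thm y}), so every $y$-monomial has total degree at most $d$. Part (b) is the same computation with one modification: for a band graph $\band^\circ$ with $d$ tiles, a good perfect matching $P$ is one for which $P\sqcup\{e\}$ is a perfect matching of the cut snake graph $(\band^\circ)_e$ for some interior edge $e$; the cut snake graph has $d$ tiles and its matchings have $d+1$ edges, so $|P|=d$, while $\mathrm{cross}(\band^\circ)=x^d$ as well, giving $\varphi(\band^\circ)=\sum_{i=0}^d a_i(\band^\circ)y^i$ with no factor of $x$ — and I would record here that $a_i(\band^\circ)$ denotes the number of good matchings of rank $i$, the lattice structure again being the one from \cite{MSW2}.

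For part (c), write $M=\calg_1\sqcup\cdots\sqcup\calg_p\sqcup\band_1^\circ\sqcup\cdots\sqcup\band_q^\circ$ as a disjoint union of $p$ snake graphs and $q$ band graphs. By multiplicativity, $\varphi(M)=\prod_{j=1}^p\varphi(\calg_j)\cdot\prod_{k=1}^q\varphi(\band_k^\circ)$. By (a) each $\varphi(\calg_j)$ lies in $x\Z[y]$ with leading-in-$x$ behaviour $x\cdot f_j(y)$ where $f_j(0)=a_0(\calg_j)=1$ (the minimal matching is the unique rank-$0$ matching), so $\varphi(\calg_j)=x\cdot f_j(y)$ with $f_j$ a nonzero polynomial; by (b) each $\varphi(\band_k^\circ)\in\Z[y]$ has no factor of $x$. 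Hence $\varphi(M)=x^p\cdot\prod_j f_j(y)\cdot\prod_k\varphi(\band_k^\circ)$, and since $\Z[y]$ is an integral domain the product $\prod_j f_j(y)\cdot\prod_k\varphi(\band_k^\circ)$ is a nonzero element of $\Z[y]$, so the exact power of $x$ dividing $\varphi(M)$ is $x^p$; that is, $m_x(\varphi(M))=p$, the number of snake graphs in $M$. The one point requiring a small argument — and the place I expect the only real friction — is confirming $\varphi(\band_k^\circ)\ne 0$ and that none of its terms carries an $x$; the latter is immediate from (b), and the former holds because $\band_k^\circ$ has at least one good matching (e.g.\ cut at any interior edge $b$; the band graph construction guarantees the minimal and maximal matchings $P_\pm$ of $\calg=(\band_k^\circ)_b$ contain $b$, yielding good matchings), so $a_0(\band_k^\circ)\ge 1$. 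This completes the proof.

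\qed
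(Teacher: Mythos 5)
Your proposal is correct and follows essentially the same route as the paper: count the edges in a perfect matching of a snake graph ($d+1$) versus a band graph ($d$), divide by the crossing monomial $x^d$, and identify the remaining $y$-sum with the rank generating function (equivalently, the specialised $F$-polynomial), with (c) following from multiplicativity. The only additions are details the paper leaves implicit — the bound $i\le d$, and the nonvanishing of $\varphi$ on band graphs needed for (c) — and both are handled correctly.
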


\begin{proof} To prove that $\varphi $ is a ring homomorphism,
we only need to show that the image of $\varphi$ is in $\mathbb{Z}[x,y]$. Let $\calg$ be a snake graph with $d$ tiles and $\calx\caly$-labels. If $d=0$, then $\calg $ is a single edge and we have $x(\calg)=1$, $x(P)=x$, $y(P)=1$ for the unique matching $P$ of $\calg$. 
 This shows that $\varphi(\calg)=x\in\mathbb{Z}[x,y]$.
 
If $d>0$ then $x(\calg)=x^d$, and, since every perfect matching of $\calg$ has exactly $d+1$ edges, we have $x(P)=x^{d+1}$, for all perfect matchings $P$ of $\calg$. This shows that \[\varphi(\calg)=x\cdot \zs_\caly(F-\textup{polynomial of }\calg)\in x\mathbb{Z}[y],\]
and (a) follows.

Now let $\band$ be a band graph with $d$ tiles and $\calx\caly$-labels. By Lemma \ref{lem unpunctured}, we have $d\ge2$. Then $x(\band) = x^d$ and, since each perfect matching of $\band$ has exactly $d$ edges, $x(P)=x^d$, for all perfect matchings $P$ of $\band$.  Therefore, 
\[\varphi(\band)=\zs_\caly (F-\textup{polynomial of }\band) \in \mathbb{Z}[y],\]
and (b) follows. (c) is a consequence of (a) and (b).
\end{proof}

\begin{remark}
 Let  $\calx\caly\cals_{geo}$ denote the  geometric (punctured) version   of $\calx\caly\cals$. Thus   $\calx\caly\cals_{geo}$ is defined in the same way as   $\calx\caly\cals$ but replacing the condition ``unpunctured" with the condition ``geometric''. Then Theorem \ref{thm 7.15} still holds. Indeed, in the proof we only need to add the case of a single tile band graph $\band$, for which we also have $\varphi(\band)=y+1\in\mathbb{Z}[y].$
\end{remark}

\begin{thm} 
\begin{itemize}
\item [\textup(a)] The unpunctured  $\calx\caly\cals$  is generated by the single edge, the single tile, and all unpunctured band graphs.
\item [\textup(b)] The geometric (punctured) version  $\calx\caly\cals_{geo}$  of $\calx\caly\cals$  is generated by the single edge  and the single tile band graph.
\end{itemize}
\end{thm}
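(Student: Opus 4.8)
The plan is to produce explicit generators using the resolution identities, primarily the grafting formulas of Section~\ref{sect 3.3graft} together with the bracelet formula. For part (a), the key observation is that $\calx\caly\cals$ is spanned by classes of unions of snake graphs and band graphs, so by multiplicativity of the disjoint union it suffices to show that every individual snake graph and every individual band graph can be expressed, modulo the ideal of resolutions, as a polynomial in the single edge, the single tile, and the unpunctured band graphs. For a snake graph $\calg$ with $d$ tiles, the plan is to induct on $d$: if $d\le 1$ we are done by hypothesis, and if $d\ge 2$ we pick an interior edge and graft $\calg[1,1]$ (a single tile) onto the band graph $(\calg[2,d])^\circ$ obtained by gluing $\calg[2,d]$ along a suitable boundary edge --- or, more carefully, use the grafting of a snake graph and a band graph (Section~\ref{sect 3.3graft}, Definition~\ref{grafting}) to write $\calg$ itself, or a closely related larger graph, as $\calg_{34}+\calg_{56}$ where $\calg_{34}$ contains one fewer ``free'' tile and $\calg_{56}$ is strictly smaller. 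The cleanest route is probably to realise a $d$-tile snake graph via grafting a single tile $\calg_1=G_1$ onto the band graph $\band_2$ built from the $(d-1)$-tile graph $\calg[2,d]$, reading the relevant resolution identity $\calg_{34}+\calg_{56}$ to isolate $\calg$ up to strictly smaller terms, then invoking the inductive hypothesis on those smaller terms.

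For part (b), in the geometric (punctured) setting the single tile is itself a band graph (glue the two boundary edges of a single tile of the same sign), so the list of generators collapses: every snake graph is obtained as above from single tiles and band graphs, and every band graph must in turn be generated from the single-tile band graph. The plan here is to use Lemma~\ref{lem unpunctured}(c): a band graph $\band$ all of whose interior edges have the same sign is, by Lemma~\ref{lem annulus}(b), a bracelet around the puncture of a self-folded triangle, hence equals $\brac_k$ of the single-tile band graph, and by the bracelet formula $\brac_k(\band_4)=\band_4\brac_{k-1}(\band_4)-\brac_{k-2}(\band_4)$ (the $k=2$ case $\brac_2(\band_4)=\band_4\band_4-2$ appears in equation~\eqref{eqbij9} and the surrounding discussion), which is a polynomial identity in $\band_4$. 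For a general (geometric) band graph whose interior edges do not all have the same sign, I would apply the resolution of a self-crossing band graph (Case~1, $s'=t+1$, $t'<d$, equation~\eqref{eqbij8}) or the grafting of a snake graph and a band graph, repeatedly, to reduce the number of tiles: each such resolution writes $\band$ as $\calg_3\band_4 + \calg_{56}$ (a product plus a strictly smaller graph), and after finitely many steps everything is expressed through the single tile and smaller band graphs, eventually bottoming out at the single-tile band graph.

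The main obstacle I anticipate is bookkeeping the ``single edge'' versus ``single tile'' cases cleanly in part (a): the grafting resolutions have several boundary cases (when $\calg_1$ is a single edge, when $\calg_{56}$ degenerates to a single edge, to $\emptyset$, or to $0$), and I need to make sure the induction on the number of tiles genuinely decreases in every branch and that the ``base'' pieces that appear are exactly single edges, single tiles, or band graphs already on the generating list. A secondary subtlety is that in part (b) the reduction of a general band graph must be shown to terminate --- the resolution identities for self-crossing band graphs produce terms $\calg_{56}$ that are band graphs with fewer tiles, so a straightforward induction on tile number works, but one must check that the ``same-sign'' band graphs that arise as $\calg_{56}$ in intermediate steps are handled by the bracelet argument rather than looping back. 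Once those case distinctions are organised, each individual step is a direct application of an identity established earlier in the paper, so no heavy computation is needed; the content is entirely in choosing the right grafting/resolution at each stage and verifying strict decrease.
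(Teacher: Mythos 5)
Your strategy for part (a) matches the paper's in outline: induct on the number of tiles of a snake graph and use a grafting resolution to trade a $d$-tile snake graph for (band graph)$\cdot$(small piece) plus a strictly smaller snake graph. But your specific decomposition --- graft a single tile onto a band graph built from $\calg[2,d]$ --- does not quite work, and the reason is not mere bookkeeping. For the term $\calg_{34}$ of the grafting resolution to recover all of $\calg$ you would need $\calg[2,d]$ to have no sign change among its interior edges, but then the band graph $(\calg[2,d])^\circ$ has all interior edges of the same sign, hence is \emph{punctured} by Theorem \ref{lem unpunctured}(c), hence is not an element of $\calx\caly\calr$ at all and the relation is not in $I_{\calx\caly\calr}$. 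The paper avoids this by forming the band graph $\calg^b$ from \emph{all} $d$ tiles of $\calg$, choosing $b\in\calgSW$ with sign opposite to $e_1$ (so the glueing edge and $e_1$ have different signs and $\calg^b$ is unpunctured), and grafting a \emph{single edge}: this yields $\calg=\calg^b\sqcup(\textup{single edge})+\calg'$ with $\calg'$ strictly smaller, and the induction closes.

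The genuine gap is in part (b), in the reduction of a general band graph to the single-tile band graph. You propose to ``apply the resolution of a self-crossing band graph \ldots repeatedly, to reduce the number of tiles,'' but a band graph need not admit a crossing self-overlap, so the existence of a resolution to apply is precisely what has to be proved; your stated worry (termination) is not the issue. The paper supplies the missing trichotomy: if $\band$ contains a $4$-tile straight segment, or two distinct $3$-tile straight segments, the middle tiles give a crossing self-overlap by a single tile whose resolution splits $\band$ into strictly smaller band graphs; if all interior edges have the same sign, then $\band=\brac_d(\band_1)$ and the recursion $\brac_d(\band_1)=\band_1\brac_{d-1}(\band_1)-y\,\brac_{d-2}(\band_1)$ applies; and the one remaining configuration (exactly one $3$-tile straight segment in an otherwise complete zigzag) is shown to be impossible for a band graph by a parity-of-sign-changes argument. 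Without this case analysis the induction never starts. A secondary error: you write that ``the single tile is itself a band graph,'' but the single-tile snake graph and the single-tile band graph are distinct elements of the ring ($\varphi$ sends them to $x(1+y)$ and $1+y$ respectively); the single-tile snake graph drops out of the generating set in (b) not by identification but because, in the punctured setting, a resolution expresses it through the single edge and the single-tile band graph.
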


\begin{proof} Let $\calg$ be a snake graph with $\calx\caly$-labels and $d$ tiles, and suppose that $d\ge2$. Let $f$ be a sign function on $\calg$, and let $b$ be the unique edge in $\calgSW$ such that the sign of $b$ is opposite to the sign of the first interior edge of $\calg$. Let $b'$ be the unique edge in $\calgNE$ that has the same sign as $b$.
 We have the following resolution
 \[\calg=\calg^b \sqcup(\textup{single edge }b) +\calg',\]
 where $\calg^b$ is the band graph obtained from $\calg $ by identifying the edges $b$ and $b'$, and $\calg'$ is the snake graph 
 \[\calg'=\calg\setminus\pred( e) \setminus \suc (e') ,\]
 where $e$ is the first interior edge in $\calg$ that has the same sign as $b$ and $e'$ is the last interior edge in $\calg$ that has the same sign as $b$. 
 In particular, $\calg'$ has fewer tiles that $\calg$. Note that the band graph $\calg^b$ has $d$ tiles, with $d\ge 2$, and not all of its interior edges have the same sign, because of our choice of $b$. Therefore it follows from Lemma \ref{lem unpunctured} that $\calg^b$ is unpunctured and hence our resolution lies in the ideal $I_{\calx\caly\calr}$.
 Thus by induction on the number of tiles, this argument proves that every snake graph is generated by the two snake graphs consisting of a single edge and a single tile, together with all unpunctured band graphs.  This shows (a).
 
 (b) The same argument as above shows that every geometric snake graph is generated by the  snake graph consisting of a single edge, together with all geometric band graphs.
 
 Now let $\band$ be a band graph. We want to show that $\band $ is generated by our two generators. Let us assume the contrary, and suppose without loss of generality that $\band $ has a minimal number of tiles among all band graphs that are not generated.
 Suppose first that $\band$ has a subsnake graph consisting of a 4-tile straight segment. Then the second and the third tile can be viewed as two embedded single tile snake graphs, and since the 4-tile segment  is straight, this is a crossing self-overlap. Its resolution decomposes $\band$ into strictly smaller band graphs, each of which has at least two tiles. By induction, this shows that $\band$ is generated by the 2-tile band graph.

 Next suppose that $\band$ has two different subsnake graphs each consisting of a 3-tile straight segment. Then the middle tiles can be viewed as two embedded single tile snake graphs, and since the 3-tile segments  are straight, this is a crossing self-overlap. Again $\band $ decomposes into smaller band graphs and is therefore generated by the 2-tile band graph.
 
  Next suppose that all interior edges of $\band$ have the same sign. Then $\band$ is the $d$-bracelet  of the single tile band graph $\band_1$ and thus $\band=\brac_d(\band_1) =\brac_{d-1}(\band_1)\band_1 -y\, \brac_{d-2} (\band_1)$. By induction, this shows that $\band$ is generated by $\band_1$.
 
 Finally, suppose that $\band $ has no 4-tile straight segment and also no two 3-tile straight segments and that not all interior edges of $\band $ have the same sign. Thus $\band $ is not a complete zigzag graph and it follows that  $\band$ has exactly one 3-tile straight segment. This 3-tile segment has precisely two interior edges, and these two edges have opposite sign, since the segment is straight. Say the first of these two edges has sign $-$ and the second has sign $+$. Since the rest of $\band $ has is a complete zigzag, it follows that there are no other sign changes from one interior edge to the next. Thus every interior edge before the 3-tile straight segment has sign $-$ and every interior edge after the 3-tile segment has sign $+$. But this is impossible, since $\band$ is a band graph.  
\end{proof}

\begin{thm}\label{corgeo}
The rings $\calx\caly\cals_{geo}$ and $ \mathbb{Z}[x,y]$ are isomorphic. \end{thm}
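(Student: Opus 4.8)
\textbf{Proof of Theorem \ref{corgeo}.}
The plan is to exhibit mutually inverse ring homomorphisms between $\calx\caly\cals_{geo}$ and $\mathbb{Z}[x,y]$. In one direction we already have the ring homomorphism $\varphi\colon\calx\caly\cals_{geo}\to\mathbb{Z}[x,y]$ of Theorem \ref{thm 7.15} (valid in the geometric case by the Remark following it). So it suffices to produce a ring homomorphism $\psi\colon\mathbb{Z}[x,y]\to\calx\caly\cals_{geo}$ with $\varphi\circ\psi=\mathrm{id}$ and $\psi\circ\varphi=\mathrm{id}$, and the most economical way is to set $\psi(x)$ to be the class of the single-tile band graph $\band_1$ and $\psi(y)$ to be the class $y\cdot\emptyset$ (the generator of the coefficient ring $\mathbb{Z}[y]$ acting on the unit $\emptyset=1$). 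First I would check $\psi$ is well-defined as a ring homomorphism: since $\mathbb{Z}[x,y]$ is a polynomial ring, any choice of two commuting elements of the target determines a unique ring homomorphism, and $\band_1$ and $y$ manifestly commute in the commutative ring $\calx\caly\cals_{geo}$; one also notes $\psi$ is $\mathbb{Z}[y]$-linear so it really is determined by $\psi(x)$.

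Next I would verify $\varphi\circ\psi=\mathrm{id}_{\mathbb{Z}[x,y]}$. By the Remark after Theorem \ref{thm 7.15}, $\varphi(\band_1)=y+1$; hmm, so $\varphi(\psi(x))=y+1\ne x$. This forces a correction to the choice of generator: instead of $\band_1$ I should take $\psi(x)$ to be the class of the \emph{single edge with one extra generator subtracted}, or more cleanly, use the single-\emph{tile snake graph}. By Theorem \ref{thm 7.15}(a) a snake graph $\calg$ with $d=0$ tiles (a single edge) has $\varphi(\calg)=x$, and $\emptyset=1$ has $\varphi(\emptyset)=1$. So the right choice is $\psi(x)=\calg_{\mathrm{edge}}$, the single-edge snake graph, for which $\varphi(\calg_{\mathrm{edge}})=x$, and $\psi(y)=y\cdot\emptyset$, for which $\varphi(y\cdot\emptyset)=y$. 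Then $\varphi\circ\psi$ agrees with the identity on the generators $x,y$ of $\mathbb{Z}[x,y]$, hence $\varphi\circ\psi=\mathrm{id}$, so $\varphi$ is surjective.

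It remains to prove $\psi$ is surjective, equivalently that $\psi\circ\varphi=\mathrm{id}$, equivalently (since $\varphi$ is injective, being the map of Theorem \ref{thm 7.15} whose injectivity is the content we must establish, or alternatively by a direct generation argument) that $\calx\caly\cals_{geo}$ is generated as a ring by $\psi(x)=\calg_{\mathrm{edge}}$ and $\psi(y)$. This is exactly the statement of Theorem (b) in the preceding block: $\calx\caly\cals_{geo}$ is generated by the single edge and the single-tile band graph $\band_1$. But in $\calx\caly\cals_{geo}$ we have $\band_1=y+1=\varphi(\psi(x))$... no — rather, the resolution identities let one express $\band_1$ in terms of the single-edge snake graph. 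Concretely, the grafting of the single-edge snake graph onto itself to form the one-tile band graph yields a relation in $I_{\calx\caly\calr}$ expressing $\band_1$ as a $\mathbb{Z}[y]$-combination of products of single edges; spelled out, $\band_1 = \calg_{\mathrm{edge}}^2 - y - 1$-type identity coming from Definition \ref{grafting} applied in the trivial case, or more simply: since by the generation theorem every element is a $\mathbb{Z}[y]$-polynomial in $\calg_{\mathrm{edge}}$ and $\band_1$, and $\band_1$ lies in the subring generated by $\calg_{\mathrm{edge}}$ and $y$ by the one-tile grafting relation, the ring $\calx\caly\cals_{geo}$ is generated by $\calg_{\mathrm{edge}}$ and $y$, i.e. by $\psi(x)$ and $\psi(y)$. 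Hence $\psi$ is surjective. Combined with $\varphi\circ\psi=\mathrm{id}$, surjectivity of $\psi$ gives that $\psi$ is injective as well (a surjection that has a right inverse which is itself surjective is a bijection), so $\psi$ is an isomorphism with inverse $\varphi$. The main obstacle — and the step I would treat with care — is precisely pinning down the normalization so that $\varphi(\psi(x))=x$ on the nose rather than $y+1$, and then justifying that the one-tile band graph is expressible via a resolution/grafting identity in terms of the single edge, so that the generation theorem delivers generation by $\psi(x)$ and $\psi(y)$; once that is in hand, the rest is the formal two-sided-inverse argument. $\qed$
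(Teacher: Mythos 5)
Your overall strategy --- exhibit a right inverse $\psi$ with $\psi(x)=\calg_{\mathrm{edge}}$ (the single-edge snake graph) and $\psi(y)=y\cdot\emptyset$, verify $\varphi\circ\psi=\mathrm{id}$ on generators, then upgrade to a two-sided inverse by showing $\psi$ is surjective --- is a legitimate repackaging of the argument, and its first half is correct: $\varphi(\calg_{\mathrm{edge}})=x$ by Theorem \ref{thm 7.15}(a) and $\varphi$ is $\mathbb{Z}[y]$-linear, so $\varphi\circ\psi=\mathrm{id}$. The paper instead takes $\varphi$ itself as the isomorphism, getting surjectivity from $\varphi(\calg_{\mathrm{edge}})=x$ and $\varphi(\band_1-1)=y$ (where $\band_1$ is the single-tile band graph) and injectivity from the fact that $\calx\caly\cals_{geo}$ is generated by $\calg_{\mathrm{edge}}$ and $\band_1$; your route via an explicit inverse is forced to make precise exactly the point that the paper's injectivity claim leaves implicit, namely how $\band_1$ sits inside the subring generated by $\calg_{\mathrm{edge}}$ and $y$.

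That is where the gap is. For $\psi$ to be surjective you need, by the generation theorem, that $\band_1$ lies in the $\mathbb{Z}[y]$-subalgebra generated by $\calg_{\mathrm{edge}}$, and neither of your two justifications works. The identity you ``spell out,'' $\band_1=\calg_{\mathrm{edge}}^2-y-1$, is provably false: applying the already well-defined homomorphism $\varphi$ and using $\varphi(\band_1)=y+1$ (the Remark after Theorem \ref{thm 7.15}) would force $y+1=x^2-y-1$ in $\mathbb{Z}[x,y]$. And Definition \ref{grafting} cannot be ``applied in the trivial case'' to two single edges: grafting is defined for a snake graph (or a single edge) against a band graph, so it presupposes $\band_1$ rather than producing it. The relation you actually need is $\band_1=(y+1)\cdot\emptyset$ in $\calx\caly\cals_{geo}$, i.e.\ that $\band_1-(y+1)\emptyset$ lies in the ideal of resolutions; the nearest available generator of that ideal, the grafting of a single edge onto $\band_1$, only gives $\calg_{\mathrm{edge}}\cdot\band_1=(1+y)\,\calg_{\mathrm{edge}}$, from which one must still cancel $\calg_{\mathrm{edge}}$ (so one needs it to be a non-zero-divisor, or a direct argument with the ideal). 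Until some version of this is supplied, the surjectivity of $\psi$ --- and hence the isomorphism --- is not established.
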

\begin{proof} The isomorphism is given by $\varphi$. To see that $\varphi$ is surjective, observe that
 the image of the single edge is $x$ and if $\band$ is the single tile band graph then \[\varphi(\band -1)=(y+1)-1=y.\]
 $\varphi $ is injective, because it is injective on the generators.
\end{proof}

 \begin{example}\label{ex718}
 Let $\calg$ be the complete zigzag snake graph with $d$ tiles. Then \[\varphi(\calg)=x(1+y+y^2+\cdots +y^d).\]
\end{example}
\begin{example}\label{ex719}
 Let $\calg$ be the straight snake graph with $d$ tiles and such that the first interior edge has sign $+$. Then \[\varphi(\calg)=x\,f_d,\] where $f_d\in \mathbb{Z}[y]$  is given by the recursion $f_0=1$, $f_1=y+1$ and \[f_d=\left\{\begin{array}{ll} y f_{d-1} + f_{d-2} & \textup{if $d$ is even;}\\ f_{d-1}+y^2 f_{d-2} & \textup{if $d$ is odd.}\end{array}\right.\]
This recursion is \textup{A123245} in \cite{OEIS}. Thus for $d=0,1,\ldots 8$, the polynomials $f_d$ are
\[ 1,\  y+1, \ y^2+y+1,\ y^3+2y^2+y+1,\ y^4+2y^3+2y^2+2y+1, \  y^5+3y^4+3y^3+3y^2+2y+1,\]
\[ y^6+3 y^5+4y^4+5y^3+4y^2+3y+1, \quad y^7+4y^6+6 y^5+7y^4+7y^3+5y^2+3y+1,\] 
\[ y^8+4y^6+ 7y^6+10 y^5+11y^4+10y^3+7y^2+ 4y+1.
\]

\end{example}
\begin{remark}
 By the fundamental theorem of finite distributive lattices, every distributive lattice is isomorphic to the lattice $J(\mathcal{P})$ of order ideals of a poset $\mathcal{P}$, see \cite[Theorem 3.4.1]{Stanley}. In Example~\ref{ex718}, this poset $\mathcal{P}$ is $\{1\le 2\le \cdots\le d\} $ and in Example \ref{ex719}, the poset $\mathcal{P}$ is the fence (or zigzag) poset
 given by one of the following two Hasse diagrams or their duals.
 \[\xymatrix@R0pt@C5pt{&\bullet\ar@{-}[rdd]&&\bullet\ar@{-}[rdd]&&&&\bullet\ar@{-}[rdd] &&
\quad &&\bullet\ar@{-}[rdd]&&\bullet\ar@{-}[rdd]&&&&\bullet
 \\ &&&&&\cdots  &&&&\quad\quad\textup{or}\quad\quad&&&&&&\cdots \\
  \bullet\ar@{-}[ruu]&&\bullet\ar@{-}[ruu]&&\bullet&&\bullet\ar@{-}[ruu]&&\bullet
  && \bullet\ar@{-}[ruu]&&\bullet\ar@{-}[ruu]&&\bullet&&\bullet\ar@{-}[ruu]}\]
\end{remark}

\subsection{The ring $\caly\cals$}
Let $\zs_{\caly}$ be the specialization sending each variable $x_{S,M,T,i}$ to the constant 1 and each $y_{S,M,T,i}$ to the single variable $y$. Thus 
\[\zs_{\caly}\colon\calf=\mathbb{Z}[x_{S,M,T,i}^{\pm1},y_{S,M,T,i}]\to \mathbb{Z}[y]\] is a surjective ring homomorphism.

On the level of snake rings we define
 $\caly\calr$ to be the free   $\mathbb{Z}[y]$-module  generated by all isomorphism classes of unions of unpunctured $\caly$-labeled snake graphs and unpunctured $\caly$-labeled band graphs, where $\caly$-labeled means that the labels on all edges are set equal to 1 and the labels on each face are set equal to $y$. 
Let $I_{\caly\calr}$ be the ideal of resolutions inside the ring $\caly\calr$, and let $\caly\cals=\caly\calr/I_{\caly\calr}$.

\begin{remark}
  There is a bijection between the ideal $I_{\caly\calr}$ of resolutions in $\caly\calr$ and the ideal $I_\calr$ of resolutions in the unlabeled ring $\calr$ given by forgetting the face labels.
\end{remark}

\begin{thm}
 $\varphi_\caly\colon\caly\cals\to \mathbb{Z}[y]$ is a surjective ring homomorphism. Moreover, if $\calg$ is a snake or band graph with $d$ tiles then $\varphi_\caly(\calg)=\sum_{i=0}^d a_i(\calg)y^i$ is the rank generating function of the poset $\match \calg$.

\end{thm}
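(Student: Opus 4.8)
The plan is to verify that $\varphi_\caly$ is a well-defined ring homomorphism and then read off the formula for $\varphi_\caly(\calg)$ directly from the definition of $\bar\varphi$ specialized at the labels. First I would recall that $\bar\varphi\colon\callr\to\calf$ is a ring homomorphism whose kernel is exactly the ideal of resolutions $I_{\callr}$; composing with the surjective specialization $\zs_\caly\colon\calf\to\mathbb{Z}[y]$ gives a ring homomorphism $\zs_\caly\circ\bar\varphi\colon\callr\to\mathbb{Z}[y]$. Since $\zs_\caly$ sends each edge label $x_{S,M,T,i}$ to $1$ and each face label $y_{S,M,T,i}$ to $y$, this composition factors through $\caly\calr$, and by the remark preceding the theorem it kills $I_{\caly\calr}$ (a resolution identity in $\caly\calr$ is the image under forgetting/specializing labels of a resolution identity in $\callr$, which lies in $\ker\bar\varphi$). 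Hence it descends to a ring homomorphism $\varphi_\caly\colon\caly\cals\to\mathbb{Z}[y]$. The factor ``ring homomorphism'' part is then immediate from the fact that $\bar\varphi$ is multiplicative under disjoint union, which was computed in the Lemma preceding Theorem~\ref{thm varphi}.

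Next I would compute $\varphi_\caly(\calg)$ for a snake or band graph $\calg$ with $d$ tiles. By definition, $\bar\varphi(\calg)=\frac{1}{x(\calg)}\sum_{P\in\match\calg} x(P)y(P)$. Specializing all edge labels to $1$ makes $x(\calg)=1$ and $x(P)=1$ for every perfect matching $P$ (or good perfect matching, in the band graph case). Specializing all face labels to $y$ turns $y(P)=\prod_j y_{\tau_{i_j}}$ (taken over the tiles enclosed by the cycles of $P_-\ominus P$, as in Definition~\ref{height}) into $y^{r(P)}$, where $r(P)$ is the number of such tiles. This number $r(P)$ is precisely the rank of $P$ in the distributive lattice $\match\calg$, since by \cite{MSW2} the lattice is graded with rank function given by the total degree of the $y$-monomial, i.e.\ by the number of tiles enclosed. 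Therefore $\varphi_\caly(\calg)=\sum_{P\in\match\calg} y^{r(P)}=\sum_{i=0}^d a_i(\calg)\,y^i$, which is by definition the rank generating function of $\match\calg$. The bound $0\le r(P)\le d$ is clear since there are only $d$ tiles.

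Finally, for surjectivity: I would exhibit explicit elements mapping onto generators of $\mathbb{Z}[y]$. The single edge maps to $1$ (its unique matching encloses no tiles), and the single-tile band graph maps to $y+1$ (its two good perfect matchings have ranks $0$ and $1$), so $\band_1-1$ maps to $y$. Hence $1$ and $y$ are both in the image, giving surjectivity of $\varphi_\caly$ onto $\mathbb{Z}[y]$.

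The main obstacle I anticipate is the well-definedness step: one must be careful that \emph{every} generator of the ideal $I_{\caly\calr}$ really lies in the kernel of $\zs_\caly\circ\bar\varphi$. This requires knowing that each resolution or grafting identity among $\caly$-labeled snake and band graphs is the $\caly$-specialization of a genuine identity in the labeled ring $\callr$ (equivalently, that it holds after applying $\bar\varphi$), which is exactly the content of the weight-preserving bijections established in Section~\ref{sect 5}, combined with the remark that $I_{\caly\calr}$ corresponds bijectively (under forgetting face labels) to the ideal $I_\calr$ of unlabeled resolutions. Once this bookkeeping is in place the rest is a routine unwinding of the definitions.
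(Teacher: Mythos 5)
Your overall strategy---descend $\zs_\caly\circ\bar\varphi$ to the quotient, read the formula off the specialized weights using the graded-lattice structure of $\match\calg$ from \cite{MSW2}, and exhibit an explicit element mapping to $y$---is essentially the paper's argument, which simply defers the formula to the proof of Theorem~\ref{thm 7.15} and computes one element for surjectivity.

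There is, however, one concrete misstep in the surjectivity paragraph: you use the single-tile band graph $\band_1$. The ring $\caly\calr$ is generated only by \emph{unpunctured} snake and band graphs, and by Theorem~\ref{lem unpunctured}(c) a band graph is unpunctured only if not all of its interior edges have the same sign; the single-tile band graph has exactly one interior edge (the glueing edge), so it fails this criterion and is not an element of $\caly\cals$. (This is precisely why the proof of Theorem~\ref{thm 7.15} records that unpunctured band graphs have $d\ge 2$, and why the single-tile band graph only reappears in the remark about the punctured ring $\calx\caly\cals_{geo}$.) The repair is immediate: the single-tile \emph{snake} graph also has two perfect matchings of ranks $0$ and $1$, hence maps to $y+1$, and subtracting $1$ gives $y$; this is the element the paper uses. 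A smaller caveat: the Remark preceding the theorem identifies $I_{\caly\calr}$ with the unlabeled ideal $I_\calr$, not with the image of $I_{\callr}$, and as Figure~\ref{figlabellem} illustrates, a $\caly$-labeled (equivalently unlabeled) crossing overlap need not lift to a label-preserving overlap of a chosen geometric labeling. So the well-definedness of $\varphi_\caly$ ultimately rests on the purely combinatorial, rank-compatible bijections of Theorem~\ref{thmbijection} rather than on specializing identities from $\callr$; you acknowledge this in your closing paragraph and the paper is no more explicit, so I would not count it as a gap, but the surjectivity slip should be corrected.
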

\begin{proof}
 The surjectivity of $\varphi_{\caly}$ follows from $\varphi_\caly(\calg-1)=y+1-1=y$, where $\calg $ is the single tile snake graph.
The proof of the other statements is similar to the proof of Theorem \ref{thm 7.15}.
\end{proof}
We have a commutative diagram of ring homomorphisms
\[\xymatrix@C80pt{\calls\,\ar@{^(->}[r]^\varphi\ar@{>>}[d]_{\zs_\caly}&\calf\ar@{>>}[d]^{\zs_\caly} \\
\caly\cals\ar@{>>}[r]^{\varphi_{\caly}}&\mathbb{Z}[y].
}
\]
Moreover, $\zs_\caly$ factors through $\zs_F$ 
\[\xymatrix@C40pt{\calls\ar@/^20pt/[rr]^{\zs_\caly}\ar[r]_{\zs_F}&F\cals\ar[r]_{\zs'}&\caly\cals}\]
where the map $\zs'$ is given by relabelling all faces with $y$.

\subsection{The ring $\cals$}
Finally we consider the specialization where all labels disappear.
Let $\zs$ be the specialization sending each variable $x_{S,M,T,i}$ and each $y_{S,M,T,i}$ to the constant 1.
 Thus 
\[\zs\colon\calf=\mathbb{Z}[x_{S,M,T,i}^{\pm1},y_{S,M,T,i}]\to \mathbb{Z}\] is a surjective ring homomorphism.

On the level of snake rings we define
 $\calr$ to be the free abelian
group generated by all isomorphism classes of unions of unpunctured unlabeled snake graphs and unpunctured unlabeled band graphs. 
Let $I_{\calr}$ be the ideal or resolutions inside the ring $\calr$, and let $\cals=\calr/I_{\calr}$.

\begin{thm} Let $\calg\in\calr$ be a snake or band graph.
 Then $\varphi  (\calg)$ is the number of perfect matchings of $\calg$.
\end{thm}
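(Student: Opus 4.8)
The statement follows from the chain of ring homomorphisms already constructed, specialized all the way down. The plan is to observe that the specialization $\zs\colon\calf\to\mathbb{Z}$ factors the map $\varphi$ through $\varphi_\caly$ and then through the further specialization $y\mapsto 1$. Concretely, I would first note that the unlabeled snake ring $\cals=\calr/I_\calr$ is obtained from $\caly\cals$ by sending $y\mapsto 1$; this is legitimate because, as remarked just before the statement, forgetting face labels gives a bijection between the ideal $I_{\caly\calr}$ of resolutions in $\caly\calr$ and the ideal $I_\calr$ of resolutions in $\calr$, so the induced map $\zs''\colon\caly\cals\to\cals$ is a well-defined ring homomorphism, and dually the quotient map descends. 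Then the composite $\varphi_\caly$ followed by $y\mapsto 1$ gives a ring homomorphism $\varphi\colon\cals\to\mathbb{Z}$, and its value on a snake or band graph $\calg$ with $d$ tiles is $\sum_{i=0}^d a_i(\calg)\cdot 1^i=\sum_{i=0}^d a_i(\calg)$, which is exactly the total number of perfect matchings of $\calg$ (respectively, good perfect matchings, if $\calg$ is a band graph), since $a_i(\calg)$ counts the matchings of rank $i$ in the lattice $\match\calg$ and these ranks partition all of $\match\calg$.

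Alternatively, and perhaps more transparently, I would argue directly from the definition of $\varphi$ on $\calr$. For a snake or band graph $\calg$ with unlabeled edges and tiles, the formula $\varphi(\calg)=\frac{1}{x(\calg)}\sum_{P\in\match\calg}x(P)y(P)$ specializes, under $\zs$, to $\frac{1}{1}\sum_{P\in\match\calg}1\cdot 1=|\match\calg|$. The only thing that needs checking is that this map is well-defined on the quotient $\cals=\calr/I_\calr$, i.e.\ that $\varphi$ kills the ideal of resolutions; but each resolution identity $\calg_{12}=y_{34}\calg_{34}+y_{56}\calg_{56}$ becomes, after setting all $y$-variables to $1$, simply $\calg_{12}=\calg_{34}+\calg_{56}$ as elements of $\mathbb{Z}$, and this is precisely the statement that the bijection of Theorem \ref{thmbijection} between the perfect matchings of the crossing graph and those of the resolution is a bijection of finite sets, hence counts match. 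So $\varphi$ descends to $\cals$ and the formula $\varphi(\calg)=|\match\calg|$ holds on the quotient.

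I do not expect any serious obstacle here: the statement is essentially a bookkeeping corollary of Theorem \ref{thmbijection} (existence of the matching bijection) together with the already-established homomorphism properties of $\bar\varphi$ and its specializations. The one point requiring a sentence of care is the case distinction between snake graphs, where $\match\calg$ denotes \emph{all} perfect matchings, and band graphs, where $\match\calg^\circ$ denotes \emph{good} perfect matchings; one should state explicitly that "number of perfect matchings" in the theorem means the cardinality of $\match\calg$ as defined earlier (so, good perfect matchings in the band graph case), and that the counting identity for resolutions involving band graphs uses the good-matching version of the bijection established in Section \ref{section 4}. With that understood, the proof is: $\varphi$ is the composite $\cals\xleftarrow{} \caly\cals$ (or directly the specialization of $\bar\varphi$), it is well-defined on the quotient because resolutions become equalities of matching-counts by Theorem \ref{thmbijection}, and evaluating it on $\calg$ gives $\sum_i a_i(\calg)=|\match\calg|$.

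\begin{proof}
 By definition, $\varphi\colon\cals\to\mathbb{Z}$ is induced from the map $\bar\varphi\colon\calr\to\mathbb{Z}$ sending a union of snake and band graphs $\calg$ to $\frac{1}{x(\calg)}\sum_{P\in\match\calg}x(P)y(P)$, with all edge and face labels specialized to $1$. Since every edge and tile label equals $1$, we have $x(\calg)=1$, $x(P)=1$ and $y(P)=1$ for all $P\in\match\calg$, so $\bar\varphi(\calg)=|\match\calg|$, the number of (good, in the band graph case) perfect matchings of $\calg$. The map $\bar\varphi$ descends to the quotient $\cals=\calr/I_\calr$: each generator of $I_\calr$ has the form $\calg_{12}-\calg_{34}-\calg_{56}$, and by Theorem \ref{thmbijection} there is a bijection between $\match\calg_{12}$ and $\match(\calg_{34}+\calg_{56})=\match\calg_{34}\sqcup\match\calg_{56}$ (respectively $\match\calg_3\times\match\calg_4\sqcup\match\calg_{56}$ according to the shape of the resolution), whence $|\match\calg_{12}|=|\match\calg_{34}|+|\match\calg_{56}|$ and $\bar\varphi$ annihilates the generator. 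Therefore $\varphi(\calg)=|\match\calg|$ for every snake or band graph $\calg\in\calr$, which is the desired statement.
\end{proof}
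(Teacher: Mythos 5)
Your proof is correct and is exactly the argument the paper intends: the paper omits a proof here, treating the statement as the label-free specialization of the preceding theorems (the analogues for $\calx\caly\cals$ and $\caly\cals$), whose proofs are the same direct computation of $\bar\varphi$ plus well-definedness on the quotient via Theorem \ref{thmbijection}. Your remark that ``perfect matchings'' means good perfect matchings in the band graph case is the right caveat and consistent with the paper's conventions for $\match\calg^\circ$.
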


In our two examples, we get the following.
 \begin{example}
 If $\cals$ is the complete zigzag snake graph with $d$ tiles then $\varphi(\calg)=d+1.$
\end{example}
\begin{example}
 If $\cals$ be the straight snake graph with $d$ tiles then $\varphi(\calg)$ is the $d+2$-nd Fibonacci number. 
\end{example}

 We end this section with several observations about forgetting the labels.

\begin{thm}
  The maps $\calx\caly\cals\to \caly\cals\to \cals$ given by specializing $x$ to $1$ and $y$ to 1 respectively are ring isomorphisms.
\end{thm}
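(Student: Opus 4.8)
The plan is to show that the two specialization maps
$\calx\caly\cals\to\caly\cals$ (setting $x=1$) and $\caly\cals\to\cals$ (setting $y=1$)
are both ring isomorphisms, and it suffices to produce inverse ring homomorphisms in each case. The key point is that these maps are already surjective ring homomorphisms by construction — they are induced on the quotient rings by the corresponding specializations $\zs_{\calx\caly}\mapsto\zs_\caly$ and $\zs_\caly\mapsto\zs$ of the label rings, and passing to the quotient by the ideal of resolutions is compatible with these specializations because, under each specialization, a resolution relation (\ref{eq 7.1}) is sent to a resolution relation. So the whole content is injectivity.

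For the first map, $x\mapsto 1$, I would argue that on the level of the free modules $\calx\caly\calr$ and $\caly\calr$ this specialization is actually a bijection on the distinguished generators: an $\calx\caly$-labeled snake or band graph and the corresponding $\caly$-labeled graph are the ``same'' combinatorial object with a formally different but recoverable edge label, since \emph{every} edge carries label $x$ (respectively $1$). More precisely, I would use Theorem~\ref{thm 7.15} and its proof: for a snake graph with $d$ tiles, $\varphi(\calg)=x\sum_i a_i(\calg)y^i$, and for a band graph $\varphi(\band)=\sum_i a_i(\band)y^i$; combined with part~(c) of Theorem~\ref{thm 7.15}, the multiplicity of $x$ in $\varphi(M)$ recovers the number of snake graphs in a union $M$. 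Thus $\varphi\colon\calx\caly\cals\to\mathbb{Z}[x,y]$ and $\varphi_\caly\colon\caly\cals\to\mathbb{Z}[y]$ fit into a commutative square with the vertical maps being ``$x\mapsto1$'' on the polynomial side, and since $\varphi$ detects the number of snake-graph factors while $\varphi_\caly$ loses exactly that information, the kernel of $\calx\caly\cals\to\caly\cals$ must be trivial: an element killed by it would have $\varphi$-image in the ideal $(x-1)\subset\mathbb{Z}[x,y]$, but a combination of snake and band graphs whose $\varphi$-image lies in $(x-1)$ is forced to be zero by counting snake factors. Equivalently, one builds an explicit inverse: relabel every edge from $1$ back to $x$; this is well-defined on $\caly\cals$ precisely because the ideals of resolutions match up under relabelling (the relation for $I_{\caly\calr}$ pulls back to the relation for $I_{\calx\caly\calr}$, as the edge labels play no role in which resolutions are valid in the unpunctured $\calx\caly$- and $\caly$-settings).

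For the second map, $y\mapsto1$, the same template applies with $\varphi_\caly(\calg)=\sum_i a_i(\calg)y^i$ and $\varphi\colon\cals\to\mathbb{Z}$ sending a graph to its number of perfect matchings, i.e. $\varphi_\caly$ evaluated at $y=1$. Here I would invoke the Remark stating that there is a bijection between the ideal $I_{\caly\calr}$ of resolutions in $\caly\calr$ and the ideal $I_\calr$ of resolutions in $\calr$ given by forgetting the face labels; this bijection, together with the fact that the free modules $\caly\calr$ and $\calr$ have the same distinguished generating sets (isomorphism classes of unions of unpunctured snake and band graphs, decorated trivially or by $y$ on faces), shows that ``forgetting $y$'' and ``attaching $y$ on every face'' are mutually inverse ring homomorphisms on the quotients. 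Hence $\caly\cals\to\cals$ is an isomorphism.

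The main obstacle is the bookkeeping needed to verify that the ideal of resolutions is preserved — and reflected — under each specialization: one must check that for \emph{each} of the eight types of relations listed in section~\ref{sect coeff} (crossing overlaps and graftings for pairs of snake graphs, single snake graphs, snake-and-band, pairs of bands, self-crossing bands), setting $x=1$ (resp.\ $y=1$) carries a generator of $I_{\calx\caly\calr}$ to a generator of $I_{\caly\calr}$ (resp.\ of $I_{\caly\calr}$ to $I_\calr$), and conversely that a relation downstairs lifts to a relation upstairs under the relabelling. For the $y$-specialization this is exactly the content of the cited Remark about $I_{\caly\calr}\leftrightarrow I_\calr$; for the $x$-specialization the same argument works because in the \emph{unpunctured} $\calx\caly$- and $\caly$-settings every edge is uniformly labeled, so the label-preserving condition on the embeddings $i_1,i_2$ and on grafting edges is vacuous, and the validity of a resolution depends only on the underlying unlabeled graphs and their sign functions. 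Once this is in place, the coefficient monomials $y_{34},y_{56}$ appearing in (\ref{eq 7.1}) are monomials in $y$ on both sides and match under the identification, so the quotients are identified and the theorem follows.
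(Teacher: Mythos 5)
Your proposal is correct and ultimately rests on the same argument as the paper: the inverse maps are given by relabelling every edge with $x$ (respectively every tile with $y$), which is well defined on the quotients because the ideals of resolutions correspond under these relabellings. The auxiliary injectivity sketch via $\varphi$ is dispensable (and would require injectivity of $\varphi$ on the unpunctured ring $\calx\caly\cals$, which is not established there), but since you fall back on the explicit inverse, the proof stands.
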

\begin{proof}
 The inverse of the first map is  given by assigning the label $x$ to each edge and the inverse of the second map is  given by assigning the label $y$ to each tile of each snake and band graph.
\end{proof}

The following lemma holds for arbitrary labeled snake graphs and not only for the geometric ones.
\begin{lem}
 If $\calg_1$ and $\calg_2$ are labeled snake graphs or band graphs with a labeled crossing overlap $\calg$ then the unlabeled versions $\sigma(\calg_1)$ and $\sigma(\calg_2)$ have a crossing overlap $\sigma(\calg)$.
\end{lem}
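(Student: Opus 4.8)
The plan is to observe that the statement is essentially a tautology once one unwinds the definitions: a crossing overlap is determined entirely by the combinatorial data of the sign function on the interior edges of the graphs involved (together with the positions $s,t,s',t'$), and none of this data depends on the edge or face labels. So the proof will be a short reduction to the relevant definitions of crossing (Definition \ref{crossing}, Definition \ref{defcrossbands}, and the self-crossing analogues) rather than any computation.

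First I would fix notation: given labeled snake or band graphs $\calg_1,\calg_2$ with a labeled crossing overlap $\calg$, the embeddings $i_1,i_2$ are by definition label-preserving embeddings of graphs, hence in particular they are embeddings of graphs in the unlabeled sense, so $\sigma(\calg)$ is an overlap of $\sigma(\calg_1)$ and $\sigma(\calg_2)$ (the maximality conditions (i), (ii), or (i'), (ii'), (iii) are conditions on the underlying graphs and are preserved by $\sigma$). Next I would note that a sign function $f$ on $\calg$ depends only on the underlying (unlabeled) snake graph structure, so $f$ is simultaneously a sign function on $\calg$ and on $\sigma(\calg)$, and the induced sign functions $f_1,f_2$ on $\calg_1,\calg_2$ agree with the induced sign functions on $\sigma(\calg_1),\sigma(\calg_2)$. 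Finally, the crossing condition in each of the relevant definitions is a statement purely about the values $f_1(e_{s-1}), f_1(e_t), f_2(e'_{s'-1}), f_2(e'_{t'})$ and the inequalities $s>1$, $t<d$, $t'<d'$ etc.; since $\sigma$ changes none of these, the crossing condition holds for $\sigma(\calg_1),\sigma(\calg_2)$ in $\sigma(\calg)$ if and only if it holds for $\calg_1,\calg_2$ in $\calg$. This gives the claim (and in fact the stronger ``if and only if'', though only one direction is asserted).

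I do not anticipate a serious obstacle; the only mild subtlety is the bookkeeping across the several cases (snake/snake, snake/band, band/band, and the various self-overlap cases listed in section \ref{sect coeff}), and the degenerate situations where the overlap is the whole band graph --- there one must check that the auxiliary subgraph $\overline{i_1(\calg)}$ used in Definition \ref{crossing} and Definition \ref{defcrossbands} is also label-independent, which it is, being defined as the largest subgraph of $\calg_1$ isomorphic (as an abstract graph) to a subgraph of $\calg_2^\infty$. Since all the constructions entering the definition of ``crossing overlap'' --- embeddings of graphs, sign functions, the infinite cover $\calg_2^\infty$, and the comparison of signs --- are manifestly invariant under the label-forgetting map $\sigma$, I would simply state this invariance as the content of the proof and leave the case-by-case verification to the reader, exactly as the surrounding text does for analogous routine checks.

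\begin{proof}
Recall that a labeled overlap consists of label-preserving embeddings $i_1,i_2$; forgetting the labels, $i_1,i_2$ remain embeddings of the underlying graphs, and the maximality conditions in the definition of overlap refer only to the underlying graphs, so $\sigma(\calg)$ is an overlap of $\sigma(\calg_1)$ and $\sigma(\calg_2)$. A sign function on a snake graph depends only on its sequence of tiles, hence a sign function $f$ on $\calg$ is simultaneously a sign function on $\sigma(\calg)$, and likewise the induced sign functions $f_1,f_2$ on $\calg_1,\calg_2$ coincide with those induced on $\sigma(\calg_1),\sigma(\calg_2)$. Moreover the infinite snake graph $\calg_2^\infty$ and, in the degenerate cases, the maximal subgraph $\overline{i_1(\calg)}$ are constructed from the underlying graphs alone, so they too are unchanged by $\sigma$. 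Finally, in each of Definition \ref{crossing}, Definition \ref{defcrossbands}, Definition \ref{def self-crossing band} and the self-overlap cases, the condition of being a crossing overlap is a statement about the signs $f_1(e_{s-1})$, $f_1(e_t)$, $f_2(e'_{s'-1})$, $f_2(e'_{t'})$ and the inequalities on $s,t,s',t',d,d'$; none of this is affected by $\sigma$. Therefore $\calg_1,\calg_2$ cross in $\calg$ if and only if $\sigma(\calg_1),\sigma(\calg_2)$ cross in $\sigma(\calg)$; in particular the former implies the latter.
\end{proof}
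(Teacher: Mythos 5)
There is a gap, and it sits at exactly the point the paper's own proof singles out as the only thing requiring an argument. You assert that ``the maximality conditions (i), (ii), or (i'), (ii'), (iii) are conditions on the underlying graphs and are preserved by $\sigma$.'' This is not correct as stated: for a \emph{labeled} overlap, the maximality condition quantifies only over \emph{label-preserving} embeddings $i_1',i_2'$, whereas for the unlabeled graphs it quantifies over all graph embeddings. A labeled overlap that is maximal among label-preserving embeddings could a priori extend to a strictly larger common subgraph of $\sigma(\calg_1)$ and $\sigma(\calg_2)$ --- for instance, if the tiles immediately following $i_1(\calg)$ and $i_2(\calg)$ are attached in the same way in both graphs but carry different labels. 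In that situation $\sigma(\calg)$ would fail condition (i) and would not be an overlap at all, so the conclusion is not a tautology. The remark and example (Figure \ref{figlabellem}) immediately following the lemma show that labels genuinely change which overlaps exist, so one cannot treat the overlap structure as a property of the underlying graph alone.

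The paper's proof isolates precisely this issue: it observes that the embeddings remain embeddings and that the crossing condition (being a statement about sign functions and indices) survives $\sigma$ --- on these points your argument agrees with the paper --- and then states that the unlabeled maximality condition follows from the \emph{combination} of the labeled maximality and the crossing condition. In other words, the crossing hypothesis is not merely the property being transported; it is an input needed to rule out a larger unlabeled overlap. Your write-up never uses the crossing condition for this purpose, so the step ``$\sigma(\calg)$ is an overlap of $\sigma(\calg_1)$ and $\sigma(\calg_2)$'' is unsupported. To repair the proof you would need to show that if the unlabeled overlap could be extended (say forward, so $t<d$, $t'<d'$ and $f_1(e_t)=f_2(e'_{t'})$), this is incompatible with the labeled overlap being both maximal and crossing.
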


\begin{proof}
  The embeddings $i_j\colon \calg\to\calg_j, j=1,2$, of a crossing overlap in labeled snake or band graphs are also embeddings into the unlabeled versions of the graphs, and the crossing condition still holds. We only need to show that in the unlabeled version we also have an overlap. To do so we need to check that the maximality condition of section \ref{section2point5}(i) still holds, but this follows from the facts that it was maximal before dropping the labels and that it satisfies the crossing condition.
 \end{proof}

\begin{remark}
 The converse of the lemma is not true. The left picture in Figure \ref{figlabellem} shows an unlabeled snake graph with crossing self-overlap consisting of the first and the last tile. The  two pictures on the right show two labeled versions of this snake graph which are both unpunctured. Indeed the one in the middle can be realised in a heptagon and the one on the right in an annulus with two marked points on one boundary component and one on the other. The snake graph in the middle has distinct labels in each tile, thus it has no overlap, whereas the snake graph on the right has the same crossing overlap as the unlabeled snake graph. 
\end{remark}

\begin{figure}
\scalebox{1.3}
{\small
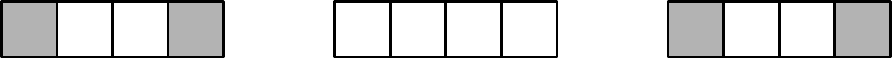}
\caption{An example of an unlabeld snake graph with crossing self-overlap (shaded) on the left, and two labeled versions of the same snake graph, one without self-overlap in the middle and one with crossing self-overlap (shaded) on the right. For simplicity, we only indicate the face labes as integers.}
\label{figlabellem}
\end{figure}

As a consequence of the lemma we get the following result.
\begin{cor}
The map $\calls\to\cals$ given by forgetting the labels is a ring homomorphism. 
\end{cor}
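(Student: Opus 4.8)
The statement to prove is the Corollary: the map $\calls\to\cals$ given by forgetting the labels is a ring homomorphism.

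\textbf{Approach.} The map in question is the composite $\zs$ restricted to snake/band graphs: it sends a labeled union of snake and band graphs to the same union with all labels deleted, and is extended $\mathbb{Z}$-linearly. Concretely it is induced by the surjective group homomorphism $\bar\zs\colon\callr\to\calr$ that forgets all edge and face labels. This $\bar\zs$ is manifestly additive and, since the product on both $\callr$ and $\calr$ is disjoint union of graphs, it respects multiplication: $\bar\zs(\calg_1\sqcup\calg_2)=\bar\zs(\calg_1)\sqcup\bar\zs(\calg_2)$. So $\bar\zs$ is already a ring homomorphism $\callr\to\calr$. The only thing to check is that it descends to the quotients, i.e.\ that $\bar\zs(I_{\callr})\subseteq I_{\calr}$; then the universal property of the quotient gives the induced ring homomorphism $\calls=\callr/I_{\callr}\to\calr/I_{\calr}=\cals$, which is exactly the label-forgetting map.

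\textbf{Key steps, in order.} First, recall from Section~\ref{sect coeff} that $I_{\callr}$ is generated as an ideal by the resolution and grafting relations of the form \eqref{eq 7.1}, $\calg_{12}=y_{34}\calg_{34}+y_{56}\calg_{56}$, where $\calg_{12}$ ranges over the eight listed types of labeled crossing/self-crossing/grafting configurations. Applying $\bar\zs$ to such a generator kills the coefficient monomials $y_{34},y_{56}$ (they become $1$) and deletes all labels, yielding $\sigma(\calg_{12})=\sigma(\calg_{34})+\sigma(\calg_{56})$ in $\calr$. The second step is the content of the preceding Lemma (and its self-crossing and grafting analogues): if $\calg_1,\calg_2$ have a labeled crossing overlap $\calg$, then $\sigma(\calg_1),\sigma(\calg_2)$ have a crossing overlap $\sigma(\calg)$; and since the resolution construction of Section~\ref{section3} depends only on the underlying unlabeled graphs and sign functions, the resolution of the unlabeled crossing is precisely $\sigma(\calg_{34})+\sigma(\calg_{56})$. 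Hence $\sigma(\calg_{12})-\sigma(\calg_{34})-\sigma(\calg_{56})=0$ is (trivially) in $I_{\calr}$, in fact it is one of the defining relations of $I_{\calr}$. The same applies verbatim to self-crossing overlaps and to grafting relations, using the fact that overlaps, self-overlaps, crossing conditions, and the grafting construction are all defined in terms of sign functions and the combinatorial structure of the graphs, not their labels. Third, since $\bar\zs$ sends each generator of $I_{\callr}$ into $I_{\calr}$ and is a ring homomorphism, it sends the whole ideal $I_{\callr}$ into $I_{\calr}$; therefore the composite $\callr\xrightarrow{\bar\zs}\calr\epito\cals$ kills $I_{\callr}$ and factors uniquely through $\calls$, giving the desired ring homomorphism $\calls\to\cals$.

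\textbf{Main obstacle.} There is no serious obstacle; this is a bookkeeping argument once the Lemma is in hand. The only subtlety worth a sentence is that one must invoke the unlabeled-overlap Lemma not just for crossing overlaps of pairs of snake graphs but also for self-overlaps and for the grafting configurations (and for the cases involving band graphs), so that every type of generator of $I_{\callr}$ listed after \eqref{eq 7.1} is covered; each of these follows by the same reasoning as the stated Lemma, namely that deleting labels can only destroy maximality of an overlap, but the crossing/grafting condition forces the deleted-label configuration to still be maximal, so a genuine (self-)overlap persists and its resolution is the $\sigma$-image of the labeled resolution. With that remark, the proof is complete.

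\begin{proof}
The label-forgetting map is induced by the group homomorphism $\bar\zs\colon\callr\to\calr$ deleting all edge and face labels. Since multiplication in both rings is disjoint union of graphs, $\bar\zs$ is a ring homomorphism. By definition $I_{\callr}$ is generated by the relations \eqref{eq 7.1}, and for each such generator $\calg_{12}-y_{34}\calg_{34}-y_{56}\calg_{56}$ we have, by the Lemma above together with its self-overlap and grafting analogues (which hold by the same argument, as overlaps and the resolution construction depend only on the underlying unlabeled graphs and their sign functions), that $\bar\zs(\calg_{12})=\sigma(\calg_{12})$ has the corresponding unlabeled crossing, self-crossing, or grafting configuration, whose resolution is exactly $\sigma(\calg_{34})+\sigma(\calg_{56})$. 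Hence $\bar\zs$ sends every generator of $I_{\callr}$ into $I_{\calr}$, so $\bar\zs(I_{\callr})\subseteq I_{\calr}$. Therefore $\bar\zs$ induces a ring homomorphism $\calls=\callr/I_{\callr}\to\calr/I_{\calr}=\cals$, which is the label-forgetting map.
\end{proof}
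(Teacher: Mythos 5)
Your proof is correct and follows essentially the same route the paper intends: the paper derives the corollary directly from the preceding lemma, and your argument makes explicit the standard quotient-ring bookkeeping (that the label-forgetting map sends the generators of $I_{\callr}$ into $I_{\calr}$, hence descends). Your remark that the lemma must also be invoked for self-overlaps and grafting configurations is a point the paper leaves implicit, and you handle it correctly.
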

\begin{remark}
 This corollary also holds if we replace the unpunctured snake ring $\calls$ by the geometric or by the full snake ring.
\end{remark}

Summarizing, we have the following commutative diagram of ring homomorphisms
\[\xymatrix@R30pt@C60pt{ &\calls_\calx\ar@{>>}[d]^{\zs_\calx}\ar@{^(->}[r] & \calls\,\ar@{>>}[d]^{\zs_F}\ar@{^(->}[r]^\varphi& \calf\ar@{>>}[d]\\
\mathbb{Z}[x^{\pm1},y_{S,M,T,i}] \ar@{>>}[d] & \calx\cals\ar@{>>}[d]^{\zs'}\ar[l]_{\varphi_\calx}\ar[r]^{x=1} &F\cals\ar[r]^{\varphi_F}\ar@{>>}[d]^{\zs'} &  \mathbb{Z}[y_{S,M,T,i}]\ar@{>>}[d] \\
\mathbb{Z}[x, y] & \calx\caly\cals\ar[l]^\cong_{\varphi_{\calx\caly}}\ar[r]^{x=1}_\cong &\caly\cals\ar@{>>}[r]^{\varphi_\caly}\ar[d]_\cong^{y=1} &  \mathbb{Z}[y]\ar@{>>}[d]  \\
&&\cals\ar@{>>}[r]^{\varphi_\cals}&\mathbb{Z}
}
\]

{}

\end{document}